\theoremstyle{plain}
\newtheorem{theorem}{Theorem}[section]
\newtheorem{proposition}[theorem]{Proposition}
\newtheorem{lemma}[theorem]{Lemma}
\newtheorem{corollary}[theorem]{Corollary}
\theoremstyle{definition}
\newtheorem{definition}[theorem]{Definition}
\newtheorem{assumption}[theorem]{Assumption}
\theoremstyle{remark}
\newtheorem{remark}[theorem]{Remark}
\newcommand{\bX}{\boldsymbol{X}}
\newcommand{\bx}{\boldsymbol{x}}
\newcommand{\bY}{\boldsymbol{Y}}
\newcommand{\bB}{\boldsymbol{B}}
\newcommand{\bF}{\boldsymbol{F}}
\newcommand{\bz}{\boldsymbol{z}}
\newcommand{\bZ}{\boldsymbol{Z}}
\icmltitlerunning{Minimax Optimality of Score-based Diffusion Models}
\begin{document}

\twocolumn[
\icmltitle{Minimax Optimality of Score-based Diffusion Models: \\Beyond the Density Lower Bound Assumptions}



\icmlsetsymbol{equal}{*}

\begin{icmlauthorlist}
\icmlauthor{Kaihong Zhang}{equal,uiuc}
\icmlauthor{Caitlyn H. Yin}{equal,uiuc}
\icmlauthor{Feng Liang}{uiuc}
\icmlauthor{Jingbo Liu}{uiuc}
\end{icmlauthorlist}

\icmlaffiliation{uiuc}{Department of Statistics, University of Illinois Urbana-Champaign, Champaign, IL, USA}

\icmlcorrespondingauthor{Kaihong Zhang}{kaihong5@illinois.edu}
\icmlcorrespondingauthor{Caitlyn H. Yin}{heqiyin2@illinois.edu}
\icmlcorrespondingauthor{Feng Liang}{liangf@illinois.edu}
\icmlcorrespondingauthor{Jingbo Liu}{jingbol@illinois.edu}

\icmlkeywords{Diffusion model, Score estimation, Nonparametric statistics, Minimax rate, Kernel density estimator, Sampling}

\vskip 0.3in
]



\printAffiliationsAndNotice{\icmlEqualContribution} 

\begin{abstract}
We study the asymptotic error of score-based diffusion model sampling in large-sample scenarios from a non-parametric statistics perspective. We show that a kernel-based score estimator achieves an optimal mean square error of  $\widetilde{O}\left(n^{-1} t^{-\frac{d+2}{2}}(t^{\frac{d}{2}} \vee 1)\right)$ for the score function of $p_0*\mathcal{N}(0,t\boldsymbol{I}_d)$, where $n$ and $d$ represent the sample size and the dimension, $t$ is bounded above and below by polynomials of $n$, and $p_0$ is an arbitrary sub-Gaussian distribution. As a consequence, this yields an $\widetilde{O}\left(n^{-1/2} t^{-\frac{d}{4}}\right)$ upper bound for the total variation error of the distribution of the sample generated by the diffusion model under a mere sub-Gaussian assumption. If in addition, $p_0$ belongs to the nonparametric family of the $\beta$-Sobolev space with $\beta\le 2$, by adopting an early stopping strategy, we obtain that the diffusion model is nearly (up to log factors) minimax optimal. This removes the crucial lower bound assumption on $p_0$ in previous proofs of the minimax optimality of the diffusion model for nonparametric families.
\end{abstract}

\section{Introduction}
\label{sec_introduction}
Diffusion models have emerged as a powerful tool of generative models, demonstrating exceptional performance in a wide range of applications. Pioneering work by \citet{ho2020denoising} and \citet{dhariwal2021diffusion} contributed to the development of image generation. Recent advances have demonstrated the effectiveness of diffusion models in a variety of domains, as exemplified by state-of-the-art results in image and text generation \citep{ramesh2022hierarchical, nichol2021glide}, text-to-speech synthesis  \citep{Jeong2021DiffTTSAD, popov2021grad, huang2022prodiff}, and molecular structure modeling \citep{xu2023geometric, hua2023mudiff}. Their reach has extended to scientific fields such as neuroscience \citep{pinaya2022brain} and materials science \citep{manica2023accelerating}.

Score-based generative modeling, a specific family of diffusion modeling, uses learned score functions (i.e., gradients of the log probability density functions) to transform white noise into the target data distribution via solving a stochastic differential equation. More specifically, the forward process converts samples drawn from a data distribution, denoted as $p_0$ (such as natural images), into complete noise, while the reverse process effectively reverts complete noise back into samples from $p_0$. Implementing the reverse process requires approximating the score function, a task typically accomplished through training neural networks using a score matching objective, as seen in prior work such as \citet{hyvarinen2005estimation}, \citet{vincent2011connection} and \citet{song2019generative}. While the score-based generative model has demonstrated remarkable performance in numerous applications, there remain gaps in our theoretical understanding.

From a statistical perspective, score-based diffusion model can be framed as the estimation of an unknown distribution from random samples. Any inaccuracies in score estimation introduce errors into the subsequent diffusion process. This naturally leads to the following question:

\textit{Consider viewing the score-based diffusion model as an algorithm designed to generate samples from an unknown distribution, using information from finite training samples generated by the forward process.
Under what conditions does this algorithm attain statistically optimal error rate for a given training sample size?
}


The total error in the sample distribution generated by a diffusion model can be approximately attributed to three components: the error stemming from approximating the true score, the discretization error incurred when discretizing stochastic differential equations, and the error arising from the convergence of the forward process (reflecting the deviation between the initial distribution of the reverse process and the standard Gaussian). Existing literature often employs Pinsker's inequality and Girsanov's theorem to bound the total variation distance between the true and approximate reverse processes, as seen in \citet{chen2022sampling}, \citet{chen2023improved} and \citet{benton2023linear}.

However, existing results are limited in the following aspects:

\textbf{Uniform-in-time assumption on score estimation error.}
In \citet{chen2022sampling} and other works such as \citet{benton2023linear},
it is assumed that the distribution is arbitrary, but there is a uniform (in time) i.e., uniformly over $[0, T]$ upper bound on the score estimation error.
The uniform upper bound assumption fails to capture the observation in practice that the score error typically decreases in time, due to the smoothing effect of the Gaussian kernel (see our numerical experiment in Appendix~\ref{appendix_numerical}). Therefore, the bound in \citet{chen2022sampling} is not sharp enough to yield the minimax optimal rate.
They also did not discuss a natural question in large sample scenarios: the relationship between estimation error and sample size $n$.

\textbf{Strong assumptions on the true data distribution.} 
By making an assumption on the data distribution class, 
it is sometimes possible to resolve the above issue by 
characterizing the decay of the score error in time.
For example,
\citet{oko2023diffusion}
derived an estimation error bound for the Besov class of density function,
showing that the diffusion model is minimax as long as the distribution satisfies a density lower bound on its support. The density lower bound greatly simplifies the proof of the score estimation error bound;
however, it excludes natural distribution classes, such as multi-modal distributions or mixtures with well-separated components.
In fact, under the compact support and the density lower bound assumption in \citet{oko2023diffusion}, 
we can deduce that the true density satisfies the log-Sobolev inequality (LSI) by the 
Holley-Stroock perturbation principle \citep{holley1987logarithmic}.
It is known that under LSI, running the Langevin dynamics is sufficient for achieving statistical efficiency \citep{koehler2022statistical},
while the diffusion model is expected to achieve efficiency for a wider class of distributions, by introducing a class of smoothed distributions. 
In this regard, the theory of \citet{oko2023diffusion} has not uncovered the true strength of the diffusion model. This paper uses a different proof technique than \citet{oko2023diffusion} and does not have this limitation. 

Comparing the above two lines of work, it might seem that a strong distribution assumption is a price we need to pay for deriving sharp bounds for settings where the score error is not uniform in time (such as the nonparametric class).
In this paper, we show that, perhaps surprisingly, 
we can derive a general sampling error bound only using a Sub-Gaussian assumption,
which, when specialized to the nonparametric class (without the density lower bound assumption),
is nearly (up to polylog factors) minimax optimal.

\subsection{Main Contributions}

In this paper, we derive a time-dependent error bound for the score function $s_t(x) = \nabla \log p_t(x)$ (see Theorem~\ref{theorem2}). Utilizing the error bound for the score function, we are able to control the total variation distance between the true data distribution $p_0$ and the distribution of a sample from the diffusion model as described in Algorithm~\ref{algorithm1}. Our main results only require the first or both of the following assumptions on the ground true data distribution $p_0$. We will make these statements rigorous in Section~\ref{sec_Assumptions}.

\begin{itemize}
  \item[\textbf{A1}] The true data distribution $p_0$ is $\sigma_0$-Sub-Gaussian.
  \item[\textbf{A2}] The true data distribution $p_0$ belongs to the Sobolev class of density functions with the order of smoothness $\beta \le 2$.
\end{itemize}
In particular, under \textbf{A1} and \textbf{A2},
we show that the error in the distribution of the sample generated by the diffusion model coincides with the classical minimax optimal convergence rate of density estimation in nonparametric statistics \citep{tsybakov}.
This rate cannot be improved,
since sampling error upper bounds the density estimation error.
We summarize our main result informally as follows:
\begin{theorem}[Informal; see Theorem~\ref{theorem2} and Theorem~\ref{main_theorem2}]
    Suppose that $p_0$ satisfies assumption \textup{\textbf{A1}}, and $C>0$ is arbitrary.
    There exists a score estimator $\hat{s}_t(x)$ ($t>t_0$) such that for the early stopping time $t_0=n^{-C}$, The distribution of sample from Algorithm~\ref{algorithm1} differs from $p_{t_0}$ by at most ${\rm polylog}(n) n^{-1/2}{t_0}^{-\frac{d}{4}}$ in the total variation (TV) distance, where $p_{t_0}$ denotes the distribution of the forward process at time $t_0$.
    Furthermore, if $p_0$ also fulfils \textup{\textbf{A2}}, by choosing $t_0=n^{-\frac{2}{2\beta+d}}$,
    the distribution of the output sample differs from $p_0$ by at most 
    ${\rm polylog}(n) n^{-\frac{\beta}{2 \beta + d}}$ in TV.
\end{theorem}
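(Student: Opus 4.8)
The plan is to bound the total variation (TV) error via the standard three-term decomposition for score-based samplers and then feed in the time-dependent score error bound of Theorem~\ref{theorem2}. Let $p_t$ be the law of the forward process at time $t$ (so that $p_t$ is, up to a negligible rescaling, $p_0*\mathcal{N}(0,tI_d)$), let $R^{\mu}$ denote the law at time $t_0$ of the \emph{exact} reverse SDE on $[t_0,T]$ with initial law $\mu$ at time $T$, and let $\hat q_{t_0}$ be the law of the output of Algorithm~\ref{algorithm1}, which runs a time-discretized reverse SDE with the estimated drift $\hat s_t$. Then
\begin{equation}
\mathrm{TV}(\hat q_{t_0},p_{t_0})\;\le\;\underbrace{\mathrm{TV}\!\big(R^{\mathcal{N}(0,I_d)},R^{p_T}\big)}_{\text{(i) forward convergence}}\;+\;\underbrace{\mathrm{TV}\!\big(\hat q_{t_0},R^{p_T}\big)}_{\text{(ii) score + discretization}}.
\end{equation}
For the first assertion this already suffices; for the second I additionally use $\mathrm{TV}(\hat q_{t_0},p_0)\le\mathrm{TV}(\hat q_{t_0},p_{t_0})+\mathrm{TV}(p_{t_0},p_0)$ and control the Gaussian-smoothing bias via \textbf{A2}.

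Term (i) is cheap: under \textbf{A1} the forward Ornstein--Uhlenbeck semigroup mixes exponentially, so $\mathrm{TV}(p_T,\mathcal{N}(0,I_d))\le e^{-cT}\,\mathrm{poly}(d,\sigma_0)$, and by the data-processing inequality this bounds (i); taking $T\asymp\log n$ makes it $o(n^{-1/2}t_0^{-d/4})$. For the discretization part of (ii) I would use a time grid refining geometrically towards the early-stopping time $t_0$ (the standard choice for polynomially small $t_0$) with $N=\mathrm{poly}(n)$ steps; a per-step Euler--Maruyama analysis in the spirit of \citet{benton2023linear} then makes the discretization contribution negligible once $N$ is a large enough polynomial in $n$, at the cost of only extra $\mathrm{polylog}(n)$ factors.

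The heart of the argument is the score-mismatch part of (ii). Girsanov's theorem followed by Pinsker's inequality gives $\mathrm{TV}(\hat q_{t_0},R^{p_T})^2\lesssim\int_{t_0}^{T}\mathbb{E}_{X\sim p_t}\|\hat s_t(X)-s_t(X)\|^2\,dt+(\text{discretization})$. Substituting the bound $\mathbb{E}\|\hat s_t-s_t\|^2=\widetilde{O}(n^{-1}t^{-\frac{d+2}{2}}(t^{\frac d2}\vee1))$ of Theorem~\ref{theorem2} and splitting the integral at $t=1$: on $[1,T]$ the integrand is $\widetilde{O}(n^{-1}t^{-1})$, contributing $\widetilde{O}(n^{-1})$, while on $[t_0,1]$ the factor $t^{d/2}\vee1$ equals $1$, so the contribution there is $\widetilde{O}(n^{-1}\int_{t_0}^{1}t^{-\frac{d+2}{2}}\,dt)=\widetilde{O}(n^{-1}t_0^{-d/2})$, which dominates. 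Taking square roots yields $\mathrm{polylog}(n)\,n^{-1/2}t_0^{-d/4}$, establishing the first assertion with $t_0=n^{-C}$ (which lies in the admissible polynomial-in-$n$ window required by Theorem~\ref{theorem2}).

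For the second assertion, under \textbf{A2} with $\beta\le2$ I would show $\mathrm{TV}(p_0,p_{t_0})=\widetilde{O}(t_0^{\beta/2})$: writing $p_{t_0}=p_0*\phi_{t_0}$ with $\phi_{t_0}$ the centered Gaussian of covariance $t_0 I_d$, a Taylor expansion with integral remainder annihilates the first-order term by symmetry of $\phi_{t_0}$ and leaves a remainder governed by the $\beta$-th-order smoothness of $p_0$, giving $\|p_0*\phi_{t_0}-p_0\|_{L^1}=\widetilde{O}(t_0^{\beta/2})$; the restriction $\beta\le2$ is precisely what keeps this term from saturating (the Gaussian has a nonzero fourth moment). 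Combining, $\mathrm{TV}(\hat q_{t_0},p_0)=\widetilde{O}(n^{-1/2}t_0^{-d/4}+t_0^{\beta/2})$, and equating the two terms forces $t_0=n^{-2/(2\beta+d)}$, which yields the rate $\mathrm{polylog}(n)\,n^{-\beta/(2\beta+d)}$. I expect the main obstacles to be (a) making the Girsanov/Pinsker reduction rigorous for an early-stopped reverse SDE whose true drift $s_t$ can grow as $t\downarrow t_0$, so that the time integral of the score error has to be balanced against the refinement of the discretization grid near $t_0$; and (b) establishing the $t_0^{\beta/2}$ smoothing bias in total variation rather than a weaker metric, uniformly over the Sobolev ball, which requires combining the local smoothness estimate with sub-Gaussian tail control to pass from pointwise/$L^2$ estimates to $L^1$. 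The remaining steps are routine bookkeeping with sub-Gaussian tails and polylog factors.
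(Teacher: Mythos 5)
Your three-term decomposition and the Girsanov/Pinsker reduction feeding in the time-dependent score bound of Theorem~\ref{theorem2} are essentially what the paper does; splitting the time integral at $t=1$ and getting the dominant $\widetilde O(n^{-1}t_0^{-d/2})$ contribution from $[t_0,1]$ is correct. The main gap is the forward-convergence term: you invoke exponential mixing of the Ornstein--Uhlenbeck semigroup and conclude $T\asymp\log n$ suffices, but Algorithm~\ref{algorithm1} uses the Brownian (variance-exploding) forward SDE, and the reverse chain is initialized from $\mathcal N(0,T\boldsymbol I_d)$, not $\mathcal N(0,\boldsymbol I_d)$. In that parametrization $\mathrm{D}_{\mathrm{KL}}(p_T\,\|\,\mathcal N(0,T\boldsymbol I_d))\lesssim 1/T$, hence $\mathrm{TV}\lesssim T^{-1/2}$ --- polynomial decay, not exponential. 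Taking $T\asymp\log n$ would leave a $(\log n)^{-1/2}$ initialization error that swamps the target rate; the paper instead takes $T=n^{2\beta/(2\beta+d)}$, a polynomial in $n$, which is harmless because the $[1,T]$ part of the score integral only contributes $n^{-1}\log T$.

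Two smaller mismatches. First, you add a time-discretization error, but Algorithm~\ref{algorithm1} as stated is continuous-time and the paper explicitly excludes discretization, so this term does not appear in the proof. If you did discretize, note that $\hat s_t=\frac{\nabla\hat p_t}{\hat p_t}\mathbbm1_{\{\hat p_t>\rho_n\}}$ is discontinuous across the truncation boundary $\{\hat p_t=\rho_n\}$, so a per-step Euler--Maruyama argument of the Benton et al.\ type (which relies on drift regularity) would not be routine here. Second, the Sobolev class in Assumption~\ref{assumption2} is defined through the Fourier transform and permits non-integer $\beta$; the paper proves $\mathrm{TV}(p_0,p_{t_0})\lesssim\mathrm{polylog}(n)\,t_0^{\beta/2}$ via Plancherel with a frequency split at $\|\omega\|_\infty=t_0^{-1/2}$, using the bound $|1-e^{-t\|\omega\|^2/2}|\lesssim t\|\omega\|^2$ on low frequencies (this is exactly where $\beta\le 2$ enters) and the sub-Gaussian tail to pass from $L^1$ to a compact-domain $L^2$ estimate. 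Your Taylor-with-integral-remainder argument only handles integer $\beta$ cleanly and would need to be replaced or supplemented by this frequency-domain argument to cover the full range $\beta\in(0,2]$.
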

In previous studies, several restrictive assumptions were made to construct minimax optimal score estimators for diffusion models. 
One of the key assumptions is that $p_0$ has compact support and whose density is bounded from below \citep{oko2023diffusion}.
As alluded to before, these assumptions are strong enough to guarantee LSI and hence fail to unveil the key advantage of the diffusion model over the Langevin dynamics \citep{koehler2022statistical}.
It is not at all obvious that the density lower bound assumption can be removed without impairing the convergence rate;
in fact, in some examples of nonparametric statistics, such as density estimation under Wasserstein error,
the minimax rate can indeed get worse without the density lower bound assumption
\citep{niles2022minimax}. 

In this work, 
we employ a truncated version of the score estimator similar to that of \citet{zhang1997empirical} in the context of empirical Bayes, 
but with more refined analysis to control the error in the regime of polynomially small $t$.
More precisely, we note that $s_t(x)=\nabla p_t(x)/p_t(x)$,
and estimate the numerator and the denominator by carefully constructed kernel density estimators.
We define the truncated score estimator by
\begin{equation}\label{new_score_estimator}
\hat s_t(x) :=\frac{\nabla\hat{p}_t(x)}{\hat p_t(x)} \mathbbm{1}_{\left\{\hat p_t(x) > \rho_n\right\}}
\end{equation}
where $\hat p_t(x)$ is a kernel density estimator of original data distribution $p_t(x)$ [see Appendix~\ref{appendix_construct_kernel} for details about properties of the kernel needed and proof of its existence]. 
We set our score estimator $\hat{s}_t(x)$ to be zero when the kernel density estimator $\hat{p}_t(x)$ is less than $\rho_n$, where $\rho_n := \frac{1}{n t^{d/2}}$ is a parameter decaying at the same rate as MSE of $\hat{p}_t$. 
If $\hat{p}_t(x)$ is larger than $\rho_n$, the MSE for  $\nabla\hat{p}_t(x)$ is proportional to $p_t(x)$
(Proposition~\ref{MSE_main}), so that $p_t(x){\rm MSE}(\hat{s}_t(x))\approx {\rm MSE}(\nabla\hat{p}_t(x))/p_t(x)$ is unaffected by the lower density.
If $\hat{p}_t(x)$ is smaller than $\rho_n$, there will be too few observations near $x$, and we cannot show that ${\rm MSE}[\nabla\hat{p}_t(x)]$ is proportional to $p_t(x)$ via Bernstein's concentration;
however, the sub-Gaussian assumption ensures that the contribution to the mean square score estimation error from this case is \emph{exactly} bounded at the level of the minimax rate.
See Section~\ref{sec_main_results} for details on the convergence of our truncated score estimator.

To summarize, we provide a convergence guarantee for the diffusion model in the sense of total variation (TV) distance 
under mild assumptions on the true distribution $p_0$.
Our main results Corollary~\ref{main_corollary} and Theorem~\ref{main_theorem2}, obtain a convergence rate of $n^{-\frac{\beta}{2\beta+d}} \mathrm{polylog}(n)$ for sampling from $p_0$ using the score-based diffusion model, which matches the minimax optimal rate in the classical nonparametric estimation theory \citep{tsybakov} up to logarithmic factors.
While the recent work of \citet{oko2023diffusion} showed a similar minimax optimality result, they made the restrictive assumptions that the data distribution $p_0$ has bounded support and a density lower bounded, i.e., $p_0(x) > C$ for some constant $C$. 
Here we relax the compact support assumption to sub-Gaussianity and completely remove the density lower bound.

\subsection{Prior Works}
Regarding the theoretical convergence results of score-based generative models, previous works, such as \citet{benton2023linear}, \citet{chen2023improved}, 
\citet{chen2022sampling},
and 
\citet{lee2023convergence},  often assume an oracle score estimator with bounded error,
without explicitly touching on the issue of the statistical error of the score estimator for finite samples.
For example, in \citet{chen2022sampling} it is assumed that their exists some estimator $s_{\theta}$ satisfying a uniform in time bound
\begin{align}
     \mathbb{E}_{q_{t_k}(\mathbf{x})}
    [\|\nabla \log q_{T-t_k}(\mathbf{x})  
     -s_\theta(\mathbf{x}, T-t_k)\|^2] \le \varepsilon_{\text{score}}^2,
     \label{e2}
\end{align}
where $t_k$ are the time discretization points,
and showed that the resulting reverse diffusion process produces a sample with error roughly the order of $\varepsilon_{\text{score}}$.
It is still not clear whether such $s_{\theta}$ exists,
or if the uniform in time assumption \eqref{e2} is valid, in specific settings.
In contrast, our study incorporates the statistical error in score estimation,
by applying a (refined) truncated score estimator similar to the one in \citet{zhang1997empirical}. This provides a unique perspective for analyzing score functions within the low-density regions.

Furthermore, the broader landscape of score-generative models relies on a variety of assumptions regarding the true distribution $p_0$, as discussed in prior works including  \citet{song2020score}, \citet{pidstrigach2022score}, and \citet{de2022convergence}. These studies assumed the true distribution $p_0$ is supported on a low dimensional manifold $\mathcal{M} \subset \mathbb{R}^d$, with a smooth density relative to the manifold. \citet{de2022convergence} specifically highlight that, without further assumptions, the generalization error exhibits exponential dependence on both the diameter of $\mathcal{M}$ and the reciprocal of the desired error margin. Further research in this domain, such as \citet{block2022generative}, explored the convergence of score estimation error and finite-sample bounds for sampling using Langevin diffusion.

Another line of work imposes the restrictive assumption of smoothness conditions such as a log-Sobolev Inequality (LSI) for the true distribution $p_0$, as discussed by \citet{lee2022convergence}, \citet{wibisono2022convergence}. These studies provide insights into polynomial convergence guarantees for score function estimation in TV distance under $L^2$-accurate error. Building upon this, \citet{chen2022sampling} and \citet{lee2023convergence} have extended the analysis by removing the LSI assumption, addressing the complexities of real-world data distributions, although they still require smoothness conditions and bounded support for the true distribution.

Closer to our line of work are the results of \citet{oko2023diffusion}, which provides a theoretical analysis of approximation and generalization abilities of score-based diffusion modeling under Besov function spaces and other specific assumptions, 
such as the true distribution being supported on a bounded domain and having a density lower bound.
They gave an upper bound for the total generalization error of $n^{-\frac{2 s}{d+2 s}} \log ^{18} n$ for the score network estimation, which matches the optimal convergence rate in the nonparametric function class setting up to logarithmic factors. Considering the neural network architecture's covering number, they minimized the empirical score-matching error.

After we finalized our manuscript, we noted a concurrent work by \citet{wibisono2024optimal} shares similarities with our work. They constructed a regularized version of the score estimator
\begin{equation*}
    \hat s_h^{\epsilon}(x) := \frac{\nabla \hat p_h (x)}{\max\left(\hat p_h(x) ,\epsilon\right)},
\end{equation*}
where $\hat p_h (x)$ is the kernel density estimator with the Gaussian kernel. Using this estimator, they achieved an optimal rate of $\tilde{O}(n^{-\frac{2}{d + 4}})$ for estimating the score function of an unknown probability distribution that is sub-Gaussian and has score function that is Lipschitz-continuous.

\subsection{Organization}
The rest of the paper is organized as follows. 
In Section~\ref{sec_background}, we establish the background of score-based generative models in our settings, detailing the necessary notations and assumptions. Section~\ref{main_result_section} presents our main results concerning the estimation error of the score function (over time $t$), and the bound for the TV distance between the data distribution and the distribution of sample from the diffusion model in Algorithm~\ref{algorithm1}. Section~\ref{sec_proof_overview} provides proof overviews for the theorems in Section~\ref{main_result_section}. We provide some discussions of this paper in Section~\ref{sec_discussion}. Finally, we briefly summarize our main findings in Section~\ref{sec_conclusion}.

\section{Background} \label{sec_background}
\subsection{Forward and Backward Processes}
\noindent\textbf{Forward Process.} Given the ground truth data distribution $p_0$, a forward process $(\mathbf \bX_t)_{t\in [0,T]}$ is defined as the solution for the following It\^o SDE: 
\begin{equation}\label{e3}
   \mathrm{d} \bX_t=\bF(\bX_t, t) \mathrm{d} t+g(t) \mathrm{d} \bB_t, \;\;\;\; \bX_0 \sim p_0,
\end{equation}
where $\bF(\cdot, t): \mathbb{R}^d \rightarrow \mathbb{R}^d$ is a vector-valued function called the drift coefficient, $g(t): \mathbb{R} \rightarrow \mathbb{R}$ is a scalar function called the diffusion coefficient, and $(\bB_t)_{t\in[0,T]}$ denotes a standard Brownian motion. 

In this paper, we focus on the case of Brownian motion process,
\begin{equation}\label{BM_process}
   \mathrm{d} \bX_t= \mathrm{d} \bB_t,\;\;\;\; \bX_0 \sim p_0,
\end{equation}
which is a special case of \eqref{e3}. In this example the drift term $\bF(\cdot, t) = \mathbf 0$ and the diffusion coefficient is a constant $g(t) = 1$. This SDE has an explicit solution
\begin{equation}\label{BM_process2}
    \bX_t = \bX_0 + \sqrt{t} \bZ. \;\;\;\; \bZ  \sim\mathcal{N}(\boldsymbol{0},\boldsymbol I_d)\perp \!\!\! \perp \bX_0\, .
\end{equation}
Then we have 
\begin{equation}
    \bX_t | \bX_0 \sim \mathcal{N}(\bX_0, t \,\mathbf{I}_d).
\end{equation}
Note that if we perturb our original data $\bX_0$ with Gaussian noise $\bB_T$ for a large enough time $T$, the marginal distribution of $\bX_T$ will have only a weak dependence on $\bX_0$ and will be approximately Gaussian distributed. We hereafter denote by $p_t(x)$ the probability density function of $\bX_t$ in \eqref{BM_process2}.

\textbf{Reverse Process for Sample Generation.} 
If we reverse the diffusion process \eqref{e3} in time, we can generate new samples from $p_0$. Importantly, denotes $\bY_t = \bX_{T-t}$, where $(\bX_{t})_{t \in [0,T]}$ is a solution to the SDE in \eqref{e3}, then $(\bY_{t})_{t \in [0,T]}$ satisfies:
\begin{align}\label{backward_sde}
    \mathrm{d} \bY_t &= \left[-\bF(\bY_t, T-t) + g^2(t) \nabla \log p_{T-t}(\bY_t)\right] \mathrm{d} t \nonumber \\
    &\quad + g(t) \mathrm{d} \bB_t, \quad \bY_0 = \bX_T \sim p_T,
\end{align}
where $(\bB_t)_{t\in[0,T]}$ is another independent standard Brownian motion. Here, $\nabla \log p_{t}(\bx)$ is called the \textit{score function} for $p_t$. If we run the process \eqref{backward_sde} with $\bY_0 \sim p_T$, then $\bY_T \sim p_0$ and we obtain a sample from the true data distribution. Setting $\bF(\cdot, t) = \mathbf 0$ and $g(t) = 1$, the corresponding reverse SDE for the Brownian motion process \eqref{BM_process} is
\begin{equation}\label{backward_sde_bm}
    \mathrm{d} \bY_t= \nabla \log p_{T-t}(\bY_t) \mathrm{d} t+\mathrm{d} \bB_t, \;\; \bY_0 = \bX_T \sim p_T.
\end{equation}
To compute the reverse SDE, we need to construct an estimator $\hat s_t(x)$ for the score function $\nabla \log p_t(x)$ from the dataset $\{x_i\}_{i=1}^n$. The \textit{score error} (SE) \citep{song2021scorebased} for the estimator $\hat s_t(\bx)$ is defined as:
\begin{equation}\label{score_error}
    \mathrm{SE}(\hat s) := \int_{t \in [0,T]}\mathbb{E}_{x \sim p_t}\left[\left\|\nabla \log p_t(x)-\hat s_t(x)\right\|_2^2\right]dt.
\end{equation}
The expectation of score error over the dataset $\mathbb{E}_{\{x_i\}_{i=1}^n}\left[\mathrm{SE}(\hat{s})\right]$ can be used to evaluate the performance of the proposed score estimator.
Once we have an estimator of the score function, we can plug it into the reverse SDE and solve it to get a sample from the original data distribution $p_0(x)$. 
\subsection{Sampling Method}
In order to sample from the unknown distribution $p_0$, in this paper we utilize the Brownian diffusion process $(\bX_t)_{t\in [0,T]}$ as defined in \eqref{BM_process} and the corresponding backward process $(\bY_t)_{t\in [0,T]}$ defined in \eqref{backward_sde_bm}. If we run the forward process $(\bX_t)_{t\in [0,T]}$ for a large enough time $T$, the distribution of $\bX_T$ can be approximated as Gaussian.
Suppose we obtain an estimator $\hat s_t$ for the score function $\nabla \log p_t(\bx)$ from the dataset $\{\bx_i\}_{i=1}^n$, we can run the backward SDE starting from Gaussian distribution with the unknown score function replaced by the score estimator:
\begin{equation}\label{output_of_algorithm}
    \mathrm{d} \widehat \bY_t= \hat s_{T-t}(\widehat \bY_t) \mathrm{d} t+\mathrm{d} \bB_t, \;\;\;\; \widehat \bY_0 \sim \mathcal{N}(0,T\boldsymbol{I}_d).
\end{equation}
To avoid learning the true score function, $\nabla \log p_0(x)$, which can lead to large score estimation errors due to low-density regions in $p_0$, we introduce an early stopping time $t_0$. Consequently, $\widehat \bY_{T-t_0}$ is a sample generated from the model.

We summarize the diffusion model based on the Brownian motion \eqref{BM_process2} in Algorithm~\ref{algorithm1}.
\begin{algorithm}
\caption{Brownian diffusion model}
\label{algorithm1}
\begin{algorithmic}[1]
    \STATE \textbf{Input:} The score estimator $\hat s_t$, early stopping time $t_0$, and a large enough time $T$.
    \STATE Sample $\bz$ from $\mathcal{N}(\boldsymbol{0}, T\boldsymbol{I}_d)$.
    \STATE Solve the backward SDE 
    \begin{equation*}
        \mathrm{d} \bY_t= \hat s_{T-t}(\bY_t) \mathrm{d} t+\mathrm{d} \bB_t 
    \end{equation*} with $\bY_0 = \bz$
    \STATE \textbf{Output:} $\bY_{T-t_0}$, a sample generated from the model.
\end{algorithmic}
\end{algorithm}
Note that discretizing the process for solving reverse SDE is not included here.
But in practice, we need to use a discrete-time approximation for the process of sampling \eqref{output_of_algorithm}. 
This problem has been discussed in many previous works. For example, \citet{chen2022sampling} in their Theorem 2 addressed the $L$-Lipschitz regularity condition on $\nabla \log q_t$ for all $t$, based on the TV metric. Theorem 1 of \citet{chen2023improved} also regarded the $L$ Lipschitz condition, to the Kullback–Leibler (KL) divergence metric, etc. A recent work \citet{benton2023linear} controlled the discretization error more precisely of the reverse SDE, based on tools from stochastic localization initially developed in \citet{el2022sampling}, \citet{montanari2023sampling}.
Without the Lipschitz continuity condition, \citet{oko2023diffusion} provided further insight regarding this issue, while considering other assumptions, including Besov spaces and density lower bounds.


\subsection{Connections with Ornstein–Uhlenbeck Process}
Many works on diffusion models utilize the Ornstein-Uhlenbeck (OU) process rather than the Brownian motion process described in \eqref{BM_process2}. The OU process is a special case of the It\^o stochastic differential equation in \eqref{e3}, with drift function $\bF(\bX_t,t) = -\bX_t $ and diffusion function $g(t) = \sqrt{2}$:
\begin{equation}
    \mathrm{d} \bX_t=-\bX_t \mathrm{d} t+ \sqrt{2} \mathrm{d} \bB_t, \;\;\;\; \bX_0 \sim p_0.
\end{equation}
The solution $(\bX_t)_{t\in[0,T]}$ admits a closed form:
\begin{equation}\label{e11}
    \bX_t = e^{-t}\bX_0 + \sqrt{1-e^{-2t}} \, \bZ \;\;\;\; \bZ  \sim\mathcal{N}(\boldsymbol{0},\boldsymbol I_d)\perp \!\!\! \perp \bX_0\, .
\end{equation}
As stated previously in \eqref{BM_process2}, the solution to the Brownian motion process $(\bY_t)_{t \in [0,T]}$ satisfies:
\begin{equation}\label{e12}
    \bY_t = \bY_0 + \sqrt{t} \bZ \;\;\;\; \bZ  \sim\mathcal{N}(\boldsymbol{0},\boldsymbol I_d)\perp \!\!\! \perp \bY_0.
\end{equation}
If we set $\bX_0 = \bY_0$ and make the time transformation $t = e^{2s} - 1$, then it follows that $e^s \bX_s \overset{\text{d}}{=} \bY_t$.  Therefore, the Ornstein-Uhlenbeck process can be viewed as a time transformation and scaled version of the Brownian motion process. For the analysis in this paper, we will utilize the Brownian motion process given in \eqref{BM_process2}. The results can then be generalized to the Ornstein-Uhlenbeck process due to the equivalence of these two processes.

\section{Main Results}\label{main_result_section}

In this section, we present our main results of score estimation error bound and the optimal convergence rate for diffusion models. 

\subsection{Notation}\label{sec_notation}
Here we outline notations and definitions used throughout the paper (see details in Appendix~\ref{appendix_notation}). $p_0: \mathbb{R}^d \rightarrow \mathbb{R}$ represents the true data distribution's density, with $p_t(x) := p_0 * \phi_t (x)$ (convolution with Gaussian density $\phi_t$ of $\mathcal{N}(0, t\boldsymbol{I}_d)$). 
The term ``polylog'' refers to $\mathrm{polylog}(n):= (\log n) ^C$ for some constant $C$.

The Fourier transform of function $f$ is given by:
\begin{equation*}
    \mathcal{F}[f](\omega)=\int_{\mathbb{R}^d} f(x) e^{-i\langle\omega, x\rangle} d x,
\end{equation*}
where $\langle\omega, x\rangle$ is the inner product of $\omega$ and $x$ in $\mathbb{R}^d$. For multi-index $\alpha$ and a vector $\omega$, $\omega^\alpha := \prod_{i=1}^d w_i^{\alpha_i}$.
We use $TV(p, q)$ to denote the total variation distance between two probability distributions $p$ and $q$. Similarly, $\mathrm{D}_{\mathrm{KL}}(p \| q)$ denotes KL divergence.

\subsection{Assumptions}\label{sec_Assumptions}
In this subsection, we will introduce the assumptions that are imposed on the true data distribution $p_0$ in our analysis. Roughly speaking, the data distribution is assumed to be sub-Gaussian and in the Sobolev class of densities.

\begin{assumption}\label{assumption1}
    The true distribution $p_0$ is $\sigma_0$-Sub-Gaussian.
\end{assumption}

While the sub-Gaussian random variables in $1$-dimension have been well-studied, it is still necessary to clarify the definition of a sub-Gaussian random vector in higher dimensions. A natural approach to define the Sub-Gaussian distribution in higher dimensions is through the projections onto lines.

\begin{definition}[Sub-Gaussian random vectors, \citet{vershynin_2018}]\label{Sub-Gaussian_def}
     A random vector $X \in \mathbb{R}^n$ is said to be Sub-Gaussian if the one-dimensional marginals $\langle X, v \rangle$ are Sub-Gaussian random variables for all $v \in \mathbb{R}^n$. The Sub-Gaussian norm of $X$ is defined as
\begin{equation}
\|X\|_{\psi_2} = \sup_{v \in S^{n-1}} \|\langle X, v \rangle\|_{\psi_2},
\end{equation}
where $S^{n-1}$ is the unit sphere in $\mathbb{R}^{d}$ and
the Orlicz norm of random variable $Y$ with respect to the function $\psi_2(x) = e^{x^2} - 1$ is defined as:
\begin{equation}
\|Y\|_{\psi_2} = \inf\left\{ C > 0 : \mathbb{E}\left[\psi_2\left(\frac{|Y|}{C}\right)\right] \leq 1 \right\}.
\end{equation}
We say a random vector $X \in \mathbb{R}^d$ is $\sigma$-Sub-Gaussian if $\sigma :=\|X\|_{\psi_2} < \infty$.
\end{definition}
\begin{remark}
    The sub-Gaussian assumption is relatively mild. 
    It is well known that a Log-Sobolev inequality (LSI) implies sub-gaussian decay of the tails (see \citet{vershynin_2018}), but the reverse implication is not true (consider for example a distribution supported on the union of two disjoint intervals). 
    The LSI is a common assumption on true data distribution in many prior works, such as  \citet{lee2022convergence,wibisono2022convergence}.
    In the work of \citet{oko2023diffusion}, the true data distribution is assumed to have bounded support and density bounded below, which implies the LSI by the Holley-Stroock principle \cite{holley1987logarithmic}. However, the assumption that the true density is bounded below is unrealistic in practice, as we would expect the true data distribution to have many low-density regions. This can lead to challenges in score estimation, as discussed in \citet{song2019generative}. 
\end{remark}
As we will see in Theorem~\ref{theorem2}, it suffices to derive an estimation error bound for the score function $s_t(x)$ only under Assumption~\ref{assumption1}. 
This in turn implies the minimax optimal rate for the nonparametric class 
under the additional Sobolev class assumption.
\begin{assumption}\label{assumption2}
    The true distribution $p_0$ belongs to the Sobolev class of density \cite{tsybakov} with $\beta \le 2$. Specifically, for $\beta, L \in \mathbb{R}_+$, the Sobolev class of density is defined as followed:
\begin{multline*}
\mathcal{P}_{\mathcal{S}}(\beta, L) = \Big\{ p \in \mathcal{L}^1(\mathbb{R}^d) \, | \, p \ge 0, \, \int p = 1, \\
\forall \alpha \text{ with } \sum_{i=1}^d \alpha_i = \beta, \int |\omega^\alpha|^{2} |\mathcal{F}[p](\omega)|^2 \, d\omega \le (2\pi)^d L^2 \Big\}.
\end{multline*}
\end{assumption}
Note that instead of using the common definition of the Sobolev class, this definition utilizes the Fourier transformation of the density $p$, thereby allowing each $\alpha_i$ to take values not only as integers but also as positive real numbers.

\subsection{Analysis of Score Estimation Error}\label{sec_main_results}
As illustrated in Section~\ref{sec_background}, the performance of the score-based diffusion model of Algorithm~\ref{algorithm1} highly depends on the estimation accuracy of the score function $s_t(x) = \nabla \log p_t(x)$ at each time $t \ge t_0$. The following theorem provides an upper bound for score estimation error. Notably, the score estimation error decays as time increases, implying that adding sufficient noise to the samples makes the score learning procedure easier. 
\begin{theorem}\label{theorem2}
Denote the true score function $s_t(x) := \nabla \log p_t(x)$. Suppose that Assumption~\ref{assumption1} holds. Then there exists a score estimator $\hat s_t(x)$ constructed from i.i.d samples $\{x_i\}_{i=1}^n \sim p_0$ such that for any constants $T_1,T_2>0$, let $t_0 := n^{-T_1}$ and $T :=n^{T_2}$. Then for any $t_0 < t < T$, 
\begin{multline}
    \mathbb{E} \Bigg[\int_x \left\| \hat{s}_t(x) - s_t(x) \right\|^2 p_t(x) \, dx \,\Bigg] \label{main_theorem}\\
    \lesssim \mathrm{polylog}(n) \, n^{-1} t^{-\frac{d+2}{2}} \left(t^{\frac{d}{2}}+ \sigma_0^d\right),
\end{multline}
where $\lesssim$ hide the constant that does not depend on time $t$ and sample size $n$.
\end{theorem}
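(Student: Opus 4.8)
The plan is to analyze the truncated kernel estimator $\hat s_t(x) = \frac{\nabla \hat p_t(x)}{\hat p_t(x)} \mathbbm{1}_{\{\hat p_t(x) > \rho_n\}}$ by splitting the integral $\int \|\hat s_t(x) - s_t(x)\|^2 p_t(x)\,dx$ according to whether $\hat p_t(x) > \rho_n$ or not. I will choose the kernel density estimators $\hat p_t$ and $\nabla \hat p_t$ using a carefully designed kernel (whose existence is deferred to Appendix~\ref{appendix_construct_kernel}) that is exactly unbiased for $p_t$ in a suitable sense, since $p_t = p_0 * \phi_t$ is already smooth (it is a convolution with a Gaussian), so bias can be made negligible — the genuine work is controlling variance. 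The bandwidth will be tuned so that both $\rho_n$ and the pointwise MSE of $\hat p_t$ decay at rate $\widetilde O(n^{-1} t^{-d/2})$ up to the $\sigma_0^d$ factor.

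The first main step is a pointwise MSE bound for the numerator estimator: I would show, via Bernstein's inequality applied to the i.i.d. sum defining $\nabla \hat p_t(x)$, that $\mathrm{MSE}(\nabla \hat p_t(x)) \lesssim \mathrm{polylog}(n)\, n^{-1} t^{-\frac{d+2}{2}} p_t(x)$ on the event $\{\hat p_t(x) > \rho_n\}$ — this is exactly the proportionality to $p_t(x)$ flagged after \eqref{new_score_estimator} and should be an invocation of Proposition~\ref{MSE_main}. The key point is that near a point $x$ where $\hat p_t(x) > \rho_n$ there are enough effective samples (roughly $n\rho_n t^{d/2}$ of them) for concentration to kick in, and the local second moment of the summands scales with $p_t(x)$. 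Given this, on the truncation event I would write $\hat s_t - s_t = \frac{\nabla \hat p_t - \nabla p_t}{\hat p_t} + \nabla p_t\left(\frac{1}{\hat p_t} - \frac{1}{p_t}\right)$, bound $1/\hat p_t \le 1/\rho_n$ crudely where needed but more sharply use that $\hat p_t$ concentrates around $p_t$, so that $p_t(x)\|\hat s_t(x) - s_t(x)\|^2 \lesssim \mathrm{MSE}(\nabla \hat p_t(x))/p_t(x) + \|s_t(x)\|^2 \mathrm{MSE}(\hat p_t(x))/p_t(x)$; integrating the first term against $p_t$ gives $\int \mathrm{MSE}(\nabla\hat p_t(x))/p_t(x)\,dx \lesssim \mathrm{polylog}(n)\,n^{-1}t^{-(d+2)/2}\int dx$ restricted to a region of $p_t$-mass essentially $O(t^{d/2}\vee 1)$ in the relevant units — here I will need the sub-Gaussian tail of $p_t$ to bound the effective support and $\int \|s_t(x)\|^2 p_t(x)\,dx = \mathbb{E}\|\nabla\log p_t\|^2$, which for $p_t = p_0*\mathcal N(0,tI)$ is $O(d/t)$ by a standard heat-semigroup / de Bruijn identity argument.

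The second main step handles the low-density region $\{\hat p_t(x) \le \rho_n\}$, where $\hat s_t = 0$ so the integrand is simply $\|s_t(x)\|^2 p_t(x) \mathbbm{1}_{\{\hat p_t(x)\le\rho_n\}}$. The claim from the introduction is that the sub-Gaussian assumption makes this contribution \emph{exactly} at the minimax level. I would bound $\|s_t(x)\| = \|\nabla p_t(x)\|/p_t(x)$ using explicit control of $\nabla p_t$ and $p_t$ for a sub-Gaussian $p_0$: since $\nabla p_t(x) = \frac{1}{t}\mathbb{E}[(X_0 - x)\phi_t(x - X_0)]/\,$normalization, one gets $\|s_t(x)\| \lesssim (|x| + \sigma_0\sqrt{d})/t$ on the bulk. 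Then $\mathbb{E}\big[\|s_t(x)\|^2 p_t(x)\mathbbm{1}_{\{\hat p_t(x)\le \rho_n\}}\big]$ is controlled by taking expectations over the data: either $p_t(x)$ is itself $\lesssim 2\rho_n$ (in which case a direct integration of $(|x|/t)^2 \rho_n$ over the sub-Gaussian effective support gives the rate), or $p_t(x) > 2\rho_n$ but the empirical $\hat p_t(x)$ undershoots, a large-deviation event whose probability decays faster than any polynomial by Bernstein, killing the contribution. Assembling both regions gives the stated bound.

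I expect the main obstacle to be the low-density region analysis: making "the sub-Gaussian assumption ensures the contribution is exactly at the minimax rate" rigorous requires simultaneously (i) a clean pointwise upper bound on $\|s_t(x)\|$ valid even where $p_t(x)$ is tiny, (ii) a uniform-enough concentration of $\hat p_t(x)$ so the misclassification event $\{p_t > 2\rho_n,\ \hat p_t \le \rho_n\}$ is negligible after multiplying by the possibly-large factor $\|s_t(x)\|^2 p_t(x)$, and (iii) tracking the exact power of $t$ and the $t^{d/2}\vee 1$ (equivalently $t^{d/2} + \sigma_0^d$) transition, which comes from comparing the "Gaussian-noise-dominated" regime $t \gtrsim \sigma_0^2$ with the "data-dominated" regime $t \lesssim \sigma_0^2$. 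A secondary technical point is constructing the kernel in Appendix~\ref{appendix_construct_kernel} so that $\hat p_t$ and $\nabla\hat p_t$ have negligible bias against the already-smooth $p_t$ while keeping the variance bounds above; I would handle this by working in Fourier space and choosing a compactly-supported (in frequency) kernel adapted to the Gaussian factor $e^{-t|\omega|^2/2}$ in $\mathcal F[p_t]$.
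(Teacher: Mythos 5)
Your high-density region analysis is essentially the right shape (it matches the paper's Case~1: MSE bounds for $\hat p_t$, $\nabla\hat p_t$ proportional to a local maximum of $p_t$, Bernstein concentration so that $\hat p_t\asymp p_t$ on the truncation event, then integrate). But your low-density region argument has a genuine gap, and it is exactly the step the paper flags as the crux. You propose the pointwise bound $\|s_t(x)\|\lesssim(\|x\|+\sigma_0\sqrt d)/t$ and then "direct integration of $(\|x\|/t)^2\rho_n$ over the sub-Gaussian effective support." First, that pointwise bound is not established for a general sub-Gaussian $p_0$: writing $s_t(x)=-t^{-1}(x-\mathbb E[X_0\mid X_t=x])$, there is no a priori reason the posterior mean stays within $O(\sigma_0)$ of $x$ when $p_t(x)$ is tiny (think of $p_0$ a mixture of well-separated components — between them $\|s_t\|$ can be large while $\|x\|$ is moderate), and the sub-Gaussian hypothesis only gives a tail upper bound on $p_t$, not a lower bound, so you cannot even conclude $\{p_t\le\rho_n\}\subset\{\|x\|\ge R\}$. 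Second, even where the bound holds, integrating it the way you propose is quantitatively too lossy: with $R\approx\sigma_t\sqrt{\log n}$ you get roughly $R^{d+2}\rho_n/t^2\asymp n^{-1}t^{-(d+4)/2}\sigma_t^{d+2}\,\mathrm{polylog}(n)$, which exceeds the target $n^{-1}t^{-(d+2)/2}\sigma_t^{d}$ by a factor of $\sigma_t^2/t\approx\sigma_0^2/t=\sigma_0^2 n^{T_1}$, destroying the rate for polynomially small~$t$.

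The paper's actual handling of this region is different in kind, not just in bookkeeping. It applies H\"older's inequality to write $\int_{G_2}\|s_t\|^2 p_t\,dx\le\bigl(\mathbb E\|s_t(X_t)\|^m\bigr)^{2/m}\,\mathbb P\bigl[p_t(X_t)\le\rho_n\log^{-c}n\bigr]^{1-2/m}$, bounds the moment using the Bobkov convexity/data-processing inequality for $I_m$ (giving $\mathbb E\|s_t(X_t)\|^m\lesssim t^{-m/2}$, far sharper than any pointwise estimate because it exploits cancellation — the bad points where $\|s_t\|$ is huge carry exponentially small $p_t$-mass), and bounds the probability by Markov's inequality together with a new R\'enyi-entropy bound for sub-Gaussian vectors (Lemma~\ref{Renyi_bound}), yielding $\mathbb P\lesssim\rho_n^{1-\alpha}\sigma_t^{d(1-\alpha)}\alpha^{-d/2}$. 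Taking $m\asymp 1/\varepsilon$ and $\alpha=\varepsilon=\log\log n/\log n$ recovers exactly $t^{-1}\rho_n\sigma_t^d=n^{-1}t^{-(d+2)/2}(t^{d/2}+\sigma_0^d)$ up to polylog. All three ingredients (H\"older split, Bobkov-type moment bound, R\'enyi entropy tail bound) are absent from your proposal, and some such machinery is necessary — the pointwise route cannot be patched. A secondary issue: you suggest a kernel compactly supported in frequency, but the paper's Legendre-polynomial kernel of order $\ell=\log n$ is compactly supported \emph{in space}; this is used essentially, since the MSE bounds in Proposition~\ref{MSE_main} are stated in terms of the local maximum $p_t^*(x)=\sup_{\|\lambda\|_\infty<h}p_t(x+\lambda)$, which only controls anything if the kernel sees a bounded spatial neighborhood, and one then still needs Lemma~\ref{pstar_property} to trade $p_t^*$ for $p_t^{1-\varepsilon}$.
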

The formal proof can be found in Appendix~\ref{appendix_theorem2}.

\begin{remark}
    From this theorem, we can see that the use of the early stopping technique is crucial for ensuring that the analysis proceeds correctly. If we set \(t_0 = 0\), the upper bound on the right-hand side in Theorem~\ref{theorem2} would blow up, resulting in a very large score estimation error. This is because, without early stopping, the estimation error for the score function \( \nabla \log p_0(x) \) at \(t = 0\) would be excessively high due to the low-density regions in \(p_0\). We can address this issue by introducing an early stopping time \(t_0\). Properly setting the early stopping time \(t_0\) is essential to achieve the desired minimax rate, as will be shown in Corollary~\ref{main_corollary} and Theorem~\ref{main_theorem2}.
\end{remark}

After integrating over time in Theorem~\ref{theorem2}, we will get the following result, and the proof is provided in Appendix~\ref{appendix_main_corollary}.
\begin{corollary}\label{main_corollary}
Suppose that $p_0$ satisfies Assumption~\ref{assumption1}, and $T_1,T_2>0$ is arbitrary.  
\begin{enumerate}
    \item For $t_0 = n^{-T_1}$ and $T = n^{T_2}$, we have
    \begin{equation}
\begin{split}
    \int_{t=t_0}^T \mathbb{E} \bigg[ & \int_x\left\|\hat{s}_t(x)-s_t(x)\right\|^2 p_t(x) \, dx  \bigg] \, dt \\
    & \lesssim \operatorname{polylog}(n) \,n^{-1}t_0^{-d/2}.
\end{split}
\end{equation}
    \item In particular, for any $\beta>0$, let $t_0 = n^{-\frac{2}{2\beta+d}}$ and $T = n^{T_2}$, we have
    \begin{equation}
\begin{split}
    \int_{t=t_0}^T \mathbb{E} \bigg[ & \int_x\left\|\hat{s}_t(x)-s_t(x)\right\|^2 p_t(x) \, dx  \bigg] \, dt \\
    & \lesssim \operatorname{polylog}(n) \,n^{-\frac{2 \beta}{2 \beta+d}}.
    \label{e18}
\end{split}
\end{equation}

\end{enumerate}
\end{corollary}

In the first part of Corollary~\ref{main_corollary}, The upper bound for the cumulative score error is given by $n^{-1} t_0^{-d/2}$, where $t_0$ is an early stopping time that can be freely chosen. 
Notably,  Corollary~\ref{main_corollary} does not impose any assumptions on $p_0$ about nonparametric class and requires only Sub-Gaussianity. However, in the further analysis in Theorem~\ref{main_theorem2} of the TV error of the diffusion model, if we further assume that $p_0$ 
is in the Sobolev class of density with smoothness parameter $\beta$ (Assumption~\ref{assumption2}), setting $t_0 = n^{-\frac{2}{2\beta+d}}$ can adjust the error bound to $n^{-\frac{2\beta}{2\beta+d}}$.
Remark that if we choose $t_0\le n^{-\frac{2}{2\beta+d}}$ (as in \cite{oko2023diffusion}),
then the optimal rate may no longer be achieved in \eqref{e18} without further assumptions such as the density lower bound.

\subsection{Analysis of Estimation Error for Diffusion Model}\label{sec_TV_results}
In this section, we analyze the estimation error of the diffusion model in Algorithm~\ref{algorithm1}, based on the score estimation error results from Section~\ref{sec_main_results}. The following theorem concerns the bound of TV distance between the true data distribution $p_0$ and the samples generated by Algorithm~\ref{algorithm1}.
\begin{theorem}\label{main_theorem2}
    Suppose that Assumption~\ref{assumption1} and Assumption~\ref{assumption2} holds. Let $\widehat \bY_{T-t_0}$ be the output of the diffusion model in Algorithm~\ref{algorithm1} at time $T-t_0$. Let $t_0 = n^{-\frac{2}{2\beta+d}}$ and $T = n^{\frac{2\beta}{2\beta+d}}$, then there exists a score estimator $\hat s_t$ such that
    \begin{equation}
    \mathbb{E}\left[\mathrm{TV}\left(\bX_0, \widehat \bY_{T-t_0}\right)\right] \lesssim \mathrm{polylog}(n) \, n^{-\frac{\beta}{2\beta+d}}.
    \end{equation}
\end{theorem}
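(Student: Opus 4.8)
The plan is to decompose the total variation error between $\bX_0 \sim p_0$ and the algorithm's output $\hat\bY_{T-t_0}$ using the triangle inequality into three pieces: (i) $\mathrm{TV}(p_0, p_{t_0})$, the cost of early stopping; (ii) the error between $p_{t_0}$ and the law of the idealized reverse SDE run with the \emph{true} score from time $T$ down to $t_0$, which measures how close $p_T$ is to $\mathcal{N}(0,T\boldsymbol{I}_d)$; and (iii) the error between the idealized reverse process and the process $\hat\bY$ driven by the estimated score $\hat s_t$, which is controlled by the cumulative score estimation error via Girsanov's theorem and Pinsker's inequality. (Since Algorithm~\ref{algorithm1} is stated in continuous time, I will ignore discretization error, as the paper does; if desired one adds a fourth term handled by standard results such as \citet{benton2023linear}.)

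For term (i), the early stopping bias: since $p_{t_0} = p_0 * \phi_{t_0}$ and $p_0 \in \mathcal{P}_{\mathcal{S}}(\beta,L)$ with $\beta\le 2$, a Fourier/Plancherel argument gives $\mathrm{TV}(p_0,p_{t_0}) \lesssim \|p_0 - p_{t_0}\|_{?}$ — more carefully, one bounds the $L^1$ distance (or uses the standard bound $\mathrm{TV} \le \frac12\|p_0-p_{t_0}\|_1$ together with a smoothness-based estimate) by noting $\mathcal{F}[p_{t_0}](\omega) = \mathcal{F}[p_0](\omega)e^{-t_0\|\omega\|^2/2}$, so $|\mathcal{F}[p_0 - p_{t_0}](\omega)| = |\mathcal{F}[p_0](\omega)|(1 - e^{-t_0\|\omega\|^2/2}) \lesssim |\mathcal{F}[p_0](\omega)|(t_0\|\omega\|^2)^{\beta/2}$. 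Combining with the Sobolev bound $\int \|\omega\|^{2\beta}|\mathcal{F}[p_0](\omega)|^2 d\omega \lesssim L^2$ yields $\mathrm{TV}(p_0,p_{t_0}) \lesssim t_0^{\beta/2} = n^{-\frac{\beta}{2\beta+d}}$, which is exactly the target rate. For term (ii), $p_T$ converges to Gaussian: because $p_0$ is $\sigma_0$-sub-Gaussian and we inflate by variance $T = n^{\frac{2\beta}{2\beta+d}}$ (polynomially large in $n$), the KL divergence (or TV) between $p_T$ and $\mathcal{N}(0,T\boldsymbol{I}_d)$ decays polynomially in $1/T$, hence is negligible compared to the target rate; this is a routine mixing/convergence estimate for the Brownian (equivalently OU after time change) forward process.

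For term (iii), the score-error contribution: by Girsanov's theorem applied to the two reverse SDEs on $[0, T-t_0]$ (one with $\nabla\log p_{T-t}$, one with $\hat s_{T-t}$, both started from $\mathcal{N}(0,T\boldsymbol{I}_d)$), the KL divergence between the path measures is $\frac12\int_{t_0}^{T}\mathbb{E}_{x\sim p_t}\|\hat s_t(x) - s_t(x)\|^2\,dt$, and then data-processing plus Pinsker gives $\mathbb{E}\,\mathrm{TV} \le \sqrt{\tfrac12 \mathbb{E}[\text{cumulative SE}]}$. By Corollary~\ref{main_corollary}, part 2, with $t_0 = n^{-\frac{2}{2\beta+d}}$ the cumulative score error is $\lesssim \mathrm{polylog}(n)\,n^{-\frac{2\beta}{2\beta+d}}$, so the square root is $\lesssim \mathrm{polylog}(n)\,n^{-\frac{\beta}{2\beta+d}}$, again matching the target. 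Summing the three bounds and taking expectation over the training sample (the only randomness in $\hat s$) gives the claimed $\mathbb{E}[\mathrm{TV}(\bX_0,\hat\bY_{T-t_0})] \lesssim \mathrm{polylog}(n)\,n^{-\frac{\beta}{2\beta+d}}$.

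\textbf{Main obstacle.} The delicate point is the rigorous application of Girsanov's theorem for term (iii): one must verify Novikov-type integrability conditions for the truncated estimator $\hat s_t$ defined in \eqref{new_score_estimator}, which is only piecewise defined and not globally Lipschitz, and one must handle the fact that the score blows up as $t\to t_0$ with $t_0$ polynomially small. This is presumably where the truncation level $\rho_n$ and the polynomial (rather than exponential) smallness of $t_0$ are used to keep all constants under control; it is also the step where one must be careful that the expectation over the data set commutes appropriately with the path-space KL bound (using Fubini and the fact that Corollary~\ref{main_corollary} already controls the \emph{expected} cumulative score error). The early-stopping and Gaussian-convergence terms, by contrast, are comparatively routine Fourier-analytic and sub-Gaussian tail estimates.
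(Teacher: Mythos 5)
Your overall decomposition into an early-stopping term, an initialization (Gaussian-mixing) term, and a Girsanov/score-error term is exactly the one the paper uses, and your handling of the first two pieces is correct: the Fourier/Plancherel bound for $\mathrm{TV}(p_0,p_{t_0})\lesssim t_0^{\beta/2}$ (Theorem~\ref{theorem3}) and the decay $\mathrm{D}_{\mathrm{KL}}(\bX_T\,\|\,\mathcal{N}(0,T\boldsymbol{I}_d))\lesssim 1/T$ both match what the paper does.

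There is, however, a genuine bug in how you set up the Girsanov step. You compare two reverse SDEs \emph{both started from} $\mathcal{N}(0,T\boldsymbol{I}_d)$: one driven by the true drift $\nabla\log p_{T-t}$ and one driven by $\hat s_{T-t}$, and you assert that the resulting path-measure KL is $\tfrac12\int_{t_0}^{T}\mathbb{E}_{x\sim p_t}\|\hat s_t(x)-s_t(x)\|^2\,dt$. But when the reference process is initialized at $\mathcal{N}(0,T\boldsymbol{I}_d)$ rather than at $p_T$, the law of the true-drift reverse process at time $T-t$ is \emph{not} $p_t$; Girsanov's theorem gives an integral of $\mathbb{E}_{x\sim \tilde q_t}\|\hat s_t(x)-s_t(x)\|^2$ against that other law $\tilde q_t$, which Corollary~\ref{main_corollary} does not control. (One could try to transfer $\mathbb{E}_{\tilde q_t}$ to $\mathbb{E}_{p_t}$ using $\mathrm{TV}(\tilde q_t,p_t)\lesssim 1/\sqrt T$, but the integrand is not uniformly bounded — the true score $\|s_t(x)\|^2$ grows in $x$ — so this is not immediate and would require extra work.) The paper avoids this by choosing a different intermediate process: $\bar\bY$ is the \emph{estimated}-drift reverse SDE started from $p_T$. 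Then $\bY$ and $\bar\bY$ share the initialization $p_T$, so the Girsanov integral is over the genuine $p_t$ (and Corollary~\ref{main_corollary} applies directly), while the remaining gap $\mathrm{TV}(\bar\bY_{T-t_0},\hat\bY_{T-t_0})$ is bounded by $\mathrm{TV}(p_T,\mathcal{N}(0,T\boldsymbol{I}_d))$ via data processing because the two processes have identical drift. Equivalently, one can apply the KL chain rule directly to $\bY$ (true drift, $p_T$ init) versus $\hat\bY$ (estimated drift, Gaussian init), which yields the initialization KL plus the $p_t$-weighted score-error integral in one step. Either fix closes the gap; as written, your term (iii) is not justified.

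Your "main obstacle" discussion is on the right track: the Novikov condition is indeed a concern, and the paper sidesteps it via the second part of Lemma~\ref{Girsanov_lemma}, which gives the KL inequality whenever the drift-difference integral is finite even without Novikov. The Fubini point is handled by taking the expectation over the training data outside the $\mathrm{TV}$/$\mathrm{D}_{\mathrm{KL}}$ bound before invoking Corollary~\ref{main_corollary}.
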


\begin{remark}
    This sampling error rate coincides with the classical minimax rate in nonparametric density estimation (up to logarithmic factors) 
\citep{stone1980optimal,stone1982optimal,tsybakov}, hence must be optimal. The sampling distribution $\widehat \bY_{T-t_0}$ is considered minimax optimal because the sampling error is lower-bounded by the estimation error. To see this, suppose there exists a sampler $\widehat \bZ$ that produces samples with a smaller TV distance $\mathbb{E}\left[\mathrm{TV}\left(\bX_0, \widehat \bZ\right)\right]$ than the minimax rate. We could then generate many independent samples from $\widehat \bZ$ to construct a density estimator for $p_0$ with an error smaller than the minimax rate, leading to a contradiction. Therefore, the sampling error cannot be smaller than the density estimation error.
\end{remark}

\begin{remark}
    In the proof of Theorem~\ref{main_theorem2} (see Section~\ref{subsec_proof_thm2}), we only use the Sobolev class assumption (Assumption~\ref{assumption2}) when controlling the early stopping error $\mathrm{TV}\left(\bX_0, \bX_{t_0}\right)$. Therefore, under only the sub-Gaussian assumption, we can generalize this theorem to other nonparametric classes, provided we can control the early stopping error: $\mathrm{TV}\left(\bX_0, \bX_{t_0}\right)$. Using part 1 of Corollary~\ref{main_corollary} to control the error of score estimation, the sampling error of the diffusion model for a general sub-Gaussian $p_0$ can be bounded as follows:
    \begin{multline*}
        \mathbb{E}\left[\mathrm{TV}\left(\bX_0, \widehat \bY_{T-t_0}\right)\right] \\
        \lesssim \inf_{t_0} \left(\mathrm{TV}\left(\bX_0, \bX_{t_0}\right) + \mathrm{polylog}(n) \, n^{-1/2}t_0^{-d/4}\right).
    \end{multline*}
    In Theorem~\ref{main_theorem2}, under the Sobolev class assumption, the desired rate is obtained by $t_0 =n^{- \frac{2}{2\beta+d}}$.
\end{remark}

\section{Proof Overview}\label{sec_proof_overview}
We now provide the proof sketches for Theorem~\ref{theorem2} and Theorem~\ref{main_theorem2}. Formal proofs are provided in Appendix~\ref{appendix_theorem2}  and Appendix~\ref{sec_proof_theorem3}, respectively.

\subsection{Proof sketch of Theorem~\ref{theorem2}}
\noindent\textbf{Construction of the kernel score estimator.}
We first construct the score estimator for proving Theorem~\ref{theorem2}. The score function associated with perturbed data distribution $p_t$ is $s_t(x) =\nabla \log p_t(x) =\frac{\nabla p_t(x)}{p_t(x)}$. It is natural to first construct a density estimator $\hat p_t(x)$ for $p_t(x)$, then use the plug-in estimator $\hat s_t(x) = \frac{\nabla \hat p_t(x)}{\hat p_t(x)}$ to estimate the score function. Here, we will use the kernel density estimator (KDE) to estimate $p_t(x)$. Specifically, given samples $X_i \stackrel{\text{i.i.d.}}{\sim} p_0$ and $Z_{i,t} \stackrel{\text{i.i.d.}}{\sim} \mathcal{N}\left(0,tI_d\right)$, by definition of the forward process in \eqref{BM_process}, $X_i^t = X_i + Z_{i,t}\stackrel{\text{i.i.d.}}{\sim} p_t$. Given a kernel $K_d$ with order $\ell \ge 1$ (Definition 1.3 of \citet{tsybakov}, also see Definition~\ref{kernel_order_def}) and with bounded support $[-1,1]^d$ (we can construct a kernel satisfying these properties using the Legendre polynomials, see Lemma~\ref{Bound_on_kernel} for details), the KDE for $p_t(x)$ is defined as follows:
\begin{equation*}
    \hat{p}_t(x) = \frac{1}{nh^{d}}\sum_{i=1}^n K_d\left(\frac{x-X_i^t}{h}\right),
\end{equation*}
where $h$ is the bandwidth to be specified later.
Naively estimating the score by $\frac{\nabla \hat{p}_t(x)}{\hat{p}_t(x)}$ would lead to poor performance for $x$ such that $\hat p_t(x)$ is small. 
To avoid this issue, we introduce the truncated version of the kernel score estimator: for any time $t$, let the threshold be $\rho_n:=\frac{1}{n t^{d / 2}}$. The truncated kernel score estimator is defined as follows:
\begin{equation}\label{score_estimator_eq}
   \hat{s}_t(x):=\frac{\nabla \hat{p}_t(x)}{\hat{p}_t(x)} \mathbbm{1}_{\left\{x\colon \hat{p}_t(x) \geq \rho_n\right\}}.
\end{equation}

\noindent\textbf{Mean Squared Error Analysis}. The pointwise MSE for $\hat p_t(x)$ and $\nabla \hat p_t(x)$ are defined as follows:
\begin{align*}
    \mathrm{MSE}(\hat p_t(x)) &:= \mathbb{E}\left[| \hat{p}_t(x) -  p_t(x) |^2\right],\\
    \mathrm{MSE}(\nabla\hat{p}_t(x))&:= \mathbb{E} \left[\|\nabla \hat p_t(x) - \nabla p_t(x)\|^2\right].
\end{align*}
We follow the common framework of bias-variance trade-off \citet{tsybakov} with slight modifications, which allows us to obtain a sharper bound that depends on the location $x$. The result of MSE bounds for $\hat{p}_t$ and $\nabla \hat{p}_t$ is summarized in the following proposition, the proof can be found in Appendix~\ref{appendix_MSE}.
\begin{proposition}[See also Proposition~\ref{MSEphat}, \ref{MSEphatprime}]\label{MSE_main}
    If we take the bandwidth $h = C\,\sqrt{\frac{t}{\log n}}$ \,for some constant $C$, and the order of kernel $\ell = \log n$, then
    \begin{align*}
        \mathrm{MSE}(\hat p_t(x)) 
        &\lesssim  \;\mathrm{polylog}(n)\left(\tfrac{p_t^*(x)}{n t^{\frac{d}{2}}}  + (\log n)^{-\log n }\right), \\
        \mathrm{MSE}(\nabla\hat{p}_t(x)) &\lesssim  \;\mathrm{polylog}(n)\left(\tfrac{p_t^*(x)}{n t^\frac{d+2}{2}} + (\log n)^{-\log n } t^{-1}\right).
    \end{align*}
    where $p_t^*(x):=\sup_{\|\lambda\|_{\infty} < h}p_t(x+\lambda)$ is the local maximum of the density function $p_t$.
\end{proposition}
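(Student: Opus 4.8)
The plan is to treat $\hat p_t$ and $\nabla\hat p_t$ as ordinary kernel estimators of $p_t$ and $\nabla p_t$ and to run a bias--variance decomposition that is localized at $x$ through the compact support $[-1,1]^d$ of $K_d$. Since $X_1^t,\dots,X_n^t$ are i.i.d.\ with density $p_t$, I would write $\mathrm{MSE}(\hat p_t(x)) = (\mathbb E[\hat p_t(x)] - p_t(x))^2 + \mathrm{Var}(\hat p_t(x))$, and the analogous coordinatewise decomposition for $\nabla\hat p_t(x)$, and bound the two pieces separately.

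For the variance, independence together with the change of variables $u=(x-y)/h$ gives
\[
\mathrm{Var}(\hat p_t(x)) \le \frac{1}{nh^{2d}}\,\mathbb E\big[K_d((x-X_1^t)/h)^2\big] = \frac{1}{nh^{d}}\int_{[-1,1]^d}K_d(u)^2\,p_t(x-hu)\,du \le \frac{\|K_d\|_2^2}{nh^{d}}\,p_t^*(x),
\]
where the last step uses $p_t(x-hu)\le p_t^*(x)$ for $\|u\|_\infty\le1$; the same computation with $\partial_j K_d$ in place of $K_d$, summed over $j$, yields $\mathrm{Var}(\nabla\hat p_t(x))\le \|\nabla K_d\|_2^2\,p_t^*(x)/(nh^{d+2})$. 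Substituting $h=C\sqrt{t/\log n}$ turns $h^{-d}$ into a constant times $(\log n)^{d/2}t^{-d/2}$ and $h^{-d-2}$ into a constant times $(\log n)^{(d+2)/2}t^{-(d+2)/2}$, while the explicit Legendre-polynomial construction of $K_d$ (Lemma~\ref{Bound_on_kernel}) makes $\|K_d\|_2^2$ and $\|\nabla K_d\|_2^2$ grow only polynomially in the kernel order; with order $\ell=\log n$ this produces the $\mathrm{polylog}(n)\,p_t^*(x)/(nt^{d/2})$ and $\mathrm{polylog}(n)\,p_t^*(x)/(nt^{(d+2)/2})$ terms.

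For the bias I would first note that $\mathbb E[\hat p_t(x)]=\int K_d(u)p_t(x-hu)\,du$, and (integrating by parts once in $u_j$, the boundary term vanishing by compact support) $\mathbb E[\partial_j\hat p_t(x)]=\int K_d(u)\partial_j p_t(x-hu)\,du$, so both expectations are the order-$\ell$ kernel smoothing of $p_t$, resp.\ of $\partial_j p_t$. Taylor-expanding around $x$ and invoking the vanishing moments of the order-$\ell$ kernel, only the order-$\ell$ remainder survives, giving
\[
\big|\mathbb E[\hat p_t(x)] - p_t(x)\big| \lesssim h^{\ell}\,\frac{d^\ell}{\ell!}\,\|K_d\|_1\,\max_{|\alpha|=\ell}\ \sup_{\|\lambda\|_\infty\le h}\big|\partial^\alpha p_t(x+\lambda)\big|,
\]
and the same estimate with $\max_{|\alpha|=\ell+1}$ for $\nabla\hat p_t$. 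The decisive input is a bound on the high-order derivatives of $p_t=p_0*\phi_t$: since $p_0$ is a probability density, $\|\partial^\alpha p_t\|_\infty\le\|\partial^\alpha\phi_t\|_\infty$, and writing $\partial^\alpha\phi_t$ via Hermite polynomials together with Cram\'er's inequality yields $\|\partial^\alpha\phi_t\|_\infty\lesssim t^{-(d+|\alpha|)/2}\,c^{|\alpha|}\sqrt{|\alpha|!}$ for an absolute constant $c$. With $|\alpha|=\ell$ and $h=C\sqrt{t/\log n}$ the $t$-powers cancel, $h^{\ell}t^{-\ell/2}=(C/\sqrt{\log n})^{\ell}$, so the bias is at most $t^{-d/2}\,\|K_d\|_1\,(cdC)^{\ell}(\log n)^{-\ell/2}/\sqrt{\ell!}$; with $\ell=\log n$, Stirling turns this into $\mathrm{poly}(n)\,t^{-d/2}(\log n)^{-\log n}$, hence the squared bias is $\mathrm{poly}(n)\,t^{-d}(\log n)^{-2\log n}$ (for $\nabla\hat p_t(x)$ an extra factor $(\log n)/t$ appears after squaring, since the relevant derivatives have order $\ell+1$ there).

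Finally I would combine the two pieces and note that, because $t$ is bounded below by a fixed negative power of $n$ (as in Theorem~\ref{theorem2}) and $(\log n)^{\log n}$ grows faster than any polynomial in $n$, one has $\mathrm{poly}(n)\,t^{-d}(\log n)^{-2\log n}\le (\log n)^{-\log n}$ for $n$ large; this absorbs the polynomial prefactors in the squared-bias term and yields $\mathrm{MSE}(\hat p_t(x))\lesssim \mathrm{polylog}(n)\big(p_t^*(x)/(nt^{d/2})+(\log n)^{-\log n}\big)$ together with $\mathrm{MSE}(\nabla\hat p_t(x))\lesssim \mathrm{polylog}(n)\big(p_t^*(x)/(nt^{(d+2)/2})+(\log n)^{-\log n}t^{-1}\big)$. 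I expect the bias step to be the main obstacle: it requires genuinely controlling order-$\log n$ derivatives of $p_t$ and checking that the factorial growth of the Hermite coefficients is beaten both by the $1/\ell!$ of the Taylor remainder and, crucially, by the bandwidth scaling $h\propto\sqrt{t/\log n}$, which is precisely the pairing of $(h,\ell)$ that converts $h^{\ell}\|\partial^\alpha p_t\|_\infty$ into the quasi-polynomially small quantity $(\log n)^{-\log n}$. A secondary technical point is verifying, through the Legendre construction of Lemma~\ref{Bound_on_kernel}, that $\|K_d\|_1$, $\|K_d\|_2$ and $\|\nabla K_d\|_2$ all grow only polynomially in $\ell$.
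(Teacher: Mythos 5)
Your proof is essentially the paper's own argument: a pointwise bias--variance split, the variance controlled by the compact support of $K_d$ and the localized bound $p_t(x-hu)\le p_t^*(x)$, and the bias controlled by an order-$\ell$ Taylor expansion whose surviving remainder is estimated via Hermite-polynomial (Cram\'er/Indritz-type) bounds on high-order Gaussian derivatives, with the pairing $\ell=\log n$, $h\asymp\sqrt{t/\log n}$ converting $h^\ell\,\|D^\alpha p_t\|_\infty$ into the quasi-polynomially small $(\log n)^{-\log n}$.

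There are two small deviations worth flagging. First, for $\nabla\hat p_t$ you integrate by parts once to rewrite $\mathbb E[\partial_j\hat p_t]=\int K_d(u)\partial_j p_t(x-hu)\,du$ and land on order-$(\ell+1)$ derivatives of $p_t$ with prefactor $h^\ell$; the paper keeps $\nabla K_d$, Taylor-expands $p_t$ itself, and lands on order-$\ell$ derivatives with prefactor $h^{\ell-1}$ (its Proposition~\ref{MSEphatprime}). These agree up to a polylog factor, so either route is fine. Second, and more substantively, you bound $\|D^\alpha p_t\|_\infty\le\|D^\alpha\phi_t\|_\infty\lesssim t^{-(d+|\alpha|)/2}c^{|\alpha|}\sqrt{|\alpha|!}$, which introduces an extra $t^{-d/2}$ that you then absorb into $(\log n)^{-\log n}$ using $t\gtrsim n^{-T_1}$. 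The paper instead uses (Lemma~\ref{bound_on_pt}) $\|D^\alpha p_t\|_\infty\le\|p_0\|_\infty\,\|D^\alpha\phi_t\|_{L^1}\lesssim\|p_0\|_\infty\,t^{-|\alpha|/2}\ell^{\ell/2+1/4}$, which carries no $t^{-d/2}$ factor and hence proves the proposition without invoking any polynomial bound on $t$, at the cost of the (implicit) hypothesis $\|p_0\|_\infty<\infty$. As written, your version proves the statement only under the side condition $t\ge n^{-T_1}$; that is harmless where the proposition is actually used (Theorem~\ref{theorem2}), but if you want the proposition verbatim you should switch to the paper's $L^1$-of-$D^\alpha\phi_t$ route.
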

\begin{remark}
    The bound for MSE depends on the point $x$, dimension $d$, which allows us to obtain a sharper bound in the low-density area of $p_t$. 
\end{remark}

With these results, we can proceed to derive the upper bound \eqref{main_theorem} in Theorem~\ref{theorem2}. The main challenge is to control the error in low-density areas. To address this, we consider the following three cases.
In the first case where $p_t(x) > \rho_n \log^c {n}$ for some constant $c$, the density is relatively high. Using the concentration property of the KDE $\hat p_t(x)$ (see Lemma~\ref{high_probability_bound}), we find that with high probability, $p_t(x)$ and $\hat p_t(x)$ will approximately have the same order. As a result, $\hat p_t(x) > \rho_n$, which implies $\hat s_t(x) = \frac{\nabla \hat p_t(x)}{\hat p_t(x)}$. Then by triangle inequality (See lemma~\ref{score error upper bound}),
\begin{align*}
    &\|\hat{s}_t(x) - s_t(x)\|^2 = \left\|\frac{\nabla \hat{p}_t(x)}{\hat{p}_t(x)} - \frac{\nabla p_t(x)}{p_t(x)}\right\|^2 \\
    &\lesssim \frac{\|\nabla\hat{p}_t(x)-\nabla p_t(x)\|^2 + \|s_t(x)\|^2 |\hat p_t(x)-p_t(x)|^2}{(\hat p_t(x))^2}
\end{align*}
Hence, the score error in \eqref{main_theorem} can be controlled using the MSE results for $\hat p_t(x)$ and $\nabla \hat p_t(x)$ from Proposition~\ref{MSE_main}. 

In case 2, we consider the lower density areas where $p_t(x) < \rho_n \log^{-c} {n}$. With high probability, $\hat p_t(x) < \rho_n$ and therefore, $\hat s_t(x) = 0$. The score error in \eqref{main_theorem} is now
\begin{equation}
    \int_{p_t(x) < \rho_n \log^{-c} {n}} \| s_t(x)\|^2 p_t(x) dx. \label{eq_case2}
\end{equation}
This represents the expectation of the score function's second moment, restricted to the lower-density area. For any integer $m\ge 1$, H\"older's inequality yields,
\begin{align*}
    \eqref{eq_case2} \le \left(\mathbb{E}\left[\|s_t(X_t)\|^m\right]\right)^{2/m} \left(\mathbb{P}[p(X_t)\le \rho_n\log^{-c}{n}]\right)^{1-\frac{2}{m}},
\end{align*}
where $X_t \sim p_t$. The moment of score can be bounded using the result in \citet{bobkov2019moments} (See Lemma~\ref{s_x_bound}), i.e., $\mathbb{E}\left[\|s_t(X_t)\|^m\right] \lesssim t^{-\frac{m}{2}}$. By Markov's inequality, the second term can be upper bounded in terms of the R\'enyi Entropy \cite{renyi1961measures} of $X_t$. Combining this with the R\'enyi entropy bound for the sub-Gaussian distribution we derive in Appendix~\ref{sec_renyi}, we obtain the desired result for case 2.

Finally, in case 3 where $ \rho_n \log^{-c} {n} < p_t(x) < \rho_n \log^c {n} $, we must consider both $\hat{s}_t(x) \neq 0$ and $\hat{s}_t(x) = 0$, since neither is guaranteed with high probability. However, since $p_t$ has both lower bound and upper bound, we can thereby apply the methods from both Case 1 and Case 2 to bound this term. Combining three cases we can prove the result in Theorem~\ref{theorem2}.

\subsection{Proof sketch of Theorem~\ref{main_theorem2}} \label{subsec_proof_thm2}

Here we sketch the proof of Theorem~\ref{main_theorem2}. Full details of the proof of Theorem~\ref{main_theorem2} can be found in Appendix~\ref{sec_proof_theorem3}. 

We first recall some notations
defined in Section~\ref{sec_background}:
\begin{itemize}
    \item The forward process in \eqref{BM_process} is denoted by$\left(\bX_t\right)_{t\in [0,T]}$ and $\bX_t \sim p_t$;
    \item The backward process \eqref{backward_sde_bm} is denoted by $\left(\bY_t\right)_{t\in [0,T]}$ and by definition, $\bY_t \sim p_{T-t}$;
    \item The process of Algorithm~\ref{algorithm1} is denoted by $(\widehat \bY_t)_{t\in [0,T]}$.
\end{itemize}
Let $(\bar \bY_t)_{t\in [0,T]}$ be $(\widehat \bY_t)_{t\in [0,T]}$ replacing $\widehat \bY_0 \sim \mathcal{N}(0,T\boldsymbol{I}_d)$ by $\bar \bY_0 \sim p_T$, i.e., $(\bar \bY_t)_{t\in [0,T]}$ satisfies:
\begin{equation*}
    \mathrm{d} \bar \bY_t= \hat s_{T-t}(\bar \bY_t) \mathrm{d} t+\mathrm{d} \bB_t, \;\;\;\; \bar \bY_0 \sim p_T.
\end{equation*}
Using triangle inequality of TV distance, 
\begin{align*}
    \mathbb{E}&[\mathrm{TV}(\bX_0, \widehat \bY_{T-t_0})] \leq \mathrm{TV}(\bX_0, \bX_{t_0}) 
    \\  
    &+ \mathbb{E}[\mathrm{TV}(\bY_{T-t_0}, \bar \bY_{T-t_0})] 
    + \mathrm{TV}( \bX_{T} , \mathcal{N}(0,T\boldsymbol{I}_d)).
\end{align*}
Therefore, it suffices to control three errors: the error from early stopping: $\mathrm{TV}\left(\bX_0, \bX_{t_0}\right)$, the error from score estimation: $\mathbb{E}\left[\mathrm{TV}\left(\bY_{T-t_0}, \bar \bY_{T-t_0}\right)\right]$, and the error from the initialization of backward process: $\mathrm{TV}\left( \bX_{T} , \mathcal{N}(0,T\boldsymbol{I}_d)\right)$.

\noindent\textbf{Controlling the error from early stopping}. Using standard Fourier analysis and tail bound for Sub-Gaussian distribution, the error $\mathrm{TV}\left(\bX_0, \bX_{t_0}\right)$ can be controlled by the following theorem.
\begin{theorem}[See also Theorem~\ref{theorem3}]
    Under Assumption~\ref{assumption1} and Assumption~\ref{assumption2}, let $t_0 = n^{-\frac{2}{2\beta + d}}$ and $p_{t_0} = p_0 * \Phi_{t_0}$, where $\Phi_t$ is the density of Gaussian distribution $\mathcal{N}\left(0, t \boldsymbol{I}_d\right)$ and $*$ denote the convolution operator, then there exists a constant $C$ that depends on $p_0$, $\beta$, $L$ and dimension $d$ such that
    \begin{equation*}
        \mathrm{TV}\left(p_0, p_{t_0}\right) \le C\, \mathrm{polylog}(n) \,n^{-\frac{\beta}{2 \beta+d}}.
    \end{equation*}
\end{theorem}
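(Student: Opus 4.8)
The plan is to bound the total variation distance $\mathrm{TV}(p_0, p_{t_0})$ by exploiting the smoothing action of the Gaussian convolution in the Fourier domain and the tail decay afforded by sub-Gaussianity. First I would reduce the TV distance to an $L^1$ bound: $\mathrm{TV}(p_0,p_{t_0}) = \tfrac12\|p_0 - p_{t_0}\|_{L^1}$. Rather than estimating $\|p_0-p_{t_0}\|_{L^1}$ directly, I would split $\mathbb{R}^d$ into a large ball $B_R := \{\|x\|\le R\}$ and its complement, with $R$ a polylogarithmic-in-$n$ multiple of $\sigma_0$. On the complement, both $p_0$ and $p_{t_0}=p_0*\Phi_{t_0}$ have sub-Gaussian tails (the convolution of a sub-Gaussian density with a Gaussian is again sub-Gaussian with a comparable parameter, since $t_0\to 0$), so $\int_{B_R^c}(p_0+p_{t_0})\,dx \le e^{-cR^2/\sigma_0^2}$, which is made negligible — smaller than any polynomial in $n^{-1}$ — by the choice $R \asymp \sigma_0\sqrt{\log n}$ with a sufficiently large constant. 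On $B_R$, I would apply Cauchy–Schwarz: $\int_{B_R}|p_0-p_{t_0}| \le |B_R|^{1/2}\,\|p_0-p_{t_0}\|_{L^2} \lesssim R^{d/2}\,\|p_0-p_{t_0}\|_{L^2}$, converting the problem to an $L^2$ estimate at the cost of a polylog factor $R^{d/2}=\mathrm{polylog}(n)$.

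The core estimate is then the $L^2$ bound $\|p_0 - p_{t_0}\|_{L^2} \lesssim L\, t_0^{\beta/2}$, which I would obtain by Plancherel's theorem. Writing $\mathcal{F}[p_{t_0}](\omega) = \mathcal{F}[p_0](\omega)\,e^{-t_0\|\omega\|^2/2}$, we have
\begin{align*}
\|p_0-p_{t_0}\|_{L^2}^2 = (2\pi)^{-d}\int_{\mathbb{R}^d}\bigl|1 - e^{-t_0\|\omega\|^2/2}\bigr|^2\,|\mathcal{F}[p_0](\omega)|^2\,d\omega.
\end{align*}
The elementary inequality $|1 - e^{-u}| \le u^{\beta/2}$ for $u\ge 0$ and $0<\beta\le 2$ (more precisely $1-e^{-u}\le \min(u,1)\le u^{\beta/2}$) gives $\bigl|1-e^{-t_0\|\omega\|^2/2}\bigr|^2 \le (t_0/2)^{\beta}\|\omega\|^{2\beta}$. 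Hence the integral is bounded by $(t_0/2)^\beta (2\pi)^{-d}\int \|\omega\|^{2\beta}|\mathcal{F}[p_0](\omega)|^2\,d\omega$, and the Sobolev assumption (Assumption~\ref{assumption2}) controls exactly this kind of weighted $L^2$ norm of the Fourier transform by $(2\pi)^d L^2$ (after summing the finitely many multi-indices $\alpha$ with $\sum\alpha_i=\beta$, or by a direct norm-equivalence between $\|\omega\|^\beta$ and the multi-index weights appearing in the assumption). This yields $\|p_0-p_{t_0}\|_{L^2} \lesssim L\,t_0^{\beta/2}$.

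Combining the pieces, $\mathrm{TV}(p_0,p_{t_0}) \lesssim R^{d/2} L\, t_0^{\beta/2} + e^{-cR^2/\sigma_0^2} \lesssim \mathrm{polylog}(n)\,t_0^{\beta/2}$, and substituting $t_0 = n^{-2/(2\beta+d)}$ gives $t_0^{\beta/2} = n^{-\beta/(2\beta+d)}$, which is the claimed rate. I expect the main technical obstacle to be the interface between the isotropic weight $\|\omega\|^{2\beta}$ that falls out naturally from the Gaussian factor and the anisotropic multi-index weights $|\omega^\alpha|^2$ with $\sum_i\alpha_i=\beta$ used in Assumption~\ref{assumption2}: one must verify that $\|\omega\|^{2\beta} \lesssim \sum_{\alpha:\,\sum\alpha_i=\beta}|\omega^\alpha|^2$ uniformly in $\omega$ (this is a homogeneous-polynomial comparison, straightforward for integer $\beta$ and handled via $\|\omega\|^{2\beta}\le (\sum_i \omega_i^2)^\beta$ and an interpolation/AM–GM argument for fractional $\beta$), and that the fractional-order weights are interpreted consistently on the Fourier side. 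A secondary point requiring care is justifying that $p_{t_0}$ is genuinely in $L^2$ and that Plancherel applies — which is immediate because $p_0\in\mathcal{P}_{\mathcal{S}}(\beta,L)$ forces $\mathcal{F}[p_0]\in L^2$ with the stated weighted integrability, and $|e^{-t_0\|\omega\|^2/2}|\le 1$.
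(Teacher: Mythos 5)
Your proof is correct and has the same overall skeleton as the paper's: truncate the domain using the sub-Gaussian tail, apply Cauchy--Schwarz on the bounded region to reduce to an $L^2$ estimate (paying a $\mathrm{polylog}(n)$ factor), pass to the Fourier side via Plancherel, and invoke the Sobolev class bound. The one genuine difference is inside the Fourier estimate: the paper splits frequency space into $\|\omega\|_\infty \geq t_0^{-1/2}$ and $\|\omega\|_\infty < t_0^{-1/2}$, bounding $|1-e^{-t_0\|\omega\|^2/2}|$ by $2$ on the high-frequency region (and extracting $t_0^\beta$ from the Sobolev constraint there) and by $t_0\|\omega\|^2/2$ on the low-frequency region (then factoring $\|\omega\|^4 = \|\omega\|^{2\beta}\|\omega\|^{4-2\beta}$ and bounding the supremum of $\|\omega\|^{4-2\beta}$, which is where $\beta\le 2$ enters). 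Your unified elementary inequality $1-e^{-u}\le u^{\beta/2}$ for $0<\beta\le 2$, $u\ge 0$ --- which is easily verified by the two cases $u\le 1$ (where $1-e^{-u}\le u\le u^{\beta/2}$) and $u>1$ (where $1-e^{-u}<1\le u^{\beta/2}$) --- collapses both regimes into a single step and lands directly on $\|p_0-p_{t_0}\|_{L^2}^2 \lesssim t_0^\beta\int\|\omega\|^{2\beta}|\mathcal{F}[p_0](\omega)|^2 d\omega$. This is a cleaner route to the same rate. You also correctly anticipate the only real reconciliation step: converting the isotropic weight $\|\omega\|^{2\beta}$ into the anisotropic multi-index weights of Assumption~\ref{assumption2}; the paper does this via $\|\omega\|^{2\beta}\le (d^{\beta-1}\vee 1)\sum_i|\omega_i|^{2\beta}$, exactly as you sketch. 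One small wording caution: with $R\asymp\sigma_0\sqrt{\log n}$ the tail term $e^{-cR^2/\sigma_0^2}$ is an adjustable-but-fixed polynomial in $n^{-1}$, not super-polynomial (the paper takes the larger radius $\log n$ to get $n^{-\Theta(\log n)}$); this does not affect the conclusion since you only need it smaller than $n^{-\beta/(2\beta+d)}$, which a large enough constant in $R$ secures.
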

The proof can be found in Appendix~\ref{theorem3}. This theorem shows that for the density $p_0$ belonging to the Sobolev class of order $\beta \le 2$ (this is the only part of the argument requiring $\beta \le 2$), by setting the early stopping time $t_0$ exactly the same as that in Corollary~\ref{main_corollary}, the TV distance between the true data distribution $\bX_0 \sim p_0$ and the perturbed distribution $\bX_0 + \bB_{t_0} \sim p_{t_0}$ will not exceed the minimax optimal rate of convergence $n^{-\frac{\beta}{2\beta+d}}$.

\noindent\textbf{Girsanov's theorem for controlling the score error}. The application of Girsanov's Theorem in analyzing the convergence of diffusion models has been explored in several studies, see \citet{song2021maximum,chen2022sampling,oko2023diffusion}. In our analysis, we replaced the unknown drift term of the process $(\bY_{t})_{t \in [0,T-t_0]}$ by our kernel score estimator $\hat s_t(x)$, resulting in the new process $(\bar \bY_{t})_{t \in [0,T-t_0]}$. By Pinsker's inequality and data processing inequality, $\mathrm{TV}\left(\bY_{T-t_0}, \bar \bY_{T-t_0}\right) \lesssim \sqrt{\mathrm{D}_{\mathrm{KL}}\left( \mathbb{P}_{\bY} \,\|\, \mathbb{P}_{\bar \bY}\right)}$, where $\mathbb{P}_{\bY}$ and $\mathbb{P}_{\bar \bY}$ are path measure for $(\bY_{t})_{t \in [0,T-t_0]}$ and $(\bar \bY_{t})_{t \in [0,T-t_0]}$, accordingly. Girsanov's theorem (detailed in Appendix~\ref{Girsanov}) shows that the KL divergence between the path measures is bounded by the accumulated score error over time $t$, which can be bounded using Corollary~\ref{main_corollary}, i.e.,
\begin{align*}
    &\mathbb{E}\left[\mathrm{D}_{\mathrm{KL}}\left( \mathbb{P}_{\bY} \,\|\, \mathbb{P}_{\bar \bY}\right)\right] 
    \\
    &\lesssim \int_{t_0}^{T} \mathbb{E} \left[\int_{x\in \mathbb{R}^d} \|\hat s_t(x) - \nabla \log p_t(x)\|^2\,p_t(x) dx\right] \,dt
    \\
    &\lesssim \mathrm{polylog}(n) n^{-\frac{2\beta}{2\beta+d}}.
\end{align*}
\noindent\textbf{Error from Initialization}. Similar to the Exponential convergence of the Ornstein–Ulhenbeck process \cite{bakry2014analysis}, we show that the Brownian diffusion process in \eqref{BM_process} is close to Gaussian distribution in KL divergence for large enough time $T$:
\begin{equation*}
    \mathrm{D}_{\mathrm{KL}}\left( \bX_{T} \,\|\, \mathcal{N}(0,T\boldsymbol{I}_d)\right) \lesssim \frac{1}{T}.
\end{equation*}
Consequently, by applying Pinsker's inequality, the initialization error $\mathrm{TV}\left( \bX_{T} , \mathcal{N}(0,T\boldsymbol{I}_d)\right)$ has polynomial decay rate of $n^{-\frac{\beta}{2\beta+d}}$ for $T \ge n^{\frac{2\beta}{2\beta+d}}$.

\section{Discussions}\label{sec_discussion}

\noindent\textbf{Discretization error}. In the analysis of Section~\ref{sec_TV_results}, we assume it is possible to solve the approximated backward SDE \eqref{output_of_algorithm} in Algorithm~\ref{algorithm1}. However, in practical scenarios, solving the backward SDE \eqref{output_of_algorithm} directly is infeasible. Therefore, we need to apply numerical SDE solvers such as Euler-Maruyama discretization to approximate a solution. 

Specifically, to discretize the SDE in \eqref{output_of_algorithm}, consider the grid points $0 = t_1 < \ldots < t_N = T - t_0$ and define the step size $\gamma_k = t_{k+1} - t_k$. We then consider the following process: Let $\hat \bY_0 \sim \mathcal{N}(0, T\boldsymbol{I}_d)$, and for $t$ in each intervals $[t_k, t_{k+1}]$, define $\widehat \bY_t$ via the following SDE:
\begin{align}\label{SED_discrete}
    \mathrm{d} \widehat \bY_t= \hat s_{T-t_k}(\widehat \bY_{t_k}) \mathrm{d} t+\mathrm{d} \bB_t, \; \text{for } t \in [t_k, t_{k+1}].
\end{align}
This is equivalent to the following iterative process:
\begin{align*}
    \widehat \bY_{t_{k+1}} =  \widehat \bY_{t_k} + \gamma_k  \hat s_{T-t_k}(\widehat \bY_{t_k})  + \sqrt{\gamma_k}  \boldsymbol{z}_k
\end{align*}
for $k = 0, \dots N-1$, where $\boldsymbol{z}_k \sim \mathcal{N}(\boldsymbol{0}, \boldsymbol{I}_d)$.

This approximation introduces an extra discretization error in the convergence analysis of the diffusion model in Theorem~\ref{main_theorem2}. According to Theorem 2.2 of \citet{chen2023improved}, when the data distribution $p_0$ has finite second moment, which is satisfied under our Sub-Gaussian assumption, the discretization error can be bounded by 
\begin{align}
    d^2 \sum_{k=0}^{N-1} \frac{\gamma_{k+1}^2}{t_{k}^2}.\label{discretization_error}
\end{align}
Therefore, by taking a constant step size $t_k = t_0 + kh$, the discretization error in \eqref{discretization_error} scales as $h^2 \int_{t_0}^T \frac{1}{t^2} dt = h^2 \left(\frac{1}{t_0} - \frac{1}{T}\right)$. In our setting, $t_0$ and $T$ are both polynomial functions of $n$, the discretization error becomes negligible if we set  $h \asymp n^{-C}$ for a sufficiently large constant $C>0$. 

Since discretization is not the main focus of our paper, we refer the reader to Theorem 2.2 of \citet{chen2023improved} for a more detailed discussion.

\noindent\textbf{Future works}. In practice, instead of directly applying kernel density estimation, the unknown score function is often learned through empirical risk minimization of a certain class of neural networks using score matching. We conclude this paper with two possible avenues for further
research. First, it is worth investigating whether a neural network-based score estimator can achieve a similar score estimation error upper bound as described in Theorem~\ref{theorem2}. Additionally, the estimation error of the diffusion model, as described in Theorem~\ref{main_theorem2}, has been proven for $\beta \leq 2$. Future research could explore extending this proof to cases where $\beta > 2$. 
 
\section{Conclusion}\label{sec_conclusion}
In this paper, we prove general upper bounds for the score estimation error for Gaussian mixture distributions,
under a mild sub-Gaussian assumption for the unknown density,
using a truncated version of the Kernel Density Estimator.
Our score error bound has optimal dependence on the sample size and the variance component of the Gaussian mixture.
By applying the Girsanov theorem and adopting early stopping, 
this implies sharp bounds on the sampling error in the diffusion model.
As a consequence, for the Sobolev class of density functions, 
we demonstrate that the total variation sampling error of the diffusion model matches the minimax rate in nonparametric statistics \citep{stone1982optimal,stone1980optimal,tsybakov}. 
This removes crucial assumptions in similar recent results on the minimax optimality of diffusion models, 
such as the density lower bound.


\section*{Acknowledgements}

We would like to thank the reviewers for their valuable feedback and insightful suggestions.

\section*{Impact Statement}

This paper presents work whose goal is to advance the field of Machine Learning. There are many potential societal consequences of our work, none of which we feel must be specifically highlighted here.

\newpage

\bibliography{ref}
\bibliographystyle{icml2024}

\newpage
\appendix
\onecolumn

\section{Notation}\label{appendix_notation}
Here is a more detailed summary of notations.
$p_0: \mathbb{R}^d \rightarrow \mathbb{R}$ denotes the density function of true data distribution. For any time $t$, we denote the probability distribution $p_t(x) := p_0 * \phi_t (x)= \int p_0(y) \phi_t (x-y)dy$, where $*$ represents the convolution operator, and $\phi_t$ is the density function of Gaussian distribution $\mathcal{N}(0, t\boldsymbol{I}_d)$. Let $s_t(x):=\nabla \log p_t(x)$ denote the (Stein) score function corresponding to the probability distribution $p_t(x)$ at time $t$.

The operations $\vee$ is defined as $a \vee b := \max \{a, b\}$.
For any functions $f$ and $g$, $f(n) \asymp g(n)$ implies that there exist two positive constants $C_1$ and $C_2$ such that $C_1 g(n) \le f(n) \le C_2 g(n)$ for all $n$, $f(n)=O(g(n))$ indicates that there exists a positive real number $M$ and an integer $n_0$ such that $|f(n)| \le M g(n)$ for all $n \ge n_0$, and $f(n)=\Omega(g(n))$ implies there exists a positive constant $c$ and an integer $n_0$ such that $f(n) \ge c \cdot g(n)$ for all $n \ge n_0$. The symbols $\lesssim$ and $\gtrsim$ denotes the corresponding inequality
up to a constant $C$. In the notations used here and throughout this paper, the implicit constants (such as $C_1$, $C_2$, $M$ and $C$ here) may depend on $p_0$ but not on $t$ and $n$, unless otherwise indicated. Throughout the proofs, the constant $C$, $C_1$, $C_2, \ldots$ are used to denote positive constants and their values may change from one line to another, but are independent of everything else. 
We define $\mathrm{poly}(f(n)):= f(n)^{C}$ for some positive constant $C$. For a set $\mathcal A \subset \mathbb{R}^d$, $\left|\mathcal A \right|$ denotes its Lebesgue measure.

For a square-integrable function $f: \mathbb{R}^d \rightarrow \mathbb{R}$, denote $\|f\|_{L^2}:= \int_x f^2(x) dx$ and $\|f\|_{\infty}:= \sup_{x} |f(x)|$. For a vector $x = (x_1,\ldots,x_d) \in \mathbb{R}^d$, $\|x\|_{2} := \sqrt{\sum_{i=1}^d x_i^2}$ and $\|x\|_1 := \sum_{i=1}^n |x_i|$. The $\ell_{\infty}$ norm of a vector $x \in \mathbb{R}^d$, is defined as $\|x\|_{\infty}=$ $\max \left(\left|x_1\right|,\left|x_2\right|, \ldots,\left|x_d\right|\right)$. 
The total variation (TV) distance between to probability distribution $p$ and $q$ is defined as $\mathrm{TV}(p,q):= \frac{1}{2} \int \left|p(x) - q(x)\right| dx$.

\section{Numerical Results}\label{appendix_numerical}
For numerical experiments using real data, we firstly demonstrate the impracticality of the prior uniform-in-time assumption in a real-world example by using 28$\times$28 MNIST dataset. In this experiment, we use U-Net shaped score-net \cite{ronneberger2015u} as our estimator. Note that the finding in Figure~\ref{figure-right} matches the conclusion in \cite{oko2023diffusion} regarding the convergence rate for neural net estimators. Actually, the decay rate we showed in Figure~\ref{figure-right} follows the results we discussed in Theorem~\ref{theorem2}, i.e., the slope approximately matches the theoretical result where the index of $t$ is $-\frac{3}{2}$.

The baselines include: (1) To represent the training procedure of the reverse process, we used score-based U-Net architecture \cite{song2021scorebased} to learn the score function, and (2) we apply a specific forward process, i.e., the Brownian diffusion process, described in Section~\ref{sec_background}, Algorithm~\ref{algorithm1}. 
\begin{figure}[ht]
\vskip 0.2in
\begin{center}
    \subfigure[1-dimensional-MSE Convergence Rate]{
        \includegraphics[width=0.45\columnwidth]{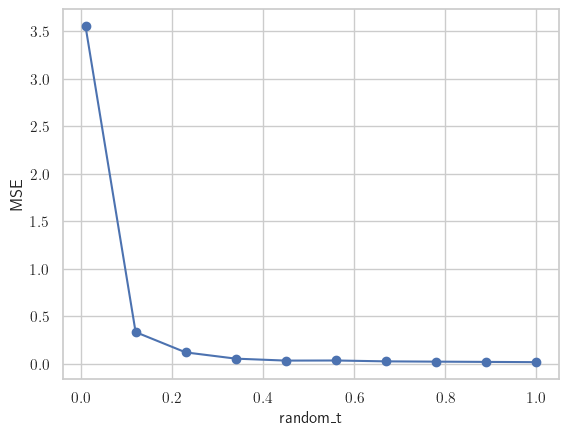}
        \label{figure-left}
    }
    \hfill 
    \subfigure[Log-Log 1-dimensional-MSE Convergence Rate V.S. Time]{
        \includegraphics[width=0.45\columnwidth]{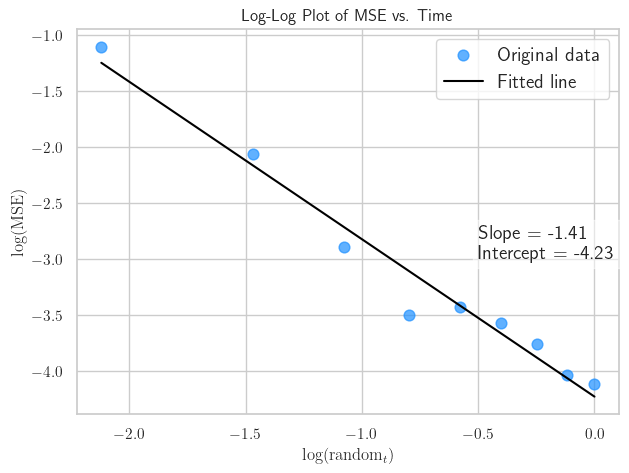}
        \label{figure-right}
    }
    \caption{Convergence Rates}
\end{center}
\vskip -0.2in
\end{figure}
Our training objective $\mathbf{s}_\theta(\mathbf{x}, t)$ is a continuous weighted combination of Fisher divergences, given by
\begin{equation*}
    \mathbb{E}_{t \in \mathcal{U}(0, T)} \mathbb{E}_{p_t(\mathbf{x})}\left[\left\|\nabla_{\mathbf{x}} \log p_t(\mathbf{x})-\mathbf{s}_\theta(\mathbf{x}, t)\right\|_2^2\right]
\end{equation*}
where $\mathcal{U}(0, T)$ denotes a uniform distribution over the time interval $[T_1, 1]$, where $T_1 >0$ is any constant.
After training a convolutional neural network with a U-shaped architecture (known as U-Net), and applying the results to linear regression as illustrated in Figure~\ref{figure-right}, we observe that the slope closely approximates the main result of the mean square error as presented in Theorem~\ref{theorem2}, particularly when substituting $d=1$ for relatively small values of $t$.
Although the U-Net score estimator differs from the kernel estimator in the proof of Theorem~\ref{theorem2}, the $-\frac{3}{2}$ slope is the minimax rate that does not depend on the choice of the score estimator.

\section{Construction and Properties of the Kernel Score Estimator}\label{appendix_score_function}
In this section, we will study the properties of the kernel score estimator we defined in \eqref{score_estimator_eq}, i.e.,
\begin{equation*}
   \hat{s_t}(x):=\frac{\nabla \hat{p_t}(x)}{\hat{p}_t(x)} \mathbbm{1}_{\left\{x\colon \hat{p}_t(x) \ge \rho_n\right\}}(x).
\end{equation*}
\subsection{Construction of Kernel Function}\label{appendix_construct_kernel}
In this section we construct kernel with order $\ell \ge 1$ that are bounded, compactly supported in $[-1,1]^d$.
\begin{definition}\citep{tsybakov}\label{kernel_order_def}
    Let $ \ell \ge 1 $ be an integer. A function $ K_d: \mathbb{R}^d \to \mathbb{R} $ is a \emph{kernel of order $ \ell $} if for any multi-index $ \alpha = (\alpha_1, \alpha_2, \ldots, \alpha_d) $ with $ |\alpha| = \alpha_1 + \alpha_2 + \ldots + \alpha_d \le \ell $, the mappings 
\begin{equation*}
   x \mapsto x^\alpha K(x) 
\end{equation*}
are integrable and satisfy the following conditions:
\begin{align*}\label{kernel_condition}
    \int_{\mathbb{R}^d} K(x) dx &= 1, \\
    \int_{\mathbb{R}^d} x^\alpha K(x) dx &= 0, \quad \text{for all } 1 \le |\alpha| \le \ell-1, \\
    \text{there exists an } \alpha \text{ with } |\alpha| &= \ell \text{ such that } \int_{\mathbb{R}^d}x^\alpha K(x) dx \neq 0.
\end{align*}  
\end{definition}

\begin{lemma}
\label{Bound_on_kernel}
    For any positive integer $\ell$, there exists kernel of order $\ell$ supported on $[-1,1]^d$, denoted as $K_d(x) := \prod_{i=1}^d K(x_i)$ such that for any $1\le i\le d$ and $\alpha \le \ell$,
    \begin{equation*}
        \int_{[-1,1]^d} K_d(x)dx=1, \;\;\;\|K_d\|_{L^2} = O(\ell^{\frac{3d} {2}}),\;\;\; \|K_d\|_{\infty} =O( \ell^{\frac{5d}{2}}),
    \end{equation*}
    \begin{equation*}
        \int_{[-1,1]}\left|K(x_i) x_i^{\alpha}\right| d x_i= O\left(\ell^{\frac{5}{2}}/\alpha\right), \;\;\; 
        \int_{[-1,1]} |K'(x_i)x_i^{\alpha}|dx =O(\ell^{\frac{5}{2}}/\alpha)
    \end{equation*}
    and
    \begin{equation*}
         \left\|\|\nabla K_d (\cdot)\|_2\right\|_{L^2}=O(\sqrt{d} \ell^{\frac{3d}{2}}),\;\;\; \|(\nabla K_d (\cdot))_i\|_{\infty}= O(\ell^\frac{5d}{2}).
    \end{equation*}
\end{lemma}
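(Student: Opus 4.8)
The plan is to construct the one-dimensional kernel $K$ explicitly from Legendre polynomials and then take the tensor product $K_d(x) = \prod_{i=1}^d K(x_i)$, reducing all the $d$-dimensional estimates to products of one-dimensional estimates. First I would recall that the Legendre polynomials $\{P_k\}_{k\ge 0}$, normalized so that $\int_{-1}^1 P_k(u)^2\,du = 1$ (i.e. the orthonormal version $\sqrt{(2k+1)/2}\,P_k^{\rm std}$), form an orthonormal basis of $L^2([-1,1])$, and define
\begin{equation*}
    K(u) := \sum_{k=0}^{\ell-1} P_k(0)\,P_k(u)\,\mathbbm{1}_{[-1,1]}(u).
\end{equation*}
This is the reproducing-kernel construction used in Tsybakov's book: for any polynomial $Q$ of degree $\le \ell - 1$ one has $\int_{-1}^1 K(u) Q(u)\,du = Q(0)$, which immediately gives $\int K = 1$ (take $Q\equiv 1$) and $\int u^\alpha K(u)\,du = 0$ for $1\le \alpha \le \ell-1$ (take $Q(u) = u^\alpha$); the moment of order $\ell$ is generically nonzero, so $K$ is a kernel of order $\ell$. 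The tensor product $K_d$ then has order $\ell$ as well since its mixed moments factor.

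The next block of steps is to obtain the quantitative norm bounds, all of which follow from standard facts about Legendre polynomials: $\|P_k^{\rm std}\|_\infty = P_k^{\rm std}(1) = 1$, hence for the orthonormal version $\|P_k\|_\infty = \sqrt{(2k+1)/2} = O(\sqrt k)$; the pointwise value $P_k(0)$ is $0$ for odd $k$ and $O(k^{-1/4})$ (from the classical asymptotics / explicit central binomial formula) but even the crude bound $|P_k(0)| = O(\sqrt k)$ suffices; and the derivative bound $\|(P_k^{\rm std})'\|_\infty = P_k^{\rm std}{}'(1) = k(k+1)/2$, so $\|P_k'\|_\infty = O(k^{5/2})$. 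From these:
\begin{itemize}
\item $\|K\|_\infty \le \sum_{k<\ell} |P_k(0)|\,\|P_k\|_\infty = O(\ell \cdot \ell^{1/2}\cdot \ell^{1/2}) = O(\ell^{5/2})$, so $\|K_d\|_\infty = \|K\|_\infty^d = O(\ell^{5d/2})$;
\item $\|K\|_{L^2}^2 = \sum_{k<\ell} P_k(0)^2 = O(\ell \cdot \ell) = O(\ell^2)$ wait --- more carefully $\sum_{k<\ell}P_k(0)^2 = O(\ell\cdot\ell^{1/2}) $ using $|P_k(0)|=O(k^{1/4})$, but using the crude bound it is $O(\ell^2)$, giving $\|K\|_{L^2} = O(\ell^{3/2})$ and hence $\|K_d\|_{L^2} = \|K\|_{L^2}^d = O(\ell^{3d/2})$;
\item for the weighted integrals, $\int_{-1}^1 |K(u)\,u^\alpha|\,du \le \|K\|_\infty \int_{-1}^1 |u|^\alpha\,du = \|K\|_\infty \cdot \frac{2}{\alpha+1} = O(\ell^{5/2}/\alpha)$, and similarly with $K'$ in place of $K$ once we bound $\|K'\|_\infty \le \sum_{k<\ell}|P_k(0)|\,\|P_k'\|_\infty = O(\ell\cdot \ell^{1/2}\cdot \ell^{5/2}) = O(\ell^{7/2})$ --- here I need the sharper $|P_k(0)|=O(k^{1/4})$ and $\|P_k'\|_\infty=O(k^{3/2})$ to land the stated $O(\ell^{5/2}/\alpha)$, or alternatively absorb the gap into the polylog since $\ell = \log n$;
\item for the gradient, $(\nabla K_d(x))_i = K'(x_i)\prod_{j\ne i}K(x_j)$, so $\|(\nabla K_d)_i\|_\infty \le \|K'\|_\infty\,\|K\|_\infty^{d-1} = O(\ell^{5d/2})$, and $\big\|\,\|\nabla K_d\|_2\,\big\|_{L^2}^2 = \sum_{i=1}^d \|K'\|_{L^2}^2\,\|K\|_{L^2}^{2(d-1)} = d\cdot O(\ell^3)\cdot O(\ell^{3(d-1)}) = O(d\,\ell^{3d})$, giving the claimed $O(\sqrt d\,\ell^{3d/2})$.
\end{itemize}

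The main obstacle is getting the \emph{sharp} exponents on $\ell$ in the derivative-type bounds (the $\ell^{5/2}$ appearing for $\int|K' u^\alpha|$ and the $\ell^{5d/2}$ for $\|(\nabla K_d)_i\|_\infty$): the naive triangle-inequality bounds $\|K'\|_\infty \le \sum_{k<\ell}|P_k(0)|\,\|P_k'\|_\infty$ overshoot, so I would need to either invoke the precise one-sided Legendre asymptotics $|P_k(0)| \sim c\,k^{-1/4}$ and $\|P_k'\|_\infty = k(k+1)/2$ and sum carefully, or --- the cleaner route --- observe that since $\ell$ is ultimately set to $\log n$ in Proposition~\ref{MSE_main}, any fixed polynomial discrepancy in the exponent is swallowed by $\mathrm{polylog}(n)$, so it is enough to prove the bounds with \emph{some} polynomial-in-$\ell$ constant and remark that the exponents can be tightened to those stated. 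I expect to present the argument with the sharp constants for $K$ and $\|K\|_{L^2}$, $\|K\|_\infty$ (which are genuinely $\ell^{1/2}\cdot\ell = \ell^{3/2}$ after using $|P_k(0)| = O(k^{1/4})$ is not even needed for these two), and for the derivative bounds rely on the classical Markov brothers' inequality $\|Q'\|_{[-1,1]} \le (\deg Q)^2\|Q\|_{[-1,1]}$ applied to $K$ itself, which gives $\|K'\|_\infty \le \ell^2 \|K\|_\infty = O(\ell^{9/2})$ --- again off by a bounded power, harmless at $\ell=\log n$. Everything else is a routine product-over-coordinates bookkeeping step.
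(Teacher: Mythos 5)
You take the standard Christoffel--Darboux route and define $K(u) = \sum_{k<\ell} P_k(0)P_k(u)$ directly, whereas the paper does something subtler: it defines $K'$, not $K$, as a finite Legendre combination $K'(x) = \sum_{i\le \ell+1} a_i P_i(x)$ (with only odd-indexed terms), and recovers $K(x) = \int_{-1}^x K'(t)\,dt$. The reason this matters is exactly the obstacle you identify. With your construction, $K$ has degree $\ell-1$, so $\|K\|_\infty$ and $\|K\|_{L^2}$ come out fine (indeed a little better than stated), but every route to a bound on $K'$ — termwise differentiation of Legendre polynomials, or Markov's inequality $\|K'\|_\infty \le (\ell-1)^2\|K\|_\infty$ — genuinely incurs an extra $\Omega(\ell)$ or $\Omega(\ell^2)$ factor, and you cannot reach $\|K'\|_\infty = O(\ell^{5/2})$. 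This is not a matter of sloppy constants: it is an unavoidable feature of that construction. The paper sidesteps it by making $K'$ the object with a direct Legendre expansion, so $\|K'\|_\infty \le \sum_i |a_i|\,\|P_i\|_\infty = O(\ell^{5/2})$ falls out immediately (using $\|P_i^{\mathrm{std}}\|_\infty = 1$ and $|a_i|=O(i^{3/2})$), and then $\|K\|_\infty \le 2\|K'\|_\infty$, $\|K\|_{L^2}^2 \le 2\|K'\|_{L^2}^2$ by integration — differentiation never appears as a lossy step.

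So the gap in your proof is concrete: the statements $\int_{[-1,1]}|K'(x_i)x_i^\alpha|\,dx_i = O(\ell^{5/2}/\alpha)$ and $\|(\nabla K_d(\cdot))_i\|_\infty = O(\ell^{5d/2})$ are not established, only weaker versions with a larger polynomial-in-$\ell$ factor. You are right that the weaker versions would suffice for the only downstream use — Propositions~\ref{MSEphat} and~\ref{MSEphatprime} set $\ell=\log n$ and place these factors in front of $(\log n)^{-\log n}$, where any fixed power of $\ell$ is absorbed — but that is an argument for why the lemma is stated more sharply than necessary, not a proof of the lemma as stated. If you want to keep your construction, state and prove the weaker bounds honestly (e.g.\ $\|K'\|_\infty = O(\ell^{4})$) and note that they suffice for the applications; if you want the stated exponents, switch to expanding $K'$ rather than $K$ in Legendre polynomials and integrate, as the paper does.
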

\begin{proof}
We first consider the $d=1$ case.
    We can construct such kernel using Legendre polynomials: Denote by $\{P_n(x)\}_{n=1}^\infty$ the Legendre polynomials. 
    Define $K$ by setting $K(-\infty)=0$ and
    \begin{equation*}
        K'(x) =  \sum_{i=0}^{\ell+1} a_i P_i(x),
    \end{equation*}
    where $a_i\neq 0$ only if $i$ is odd.
    Since $P_i$ is an odd function for odd $i$, by this construction we automatically have $K(\pm1)=0$.
    Then from the orthogonality properties of Legendre polynomials
    \begin{equation*}
        a_i = \frac{2i+1}{2}\int_{-1}^1 K'(x) P_i(x)dx. 
    \end{equation*}
    To ensure that $K(x)$ is kernel of order $\ell$,
    we need
    \begin{align*}
    \int_{-1}^1K'(x)dx&=0;
    \\
    \int_{-1}^1K'(x)xdx&=-\int_{-1}^1K(x)dx=-1;
    \\
    \int_{-1}^1K'(x)x^kdx&=-k \int_{-1}^1K(x)x^{k-1}dx=0,\quad k=2,\dots,\ell+1.
    \end{align*}
    Note that these are $\ell+2$ linear constraints, which uniquely determine $\{a_0,\dots,a_{\ell+1}\}$.
    (For this we need that the matrix $A$, defined as $A:=\{\int_{-1}^1 x^iP_j(x)dx\}_{i,j=0,\dots,\ell+1}$ which represents the coefficients of the system of linear equations, is invertible. This is true because $BA=\{\int_{-1}^1 P_i(x)P_j(x)dx\}_{i,j=0,\dots,\ell+1}$ for some matrix $B$.)
    Therefore,
    \begin{align*}
    a_i&=-\frac{2i+1}{2}[x^i]P_i(x)
    =-\frac{2i+1}{2}2^ii \binom{\frac{i - 1}{2}}{i},
    \end{align*}
    where $[x^i]P_i(x)$ denotes the coefficients for $x^i$ in the polynomials $P_i(x)$. Therefore,
    \begin{align*}
        |a_i| &= (2i+1)\, \frac{\left((i)!!\right)^2}{i!} \\
        &\lesssim (2i+1) \frac{2 i \left(\frac{i}{e}\right)^{i}}{\sqrt{2\pi i}\left(\frac{i}{e}\right)^i} \quad \left(i!! \sim \sqrt{2n}\left(\frac{i}{e}\right)^{i/2}\; \text{and } i! \sim \sqrt{2\pi n}\left(\frac{i}{e}\right)^{i}\right)\\
        &\lesssim (2i+1)\sqrt{i}.
    \end{align*}
    Therefore,
    \begin{equation*}
        \|K'\|_{L^2}^2 = \int K'^2(x)dx = \sum_{i=0}^{\ell+1} \frac{2}{2i+1} a_i^2 \lesssim \sum_{i=0}^{\ell+1}  \,(2i+1)i = O(\ell^3), 
    \end{equation*}
    \begin{equation*}
        \|K'\|_\infty \leq \frac{\ell +2}{2} \max |a_i| = O(\ell^{\frac{5}{2}}).
    \end{equation*}
    Last we consider $\int |K'(x)x^\alpha|dx$.
    \begin{equation*}
        \int_{[-1,1]} |K'(x)x^\alpha|dx \leq \|K'\|_{\infty}\int_{[-1,1]} |x^{\alpha}|dx \leq O(\ell^{5/2}) \,\frac{2}{\alpha+1} = O(\ell^{5/2}/\alpha).
    \end{equation*}
    Next we consider the bounds for kernel $K(x) =K(-1) + \int_{-1}^x K'(t)dt = \int_{-1}^x K'(t)dt$.
    \begin{align*}
        \|K\|_{L^2}^2 &= \int K^2(x)dx \\
        &= \int \left(\int_{-1}^x K'(t)dt\right)^2dx \\
        &\le \int\int_{-1}^x K'(t)^2dtdx \\
        &\le 2\|K'\|^2\\
        &= O(\ell^3),
    \end{align*}
    \begin{equation*}
        \|K\|_\infty = \sup_{x\in [-1,1]} \left|\int_{-1}^x K'(t)dt\right| \le 2 \|K'\|_{\infty} =O(\ell^{\frac{5}{2}}).
    \end{equation*}
    Finally,
    \begin{align*}
    \int_{[-1,1]}\left|K(x_i) x_i^{\alpha}\right| d x_i 
        &\leq \|K\|_{\infty} \int_{[-1,1]} |x^\alpha|\,dx 
        = O(\ell^{5/2}/\alpha).
    \end{align*}
    In general, for $x = (x_1,\ldots,x_d)\in \mathbb{R}^d$, let
    \begin{equation*}
    K_d(x):=\prod_{j=1}^d K\left(x_j\right), \quad (\nabla K_d(x))_j=K^{\prime}\left(x_j\right) \prod_{i \neq j} K\left(x_i\right).
    \end{equation*}
    Using the above result in 1-dimension, we can directly find that
    \begin{equation*}
        \|K_d\|_{L^2} = O(\ell^{\frac{3d} {2}}),\quad \|K_d\|_{\infty} =O( \ell^{\frac{5d}{2}})
    \end{equation*}
    and
    \begin{equation*}
        \int_{[-1,1]^d}\left|K_d(x) x_j^{\ell}\right| d x\le 
        \|K\|_{\infty}^{d-1}\int_{[-1,1]}\left|K(x_j) x_j^{\ell}\right| d x_j= O\left(\ell^{\frac{5d-2}{2}}\right), \;\;\; \text{for any }  1\le j \le d.
    \end{equation*}
    Besides,
    \begin{align*}
        \left\|\|\nabla K_d (\cdot)\|_2\right\|_{L^2}^2 &= \int \sum_{i=1}^d (\nabla K_d(x))_i^2 dx \\
        &= \sum_{i=1}^d \int  \left(K^{\prime}\left(x_i\right) \prod_{j \neq i} K\left(x_j\right) \right)^2dx \\
        &\le d \|K'\|_{L^2}^2 \|K\|_{L^2}^{2(d-1)} \\
        &=O(d \ell^{3d}).
    \end{align*}
    For any fixed $i$,
    \begin{align*}
        \|(\nabla K_d (\cdot))_i\|_{\infty} &\le  \|K'\|_{\infty} \|K\|_{\infty}^{(d-1)} = O(\ell^\frac{5d}{2}).
    \end{align*}
\end{proof}
\subsection{Mean Squared Error Analysis}\label{appendix_MSE}
Intuitively, if both the gradient estimator $\nabla \hat p_t(x)$ and the density estimator $\nabla \hat p_t(x)$ are close to their respective true values $\nabla p_t(x)$ and $p_t(x)$, the kernel score estimator $\hat s_t(x)$ defined in \eqref{score_estimator_eq} should be also close to the true score function $s_t(x) = \frac{\nabla p_t(x)}{p_t(x)}$. In this section, we study the mean squared errors (MSE) of $\hat p_t(x)$ and $\nabla \hat p_t(x)$, and it will be shown in the proof of Theorem~\ref{theorem2} that these mean squared errors are the dominant part of the score error.

The MSE for $\hat p_t(x)$ and $\nabla \hat p_t(x)$ are defined as follows:
\begin{align*}
    \mathrm{MSE}(\hat p_t(x)) &:= \mathbb{E}\left[| \hat{p}_t(x) -  p_t(x) |^2\right],\\
    \mathrm{MSE}(\nabla\hat{p}_t(x))&:= \mathbb{E} \left[\|\nabla \hat p_t(x) - \nabla p_t(x)\|^2\right].
\end{align*}
We follow the common framework of bias-variance trade-off \citet{tsybakov} with slight modifications, which allows us to obtain a sharper bound that depends on the location $x$.
\begin{proposition}[\textbf{MSE for estimator $\hat{p}_t$}]\label{MSEphat}
    If we take the bandwidth $h = \frac{\sqrt{t}}{D_n}$ for some constant $D_n$ that depends on $n$, then there exists a constant $C_2$ that depend on $p_0$ and $d$, such that the MSE for $\hat p_t(x)$ satisfies
    \begin{equation*}
        \mathrm{MSE}(\hat p_t(x)) \le C_2 \left(\frac{D_n^d\,\ell^{3d}}{nt^{\frac{d}{2}}} \, p_t^*(x) + \left(\frac{D_n}{de}\right)^{-2\ell} \ell^{- \ell + 5d -\frac{1}{2}}\right),
    \end{equation*}
    where $\ell$ is the order of kernel defined in \autoref{Bound_on_kernel}, $p_t^*(x):=\sup_{\|\lambda\|_{\infty} < h}p_t(x+\lambda)$. In particular, if we take $\ell=\Omega(\log n)$ and $D_n = C^* \sqrt{\log n}$, 
    for some constant $C^*> de$, we have
    \begin{equation*}
        \mathrm{MSE}(\hat p_t(x)) \leq C_2 \;\mathrm{polylog}(n)\left(\frac{p_t^*(x)}{n t^{\frac{d}{2}}}  + (\log n)^{-\log n }\right).
    \end{equation*}
\end{proposition}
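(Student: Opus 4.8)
The plan is to proceed through the standard bias--variance decomposition of a kernel density estimator,
\[
\mathrm{MSE}(\hat p_t(x))=\big(\mathbb{E}[\hat p_t(x)]-p_t(x)\big)^2+\mathrm{Var}\big(\hat p_t(x)\big),
\]
keeping the dependence on the evaluation point $x$ (through the local maximum $p_t^*$) and on the kernel order $\ell$ explicit, and crucially using that $p_t=p_0*\phi_t$ is $C^\infty$ with Gaussian-type derivative bounds. For the variance, since $\hat p_t(x)$ is an average of $n$ i.i.d.\ terms,
\[
\mathrm{Var}\big(\hat p_t(x)\big)\le\frac{1}{nh^{2d}}\,\mathbb{E}\Big[K_d\big(\tfrac{x-X^t}{h}\big)^2\Big]=\frac{1}{nh^{d}}\int_{[-1,1]^d}K_d(u)^2\,p_t(x-hu)\,du\le\frac{\|K_d\|_{L^2}^2}{nh^{d}}\,p_t^*(x),
\]
using $\|u\|_\infty\le1$ on $\operatorname{supp}K_d$ so that $p_t(x-hu)\le p_t^*(x)$; plugging $h=\sqrt t/D_n$ and $\|K_d\|_{L^2}^2=O(\ell^{3d})$ from Lemma~\ref{Bound_on_kernel} gives the first term, $O\big(D_n^d\ell^{3d}\,p_t^*(x)/(nt^{d/2})\big)$.

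The bias is deterministic, $\mathbb{E}[\hat p_t(x)]-p_t(x)=\frac{1}{(2\pi)^d}\int\mathcal{F}[p_t](\omega)\,e^{i\langle\omega,x\rangle}\big(\mathcal{F}[K_d](h\omega)-1\big)\,d\omega$. The order-$\ell$ conditions $\int u^\alpha K_d(u)\,du=0$ for $1\le|\alpha|\le\ell-1$, with the Taylor remainder for $e^{iz}$, give $|\mathcal{F}[K_d](\xi)-1|\le\|\xi\|_1^\ell\,\|K_d\|_{L^1}/\ell!$, while $|\mathcal{F}[p_t](\omega)|=|\mathcal{F}[p_0](\omega)|\,e^{-t\|\omega\|^2/2}\le e^{-t\|\omega\|^2/2}$ since $p_0$ is a probability density; hence
\[
\big|\mathbb{E}[\hat p_t(x)]-p_t(x)\big|\le\frac{\|K_d\|_{L^1}\,h^\ell}{(2\pi)^d\,\ell!}\int_{\mathbb{R}^d}e^{-t\|\omega\|^2/2}\,\|\omega\|_1^\ell\,d\omega .
\]
Rescaling $\omega\mapsto\omega/\sqrt t$ reduces the integral to $t^{-(d+\ell)/2}(2\pi)^{d/2}\,\mathbb{E}\|Z\|_1^\ell\le t^{-(d+\ell)/2}(2\pi)^{d/2}d^\ell 2^{\ell/2}\Gamma(\tfrac{\ell+1}{2})/\sqrt\pi$ for $Z\sim\mathcal{N}(0,I_d)$. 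Substituting $h^\ell=t^{\ell/2}D_n^{-\ell}$, squaring, and combining $\|K_d\|_{L^1}^2=O(\ell^{5d})$ (Lemma~\ref{Bound_on_kernel}) with Stirling ($\Gamma(\tfrac{\ell+1}{2})\le\sqrt{(\ell-1)!}$ by log-convexity of $\Gamma$ and $\ell!\ge(\ell/e)^\ell$), the factor $\|K_d\|_{L^1}^2\big(d^\ell 2^{\ell/2}\Gamma(\tfrac{\ell+1}{2})/\ell!\big)^2$ collapses to $\lesssim(de)^{2\ell}\ell^{-\ell}\ell^{O(d)}$, giving the bias contribution $\big(\tfrac{D_n}{de}\big)^{-2\ell}\ell^{-\ell+5d-1/2}$ up to an extra factor $t^{-d}$. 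This extra $t^{-d}$ is harmless where it is used: $t\ge t_0=n^{-T_1}$ forces $t^{-d}\le n^{T_1 d}$, which is dominated by the super-polynomially small quantity once $\ell$ is a large enough multiple of $\log n$. Taking $\ell=\Omega(\log n)$ and $D_n=C^*\sqrt{\log n}$ with $C^*>de$ then turns the variance term into $\mathrm{polylog}(n)\,p_t^*(x)/(nt^{d/2})$ and the bias term into $(\log n)^{-\log n}$ up to polylog factors, which is the simplified bound.

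The delicate part is the bias. Because $p_0$ is assumed only to be a density, a fixed-smoothness argument would leave a bias merely polynomial in $h$ that can never be made negligible; the resolution --- the point of the whole construction --- is to exploit the Gaussian smoothing $p_t=p_0*\phi_t$, whose $\ell$-th derivatives have size $\asymp t^{-(d+\ell)/2}$ times a combinatorial factor growing only like $\sqrt{\ell!}$, together with a kernel of \emph{growing} order $\ell\asymp\log n$. The real work is in tracking the $\ell$-dependence of the kernel norms (hence the need for the explicit bounds in Lemma~\ref{Bound_on_kernel}) and balancing $h^\ell$, $1/\ell!$, $\Gamma(\tfrac{\ell+1}{2})$ and $d^\ell$ through Stirling so that the net factor $(de/D_n)^{2\ell}\ell^{-\ell}\mathrm{poly}(\ell)$ dominates $t^{-d}$ uniformly over $t\in[t_0,T]$. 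The variance step is routine apart from the deliberate choice to keep the local maximum $p_t^*(x)$ rather than $\|p_t\|_\infty$, which is what makes the estimate usable in the low-density regions relevant for Theorem~\ref{theorem2}.
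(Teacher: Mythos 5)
Your proof establishes the result via the same bias--variance split, and your variance calculation is essentially identical to the paper's (both bound $\mathbb{E}\eta_1^2$ by $h^d p_t^*(x)\|K_d\|_{L^2}^2$ and substitute $h=\sqrt t/D_n$). The bias, however, is handled by a genuinely different route. The paper Taylor-expands $p_t(x+uh)$ to order $\ell$, uses the kernel-moment vanishing and the multinomial theorem, and then invokes Lemma~\ref{bound_on_pt} (Hermite-polynomial estimates) to bound $\sup_x|D^\alpha p_t(x)|\lesssim\|p_0\|_\infty\,\ell^{\ell/2+1/4}t^{-\ell/2}$, an $L^\infty\!\to\! L^\infty$ convolution estimate. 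You instead work in Fourier space: you push the entire calculation through $|\mathcal{F}[K_d](h\omega)-1|\le h^\ell\|\omega\|_1^\ell\|K_d\|_{L^1}/\ell!$ and the inequality $|\mathcal{F}[p_t](\omega)|\le e^{-t\|\omega\|^2/2}$, which only uses $|\mathcal{F}[p_0]|\le1$, i.e.\ $\|p_0\|_{L^1}=1$. The upshot is a trade-off. Your route avoids the implicit assumption $\|p_0\|_\infty<\infty$ that the paper's Lemma~\ref{bound_on_pt} requires, but it replaces an $L^\infty\!\to\!L^\infty$ estimate with an $L^1\!\to\!L^\infty$-type estimate, so the rescaling $\omega\mapsto\omega/\sqrt t$ no longer has a Gaussian normalization to cancel the volume factor, and an extra $t^{-d/2}$ appears in the bias (so $t^{-d}$ after squaring). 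Consequently the unsimplified first display of the proposition, which has no $t$-dependence in the bias term, is not literally what your argument yields; you get it only up to the extra $t^{-d}$. You correctly diagnose that this is immaterial because the proposition is always invoked with $t\ge n^{-T_1}$, and with $\ell\gtrsim\log n$ the factor $n^{T_1 d}$ is swamped by $(\log n)^{-\log n}=n^{-\log\log n}$, so the ``in particular'' display (the form actually used downstream in Theorem~\ref{theorem2}) survives intact. The remaining $\ell$-bookkeeping (Stirling for $\ell!$ and $\Gamma(\tfrac{\ell+1}{2})$, $\|K_d\|_{L^1}^2=O(\ell^{5d})$ from Lemma~\ref{Bound_on_kernel} via $\|K_d\|_{L^1}\le 2^d\|K_d\|_\infty$) is sound and in fact yields a slightly tighter exponent than the stated $\ell^{-\ell+5d-1/2}$.
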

\begin{proof} 
    By the Bias-Variance trade-off, we have  
    \begin{equation*}
        \text{MSE}(\hat p_t(x)) = \sigma^2(x) + b(x)^2,
    \end{equation*}
where
\begin{equation*}
    \sigma^2(x) = \mathbb{E}\left[\left| \hat{p}_t(x) - \mathbb{E}[\hat{p}_t(x)]  \right|^2 \right],\;\;\;b(x) = \mathbb{E}\left[\hat{p}_t(x) \right] - p_t(x).
\end{equation*}
We first analyze the variance term $\sigma^2(x)$.
Denote
\begin{equation*}
   \eta_i(x) = K_d\left(\frac{x-X_i^t}{h}\right) - \mathbb{E} \left[K_d\left(\frac{x-X_i^t}{h}\right)\right],
\end{equation*}
where $X_i^t \sim p_t$ be the $i$-th observation at specific time point $t$.
Then we have
\begin{align*}
        \mathbb{E}\left[\eta_i^2(x)\right] 
         &\le \mathbb{E}\left[K_d^2\left(\frac{x-X_i^t}{h}\right)\right] 
         \\
         &= \int_{\mathbb{R}^d} K_d^2\left(\frac{x-y}{h}\right) p_t(y)\;dy 
         \\
         &= h^d \int_{[-1,1]^d} K_d^2(z) p_t(x-hz)dz
         \\
         &\le h^d \;p_t^*(x) \int_{[-1,1]^d} K_d^2(z)dz,
\end{align*}
where
    \begin{equation*}
         p_t^*(x):=\sup_{\|\lambda\|_{\infty} \le h}p_t(x+\lambda).
    \end{equation*}
Therefore, since $\eta_i(x)$ are independent and $\mathbb {E} \eta_i(x) = 0$, 
    \begin{align*}
        \sigma^2(x) 
        &= \mathbb{E}\left[\left(\frac{1}{n h^d} \sum_{i=1}^n \eta_i(x)\right)^2\right] = \frac{1}{nh^{2d}} \mathbb{E}\left[\eta_1^{2}(x)\right] \le \frac{p_t^*(x)}{nh^{d}} \int K_d^2(z)dz \lesssim   \frac{\ell^{3d}\,p_t^*(x)}{nh^{d}},
    \end{align*}
where in the last inequality we use the result in Lemma~\ref{Bound_on_kernel}. Taking $h = \frac{\sqrt{t}}{D_n}$, we get
\begin{equation}\label{MSE_pt_var}
    \sigma^2(x) \lesssim \frac{\ell^{3d}\, D_n^d\,p_t^*(x)}{nt^{d/2}}.
\end{equation}
Next we consider the bias $b(x)$:
    \begin{align*}
        b(x) 
        &= \frac{1}{nh^d}\mathbb{E} \left[\sum_{i=1}^n K_d\left(\frac{x-X_i^t}{h}\right)\right] - p_t(x) 
        \\
        & = \frac{1}{h^{d}} \int_{\mathbb{R}^d} K_d\left(\frac{x-z}{h}\right)p_t(z)dz - p_t(x)
        \\
        &= \int_{\mathbb{R}^d} K_d\left(-u \right) [p_t(x+uh) - p_t(x)]du.
    \end{align*}
Using the Multivariate version of Taylor's Theorem \citep{folland2005higher}, we have
\begin{equation*}
p_t(x+u h)=p_t(x)+\sum_{|\alpha|=1} \frac{D^\alpha p_t(x)}{\alpha !}(u h)^\alpha+\cdots+\sum_{|\alpha|=\ell} \frac{D^\alpha p_t(x+\tau u h)}{\alpha !}(u h)^\alpha
\end{equation*}
for some $\tau\in [0,1]$, where we use the multi-index notation,
\begin{equation*}
  |\alpha|=\alpha_1+\cdots+\alpha_d = \ell, \quad u^\alpha =\prod_{i=1}^d u_i^{\alpha_i}, \quad \alpha ! =\prod_{i=1}^d \alpha_{i}!.  
\end{equation*}
Using the definition of kernel of order $\ell$, $\int K_d(-u)du=1$ and $\int u^{\alpha} K_d(u) du = 0$ for $|\alpha| \le \beta -1$, we have
\begin{align*}
       \left|b(x)\right|
       &=\left|\sum_{|\alpha| = \ell} \int_{\mathbb{R}^d} K_d(-u) \left(D^\alpha p_t(x+\tau uh) \right)\frac{(uh)^\alpha}{\alpha !} du\right| \\
       &\le h^\ell \sum_{|\alpha| = \ell} \int_{\mathbb{R}^d} \left|K_d(-u) \left(D^\alpha p_t(x+\tau uh) \right)\frac{u^\alpha}{\alpha !}\right| du \\
       &\le  h^\ell \sum_{|\alpha| = \ell} \int_{\mathbb{R}^d} \left|K_d(-u) \frac{u^\alpha}{\alpha !}\right| du \, \left(\sup_{x} |D^\alpha p_t(x)|\right).
\end{align*}
By Proposition~\ref{bound_on_pt}, we have 
\begin{equation*}
    \sup_x |D^{\alpha}p_t(x)| \leq C_1^d \|p_0\|_{\infty} \ell^{\frac{\ell}{2} + \frac{1}{4}} t^{-\ell/2}.
\end{equation*}
Using the property of the kernel $K_d$ in Lemma~\ref{Bound_on_kernel},
\begin{align*}
    \sum_{|\alpha| = \ell} \int_{\mathbb{R}^d} \left|K_d(-u) \frac{u^\alpha}{\alpha !}\right| du &= \sum_{|\alpha| = \ell} \prod_{i=1}^d \frac{1}{\alpha_i!}\int_{\mathbb{R}} \left|K(-u_i) u_i^{\alpha_i}\right| du_i 
    \\
    &\lesssim \sum_{|\alpha| = \ell} \prod_{i=1}^d \frac{1}{\alpha_i!}\frac{\ell^{5/2}}{\alpha_i} 
    \\
    &\le \ell^{5d/2} \sum_{|\alpha| = \ell}  \frac{1}{\alpha!}
    \\
    &= \ell^{5d/2} \frac{d^\ell}{\ell!}.
\end{align*}
Here in the last equality, we use the following multinomial theorem with $x_i = 1$ for all $i=1,\ldots,d$:
\begin{equation*}
    \left(\sum_{i=1}^d x_i\right)^\ell = \sum_{|\alpha| = \ell} \frac{\ell!}{\alpha!} \prod_{i=1}^d x_i^{\alpha_i}.
\end{equation*}
By Stirling's approximation,
\begin{align*}
    \ell! 
    &> \sqrt{2\pi \ell} \,\ell^\ell e^{-\ell} e^{\frac{1}{12\ell+1}} > \ell^{\ell + \frac{1}{2}} e^{-\ell}.
\end{align*}
Thus, 
\begin{align*}
        |b(x)| &\lesssim C_1^d \|p_0\|_{\infty} h^\ell\, (de)^{\ell}\ell^{- \frac{\ell}{2} +\frac{5d}{2} -\frac{1}{4}}\, t^{-\frac{\ell}{2}}.
\end{align*}
Take the bandwidth as $h = \frac{\sqrt{t}}{D_n}$, we have:
\begin{equation}\label{MSE_pt_bias}
        |b(x)| \lesssim C_1^d \|p_0\|_{\infty} \left(\frac{D_n}{de}\right)^{-\ell} \ell^{- \frac{\ell}{2} +\frac{5d}{2} -\frac{1}{4}}.
\end{equation}
Then the results follow by combining \eqref{MSE_pt_var} and \eqref{MSE_pt_bias}.
\end{proof}

\begin{proposition}[\textbf{MSE for estimator $\nabla{\hat{p_t}}$}]\label{MSEphatprime}
    If we take the bandwidth $h = \frac{\sqrt{t}}{D_n}$ for some constant $D_n$ that depends on $n$, then there exists a constant $C_3$, such that the MSE for $\nabla\hat{p}_t(x)$ satisfies
    \begin{equation*}
        \mathrm{MSE}(\nabla\hat{p}_t(x)) \le C_3 \left(\frac{ \ell^{\frac{3d}{2}}\, D_n^{d+2}}{nt^{\frac{d+2}{2}}}  p_t^*(x) + \left(\frac{D_n}{de}\right)^{-2(\ell-1)} \ell^{-\ell + 5d - \frac{1}{2}} t^{-1}\right),
    \end{equation*}
    where $\ell$ is the order of kernel defined in Lemma~\ref{Bound_on_kernel}, $p_t^*(x):=\sup_{\|\lambda\|_{\infty} < h}p_t(x+\lambda)$. In particular, if we take $\ell=\log n$ and $D_n = C \sqrt{\log n}$ for some constant $C> de$, we have
    \begin{equation*}
        \mathrm{MSE}(\nabla\hat{p}_t(x)) \le C_3 \;\mathrm{polylog}(n)\left(\frac{p_t^*(x)}{n t^\frac{d+2}{2}} + (\log n)^{-\log n } t^{-1}\right).
    \end{equation*}
\end{proposition}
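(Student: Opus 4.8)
The plan is to mirror the bias--variance decomposition used for $\hat p_t$ in the proof of Proposition~\ref{MSEphat}, now applied to the gradient estimator, which by differentiating the kernel density estimator equals $\nabla\hat p_t(x) = \frac{1}{nh^{d+1}}\sum_{i=1}^n (\nabla K_d)\!\left(\frac{x-X_i^t}{h}\right)$. Write $\mathrm{MSE}(\nabla\hat p_t(x)) = \sigma^2(x) + \|b(x)\|^2$, where $\sigma^2(x) = \mathbb{E}\big[\|\nabla\hat p_t(x) - \mathbb{E}\nabla\hat p_t(x)\|^2\big]$ and $b(x) = \mathbb{E}\nabla\hat p_t(x) - \nabla p_t(x)$, and bound the two pieces separately. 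A useful remark to keep in mind throughout is that, via integration by parts, $\nabla\hat p_t$ behaves like the order-$\ell$ kernel smoothing of $\nabla p_t$; hence its bias involves $(\ell+1)$-st order derivatives of $p_t$, which is the origin of the extra factor $t^{-1}$ multiplying the super-exponentially small term in the claimed bound (as opposed to the pure $(\log n)^{-\log n}$ term in Proposition~\ref{MSEphat}).

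For the variance, set $\eta_i(x) = (\nabla K_d)\!\left(\frac{x-X_i^t}{h}\right) - \mathbb{E}[\,\cdot\,]$; since the $\eta_i$ are i.i.d.\ and mean zero, $\sigma^2(x) = \frac{1}{nh^{2d+2}}\mathbb{E}\|\eta_1(x)\|^2 \le \frac{1}{nh^{2d+2}}\mathbb{E}\big[\|\nabla K_d\|_2^2\big((x-X_1^t)/h\big)\big]$. Substituting $z=(x-y)/h$ and bounding $p_t$ on $x+h[-1,1]^d$ by $p_t^*(x)$ gives $\sigma^2(x) \le \frac{p_t^*(x)}{nh^{d+2}}\,\big\|\|\nabla K_d(\cdot)\|_2\big\|_{L^2}^2$; plugging in the $L^2$ bound from Lemma~\ref{Bound_on_kernel} and $h=\sqrt t/D_n$ produces the variance term $\frac{D_n^{d+2}\,\ell^{3d/2}}{nt^{(d+2)/2}}\,p_t^*(x)$ (the precise power of $\ell$ is pure bookkeeping).

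For the bias, the key step is the integration-by-parts identity: since $K(\pm1)=0$, for each coordinate $k$,
\[
  \mathbb{E}\big[(\nabla\hat p_t(x))_k\big] = \frac{1}{h}\int K'(u_k)\prod_{j\ne k}K(u_j)\,p_t(x-hu)\,du = \int K_d(u)\,\partial_k p_t(x-hu)\,du,
\]
so $b_k(x) = \int K_d(u)\big[\partial_k p_t(x-hu)-\partial_k p_t(x)\big]\,du$. Next I would Taylor-expand $\partial_k p_t(x-hu)$ to order $\ell$, use the vanishing-moment conditions of $K_d$ (Definition~\ref{kernel_order_def}) to annihilate all terms of order $1,\dots,\ell-1$, and bound the remainder by $h^\ell\sum_{|\gamma|=\ell}\frac{1}{\gamma!}\int|K_d(u)u^\gamma|\,du\cdot\sup_x|D^{\gamma}\partial_k p_t(x)|$. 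The order-$(\ell+1)$ derivative bound $\sup_x|D^\beta p_t(x)|\lesssim C_1^d\|p_0\|_\infty(\ell+1)^{(\ell+1)/2+1/4}t^{-(\ell+1)/2}$ from Proposition~\ref{bound_on_pt}, the moment bound $\int|K_d(u)u^\gamma|\,du\lesssim \ell^{5d/2}d^\ell/\ell!$ from Lemma~\ref{Bound_on_kernel}, and Stirling's approximation $\ell!>\ell^{\ell+1/2}e^{-\ell}$ then combine with $h=\sqrt t/D_n$ to give $\|b(x)\|\lesssim C_1^d\|p_0\|_\infty\,(D_n/de)^{-(\ell-1)}\,\ell^{-\ell/2+5d/2-1/4}\,t^{-1/2}$, i.e.\ the stated bias-squared term. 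Adding $\sigma^2(x)$ and $\|b(x)\|^2$ gives the first display of Proposition~\ref{MSEphatprime}; specializing $\ell=\log n$ and $D_n=C\sqrt{\log n}$ with $C>de$ makes $(D_n/de)^{-2(\ell-1)}\ell^{-\ell+5d-1/2}$ decay faster than any polynomial (dominated by $(\log n)^{-\log n}$), while $\ell^{3d/2}$ and $D_n^{d+2}$ are polylogarithmic, which yields the polylog corollary.

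I expect the bias estimate to be the main obstacle. The delicate points are: (i) carrying out the integration by parts so that exactly the right number of vanishing moments of $K_d$ is used --- this is what pins down the exponent of $D_n/de$; and (ii) verifying that the very rapid growth $t^{-(\ell+1)/2}$ of the order-$(\ell+1)$ derivatives of $p_t$, multiplied by the factorially large moment constants of the high-order kernel, is nonetheless dominated by the factor $h^\ell=(t/D_n^2)^{\ell/2}$, leaving only a harmless $t^{-1/2}$ (equivalently $t^{-1}$ after squaring) and a prefactor that is super-exponentially small in $\ell$. The variance step is routine once the $L^2$ norm of $\nabla K_d$ from Lemma~\ref{Bound_on_kernel} is in hand.
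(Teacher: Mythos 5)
Your proposal is correct and follows essentially the same bias--variance strategy as the paper's proof, with the same variance bound via the $L^2$ norm of $\nabla K_d$ and the same ingredients for the bias (Taylor expansion, vanishing moments, Proposition~\ref{bound_on_pt}, Stirling). The only difference is a minor reorganization of the bias estimate: you integrate by parts at the outset to recast $\mathbb{E}[\nabla\hat p_t]$ as an order-$\ell$ kernel smoothing of $\nabla p_t$ and then bound the remainder via $(\ell+1)$-order derivatives of $p_t$, whereas the paper Taylor-expands $p_t$ directly against $\nabla K_d$ and uses $\ell$-order derivatives together with the $1/h$ prefactor; both routes give $h^{\ell}t^{-(\ell+1)/2}\asymp h^{\ell-1}t^{-\ell/2}\asymp D_n^{-\ell}t^{-1/2}$ and a super-exponentially small prefactor in $\ell$, which is all that matters for the polylog corollary.
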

\begin{proof}
We follow the same procedure above in Proposition~\ref{MSEphat}. The Bias-Variance trade-off reads 
    \begin{equation*}
        \text{MSE}(\nabla \hat p_t(x)) = \sigma^2(x) + \left\|b(x)\right\|^2,
    \end{equation*}
where
\begin{equation*}
    \sigma^2(x) = \mathbb{E}\left[\left\| \nabla \hat{p}_t(x) - \mathbb{E}\left[\nabla\hat{p}_t(x)\right]  \right\|^2 \right],\;\;\;\; b(x) = \mathbb{E}\left[\nabla \hat{p}_t(x) \right] - \nabla p_t(x).
\end{equation*}
By definition, the kernel density estimation for the gradient $\nabla p_t(x)$ is:
\begin{equation*}
    \nabla \hat{p}_t(x) = \frac{1}{nh^{d+1}}\sum_{i=1}^n \nabla K_d\left(\frac{x-X_i^t}{h}\right),
\end{equation*}
where $X^t_i$ stands for the $i-th$ sample at time $t$.
We first analyze the variance term $\sigma^2(x)$.
Denote
    \begin{equation*}
        \eta_i(x):= \nabla K_d\left(\frac{x-X_i^t}{h}\right)-\mathbb{E}\left[\nabla K_d\left(\frac{x-X_i^t}{h}\right)\right],
    \end{equation*}
    Then, we get
    \begin{align*}
    \mathbb{E}\left[\|\eta_i(x)\|^2\right] &\le \mathbb{E}\left[\left\|\nabla K_d\left(\frac{x-X_i^t}{h}\right)\right\|^2\right] 
    \\
    &= \int_{\mathbb{R}^d} \left\|\nabla K_d\left(\frac{x-y}{h}\right)\right\|^2 p_t(y)\;dy 
    \\
    &= h^d \int_{[-1,1]^d} \left\|\nabla K_d(u)\right\|^2 p_t(x-uh)du 
    \\
    &\le h^d \;\sup_{\|\lambda\|_{\infty} < h}p_t(x+\lambda) \int_{[-1,1]^d} \left\|\nabla K_d(u)\right\|^2du 
    \\
    &\lesssim h^d p_t^*(x)\,d \ell^{\frac{3d}{2}}, 
\end{align*}
where in the last inequality we use the result in Lemma~\ref{Bound_on_kernel}.
    Therefore, since $\eta_i(x)$ are independent and $\mathbb {E} \eta_i(x) = 0$, we have
    \begin{align*}
        \sigma^2(x) &= \mathbb{E}\left[\left\|\frac{1}{n h^{d+1}} \sum_{i=1}^n \eta_i(x)\right\|^2\right] 
        = \frac{1}{nh^{2d+2}} \mathbb{E}\left[\eta_1^2(x)\right] 
        \lesssim \frac{d \ell^{\frac{3d}{2}}}{nh^{d+2}}  p_t^*(x).
    \end{align*}
    Then, if we take $h = \frac{\sqrt{t}}{D_n}$, then
    \begin{equation}\label{MSE_prime_pt_var}
        \sigma^2(x) \lesssim \frac{d \ell^{\frac{3d}{2}}\, D_n^{d+2}}{nt^{\frac{d+2}{2}}}  p_t^*(x).
    \end{equation}
    Next, we consider the bias term $b(x)$, 
    \begin{align*}
        b(x) 
        &= \frac{1}{nh^{d+1}}\mathbb{E} \left[\sum_{i=1}^n \nabla K_d\left(\frac{x-X^i}{h}\right)\right] - \nabla p_t(x) \\
        &= \frac{1}{h^{d+1}} \int_{\mathbb{R}^d} \nabla K_d\left(\frac{x-y}{h}\right)p(y)dy - \nabla p_t(x) \\
        &= \frac{1}{h} \int_{\mathbb{R}^d} \nabla K_d\left(-u \right) p(x+uh)du - \nabla p_t(x).
        \end{align*}
We consider the first coordinate of $b(x)$:
\begin{equation*}
b_1(x) := \frac{1}{h}\int  K'(-u_1)\prod_{j=2}^d K(-u_j)p_t(x_1+u_1h,\ldots,x_d+u_dh)du_1\ldots du_d - \frac{\partial}{\partial x_1} p_t(x)
\end{equation*}
By Multivariate version of Taylor's Theorem \citet{folland2005higher}, we have
\begin{equation*}
    p_t(x+uh) = \sum_{|\alpha|\le \ell-1} \frac{D^\alpha p(x)}{\alpha!} (uh)^\alpha +  \sum_{|\alpha| = \ell} \frac{D^\alpha p(x+\tau uh)}{\alpha!} (uh)^\alpha, \quad \text{for some }\tau\in [0,1].
\end{equation*}
    Using $K(1) = K(-1) = 0$, $\int K_d(-u)du=1$, $\int u^\ell K_d(u) du = 0$ for $\ell \le \beta -1$ and integration by parts, we have the first coordinate of $b(x)$:
    \begin{align*}
        |b_1(x)| &= \frac{1}{h} \left|\int \sum_{|\alpha| = \ell} (\nabla K_d(-u))_1 D^\alpha p(x+\tau uh) \frac{(uh)^\alpha}{\alpha!} du \right| 
        \\
        &\le h^{\ell-1} \sum_{|\alpha| = \ell} \int_{\mathbb{R}^d} \left|(\nabla K_d(-u))_1 \left(D^\alpha p_t(x+\tau uh) \right)\frac{u^\alpha}{\alpha !}\right| du 
        \\
        &\le h^{\ell-1} \sum_{|\alpha| = \ell} \int_{\mathbb{R}^d} \left|(\nabla K_d(-u))_1 \frac{u^\alpha}{\alpha !}\right| du \, \left(\sup_{x} |D^\alpha p_t(x)|\right).
    \end{align*}
By Lemma~\ref{bound_on_pt},
\begin{equation*}
    \sup_x |D^{\alpha}p_t(x)| \leq C_1^d \|p_0\|_{\infty} \ell^{\frac{\ell}{2} + \frac{1}{4}} t^{-\ell/2}.
\end{equation*}
Using the property of the kernel $\nabla K_d$ in Lemma~\ref{Bound_on_kernel} and multinomial theorem, we have
\begin{align*}
    \sum_{|\alpha| = \ell} \int_{\mathbb{R}^d} \left|(\nabla K_d(-u))_1 \frac{u^\alpha}{\alpha !}\right| du &= \sum_{|\alpha| = \ell} \left[ \left(\frac{1}{\alpha_1!} \int_{\mathbb{R}} \left|K'(-u_1) u_1^{\alpha_1}\right| du_1 \right)
    \prod_{i=2}^d \frac{1}{\alpha_i!}\int_{\mathbb{R}} \left|K(-u_i) u_i^{\alpha_i}\right| du_i \right]
    \\
    &\lesssim \sum_{|\alpha| = \ell} \prod_{i=1}^d \frac{1}{\alpha_i!}\frac{\ell^{5/2}}{\alpha_i} 
    \\
    &\le \ell^{5d/2} \sum_{|\alpha| = \ell}  \frac{1}{\alpha!}
    \\
    &= \ell^{5d/2} \frac{d^\ell}{\ell!}.
\end{align*}
By Stirling's approximation,
\begin{align*}
    \ell! 
    &> \sqrt{2\pi \ell} \,\ell^\ell e^{-\ell} e^{\frac{1}{12\ell+1}} > \ell^{\ell + \frac{1}{2}} e^{-\ell}.
\end{align*}
Thus, 
\begin{align*}
       |b_1(x)| &\lesssim C_1^d \|p_0\|_{\infty} h^{\ell-1}\, (de)^{\ell}\ell^{- \frac{\ell}{2} +\frac{5d}{2} -\frac{1}{4}}\, t^{-\frac{\ell}{2}}.
\end{align*}
Taking the bandwidth as $h = \frac{\sqrt{t}}{D_n}$, we have:
\begin{equation*}
        |b_1(x)| \lesssim C_1^d \|p_0\|_{\infty} de\,\left(\frac{D_n}{de}\right)^{-(\ell-1)} \ell^{- \frac{\ell}{2} +\frac{5d}{2} -\frac{1}{4}} t^{-\frac{1}{2}},
\end{equation*}
and then
\begin{equation}\label{MSE_prime_pt_bias}
    \|b(x)\|^2 \le  |b_1(x)|^2 \lesssim d^2eC_1^{2d} \|p_0\|_{\infty}^2 \left(\frac{D_n}{de}\right)^{-2(\ell-1)} \ell^{-\ell + 5d - \frac{1}{2}} t^{-1}.
\end{equation}
Finally, The results follow by combining \eqref{MSE_prime_pt_var} and \eqref{MSE_prime_pt_bias}.
\end{proof}

\section{Proof of Estimation Error for Score Function}
\subsection{Proof of Theorem~\ref{theorem2}}\label{appendix_theorem2}
The proof of our main result begins with a discussion of the estimation of the target total error term. Using the kernel score estimator we constructed in Appendix~\ref{appendix_score_function}, i.e.,
\begin{equation*}
    \hat s_t(x):= \frac{\nabla \hat p_t(x)}{\hat p_t(x)} \mathbbm1_{\hat p_t(x) > \rho_n},
\end{equation*}
the following proposition shows that the score error $\|\hat s_t(x) - s_t(x)\|^2$ can be decomposed into two components: the error of the gradient estimator $\nabla\hat{p}_t(x)$ and the error of the density estimator $\hat{p}_t(x)$. Therefore, it suffices to control the error of $\nabla\hat{p}_t(x)$ and $\hat{p}_t(x)$.
\begin{lemma}\label{score error upper bound}
    For any $t>0$, the score estimator defined in \eqref{score_estimator_eq} satisfies
    \begin{align*}
        &\int_x \mathbb{E} \left[\|\hat s_t(x) - s_t(x)\|^2\right]p_t(x)dx \\
        &\leq 2\int_x \mathbb{E}\left[ \frac{\|\nabla\hat{p}_t(x)-\nabla p_t(x)\|^2 + \|s_t(x)\|^2 |\hat p_t(x)-p_t(x)|^2}{\hat p_t(x)^2}\mathbbm1_{\{\hat p_t(x) > \rho_n\}} \right] p_t(x)dx \\
        &+ \int_x \mathbb{P}\left( \hat p_t(x) \leq \rho_n  \right) \|s_t(x)\|^2p_t(x)dx.
    \end{align*}
\end{lemma}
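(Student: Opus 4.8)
The plan is to split the pointwise quantity $\|\hat s_t(x)-s_t(x)\|^2$ according to the indicator appearing in the definition \eqref{score_estimator_eq}, namely the two events $\{\hat p_t(x)>\rho_n\}$ and $\{\hat p_t(x)\le\rho_n\}$, and handle each separately. On the first event we have $\hat s_t(x)=\nabla\hat p_t(x)/\hat p_t(x)$, so the task reduces to bounding the squared difference of two ratios; on the second event $\hat s_t(x)=0$, so $\|\hat s_t(x)-s_t(x)\|^2=\|s_t(x)\|^2$ exactly.

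For the first event, the key step is the algebraic identity
\begin{equation*}
\frac{\nabla\hat p_t(x)}{\hat p_t(x)}-\frac{\nabla p_t(x)}{p_t(x)}
=\frac{\nabla\hat p_t(x)-\nabla p_t(x)}{\hat p_t(x)}
+\frac{\nabla p_t(x)}{p_t(x)}\cdot\frac{p_t(x)-\hat p_t(x)}{\hat p_t(x)},
\end{equation*}
obtained by adding and subtracting $\nabla p_t(x)/\hat p_t(x)$. Recognizing $\nabla p_t(x)/p_t(x)=s_t(x)$ and applying $\|a+b\|^2\le 2\|a\|^2+2\|b\|^2$ yields, on $\{\hat p_t(x)>\rho_n\}$,
\begin{equation*}
\|\hat s_t(x)-s_t(x)\|^2\le 2\,\frac{\|\nabla\hat p_t(x)-\nabla p_t(x)\|^2+\|s_t(x)\|^2\,|\hat p_t(x)-p_t(x)|^2}{\hat p_t(x)^2}.
\end{equation*}
Note there is no division-by-zero concern here since the bound is only invoked on the event where $\hat p_t(x)>\rho_n>0$; this is precisely the role of the truncation.

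Finally I would take expectations over the samples and integrate against $p_t(x)\,dx$. Writing $\mathbb E[\|\hat s_t(x)-s_t(x)\|^2]=\mathbb E[\,\cdot\,\mathbbm1_{\{\hat p_t(x)>\rho_n\}}]+\mathbb E[\,\cdot\,\mathbbm1_{\{\hat p_t(x)\le\rho_n\}}]$, the first term is dominated by the displayed bound above, and for the second term, using that $\hat s_t(x)=0$ there and that $s_t(x)$ is deterministic, $\mathbb E[\|s_t(x)\|^2\mathbbm1_{\{\hat p_t(x)\le\rho_n\}}]=\|s_t(x)\|^2\,\mathbb P(\hat p_t(x)\le\rho_n)$. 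Combining the two contributions gives the stated inequality. This lemma is essentially an exact decomposition, so there is no genuine obstacle within its proof itself; the real work is deferred to bounding the resulting three terms, which is where Proposition~\ref{MSE_main}, the high-probability concentration of $\hat p_t$, and the sub-Gaussian/Rényi-entropy estimates enter.
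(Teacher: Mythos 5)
Your proposal is correct and follows essentially the same route as the paper: the same split into the events $\{\hat p_t(x)>\rho_n\}$ and $\{\hat p_t(x)\le\rho_n\}$, the same add-and-subtract decomposition of the difference of ratios (the paper multiplies through by $p_t(x)\hat p_t(x)$, which is algebraically identical to your adding and subtracting $\nabla p_t(x)/\hat p_t(x)$), and the same use of $\|a+b\|^2\le 2\|a\|^2+2\|b\|^2$. The identification of the second term as $\|s_t(x)\|^2\,\mathbb{P}(\hat p_t(x)\le\rho_n)$ is also exactly what the paper does.
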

\begin{proof}
If $x$ is such that $\hat p_t(x) > \rho_n$,
we have the decomposition
\begin{align*}
    \hat{s}_t(x) - s_t(x) &= \nabla \log \hat{p}_t(x) - \nabla \log p_t(x) \\
    &= \frac{\nabla \hat{p}_t(x)}{\hat{p}_t(x)} - \frac{\nabla p_t(x)}{p_t(x)} \\
    &= \frac{\nabla \hat{p}_t(x) p_t(x) - \nabla p_t(x) p_t(x) + \nabla p_t(x) p_t(x) - \nabla p_t(x) \hat{p}_t(x)}{\hat{p}_t(x) p_t(x)} \\
    &= \frac{\nabla \hat{p}_t(x) - \nabla p_t(x) + s_t(x)(p_t(x) - \hat{p}_t(x))}{\hat{p}_t(x)}.
\end{align*}
Therefore,
\begin{align*}
    &\quad \int_x \mathbb{E} \left[\|\hat s_t(x) - s_t(x)\|^2\mathbbm1_{\hat{p}_t(x)> \rho_n}\right]p_t(x)dx
    \nonumber\\
    &\leq 2\int_x \mathbb{E}\left[ \frac{\|\nabla\hat{p}_t(x)-\nabla p_t(x)\|^2 + \|s_t(x)\|^2 |\hat p_t(x)-p_t(x)|^2}{\hat p_t(x)^2} \mathbbm1_{\hat{p}_t(x)> \rho_n}\right] p_t(x)dx.
\end{align*}
On the other hand, if $x$ is such that $\hat p_t(x) \le \rho_n$,
we have 
$\hat s_t(x) = 0$.
Therefore,
\begin{equation*}
    \int_{x} \mathbb{E}\left[\|\hat s_t(x) - s_t(x)\|^2\mathbbm1_{\{\hat p_t(x) \leq \rho_n\}}\right]p_t(x)dx = \int_x \mathbb{P}\left( \hat p_t(x) \leq \rho_n  \right) \|s_t(x)\|^2p_t(x)dx.
\end{equation*}
\end{proof}
We further consider the score error in three cases, respectively. Recall that our score estimator is defined as 
\begin{equation*}
    \hat s_t(x):= \frac{\nabla \hat p_t(x)}{\hat p_t(x)} \mathbbm1_{\hat p_t(x) > \rho_n}.
\end{equation*}
The key challenge is to control the error in low-density regions. We therefore consider the following three cases. Let $c> 4C_d + dT_2 + 2$ be a sufficiently large constant, where is a constant that only depends on the dimension $d$.
In the first case $p_t(x) > \rho_n \log^c {n}$, the density is relatively high, so as we will see in the following proof, with high probability, $p_t(x)$ and $\hat p_t(x)$ will have the same order and as a result, $\hat p_t(x) > \rho_n$, which implies that the first term in Lemma~\ref{score error upper bound} will dominate. Whereas in case 2 we consider the lower density area $p_t(x) < \rho_n \log^{-c} {n}$, with high probability, $\hat p_t(x) < \rho_n$ and therefore, $\hat s_t(x) = 0$. Finally, in case 3 where $ \rho_n \log^{-c} {n} < p_t(x) < \rho_n \log^c {n} $, we must consider both $\hat{s}_t(x) \neq 0$ and $\hat{s}_t(x) = 0$, since neither of them is guaranteed with high probability. In this case, we will apply the methods from both Case 1 and Case 2 to bound this term.
\subsubsection{CASE 1: \texorpdfstring{$p_t(x) > \rho_n \log^c {n}$}{case1}}
\begin{lemma}\label{main_lemma1}
    For any time $n^{-T_1}<t<n^{T_2}$, and for any  $0 <\varepsilon <1 $, taking $\rho_n = \frac{1}{n\,t^{d/2}}$, the score error on the set
    \begin{equation*}
        G_1 := \{x \colon p_t(x) > \rho_n \log^c {n}\}
    \end{equation*}
    can be upper bounded by
    \begin{equation*}
        \mathbb{E}\left[\int_{G_1} \|\hat s_t(x) - s_t(x)\|^2 p_t(x) dx \right]\lesssim \mathrm{polylog}(n) \,n^{-1}t^{-\frac{d+2}{2}}(t^{\frac{d}{2}} + \sigma_0^d)\,g_1(n, \varepsilon),
    \end{equation*}
    where 
    \begin{equation*}
        g_1(n, \varepsilon):= \exp\left(\frac{1}{\varepsilon \log n}\right) (\varepsilon)^{-1}\,n^{C_1 \varepsilon},
    \end{equation*}
    where $C_1 = \frac{2+dT_2}{2}$.
    \begin{remark}
We will take $\varepsilon = \frac{\log \log n}{\log n}$ later in  Section~\ref{sec_combine} when Lemma~\ref{main_lemma1} is applied.
    \end{remark}
\end{lemma}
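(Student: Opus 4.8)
The plan is to start from Lemma~\ref{score error upper bound}, restricted to $G_1$, which splits the target into a ``ratio term'' $\int_{G_1}\mathbb{E}\big[\tfrac{\|\nabla\hat p_t(x)-\nabla p_t(x)\|^2+\|s_t(x)\|^2|\hat p_t(x)-p_t(x)|^2}{\hat p_t(x)^2}\mathbbm{1}_{\{\hat p_t(x)>\rho_n\}}\big]p_t(x)\,dx$ and a ``truncation term'' $\int_{G_1}\mathbb{P}(\hat p_t(x)\le\rho_n)\|s_t(x)\|^2p_t(x)\,dx$. On $G_1$ the density is at least $\rho_n\log^c n$, so the effective local sample size $nh^d p_t(x)\gtrsim\log^{\,c-O(d)}n$ is large; feeding this into the concentration bound for the KDE (Lemma~\ref{high_probability_bound}) gives, for each $x\in G_1$, a ``good event'' $E_x$ with $\mathbb{P}(E_x^c)$ super-polynomially small, on which $\hat p_t(x)\asymp p_t(x)>\rho_n$ (the indicator is active and $\hat p_t(x)^2\gtrsim p_t(x)^2$). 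This disposes of the truncation term [super-poly-small probability times $\int\|s_t\|^2p_t\lesssim t^{-1}$ via Lemma~\ref{s_x_bound}] and of the $E_x^c$-part of the ratio term [on $\{\hat p_t>\rho_n\}$ the integrand is deterministically of size $\mathrm{poly}(n,t^{-1})$, since $\|\nabla\hat p_t\|\lesssim\mathrm{polylog}(n)\,t^{-(d+1)/2}$, $|\hat p_t-p_t|\lesssim\mathrm{polylog}(n)\,t^{-d/2}$, $\|s_t(x)\|^2\lesssim n\,t^{-1}$ on $G_1$, and $\hat p_t^2>\rho_n^2$; multiplying by $\mathbb{P}(E_x^c)$ leaves something negligible].

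What remains is the $E_x$-part of the ratio term. Using $\hat p_t(x)^{-2}\lesssim p_t(x)^{-2}$ there, taking the expectation (which turns the numerator into mean squared errors) and integrating against $p_t(x)\,dx$ bounds it by $\lesssim\int_{G_1}\tfrac{\mathrm{MSE}(\nabla\hat p_t(x))+\|s_t(x)\|^2\mathrm{MSE}(\hat p_t(x))}{p_t(x)}\,dx$. Now invoke Proposition~\ref{MSE_main}. The bias parts of the two MSEs ($\asymp(\log n)^{-\log n}$, deterministic), multiplied by the crude estimates $p_t(x)^{-1}\le nt^{d/2}/\log^c n$ on $G_1$, $\|s_t(x)\|^2\lesssim n\,t^{-1}$ and $|G_1|\le\mathrm{poly}(n)$, are super-polynomially small, hence negligible. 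The variance parts leave $\mathrm{polylog}(n)\int_{G_1}\tfrac{p_t^*(x)}{p_t(x)}\big(\tfrac{1}{nt^{(d+2)/2}}+\tfrac{\|s_t(x)\|^2}{nt^{d/2}}\big)\,dx$, with $p_t^*(x)=\sup_{\|\lambda\|_\infty\le h}p_t(x+\lambda)$.

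Two estimates finish the argument. \textbf{(i) Size of $G_1$.} Since $p_0$ is $\sigma_0$-sub-Gaussian, $p_t=p_0*\phi_t$ has Gaussian-type tails $p_t(x)\lesssim (2\pi t)^{-d/2}\exp(-\Omega(\|x\|^2/(\sigma_0^2+t)))$, so $G_1=\{p_t>\log^c n/(nt^{d/2})\}$ lies in a ball of radius $O(\sqrt{(\sigma_0^2+t)\log n})$, giving $|G_1|\lesssim\mathrm{polylog}(n)\,(t^{d/2}\vee\sigma_0^d)\le\mathrm{polylog}(n)\,(t^{d/2}+\sigma_0^d)$. \textbf{(ii) Controlling $p_t^*/p_t$ and the score on the bulk.} Put $\tau:=\Theta\!\big(\varepsilon(\log n)^{3/2}/\sqrt t\big)$, chosen so that $h\sqrt d\,\tau=C_1\varepsilon\log n$, and $G_1':=\{x\in G_1:\ \|s_t(z)\|\le\tau\ \text{for all}\ \|z-x\|_\infty\le h\}$. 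On $G_1'$, the mean-value bound $\tfrac{p_t^*(x)}{p_t(x)}=\sup_{\|\lambda\|_\infty\le h}\exp\!\int_0^1\langle s_t(x+u\lambda),\lambda\rangle\,du\le\exp(h\sqrt d\,\tau)=n^{C_1\varepsilon}$ holds, and at the same time $\|s_t(x)\|\le\tau=\mathrm{polylog}(n)\,\varepsilon\,t^{-1/2}$; hence $\int_{G_1'}\tfrac{p_t^*(x)}{p_t(x)}\big(\tfrac{1}{nt^{(d+2)/2}}+\tfrac{\|s_t(x)\|^2}{nt^{d/2}}\big)dx\le n^{C_1\varepsilon}\mathrm{polylog}(n)\tfrac{|G_1|}{nt^{(d+2)/2}}\lesssim\mathrm{polylog}(n)\,n^{C_1\varepsilon}\,n^{-1}t^{-(d+2)/2}(t^{d/2}+\sigma_0^d)$. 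On the complement $G_1\setminus G_1'$ — where $\|s_t\|$ exceeds $\tau$ somewhere within $\ell_\infty$-distance $h$ — Markov's inequality with the $L^m$ score-moment bound $\mathbb{E}[\|s_t(X_t)\|^m]\lesssim t^{-m/2}$ (Lemma~\ref{s_x_bound}), $m\asymp\log n$, shows $X_t$ lands there with super-polynomially small probability (the $h$-neighbourhood in the definition of $G_1'$ is absorbed using smoothness of $p_t$ on scale $h$, which is the origin of the $\exp(1/(\varepsilon\log n))$, $\varepsilon^{-1}$ corrections); combined with the crude pointwise bound $\mathbb{E}\|\hat s_t(x)-s_t(x)\|^2\lesssim\mathrm{polylog}(n)\,n^2\,t^{-1}$ on $G_1$, the contribution of $G_1\setminus G_1'$ is negligible. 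Collecting the $\varepsilon$-dependent factors gives the claimed bound with the factor $g_1(n,\varepsilon)$.

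\textbf{Main obstacle.} The delicate point is step (ii): there is no pointwise bound $\|s_t(x)\|\lesssim t^{-1/2}$ on the high-density region (it genuinely fails near sharply peaked modes of $p_t$), so $p_t^*/p_t$ and the cross term $\|s_t\|^2\mathrm{MSE}(\hat p_t)$ cannot be controlled deterministically; one must instead show the score is well behaved \emph{off} a set of super-polynomially small $p_t$-measure (via the $L^m$ moment estimate) and absorb the residual growth of $p_t^*/p_t$ into the $n^{C_1\varepsilon}$ slack, and it is exactly this trade-off, together with the neighbourhood enlargement, that produces $g_1(n,\varepsilon)$.
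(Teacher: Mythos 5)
Your decomposition and the treatment of the good/bad events, the truncation term, and the bias contribution all track the paper's proof of Lemma~\ref{main_lemma1}, reducing the problem to bounding $\mathrm{polylog}(n)\int_{G_1}\tfrac{p_t^*(x)}{p_t(x)}\big(\tfrac{1}{nt^{(d+2)/2}}+\tfrac{\|s_t(x)\|^2}{nt^{d/2}}\big)dx$. Where you diverge is in estimating this quantity, and the divergence is not benign. The paper does not attempt any pointwise control of the score. Instead it proves the \emph{deterministic} inequality (Lemma~\ref{pstar_property}) $p_t^*(x)\le(1-\varepsilon)^{-d/2}\exp\{d(1-\varepsilon)/(2D_n^{2}\varepsilon)\}\,p_t^{1-\varepsilon}(x)$ for all $x$, via Jensen's inequality on the Gaussian convolution ($p_t^{1-\varepsilon}\gtrsim p_0*\phi_{t/(1-\varepsilon)}$) followed by a supremum over Gaussian shifts. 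This turns $p_t^*/p_t$ into $C_\varepsilon\,p_t^{-\varepsilon}$, the lower bound $p_t>\rho_n\log^c n$ on $G_1$ absorbs the extra $p_t^{-\varepsilon}$, and the cross term $\int_{G_1}\tfrac{p_t^*}{p_t}\|s_t\|^2dx$ is then bounded (Lemma~\ref{bound_on_GS}) by H\"older with the $L^m$ score-moment estimate at $m\asymp\varepsilon^{-1}$, which is what produces the $\varepsilon^{-1}$ and the second $\rho_n^{-\varepsilon}$ factor. So your ``Main obstacle'' assertion that $p_t^*/p_t$ ``cannot be controlled deterministically'' is simply not correct, and the $\exp(1/(\varepsilon\log n))$ correction originates in Lemma~\ref{pstar_property}, not in any neighbourhood enlargement.

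Your substitute in step (ii) --- splitting $G_1$ into $G_1'$ (score at most $\tau$ throughout an $\ell_\infty$-ball of radius $h$) and $G_1\setminus G_1'$, bounding $p_t^*/p_t$ on $G_1'$ by $\exp\!\int_0^1\langle s_t(x+u\lambda),\lambda\rangle du$, and discarding $G_1\setminus G_1'$ by Markov --- has a circularity in the discard step. The set $G_1\setminus G_1'$ is the $\ell_\infty$ $h$-fattening of $\{z:\|s_t(z)\|>\tau\}$, while the moment bound $\mathbb{E}[\|s_t(X_t)\|^m]\lesssim t^{-m/2}$ only controls the $p_t$-mass of the unfattened set. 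To conclude that the fattened set also has small $p_t$-mass, you must show $p_t$ cannot increase much over an $\ell_\infty$-neighbourhood of radius $h$, which is precisely a bound on $p_t^*/p_t$ on that set --- the quantity you are constructing the partition to estimate in the first place. Your remark that this is ``absorbed using smoothness of $p_t$ on scale $h$'' is stated without a mechanism, and the natural tool to make it rigorous is exactly Lemma~\ref{pstar_property}, at which point the $G_1'$ construction becomes superfluous. As written, this step is a genuine gap.
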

\begin{proof}[Proof of Lemma~\ref{main_lemma1}]
    From Lemma~\ref{score error upper bound},
    \begin{align}
        &\int_{G_1} \mathbb{E} \left[\|\hat s_t(x) - s_t(x)\|^2\right]p_t(x)dx \notag\\
        &\leq 2\int_{G_1} \mathbb{E}\left[ \frac{\|\nabla\hat{p}_t(x)-\nabla p_t(x)\|^2 + \|s_t(x)\|^2 |\hat p_t(x)-p_t(x)|^2}{\hat p_t(x)^2}\mathbbm1_{\{\hat p_t(x) >  \rho_n \}} \right] p_t(x)dx \label{e365}\\
        &+ \int_{G_1} \mathbb{P}\left( \hat p_t(x) \leq \rho_n  \right) \|s_t(x)\|^2p_t(x)dx \label{e366}
    \end{align}
    Denote
    \begin{equation*}
        I_t(x) := \frac{\|\nabla \hat {p}_t(x)-\nabla p_t(x)\|^2 + \|s_t(x)\|^2 |\hat p_t(x)-p_t(x)|^2}{\hat p_t(x)^2}.
    \end{equation*}
    By \autoref{high_probability_bound}, define the random set
    \begin{equation*}
        A := \left\{x \colon \left|\hat p_t(x) - p_t(x)\right|< C_4(\alpha)\; \mathrm{polylog}(n)\left(\sqrt{\frac{p_t^*(x)}{n t^{\frac{d}{2}}}} + \frac{1}{n t^{\frac{d}{2}}} + (\log n)^{-\frac{(\log n)}{2}}\right)\right\}, 
    \end{equation*}
    then $\{x\in A\}$ has probability greater than $1-n^{-\alpha}$ for any $\alpha > 0$, where $p_t^*(x):=\sup_{\|\lambda\|_{\infty} < h}p_t(x+\lambda)$ and $C_4(\alpha)$ is some constant that only depends on $p_0$, $d$ and $\alpha$.
    Then we consider
    \begin{align}
     I_t(x)\mathbbm1_{\{\hat p(x) > \rho_n\}} &= I_t(x)\mathbbm1_{\{\hat p(x) > \rho_n\}\cap A} + I_t(x)\mathbbm1_{\{\hat p(x) > \rho_n\}\cap A^c}.\label{eq295}
    \end{align}
    Now we split the proof into several parts:
    
    \noindent \textbf{Part 1: The first term $I_t(x)\mathbbm1_{\{\hat p(x) > \rho_n\}\cap A}$ in \eqref{eq295}}.
    Under the random set $\{x:\hat p(x) > \rho_n\}\cap A\cap G_1$, by Lemma~\ref{phat_ge_p}: 
    \begin{equation*}
    \hat p_t^2(x) \geq \left(p_t(x) -  C_4(\alpha)\; \mathrm{polylog}(n)\left(\sqrt{\frac{p_t^*(x)}{nt^{\frac{d}{2}}}} + \frac{1}{n t^{\frac{d}{2}}} + (\log n)^{-\log n}\right)\right)^2\ge \frac{1}{4} p_t^2(x).
    \end{equation*} 
    Therefore, for any $x \in G_1$,
    \begin{align*}
        I_t(x)\mathbbm1_{\{\hat p(x) > \rho_n\}\cap A} &\leq \frac{\|\nabla\hat{p}_t(x)-\nabla p_t(x)\|^2 + \|s_t(x)\|^2 |\hat p_t(x)-p_t(x)|^2}{ p_t(x)^2/4}.
    \end{align*}
    Using Proposition~\ref{MSEphat} and Proposition~\ref{MSEphatprime}:
    \begin{equation}\label{main_pt_equation}
        \mathbb{E}\left[|\hat p_t(x)-p_t(x)|^2\right] \leq C_2 \,\mathrm{polylog}(n) \left(\frac{p_t^*(x)}{n t^{d/2}} + \left(\log n\right)^{-\log n}\right),
    \end{equation}
    \begin{equation}\label{main_ptprime_equation}
        \mathbb{E}\left[\|\nabla\hat{p}_t(x)-\nabla p_t(x)\|^2\right] \le C_3 \,\mathrm{polylog}(n) \left(\frac{p_t^*(x)}{n\,t^{(d+2)/2}} + \left(\log n\right)^{-\log n} t^{-1}\right),
    \end{equation}
    Therefore,
    \begin{align}
        \int_{G_1} \mathbb{E} \left[I_t(x)\mathbbm1_{\{\hat p(x) > \rho_n\}\cap A}\right] p_t(x)dx &\lesssim \mathrm{polylog}(n) \Bigg[\frac{1}{n t^{(d+2)/2}} \int_{G_1} \frac{p_t^*(x)}{p_t(x)}dx \nonumber \\
        &\quad + \frac{1}{n t^{d/2}} \int_{G_1} \frac{p_t^*(x)}{p_t(x)}\|s_t(x)\|^2dx \nonumber \\
        &\quad + (\log n)^{- \log n} \int_{G_1} \left(\frac{t^{-1}}{p_t(x)} + \frac{\|s_t(x)\|^2}{p_t(x)}\right)dx \Bigg].\label{e375}
    \end{align}
    For the last term, we can use Lemma~\ref{bound_on_pt}, Lemma~\ref{Bound_for_G} and the fact that $t > n^{-T_1}$, to obtain:
    \begin{align*}
       \int_{G_1} \left(\frac{t^{-1}}{p_t(x)} + \frac{\|s_t(x)\|^2}{p_t(x)}\right)dx \le |G_1| \left(t^{-1}\rho_n^{-1} \log^{-c} {n}  + t^{-\frac{d}{2}}\,\rho_n^{-3} \log^{-3c} {n} \right)\lesssim \mathrm{poly}(n\log n).
    \end{align*}
    Since $(\log n)^{- \log n} 	\ll \mathrm{poly}(n\log n)$ for sufficiently large $n$, the error for this term can be ignored.
    Using Lemma~\ref{pstar_property}, for any $0<\varepsilon <1$,
    \begin{equation*}
        \frac{p^*_t(x)}{p_t^{1-\varepsilon}(x)}\leq \frac{1}{\sqrt{1-\varepsilon}} \exp\left\{\frac{1-\varepsilon}{D_n^2\varepsilon}\right\}, 
    \end{equation*}
    we have
    \begin{equation*}
        \int_{G_1} \frac{p_t^*(x)}{p_t(x)}dx = \int_{G_1} \frac{p^*_t(x)}{p_t^{1-\varepsilon}(x)} p^{-\varepsilon}_t(x)dx \leq \frac{1}{\sqrt{1-\varepsilon}} \exp\left\{\frac{1-\varepsilon}{2D_n^2\varepsilon}\right\} (\log {n}) ^{-c\varepsilon}\rho_n^{-\varepsilon} |G_1|,
    \end{equation*}
    and 
    \begin{equation*}
        \int_{G_1} \frac{p_t^*(x)}{p_t(x)}\,\|s_t(x)\|^2dx \leq \frac{1}{\sqrt{1-\varepsilon}} \exp\left\{\frac{1-\varepsilon}{D_n^2\varepsilon}\right\} (\log {n}) ^{-c\varepsilon}\rho_n^{-\varepsilon} \int_{G_1} \|s_t(x)\|^2dx.
    \end{equation*}
    Here, $|G_1|$ and $\int_{G_1} \|s_t(x)\|^2dx$ can be upper bounded by Lemma~\ref{Bound_for_G} and Lemma~\ref{bound_on_GS}, accordingly. Therefore, we have
    \begin{align*}
        \int_{G_1} \mathbb{E} &\left[I_t(x)\mathbbm1_{\{\hat p(x) > \rho_n\}\cap A}\right] p_t(x)dx \\
        &\lesssim \mathrm{polylog}(n) \frac{(t^{d/2} + \sigma_0^d)}{n\, t^{(d+2)/2}}\frac{1}{\sqrt{1-\varepsilon}} \exp\left\{\frac{1-\varepsilon}{D_n^2\varepsilon}\right\} (\log {n}) ^{-c\varepsilon}\rho_n^{-\varepsilon} \left(1+m\, (\log n)^{-\frac{c}{m} }\rho_n^{-\frac{1}{m}}\right) \\
        &\le \mathrm{polylog}(n) \frac{(t^{d/2} + \sigma_0^d)}{n\, t^{(d+2)/2}}\frac{1}{\sqrt{1-\varepsilon}} \exp\left\{\frac{1-\varepsilon}{D_n^2\varepsilon}\right\}\rho_n^{-\varepsilon} \left(1+m\,\rho_n^{-\frac{1}{m}}\right) 
    \end{align*}
    Take $m \asymp \frac{1}{\varepsilon}$, and $D_n = C\sqrt{\log n}$, then $\frac{1}{\sqrt{1-\varepsilon}} \exp\left\{\frac{1-\varepsilon}{D_n^2\varepsilon}\right\} \lesssim \exp\left(\frac{1}{\varepsilon \log n}\right)$. Using $\rho_n = n^{-1}t^{-d/2} > n^{-1}n^{-dT_2/2}$, we then have 
    \begin{align*}
        \rho_n^{-\varepsilon} \left(1+m\,\rho_n^{-\frac{1}{m}}\right) &= \rho_n^{-\varepsilon} \left(1+\varepsilon^{-1}\,\rho_n^{-\varepsilon}\right) \\
        &\le \varepsilon^{-1} \rho_n^{-2\varepsilon} \\
        &\le \varepsilon^{-1} n^{ C_1 \varepsilon}, 
    \end{align*}
    where $C_1 = \frac{2+dT_2}{2}$.
    Thus we can conclude that:
    \begin{equation*}
        \int_{G_1} \mathbb{E} \left[I_t(x)\mathbbm1_{\{\hat p(x) > \rho_n\}\cap A}\right] p_t(x)dx 
        \lesssim \mathrm{polylog}(n) \,n^{-1}t^{-\frac{d+2}{2}}(t^{\frac{d}{2}} + \sigma_0^d) \exp\left(\frac{1}{\varepsilon \log n}\right) (\varepsilon)^{-1} n^{C_1 \varepsilon}.
    \end{equation*}
    
    \noindent \textbf{Part 2: The second term $I_t(x)\mathbbm1_{\{\hat p(x) > \rho_n\}\cap A^c}$ in \eqref{eq295}}.
    The error bound in this part essentially uses the fact that $A^c$ is small by concentration inequalities. Under the random set $\{x:\hat p(x) > \rho_n\}\cap A^c\cap G_1$, using $\hat p_t(x) > \rho_n$, $p_t(x) > \rho_n \log^c {n} $ and Lemma~\ref{upper_bound_for_p_minus_phat} of uniform upper bound for $\|\nabla\hat{p}_t(x)-\nabla p_t(x)\|^2$ and $\|\nabla p_t(x)\|^2 |\hat p_t(x)-p_t(x)|^2$,
    \begin{align*}
        I_t(x)\mathbbm1_{\{\hat p(x) > \rho_n\}\cap A^c} &\le \left(\frac{\|\nabla \hat{p}_t(x)-\nabla p_t(x)\|^2}{\rho_n^2} + \frac{\|\nabla p_t(x)\|^2 |\hat p_t(x)-p_t(x)|^2}{\rho_n^2 \, p_t^2(x)}\right)\mathbbm1_{\{\hat p(x) > \rho_n\}\cap A^c}\\
        &\lesssim \frac{t^{-(d+1)}\text{poly}\log (n)}{\rho_n^2}\left(1+(\log{n})^{-2c} \rho_n^{-2}\right)\mathbbm1_{A^c}.
    \end{align*}
    Therefore,
    \begin{align}
        \int_{G_1} \mathbb{E} \left[I_t(x)\mathbbm1_{\{\hat p(x) > \rho_n\}\cap A^c}\right] p_t(x)dx &\lesssim \frac{t^{-(d+1)}\mathrm{poly}(\log n )}{\rho_n^2}\left(1+(\log{n})^{-2c}\rho_n^{-2}\right)
        \int_{G_1} \mathbb{E} \left[\mathbbm1_{A^c}\right] p_t(x)dx \notag\\
        &= \frac{t^{-(d+1)}\mathrm{poly}(\log n )}{\rho_n^2}\left(1+(\log{n})^{-2c}\rho_n^{-2}\right) \int_{G_1} \mathbb{P} \left[x \in A^c\right] p_t(x)dx \notag\\
        &\le \frac{t^{-(d+1)}\mathrm{poly}(\log n )}{\rho_n^2}\left(1+(\log{n})^{-2c}\rho_n^{-2}\right) n^{-\alpha}, \label{eq301}
    \end{align}
    where in the last inequality we use the fact that $\mathbb{P} \left[x\in A^c\right] \leq n^{-\alpha}$.
    Since $\alpha$ can be arbitrarily large and $\frac{t^{-(d+1)}\mathrm{poly}\log n }{\rho_n^2}\left(1+(\log{n})^{-2c}\rho_n^{-2}\right)= O(\mathrm{poly}(n \log n))$, we can ignore the error of this term.
    
    \noindent \textbf{Part 3: The second term $\int_{G_1} \mathbb{P}\left( \hat p_t(x) \leq \rho_n  \right) \|s_t(x)\|^2p_t(x)dx$ in \eqref{e366}}.
    Using Lemma~\ref{set_inclusion1},
    \begin{align*}
        \int_{G_1} \mathbb{P}\left( \hat p_t(x) \leq \rho_n  \right) \|s_t(x)\|^2p_t(x)dx 
        &\leq \int_{G_1} \mathbb{P}\left(x\in A^c  \right) \|s_t(x)\|^2p_t(x)dx \\
        &\leq n^{-\alpha} \,\mathbb{E}_{X\sim p_t}\left[\|s_t(X)\|^2\right] \\
        &\lesssim n^{-\alpha} t^{-1}.
    \end{align*}
    Here in the last inequality, we use Lemma~\ref{s_x_bound} to bound the expectation of moment of the score function. Since $\alpha$ can be sufficiently large and $t^{-1} = O(\mathrm{poly}(n))$, the error of this term can be negligible.
\end{proof}
\begin{lemma}\label{phat_ge_p}
    Let $A$ be the high probability set in Lemma~\ref{high_probability_bound}, i.e.,
    \begin{equation*}
        A := \left\{x\colon\left|\hat p_t(x) - p_t(x)\right|< C_4(\alpha)\; \mathrm{polylog}(n)\left(\sqrt{\frac{p_t^*(x)}{n t^{\frac{d}{2}}}} + \frac{1}{n t^{\frac{d}{2}}} + (\log n)^{-\log n}\right)\right\}.
    \end{equation*}
    For any $x \in A$, if $x$ also satisfies: $\hat p_t(x) > \rho_n$ and $p_t(x) > \log^c n \;\rho_n$, then
    \begin{equation*}
        \hat p_t(x) \ge \frac{1}{2} p_t(x).
    \end{equation*}
    sufficiently large $n$.
\end{lemma}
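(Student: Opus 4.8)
The plan is to reduce the claim to the two-sided estimate $|\hat p_t(x)-p_t(x)| \le \tfrac12 p_t(x)$, which immediately gives $\hat p_t(x) \ge p_t(x)-\tfrac12 p_t(x) = \tfrac12 p_t(x)$. Since $x \in A$, Lemma~\ref{high_probability_bound} gives
\begin{equation*}
|\hat p_t(x)-p_t(x)| < C_4(\alpha)\,\mathrm{polylog}(n)\Bigl(\,\underbrace{\sqrt{\tfrac{p_t^*(x)}{nt^{d/2}}}}_{\mathrm{(I)}} \;+\; \underbrace{\tfrac{1}{nt^{d/2}}}_{\mathrm{(II)}} \;+\; \underbrace{(\log n)^{-\log n}}_{\mathrm{(III)}}\,\Bigr),
\end{equation*}
so it suffices to bound each of $C_4(\alpha)\mathrm{polylog}(n)\cdot\mathrm{(I)}$, $C_4(\alpha)\mathrm{polylog}(n)\cdot\mathrm{(II)}$, $C_4(\alpha)\mathrm{polylog}(n)\cdot\mathrm{(III)}$ by $\tfrac16 p_t(x)$ for all large $n$, and then add. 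Throughout, the density hypothesis $p_t(x) > \rho_n\log^c n$ with $\rho_n = (nt^{d/2})^{-1}$ is the input that converts ``$\rho_n$-sized'' quantities into small fractions of $p_t(x)$.

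Terms $\mathrm{(II)}$ and $\mathrm{(III)}$ are routine. For $\mathrm{(II)}$: by hypothesis $\mathrm{(II)} = \rho_n < p_t(x)/\log^c n$, and since $c$ is taken larger than the exponent of the polylog factor (as fixed at the start of the proof of Theorem~\ref{theorem2}), $C_4(\alpha)\mathrm{polylog}(n)\rho_n \le \tfrac16 p_t(x)$ eventually. For $\mathrm{(III)}$: since $t$ is polynomially bounded below and above (as in Lemma~\ref{main_lemma1}), $\rho_n \ge n^{-C}$ for a constant $C$, hence $p_t(x) > n^{-C}\log^c n$; because $(\log n)^{-\log n} = n^{-\log\log n}$ decays faster than every fixed power of $n$, we get $C_4(\alpha)\mathrm{polylog}(n)(\log n)^{-\log n} \ll n^{-C}\log^c n < p_t(x)$ eventually.

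The substantive step is $\mathrm{(I)}$, where I would trade $p_t^*(x)$ for a constant multiple of $p_t(x)$ by applying Lemma~\ref{pstar_property} with the $n$-dependent choice $\varepsilon = 1/\log n$: since the bandwidth is $h = \sqrt t/D_n$ with $D_n = C\sqrt{\log n}$, one has $\frac{1-\varepsilon}{D_n^2\varepsilon} \le 1/C^2$, so $p_t^*(x) \le C_5\,p_t(x)^{1-1/\log n} = C_5\,p_t(x)\cdot p_t(x)^{-1/\log n}$ for an absolute constant $C_5$. As $p_t(x) > n^{-C}\log^c n$, we have $-\tfrac{1}{\log n}\log p_t(x) \le C$ when $p_t(x)<1$ and $p_t(x)^{-1/\log n}\le 1$ otherwise, so $p_t(x)^{-1/\log n}$ is bounded by an absolute constant and $p_t^*(x) \le C_6\,p_t(x)$. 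Hence $\mathrm{(I)} = \sqrt{\rho_n\,p_t^*(x)} \le \sqrt{C_6\,\rho_n\,p_t(x)} < \sqrt{C_6}\;p_t(x)/\log^{c/2}n$, using $\rho_n < p_t(x)/\log^c n$ again, and since $c/2$ exceeds the polylog exponent, $C_4(\alpha)\mathrm{polylog}(n)\cdot\mathrm{(I)} \le \tfrac16 p_t(x)$ eventually. Adding the three estimates gives $|\hat p_t(x)-p_t(x)| < \tfrac12 p_t(x)$, which is the claim. (The hypothesis $\hat p_t(x)>\rho_n$ is not actually used here; it is in the statement because the lemma is invoked within that regime in the proof of Lemma~\ref{main_lemma1}.) I expect the only delicate point to be precisely this choice $\varepsilon = 1/\log n$: a \emph{fixed} $\varepsilon$ would leave a residual factor $p_t(x)^{-\varepsilon/2}$ that is a positive power of $n$ and would overwhelm the $\log^{-c/2}n$ gain.
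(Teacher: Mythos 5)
Your proof is correct, and in fact it repairs a gap in the paper's own argument for this lemma. The paper bounds the local maximal function crudely by $p_t^*(x)\le\|p_0\|_\infty$, so that term $\mathrm{(I)}$ becomes $\sqrt{\rho_n}$ up to constants; after dividing by $p_t(x)>\rho_n\log^c n$ it obtains
\begin{equation*}
\frac{|\hat p_t(x)-p_t(x)|}{p_t(x)} \lesssim \frac{\mathrm{polylog}(n)}{\log^c n}\bigl(\rho_n^{-1/2}+1\bigr)\lesssim \mathrm{polylog}(n)\,n^{O(1)}.
\end{equation*}
But $\rho_n^{-1/2}=n^{1/2}t^{d/4}$ is a polynomial in $n$ with exponent of order one throughout the range $n^{-T_1}<t<n^{T_2}$, and no choice of the free constant $c$ can make a $\log^{-c}n$ factor defeat a positive power of $n$. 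So the paper's final step ``$\hat p_t(x)/p_t(x)\ge 1-\mathrm{polylog}(n)\,n^{O(1)}\ge 1/2$'' does not follow from the displayed estimate. The decisive improvement in your argument is that you keep the $p_t^*(x)$-dependence and apply Lemma~\ref{pstar_property} with the sliding choice $\varepsilon\asymp 1/\log n$ (for which the constant $\exp\{d(1-\varepsilon)/(2D_n^2\varepsilon)\}$ is $O(1)$ when $D_n\asymp\sqrt{\log n}$, and $p_t(x)^{-\varepsilon}=O(1)$ because $p_t(x)\gtrsim n^{-O(1)}$ on $G_1$). This converts term $\mathrm{(I)}$ into $\sqrt{\rho_n\,p_t(x)}$, and after division by $p_t(x)$ the ratio is $\sqrt{\rho_n/p_t(x)}<\log^{-c/2}n$, a genuinely decaying quantity that the $\mathrm{polylog}(n)$ factor from Lemma~\ref{high_probability_bound} cannot overcome once $c$ is chosen large enough. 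Your route is precisely the one the authors themselves use in the adjacent Lemma~\ref{set_inclusion1} (there with $\varepsilon=\log\log n/\log n$, which is equivalent for this purpose); applying it here as well is what the argument requires. Your parenthetical observation that the hypothesis $\hat p_t(x)>\rho_n$ is never used is also accurate --- the paper's proof does not use it either --- so the lemma as stated carries a vacuous hypothesis.
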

\begin{proof}
For any $x \in A$, since $p^*_t (x) \leq \|p\|_{\infty}$, we have
\begin{align*}
    \left|\hat p_t(x) - p_t(x)\right| &\lesssim \mathrm{polylog}(n)\left(\sqrt{\frac{1}{nt^{\frac{d}{2}}}} + \frac{1}{n t^{\frac{d}{2}}} + (\log n)^{-\log n}\right)
    \\
    &= \mathrm{polylog}(n)\left(\sqrt{\rho_n} + \rho_n + (\log n)^{-\log n}\right)
    \\
    &\lesssim \mathrm{polylog}(n)\left(\sqrt{\rho_n} + \rho_n \right)
\end{align*}
Therefore, under random set A and $G := \{x \colon p_t(x) > \log^c n \rho_n\}$,
\begin{align*}
    \frac{\left|\hat p_t(x) - p_t(x)\right|}{p_t(x)} &\lesssim \mathrm{polylog}(n)\, \frac{\sqrt{\rho_n} + \rho_n}{p_t(x)}  
    \\
    &\leq (\log^c n)^{-1} \mathrm{polylog}(n)\,\left(\rho_n^{-1/2} + 1\right) 
    \\
    &\lesssim \mathrm{polylog}(n)\, n^{O(1)},
\end{align*}
where in the last inequality we use $\rho_n^{-1/2} = n^{1/2} t^{d/4} \le n^{O(1)}$.
Therefore
\begin{align*}
    \frac{\hat p_t(x)}{p_t(x)} \geq 1 - \mathrm{polylog}(n)\, n^{O(1)}. 
\end{align*}
When $n$ is large, we can ensure that 
$\hat p_t(x) \ge \frac{1}{2}\, p_t(x)$.
\end{proof}

\begin{lemma}\label{set_inclusion1}
    Let $A$ be the high-probability random set defined in Lemma~\ref{phat_ge_p}. Define
    \begin{equation*}
        B := \{x \colon p_t(x) > \rho_n \log^c{n} \},
    \end{equation*}
    \begin{equation*}
        C := \{x \colon \hat p_t(x) < \rho_n\}.
    \end{equation*}
    Then $B \cap C \subseteq B \cap A^c$.
\end{lemma}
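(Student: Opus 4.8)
My plan is to recast the inclusion as an emptiness statement. Since $B=G_1$ appears on both sides, $B\cap C\subseteq B\cap A^{c}$ is equivalent to $A\cap B\cap C=\emptyset$. Here $B$ and $C$ are defined through the values of $p_t$ and $\hat p_t$ at a single point, while $A$ is the high-probability set of Lemma~\ref{phat_ge_p}. So I would fix a deterministic $x$ with $p_t(x)>\rho_n\log^{c}n$, assume that $x$ also lies in $A$ for the realized data, and show $\hat p_t(x)\ge\rho_n$, which contradicts $x\in C$.

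The quantitative heart of the argument is to upgrade the crude bound $p_t^{*}(x)\le\|p_0\|_{\infty}$ to the sharp comparison $p_t^{*}(x)\lesssim p_t(x)$, valid on $B$. I would obtain this from Lemma~\ref{pstar_property}: with the bandwidth choice $D_n=C\sqrt{\log n}$ and $\varepsilon\asymp 1/\log n$ it gives $p_t^{*}(x)\le C'\,p_t^{1-1/\log n}(x)$ for a constant $C'$, and since $t<T=n^{T_2}$ forces $\rho_n\ge n^{-1-dT_2/2}$, on $B$ we have $p_t(x)>\rho_n\log^{c}n\ge n^{-1-dT_2/2}$, so the extra factor $p_t^{-1/\log n}(x)$ is bounded by a constant; hence $p_t^{*}(x)\le C_5\,p_t(x)$. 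Substituting this into the defining inequality of $A$ and using $\rho_n=1/(nt^{d/2})$ gives
\[
  |\hat p_t(x)-p_t(x)| < C_4(\alpha)\,\mathrm{polylog}(n)\Big(\sqrt{C_5\,p_t(x)\,\rho_n}+\rho_n+(\log n)^{-\log n}\Big).
\]
Dividing by $p_t(x)$, using $p_t(x)>\rho_n\log^{c}n$, and noting that $(\log n)^{-\log n}=n^{-\log\log n}$ decays faster than the polynomial lower bound $\rho_n\ge n^{-1-dT_2/2}$, I get
\[
  \frac{|\hat p_t(x)-p_t(x)|}{p_t(x)} < C_4(\alpha)\,\mathrm{polylog}(n)\Big(\sqrt{\tfrac{C_5}{\log^{c}n}}+\tfrac{1}{\log^{c}n}+\tfrac{(\log n)^{-\log n}}{\rho_n\log^{c}n}\Big) \lesssim \mathrm{polylog}(n)\,(\log n)^{-c/2}.
\]

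The step I expect to be the main obstacle is the polylog bookkeeping: every $\mathrm{polylog}(n)$ factor above carries an exponent controlled by a dimension-dependent constant $C_d$ (coming from the kernel estimates of Lemma~\ref{Bound_on_kernel} and the concentration bound of Lemma~\ref{high_probability_bound}), and one must verify that $c$, which was chosen larger than a fixed multiple of $C_d$, exceeds twice this exponent, so that the right-hand side above is $o(1)$. Granting this, for all large $n$ one has $|\hat p_t(x)-p_t(x)|\le\tfrac12 p_t(x)$, hence $\hat p_t(x)\ge\tfrac12 p_t(x)>\tfrac12\rho_n\log^{c}n>\rho_n$. This contradicts $x\in C$, so $A\cap B\cap C=\emptyset$ and therefore $B\cap C\subseteq B\cap A^{c}$, as claimed.
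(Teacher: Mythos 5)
Your proof is correct and follows the same strategy as the paper's: argue by contraposition, showing that on the event $A$ and for $x\in B$ the concentration bound of Lemma~\ref{high_probability_bound} combined with the maximal-function comparison of Lemma~\ref{pstar_property} forces $|\hat p_t(x)-p_t(x)|\le\tfrac12 p_t(x)$, hence $\hat p_t(x)\ge\rho_n$, contradicting $x\in C$. The one place you diverge in detail is the choice of $\varepsilon$ in Lemma~\ref{pstar_property}: you take $\varepsilon\asymp 1/\log n$, which on $B$ (where $p_t(x)\ge n^{-1-dT_2/2}$) gives $p_t^*(x)\le C_5\,p_t(x)$ with $C_5$ an absolute constant, while the paper takes $\varepsilon=\tfrac{\log\log n}{\log n}$ and carries the resulting $n^{\varepsilon/2(1+dT_2/2)}=(\log n)^{(2+dT_2)/4}$ as an extra polylog factor to be absorbed by the requirement $c>4C_d+dT_2+2$. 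Your choice is marginally cleaner at that step and is perfectly valid; the remaining bookkeeping you flag as the main obstacle is exactly the role of the standing assumption $c>4C_d+dT_2+2$ stated at the top of Appendix~D.1, so the proof closes as you expect.
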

\begin{proof}
    For $x \in B\cap C$,
      \begin{align*}
        |p_t(x) - \hat p_t(x)| > p_t(x) - \rho_n > \frac{1}{2}\,p_t(x),
    \end{align*}  
    when $n$ is large. But by Lemma~\ref{high_probability_bound}, $A$ implies 
    $|p_t(x) - \hat p_t(x)|\lesssim (\log n)^{C_d} \sqrt{p_t^*(x)\rho_n}$ for some constant $C_d$ that only depends on $d$.
    Using Lemma~\ref{pstar_property}, for any $\varepsilon >0 $, $A\cap B$ implies 
    \begin{align*}
    |p_t(x) - \hat p_t(x)|
    &\lesssim (\log n)^{C_d} (\rho_n \log^c{n})^{-\frac{\varepsilon}{2}}\sqrt{p_t(x)\rho_n}
    \\
    &\le (\log n)^{C_d} (\rho_n \log^c{n})^{-\frac{\varepsilon}{2}}p_t(x)\log^{-\frac{c}{2}}{n}
    \\
    &= (\log n)^{C_d} (n^{-1}t^{-d/2})^{-\frac{\varepsilon}{2}}(\log {n})^{-\frac{c}{2}\left(\varepsilon + 1\right)} p_t(x)
    \\
    &\le (\log n)^{C_d} n^{\frac{\varepsilon}{2}\left(1+\frac{dT_2}{2}\right)} (\log {n})^{-\frac{c}{2}\left(\varepsilon + 1\right)} p_t(x),
    \end{align*}
    where in the last inequality we use the fact that $t \le n^{T_2}$. Therefore, by taking $\varepsilon = \frac{\log \log n}{\log n}$, we have $n^{\frac{\varepsilon}{2}\left(1+\frac{dT_2}{2}\right)} = (\log n)^{\frac{2+dT_2}{4}}$ and $(\log {n})^{-\frac{c}{2}\left(\varepsilon + 1\right)} \lesssim (\log {n})^{-\frac{c}{2}}$, we have 
    \begin{equation*}
        (\log n)^{C_d} n^{\frac{\varepsilon}{2}\left(1+\frac{dT_2}{2}\right)} (\log {n})^{-\frac{c}{2}\left(\varepsilon + 1\right)} \le (\log n)^{C_d + \frac{2+dT_2}{4} - \frac{c}{2}}.
    \end{equation*}
    Then since the constant $c$ satisfies $\frac{c}{2} > 2 C_d + \frac{2 + dT_2}{2} > C_d + \frac{2+dT_2}{4}$, for large enough $n$ we have $|p_t(x) - \hat p_t(x)| < \frac{1}{2}\, p_t(x)$
    and $B\cap C \subseteq A^c$, as desired.
\end{proof}

\subsubsection{CASE 2: \texorpdfstring{$p_t(x) < \rho_n \log^{-c} {n} $}{case2}}\label{appendix_case2}
\begin{lemma}\label{case2}
    For any time $n^{-T_1}<t<n^{T_2}$, taking $\rho_n = \frac{1}{n\,t^{d/2}}$ and for any $0<\varepsilon < 1$, the score error on the set
    \begin{equation*}
        G_2 := \{x \colon p_t(x) < \rho_n \log^{-c} {n}\}
    \end{equation*}
    can be upper bounded by
    \begin{equation*}
        \mathbb{E}\left[\int_{G_2} \|\hat s_t(x) - s_t(x)\|^2 p_t(x) dx \right]\lesssim  n^{-1}t^{-\frac{d+2}{2}}  (t^{\frac{d}{2}} + \sigma_0^d) \,g_2(n,\varepsilon),
    \end{equation*}
    where
    \begin{equation*}
        g_2(n,\varepsilon):= (\varepsilon)^{-(1+d/2)}\,n^{C_2\varepsilon}
    \end{equation*}
    for some constant $C_2$ defined in the proof that only depends on $T_1$, $T_2$ and $d$ (in turn, only depends on $p_0$).
\end{lemma}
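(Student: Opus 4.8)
The plan is to split the integral over $G_2$ according to whether the truncated estimator vanishes, treat the overwhelmingly likely branch $\{\hat p_t(x)<\rho_n\}$ by reducing to the second moment of the true score on a low-density set, and bound that quantity by a H\"older/Markov argument fed by a R\'enyi-entropy estimate for the sub-Gaussian mixture $p_t$. Concretely, for $x\in G_2$ I would write the score error as the sum of $\mathbb{E}[\|\hat s_t(x)-s_t(x)\|^2\mathbbm{1}_{\{\hat p_t(x)<\rho_n\}}]p_t(x)$ and the analogous term on $\{\hat p_t(x)\ge\rho_n\}$. On $\{\hat p_t(x)<\rho_n\}$ the estimator is exactly $0$, so integrating over $G_2$ the first term is at most $\int_{G_2}\|s_t(x)\|^2p_t(x)\,dx=\mathbb{E}_{\bX_t\sim p_t}[\|s_t(\bX_t)\|^2\mathbbm{1}_{\{\bX_t\in G_2\}}]$. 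For the second term I would mirror Lemma~\ref{set_inclusion1} to show $G_2\cap\{\hat p_t(x)\ge\rho_n\}\subseteq G_2\cap A^c$, with $A$ the high-probability set of Lemma~\ref{high_probability_bound}, so this event has probability $\le n^{-\alpha}$ for arbitrary $\alpha$; combined with the crude bound $\|\hat s_t(x)-s_t(x)\|^2\lesssim\rho_n^{-2}(\|\nabla\hat p_t(x)-\nabla p_t(x)\|^2+\|s_t(x)\|^2|\hat p_t(x)-p_t(x)|^2)$, the uniform estimates of Lemma~\ref{upper_bound_for_p_minus_phat}, $\rho_n^{-2}=n^2t^d=\mathrm{poly}(n)$, and $\mathbb{E}_{\bX_t}\|s_t(\bX_t)\|^2\lesssim t^{-1}=\mathrm{poly}(n)$, this contributes $n^{-\alpha}\mathrm{poly}(n)$, negligible for large $\alpha$ — exactly as in Parts~2--3 of the Case~1 proof.

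The main term is then handled by H\"older's inequality: for an integer $r$,
\[
\mathbb{E}_{\bX_t}\!\big[\|s_t(\bX_t)\|^2\mathbbm{1}_{\{\bX_t\in G_2\}}\big]\le\big(\mathbb{E}_{\bX_t}\|s_t(\bX_t)\|^{2r}\big)^{1/r}\big(\mathbb{P}[\bX_t\in G_2]\big)^{1-1/r}.
\]
By Lemma~\ref{s_x_bound} (\citet{bobkov2019moments}), $\mathbb{E}_{\bX_t}\|s_t(\bX_t)\|^{2r}\lesssim((d+r)/t)^r$, so with $r\asymp\log n$ the first factor is $\mathrm{polylog}(n)\,t^{-1}$ and $(\mathbb{P}[\bX_t\in G_2])^{-1/r}=O(1)$. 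For $\mathbb{P}[\bX_t\in G_2]$ I use Markov with exponent $q\in(0,1)$: $\mathbb{P}[p_t(\bX_t)<\rho_n\log^{-c}n]\le(\rho_n\log^{-c}n)^q\int p_t(x)^{1-q}\,dx$, and $\int p_t^{1-q}$ is the exponentiated R\'enyi entropy of order $1-q$ of $p_t$. Invoking Assumption~\ref{assumption1} and the sub-Gaussian R\'enyi-entropy bound of Appendix~\ref{sec_renyi}, one has $\int p_t^{1-q}\lesssim C_{d,q}(t+\sigma_0^2)^{qd/2}$ with $C_{d,q}\lesssim(1-q)^{-d/2}$; since $\rho_n=1/(nt^{d/2})$, the factors $t^{-qd/2}$ and $t^{qd/2}$ \emph{cancel}, leaving $\mathbb{P}[\bX_t\in G_2]\lesssim C_{d,q}\,n^{-q}(1+\sigma_0^2/t)^{qd/2}$ (up to the harmless $(\log n)^{-cq}$ factor).

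To assemble, take $q=1-\varepsilon$ (so $C_{d,q}\lesssim\varepsilon^{-d/2}$) and raise to the power $1-1/r$ with $r\asymp\log n$, which gives $(\mathbb{P}[\bX_t\in G_2])^{1-1/r}\lesssim\varepsilon^{-d/2}n^{-1+\varepsilon}(1+\sigma_0^2/t)^{d/2}$; multiplying by the score-moment factor and using $(1+\sigma_0^2/t)^{d/2}\asymp t^{-d/2}(t^{d/2}+\sigma_0^d)$, the $t$-powers combine to exactly $t^{-(d+2)/2}(t^{d/2}+\sigma_0^d)$ and the leftover $n$-factor $\mathrm{polylog}(n)\,\varepsilon^{-d/2}n^\varepsilon$ is absorbed into $g_2(n,\varepsilon)=\varepsilon^{-(1+d/2)}n^{C_2\varepsilon}$ for a suitable $C_2$. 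In the regime $t\ll\sigma_0^2$ the sub-Gaussian moment-generating radius only allows $q$ to be a constant multiple of $t/\sigma_0^2$; there one additionally uses the a priori bounds $n^{-T_1}\le t\le n^{T_2}$ to control the resulting lost powers, and this is precisely where the dependence of $C_2$ on $T_1,T_2,d$ enters.

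\textbf{Expected main obstacle.} The crux is the R\'enyi-entropy estimate $\int p_t^{1-q}\lesssim C_{d,q}(t+\sigma_0^2)^{qd/2}$ for $p_t=p_0*\mathcal{N}(0,t\boldsymbol I_d)$, sharp in both $t$ and $\sigma_0$ under only sub-Gaussianity: a naive Jensen argument gives the weaker $t^{qd/2}\exp(Cq\sigma_0^2/t)$, which is far too lossy when $t$ is polynomially small, so one must argue more carefully and track how close to $1$ the exponent $q$ may be pushed while keeping all $\varepsilon$-dependent exponents aligned with the target rate — the bookkeeping in the small-$t$ regime is what produces the explicit constant $C_2$.
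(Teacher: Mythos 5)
Your proposal follows the paper's own route closely: split on $\{\hat p_t(x)<\rho_n\}$ versus $\{\hat p_t(x)\ge\rho_n\}$, note that on the first event the truncated estimator vanishes so the contribution reduces to $\int_{G_2}\|s_t\|^2 p_t\,dx$, bound that via H\"older plus Markov fed by the sub-Gaussian R\'enyi-entropy estimate (this is precisely the content of Lemma~\ref{renyi_bound}), and dismiss the second event as a subset of $A^c$ of polynomially small probability. Your use of the uniform bound from Lemma~\ref{upper_bound_for_p_minus_phat} on the bad branch, rather than the paper's Cauchy--Schwarz with the fourth-moment bound of Lemma~\ref{bound_on_4_score_error}, is an equally valid alternative. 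One minor bookkeeping mismatch: you fix the H\"older exponent at $r\asymp\log n$ while taking the R\'enyi/Markov exponent $q=1-\varepsilon$ separately, which yields a $\mathrm{polylog}(n)\,\varepsilon^{-d/2}$ prefactor rather than the stated $\varepsilon^{-(1+d/2)}$; the paper couples both to $\varepsilon$ (H\"older exponent $m=1/\varepsilon$, R\'enyi order $\alpha=\varepsilon$), which produces exactly $g_2(n,\varepsilon)=\varepsilon^{-(1+d/2)}n^{C_2\varepsilon}$. Replacing your $r\asymp\log n$ by $r\asymp1/\varepsilon$ recovers the lemma as written and makes the final choice $\varepsilon=\log\log n/\log n$ in Section~\ref{sec_combine} transparent.
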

\begin{proof}

Let $A$ denote the high probability random set defined in Lemma~\ref{high_probability_bound}, then using Lemma~\ref{set_inclusion2},
\begin{equation*}
    G_2 \cap A \subset \{x \colon \hat p_t(x) < \rho_n\}.
\end{equation*}
Then under $\{x \colon \hat p_t(x) < \rho_n\}$, by definition, $\hat s_t(x) = 0$. Therefore,
\begin{align*}
    &\quad \mathbb{E}\left[\int_{G_2} \|\hat s_t(x) - s_t(x)\|^2 p_t(x) dx \right] \\
    &= \mathbb{E}\left[\int_{G_2} \|\hat s_t(x) - s_t(x)\|^2 \mathbbm{1}_{A}(x)\, p_t(x) dx \right] + \mathbb{E}\left[\int_{G_2} \|\hat s_t(x) - s_t(x)\|^2 \mathbbm{1}_{A^c}(x)\,p_t(x) dx \right] \\
    &\leq \mathbb{E}\left[\int_{G_2} \|\hat s_t(x) - s_t(x)\|^2 \mathbbm{1}_{\{\hat p_t(x) < \rho_n\}}(x)\, p_t(x) dx \right] + \mathbb{E}\left[\int_{G_2} \|\hat s_t(x) - s_t(x)\|^2 \mathbbm{1}_{A^c}(x)\,p_t(x) dx \right] \\
    &\lesssim \int_{G_2} \| s_t(x)\|^2 p_t(x) dx  + \left(\mathbb{E}_{X \sim p_t, \{x_i\}_{i=1}^n} \left[\|\hat s_t(X) - s_t(X)\|^4\right]\right)^{1/2} \left(\int  \mathbb{P}\left(x \in A^c\right)\,p_t(x) dx\right)^{1/2}
    \\
    &\le \int_{G_2} \| s_t(x)\|^2 p_t(x) dx  + \left(\mathbb{E}_{X \sim p_t, \{x_i\}_{i=1}^n} \left[\|\hat s_t(X) - s_t(X)\|^4\right]\right)^{1/2} n^{-\alpha/2},
\end{align*}
where the last inequality follows by the Cauchy–Schwarz inequality. Therefore the error in the second term can be ignored since by Lemma~\ref{bound_on_4_score_error} the fourth moment of the score error is bounded by some $\mathrm{poly}(n^{-1}\log n)$ but $n^{-\alpha}$ can be arbitrarily small. For the first term, using Lemma~\ref{renyi_bound},
\begin{equation*}
    \int_{G_2} \| s_t(x)\|^2 p_t(x) dx \lesssim (\varepsilon)^{-(1+d/2)}\, t^{-1} \rho_n^{1-\varepsilon} \sigma_t^{d(1-\varepsilon)}.
\end{equation*}
Then the result follows by $\rho_n = \frac{1}{nt^{d/2}} = \mathrm{poly}(n)$ and $\sigma_t^d \lesssim t^{\frac{d}{2}} + \sigma_0^d \lesssim \mathrm{poly}(n)$, and the order of the polynomials only depends on $T_1$ and $T_2$.

\begin{lemma}\label{set_inclusion2}
    For any $x$ satisfying $x \in A$ and $p_t(x) < \rho_n \log^{-c} {n}$, we have $\hat p_t(x) < \rho_n$.
\end{lemma}
\begin{proof}
    By definition of the random set $A$,
    \begin{align*}
        \hat p_t(x) &\le p_t(x) +  (\log n)^{C_d} \left(\sqrt{p_t^*(x)\rho_n} + \rho_n + (\log n)^{-\frac{(\log n)}{2}}\right)
        \\
        &\lesssim \rho_n \log^{-c} {n} + (\log n)^{C_d} \sqrt{p_t^*(x)\rho_n}.
    \end{align*}
    Using Lemma~\ref{pstar_property}, for any $\varepsilon > 0$,
    \begin{align*}
        \sqrt{p_t^*(x)\rho_n} &\lesssim \sqrt{p_t^{1-\varepsilon}
        (x)\rho_n} 
        \lesssim (\log n)^{-\frac{c(1-\varepsilon)}{2}} \rho_n^{1-\frac{\varepsilon}{2}}.
    \end{align*}
    Using the fact that $t \le n^{T_2}$,
    \begin{align*}
        \rho_n^{-\frac{\varepsilon}{2}} &= (n^{-1}t^{-d/2})^{-\frac{\varepsilon}{2}} \le n^{\frac{\varepsilon}{2}\left(1+\frac{dT_2}{2}\right)},
    \end{align*}
    we have
    \begin{align*}
        \hat p_t(x) \lesssim \left(1 + (\log n)^{C_d}\,n^{\frac{\varepsilon}{2}\left(1+\frac{dT_2}{2}\right)}\,\log^{\frac{c(\varepsilon+1)}{2}}{n} \right) \rho_n \,\log^{-c}{n}.
    \end{align*}
    Then we can take $\varepsilon = \frac{\log \log n}{\log n}$ such that $n^{\frac{\varepsilon}{2}\left(1+\frac{dT_2}{2}\right)} = (\log n)^{\frac{2+dT_2}{4}}$ and $\log^{\frac{c(\varepsilon+1)}{2}}{n} \le \log^{\frac{c(1/2+1)}{2}}{n} = \log^{\frac{3c}{4}}{n}$. Therefore,
    \begin{align*}
        \left(1 + (\log n)^{C_d}\,n^{\frac{\varepsilon}{2}\left(1+\frac{dT_2}{2}\right)}\,\log^{\frac{c(\varepsilon+1)}{2}}{n} \right)\log^{-c}n &< \log^{-c}n + (\log n)^{C_d + \frac{2+dT_2}{4} - \frac{c}{4}}
        \\
        &<1,
    \end{align*}
    for sufficiently large $n$, since $c > 4 C_d + dT_2 + 2$.
    Therefore, this implies $\hat p_t(x) < \rho_n$.
\end{proof}
\end{proof}

\subsubsection{CASE 3: \texorpdfstring{$ \rho_n \log^{-c} {n} < p_t(x) < \rho_n \log^c {n} $}{case3}}
\begin{lemma}\label{case3}
    For any time $n^{-T_1}<t<n^{T_2}$ and any $0<\varepsilon<1$, the score error on the set
    \begin{equation*}
        G_3 := \{x \colon \rho_n \log^{-c} {n} < p_t(x) < \rho_n \log^c {n}\}
    \end{equation*}
    can be upper bounded by
    \begin{equation*}
        \mathbb{E}\left[\int_{G_3} \|\hat s_t(x) - s_t(x)\|^2 p_t(x) dx \right]\lesssim \mathrm{polylog}(n)\, n^{-1}\,t^{-\frac{d+2}{2}}(t^{\frac{d}{2}} + \sigma_0^d)\, g_3(n,\varepsilon),
    \end{equation*}
    where
    \begin{equation*}
        g_3(n,\varepsilon'):= (\varepsilon)^{-(1+d/2)} \exp\left(\frac{1}{ \varepsilon \log n}\right)n^{C_3\varepsilon}
    \end{equation*}
    for some constant $C_3$ defined in the proof that only depends on $T_1$, $T_2$ and $d$.
\end{lemma}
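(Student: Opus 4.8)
Here is a plan for proving Lemma~\ref{case3}.

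The plan is to treat $G_3$ as a hybrid of the two previous cases, exploiting that on $G_3$ the density is two-sidedly pinned, $\rho_n\log^{-c}n<p_t(x)<\rho_n\log^c n$. I would start from Lemma~\ref{score error upper bound} restricted to $G_3$:
\begin{align*}
    \mathbb{E}\!\left[\int_{G_3}\|\hat s_t(x)-s_t(x)\|^2 p_t(x)\,dx\right]
    &\le 2\int_{G_3}\mathbb{E}\!\left[\frac{\|\nabla\hat p_t(x)-\nabla p_t(x)\|^2+\|s_t(x)\|^2|\hat p_t(x)-p_t(x)|^2}{\hat p_t(x)^2}\mathbbm{1}_{\{\hat p_t(x)>\rho_n\}}\right]p_t(x)\,dx\\
    &\quad+\int_{G_3}\mathbb{P}\!\big(\hat p_t(x)\le\rho_n\big)\,\|s_t(x)\|^2 p_t(x)\,dx,
\end{align*}
and bound the two terms separately.

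\textbf{First term.} On the event $\{\hat p_t(x)>\rho_n\}$ with $x\in G_3$, the upper bound $p_t(x)<\rho_n\log^c n$ forces $\hat p_t(x)>\rho_n>p_t(x)\log^{-c}n$, hence $\hat p_t(x)^{-2}<\log^{2c}n\;p_t(x)^{-2}$ \emph{deterministically on that event}; unlike in Case~1, no appeal to the concentration set $A$ of Lemma~\ref{high_probability_bound} is needed, because $\hat p_t(x)>\rho_n$ already makes $\hat p_t(x)$ and $p_t(x)$ comparable up to polylog. Discarding the nonnegative indicator and taking expectations, the first term is at most $\log^{2c}n\int_{G_3}\big(\mathrm{MSE}(\nabla\hat p_t(x))+\|s_t(x)\|^2\,\mathrm{MSE}(\hat p_t(x))\big)p_t(x)^{-1}\,dx$, which is exactly the quantity estimated in Part~1 of the proof of Lemma~\ref{main_lemma1} with $G_1$ replaced by $G_3$ and an extra $\mathrm{polylog}(n)$ factor. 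I would then reproduce that argument line for line: insert the bounds of Proposition~\ref{MSE_main}; discard the $(\log n)^{-\log n}$ and $t^{-1}$ contributions, negligible because $|G_3|$, $\rho_n^{-1}$ and $t^{-1}$ are all $\mathrm{poly}(n)$ while $(\log n)^{-\log n}$ decays super-polynomially; apply Lemma~\ref{pstar_property} to replace $p_t^*(x)$ by $\tfrac{1}{\sqrt{1-\varepsilon}}\exp\{\tfrac{1-\varepsilon}{D_n^2\varepsilon}\}p_t^{1-\varepsilon}(x)$; and bound $|G_3|$ via Lemma~\ref{Bound_for_G} (using $G_3\subseteq\{p_t>\rho_n\log^{-c}n\}$) and $\int_{G_3}\|s_t(x)\|^2\,dx$ via Lemma~\ref{bound_on_GS}. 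With $D_n=C\sqrt{\log n}$ and the auxiliary index $m\asymp 1/\varepsilon$ this yields $\lesssim\mathrm{polylog}(n)\,n^{-1}t^{-(d+2)/2}(t^{d/2}+\sigma_0^d)\exp(\tfrac{1}{\varepsilon\log n})\varepsilon^{-1}n^{C_1\varepsilon}$, i.e.\ the Case~1 bound up to a polylogarithmic factor.

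\textbf{Second term, and combining.} On $\{\hat p_t(x)\le\rho_n\}$ we have $\hat s_t(x)=0$ by construction, so bounding $\mathbb{P}(\hat p_t(x)\le\rho_n)\le1$ reduces this term to $\int_{G_3}\|s_t(x)\|^2 p_t(x)\,dx$; note that here, in contrast to Case~2, no fourth-moment $A^c$ correction is needed. Since $G_3\subseteq\{x:p_t(x)<\rho_n\log^c n\}$ is contained in a sublevel set of $p_t$, I would apply the R\'enyi-entropy estimate of Lemma~\ref{renyi_bound} exactly as in the proof of Lemma~\ref{case2}, but with threshold $\rho_n\log^c n$ in place of $\rho_n\log^{-c}n$; this changes the resulting bound only by a $\mathrm{polylog}(n)$ factor and gives $\lesssim\mathrm{polylog}(n)\,n^{-1}t^{-(d+2)/2}(t^{d/2}+\sigma_0^d)\varepsilon^{-(1+d/2)}n^{C_2\varepsilon}$ (using $\sigma_t^d\lesssim t^{d/2}+\sigma_0^d$ and $\rho_n^{-\varepsilon}\le n^{O(\varepsilon)}$ as in Lemma~\ref{case2}), the Case~2 bound up to a polylogarithmic factor. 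Adding the two contributions and setting $C_3:=\max(C_1,C_2)$, the claimed estimate follows, since $g_3(n,\varepsilon)=\varepsilon^{-(1+d/2)}\exp(\tfrac{1}{\varepsilon\log n})n^{C_3\varepsilon}$ dominates both $\mathrm{polylog}(n)\,g_1(n,\varepsilon)$ and $\mathrm{polylog}(n)\,g_2(n,\varepsilon)$.

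Conceptually Case~3 is the least delicate of the three: the two-sided control on $p_t$ over $G_3$ makes $\hat p_t(x)$, on the event $\{\hat p_t(x)>\rho_n\}$, and $p_t(x)$ automatically comparable up to polylog, so the concentration bookkeeping that drives Cases~1 and~2 collapses, and everything reduces to invocations of results already proved. The only real care required is in tracking the $\mathrm{polylog}(n)$ and $n^{C\varepsilon}$ factors — in particular checking that Lemma~\ref{renyi_bound} is insensitive up to polylog to whether the sublevel threshold is $\rho_n\log^{-c}n$ or $\rho_n\log^c n$, and that the constant $c>4C_d+dT_2+2$ fixed earlier remains adequate here (it does, since beyond supplying polylog factors its only role is the same one it played in Cases~1 and~2).
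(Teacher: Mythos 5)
Your proposal matches the paper's own proof essentially line for line: both split via Lemma~\ref{score error upper bound}, bound the first term by observing that on $\{\hat p_t(x)>\rho_n\}$ the two-sided pinning of $p_t$ on $G_3$ gives $\hat p_t(x)^{-2}<\log^{2c}n\,p_t(x)^{-2}$ deterministically (so the concentration set $A$ is not needed), then reuse the Case~1 machinery (Proposition~\ref{MSE_main}, Lemma~\ref{pstar_property}, Lemmas~\ref{Bound_for_G} and~\ref{bound_on_GS}), and both bound the second term by dropping $\mathbb{P}(\hat p_t\le\rho_n)\le1$, enlarging $G_3$ to the sublevel set $\{p_t<\rho_n\log^c n\}$, and invoking the R\'enyi bound of Lemma~\ref{renyi_bound} with the threshold flipped to $\rho_n\log^c n$. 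The observations you flag — that the $A^c$ correction of Case~2 is unnecessary here, and that the flipped threshold only costs a polylog — are exactly the points the paper implicitly relies on, so the plan is correct and complete.
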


\begin{proof}
    Again, apply Lemma~\ref{score error upper bound},
    \begin{align}
        &\quad \int_{G_3} \mathbb{E} \left[\|\hat s_t(x) - s_t(x)\|^2\right]p_t(x)dx \notag
        \\
        &\leq 2\int_{G_3} \mathbb{E}\left[ I_t(x)\mathbbm1_{\{\hat p_t(x) > \rho_n\}} \right] p_t(x)dx + \int_{G_3} \mathbb{P}\left( \hat p_t(x) \leq \rho_n  \right) \|s_t(x)\|^2p_t(x)dx \label{e444}
    \end{align}
    where
    \begin{equation*}
        I_t(x) := \frac{\|\nabla\hat{p}_t(x)-\nabla p_t(x)\|^2 + \|s_t(x)\|^2 |\hat p_t(x)-p_t(x)|^2}{\hat p_t(x)^2}.
    \end{equation*}
    Now we split the proof into two parts:
    
    \noindent \textbf{Part 1: The first term $\int_{G_3} \mathbb{E}\left[ I_t(x)\mathbbm1_{\{\hat p_t(x) > \rho_n\}} \right] p_t(x)dx $ in \eqref{e444}}.
    
    First note that for any $x \in G_3$, we have $p_t(x) \log^{-c} {n}< \rho_n < p_t(x) \log^c {n}$. Therefore, 
    \begin{align*}
        \hat p_t(x) \geq \rho_n \geq p_t(x) \log^{-c} {n}.
    \end{align*}
    Then we have
    \begin{align*}
        I_t(x) \leq (\log^{2c} {n})(p_t(x))^{-2}\left(\|\nabla\hat{p}_t(x)-\nabla p_t(x)\|^2 + \|s_t(x)\|^2 |\hat p_t(x)-p_t(x)|^2\right)
    \end{align*}
    Using Proposition~\ref{MSEphat} and Proposition~\ref{MSEphatprime}:
    \begin{equation*}
        \mathbb{E}\left[|\hat p_t(x)-p_t(x)|^2\right] \le C_2 \,\mathrm{polylog}(n) \left(\frac{p_t^*(x)}{n t^{\frac{d}{2}}} + \left(\log n\right)^{-\log n}\right),
    \end{equation*}
    \begin{equation*}
        \mathbb{E}\left[\|\nabla\hat{p}_t(x)-\nabla p_t(x)\|^2\right] \le C_3 \,\mathrm{polylog}(n) \left(\frac{p_t^*(x)}{n\,t^{\frac{d+2}{2}}} + \left(\log n\right)^{-\log n} t^{-1}\right),
    \end{equation*}
    Then
    \begin{align*}
        \mathbb{E}\left[I_t(x)\right] \lesssim \mathrm{polylog}(n)\, (p_t(x))^{-2} \left[\frac{p_t^*(x)}{n\,t^{\frac{d+2}{2}}} + \frac{p_t^*(x)\|s_t(x)\|^2}{n\,t^{1/2}} + \left(\log n\right)^{-\log n}(t^{-1} + \|s_t(x)\|^2)\right]
    \end{align*}
    Integrate over $x$:
    \begin{align*}
        \int_{G_3} \mathbb{E} \left[I_t(x)\mathbbm1_{\{\hat p(x) > \rho_n\}}\right] p_t(x)dx &\lesssim \mathrm{polylog}(n)\,\Bigg[\frac{1}{n t^{\frac{d+2}{2}}} \int_{G_3} \frac{p_t^*(x)}{p_t(x)}dx \nonumber \\
        &\quad + \frac{1}{n t^{\frac{d}{2}}} \int_{G_3} \frac{p_t^*(x)}{p_t(x)}\|s_t(x)\|^2dx \nonumber \\
        &\quad + (\log n)^{- \log n} \int_{G_3} \left(\frac{t^{-1}}{p_t(x)} + \frac{\|s_t(x)\|^2}{p_t(x)} \right)dx \Bigg].
    \end{align*}
    Notice that $G_3 \subset \{x \colon p_t(x) > \rho_n \log^{-c}{n}\}:=G_3'$, which is similar to Case 1 (by replacing $\log^{c}{n}$ with $\log^{-c}{n}$). Therefore, using a similar approach in Lemma~\ref{main_lemma1} of Case 1, we know the term $(\log n)^{- \log n}$ is small than $\mathrm{poly}(n\log n)$ for sufficiently large $n$, then the error for the last term can be ignored:
    \begin{align*}
       (\log n)^{- \log n}\int_{G_3} \left(\frac{t^{-1}}{p_t(x)} + \frac{\|s_t(x)\|^2}{p_t(x)} \right)dx &\leq (\log n)^{- \log n} |G_3'| \left(t^{-1}\rho_n^{-1} \log^{c} {n}  + t^{-\frac{d}{2}}\,\rho_n^{3} \log^{3c} {n} \right) 
       \\
       &\lesssim (\log n)^{- \log n} \mathrm{poly}(n\log n) 
       \\
       &\ll \mathrm{poly}(n^{-1}).
    \end{align*}
    Again, using the similar approach in Lemma~\ref{main_lemma1} in Case 1, 
    \begin{align*}
        &\quad \frac{1}{n t^{(d+2)/2}} \int_{G_3} \frac{p_t^*(x)}{p_t(x)}dx \nonumber + \frac{1}{n\,t^{d/2}} \int_{G_3} \frac{p_t^*(x)}{p_t(x)}\|s_t(x)\|^2dx 
        \\
        &\lesssim  \frac{(t^{d/2} + \sigma_0^d)}{n \,t^{(d+2)/2}}\frac{1}{\sqrt{1-\varepsilon}} \exp\left\{\frac{1-\varepsilon}{D_n^2\varepsilon}\right\} (\log^c n)^{\varepsilon}\rho_n^{-\varepsilon} \left(1+m\, (\log^c n)^{\frac{1}{m} }\rho_n^{-\frac{1}{m}}\right).
    \end{align*}
    As before, by taking $m \asymp \frac{1}{\varepsilon}$, and $D_n = C\sqrt{\log n}$, then $\frac{1}{\sqrt{1-\varepsilon}} \exp\left\{\frac{1-\varepsilon}{D_n^2\varepsilon}\right\} \lesssim \exp\left(\frac{1}{\varepsilon \log n}\right)$. Since $\varepsilon < 1$, we know $(\log^c n)^{\varepsilon} < \log^c n$ and $(\log^c n)^{1/m} < \log^c n$. Using $\rho_n = n^{-1}t^{-d/2} > n^{-1}n^{-dT_2/2}$ we can conclude that 
    \begin{align*}
        \int_{G_3} \mathbb{E} \left[I_t(x)\mathbbm1_{\{\hat p(x) > \rho_n\}}\right] p_t(x)dx &\lesssim \mathrm{polylog}(n)\, n^{-1}\,t^{-\frac{d+2}{2}}(t^{\frac{d}{2}} + \sigma_0^d) \left(\frac{1}{\varepsilon}n^{(2+dT_2)\varepsilon} \exp\left(\frac{1}{ \varepsilon\log n}\right)\right).
    \end{align*}
    \noindent \textbf{Part 2: The second term $\int_{G_3} \mathbb{P}\left( \hat p_t(x) \leq \rho_n  \right) \|s_t(x)\|^2p_t(x)dx$ in \eqref{e444}}.

    Notice that
    \begin{align*}
        \int_{G_3} \mathbb{P}\left( \hat p_t(x) \leq \rho_n  \right) \|s_t(x)\|^2p_t(x)dx 
        &\leq \int_{\{x \;: \;p_t(x) \leq \rho_n \log^c {n}\}} \|s_t(x)\|^2p_t(x)dx.
    \end{align*}
    Therefore, we can use a similar method of case 2 as in Lemma~\ref{renyi_bound}, by replacing $\log^{-c} {n}$ with $\log^{c} {n}$:
    \begin{align*}
        \int_{\{x \;: \;p_t(x) \leq \rho_n \log^c {n}\}} \|s_t(x)\|^2p_t(x)dx 
        &\leq \mathrm{polylog}(n)\,(\varepsilon)^{-(1+d/2)}\, t^{-1} \rho_n^{1-\varepsilon} \sigma_t^{d(1-\varepsilon)}.
    \end{align*}
    Since $\rho_n = \frac{1}{nt^{d/2}} = \mathrm{poly}(n)$ and $\sigma_t^d \lesssim t^{\frac{d}{2}} + \sigma_0^d \lesssim \mathrm{poly}(n)$, and the order of the polynomials $C_2$ (the same as that in case 2) only depends on $T_1$, $T_2$ and $d$. Then can conclude that
    \begin{align*}
        \int_{G_3} \mathbb{P}\left( \hat p_t(x) \leq \rho_n  \right) \|s_t(x)\|^2p_t(x)dx &\leq \mathrm{polylog}(n)(\varepsilon)^{-(1+d/2)}\, n^{-1} t^{-\frac{d+2}{2}} \left(\sigma_0^{\frac{d}{2}}+ t^{\frac{d}{2}}\right)n^{C_2\varepsilon}.
    \end{align*}
\end{proof}

\subsubsection{Combining the Three Cases}
\label{sec_combine}
Next, combining the three cases in Lemma~\ref{main_lemma1}, Lemma~\ref{case2} and Lemma~\ref{case3} as discussed above,
    \begin{align*}
        \mathbb{E} \left[\int_x  \left\| \hat{s_t}(x) - s_t(x) \right\|^2 p_t(x) \, dx \,\right] 
        &= \mathbb{E} \left[\int_{G_1 \cup G_2 \cup G_3}  \left\| \hat{s_t}(x) - s_t(x) \right\|^2 p_t(x) \, dx \,\right] 
        \\
        &\lesssim \mathrm{polylog}(n) \,n^{-1}t^{-\frac{d+2}{2}}(t^{\frac{d}{2}} + \sigma_0^d)\,g(n,\varepsilon),
    \end{align*}
    where
    \begin{align*}
        g(n,\varepsilon): &= g_1(n,\varepsilon) + g_2(n,\varepsilon) + g_3(n,\varepsilon)
        \\
        &\le \left(\exp\left(\frac{1}{\varepsilon \log n}\right) + (\varepsilon)^{-d/2} +  (\varepsilon)^{-d/2}\exp\left(\frac{1}{ \varepsilon\log n}\right)\right) (\varepsilon)^{-1}\,n^{(C_1 \vee C_2 \vee C_3)\varepsilon}.
    \end{align*}
    Let $\varepsilon = \frac{\log \log n}{\log n}$, since $n^{\varepsilon} = n^{\frac{\log \log n}{\log n}} = \log n$, we have $g(n,\varepsilon) \le C\,\mathrm{polylog}(n)$ for some universal constant $C$. Therefore,
    \begin{align*}
        \mathbb{E} \left[\int_x  \left\| \hat{s_t}(x) - s_t(x) \right\|^2 p_t(x) \, dx \,\right] \lesssim \mathrm{polylog}(n) \,n^{-1}t^{-\frac{d+2}{2}}(t^{\frac{d}{2}} + \sigma_0^d).
    \end{align*}

\subsection{Proof of Corollary \ref{main_corollary}}\label{appendix_main_corollary}
In Theorem~\ref{theorem2}, taking the integral with respect to time $t$ from $t_0 = n^{-T_1}$ to $T = n^{T_2}$, we have:
\begin{align*}
     \int_{t\in[t_0,T]} \int_x \mathbb{E} \|\hat s_t(x) - s_t(x)\|^2p_t(x)dxdt
     & \lesssim \mathrm{polylog}(n) \,n^{-1} \int_{t_0}^T t^{-\frac{d+2}{2}} \, (t^{\frac{d}{2}} + \sigma_0^d)dt \\
     &= \mathrm{polylog}(n) \,n^{-1} \left(\log(T) - \log(t_0) - \frac{2\sigma_0^d}{d} T^{-\frac{d}{2}} + \frac{2\sigma_0^d}{d}t_0^{-\frac{d}{2}}\right) \\
     &\le \mathrm{polylog}(n) \,n^{-1} \left(\log(T) - \log(t_0) + \frac{2\sigma_0^d}{d}t_0^{-\frac{d}{2}}\right) \\
     &\lesssim \mathrm{polylog}(n) \,n^{-1}\, n^{\frac{d\,T_1}{2}}.
\end{align*}
    The second part of Corollary~\ref{main_corollary} simply follows by letting $T_1 = \frac{2}{2\beta+d}$ and noticing that $n^{-1}n^{\frac{dT_1}{2}} = n^{-\frac{2\beta}{2\beta+d}}$.

\section{Proof of Theorem \ref{main_theorem2}}\label{sec_proof_theorem3}
\subsection{Control of the error from early stopping}

\begin{theorem}\label{theorem3}
    Under Assumption~\ref{assumption1} and Assumption~\ref{assumption2}, if $\beta \in [0,2]$, $t_0 = n^{-\frac{2}{2\beta + d}}$ and $p_{t_0} = p_0 * \Phi_t$, where $\Phi_t$ is the density of Gaussian distribution in $d$-dimension, $\mathcal{N}\left(0, t \boldsymbol{I}_d\right)$ and $*$ denote the convolution operator, then there exists a constant $C$ that depends on $p_0$, $\beta$, $L$ and dimension $d$ such that
    \begin{equation*}
        \mathrm{TV}\left(p_0, p_{t_0}\right) \le C\, \mathrm{polylog}(n) \,n^{-\frac{\beta}{2 \beta+d}}.
    \end{equation*}
\end{theorem}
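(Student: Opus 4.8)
The plan is to bound $\mathrm{TV}(p_0,p_{t_0})=\tfrac12\|p_0-p_{t_0}\|_{L^1}$ by first establishing an $L^2$ estimate through Plancherel's identity and the Sobolev assumption, and then upgrading it to an $L^1$ estimate by truncating to a Euclidean ball of radius $R\asymp\sqrt{\log n}$ and discarding the tail via the sub-Gaussian hypothesis. For the first step, since $p_{t_0}=p_0*\Phi_{t_0}$ and, in the paper's Fourier convention, $\mathcal{F}[\Phi_{t_0}](\omega)=e^{-t_0\|\omega\|^2/2}$, I would first check that $p_0\in L^2$ — which follows from $|\mathcal{F}[p_0]|\le\|p_0\|_{L^1}=1$ together with the finiteness of $\int\|\omega\|^{2\beta}|\mathcal{F}[p_0](\omega)|^2\,d\omega$ implied by Assumption~\ref{assumption2} — and then apply Plancherel to write
\begin{equation*}
\|p_0-p_{t_0}\|_{L^2}^2=\frac{1}{(2\pi)^d}\int_{\mathbb{R}^d}|\mathcal{F}[p_0](\omega)|^2\bigl(1-e^{-t_0\|\omega\|^2/2}\bigr)^2\,d\omega .
\end{equation*}

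The second step is where the restriction $\beta\le2$ is used. Using the elementary inequality $1-e^{-u}\le u^{\theta}$, valid for all $u\ge0$ and all $\theta\in[0,1]$, with $\theta=\beta/2$, gives $\bigl(1-e^{-t_0\|\omega\|^2/2}\bigr)^2\le(t_0/2)^{\beta}\|\omega\|^{2\beta}$. Then one bounds $\|\omega\|^{2\beta}=\bigl(\sum_{j=1}^d\omega_j^2\bigr)^{\beta}\le d\sum_{j=1}^d|\omega_j|^{2\beta}$ (subadditivity of $t\mapsto t^{\beta}$ when $\beta\le1$, convexity when $1<\beta\le2$) and applies Assumption~\ref{assumption2} with the coordinate multi-exponents $\alpha=\beta e_j$, for which $\int|\omega_j|^{2\beta}|\mathcal{F}[p_0](\omega)|^2\,d\omega\le(2\pi)^dL^2$. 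Combining these yields $\|p_0-p_{t_0}\|_{L^2}^2\le C_1\,t_0^{\beta}$ for a constant $C_1$ depending only on $d$ and $L$.

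For the passage from $L^2$ to $L^1$, fix $R>0$ and split $\int|p_0-p_{t_0}|=\int_{\|x\|\le R}+\int_{\|x\|>R}$. On the ball, Cauchy–Schwarz gives $\int_{\|x\|\le R}|p_0-p_{t_0}|\le|B_R|^{1/2}\|p_0-p_{t_0}\|_{L^2}\lesssim R^{d/2}t_0^{\beta/2}$. On the complement, $\int_{\|x\|>R}|p_0-p_{t_0}|\le\mathbb{P}(\|\bX_0\|>R)+\mathbb{P}(\|\bX_{t_0}\|>R)$, where $\bX_0$ is $\sigma_0$-sub-Gaussian by Assumption~\ref{assumption1}, while $\bX_{t_0}=\bX_0+\sqrt{t_0}\,\bZ$ (with $\bZ\sim\mathcal{N}(\boldsymbol{0},\boldsymbol{I}_d)$ independent) has $\|\bX_{t_0}\|_{\psi_2}\le\sigma_0+c\sqrt{t_0}\le2\sigma_0$ for all large $n$; a coordinatewise union bound then gives $\mathbb{P}(\|\bX_0\|>R)\vee\mathbb{P}(\|\bX_{t_0}\|>R)\le C_d\exp\!\bigl(-cR^2/(d\sigma_0^2)\bigr)$. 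Choosing $R=C_2\sqrt{\log(1/t_0)}\asymp\sqrt{\log n}$ with $C_2$ large enough that the tail term is $\le t_0^{\beta/2}$, the two pieces combine to $\mathrm{TV}(p_0,p_{t_0})\lesssim R^{d/2}t_0^{\beta/2}\asymp(\log n)^{d/4}t_0^{\beta/2}$; since $t_0=n^{-2/(2\beta+d)}$ we have $t_0^{\beta/2}=n^{-\beta/(2\beta+d)}$, which is exactly the claimed $C\,\mathrm{polylog}(n)\,n^{-\beta/(2\beta+d)}$. (The degenerate case $\beta=0$ is trivial, since $\mathrm{TV}\le1$ while the target is $\mathrm{polylog}(n)$.)

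I expect the main obstacle to be this $L^2\to L^1$ passage: a direct Cauchy–Schwarz over all of $\mathbb{R}^d$ is useless because the Lebesgue measure is infinite, so the sub-Gaussian hypothesis is genuinely needed to cut off the far field, and the logarithmic radius $R$ is precisely what generates the $\mathrm{polylog}(n)$ factor. A secondary nuisance is the careful tracking of constants in the second step (and at the boundary $\beta=2$) so that the final constant depends only on $p_0,\beta,L$ and $d$ as stated.
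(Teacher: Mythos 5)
Your proof is correct, and its skeleton --- Plancherel plus the Sobolev condition for an $L^2$ bound of order $t_0^{\beta/2}$, then Cauchy--Schwarz over a logarithmic-radius region and sub-Gaussian tail control outside it --- is exactly the structure of the paper's proof of Theorem~\ref{theorem3}. The one place where your argument differs is the frequency-domain estimate: the paper splits $\omega$-space into $\|\omega\|_\infty\ge t_0^{-1/2}$ (where it uses $|1-e^{-t_0\|\omega\|^2/2}|\le 2$ together with the Sobolev integral restricted to high frequencies) and $\|\omega\|_\infty<t_0^{-1/2}$ (where it uses $1-e^{-u}\le u$ together with $\sup_{\|\omega\|_\infty<t_0^{-1/2}}\|\omega\|^{4-2\beta}\lesssim t_0^{\beta-2}$, which is where $\beta\le 2$ enters), whereas you get the same $t_0^{\beta}\|\omega\|^{2\beta}$ weight uniformly in $\omega$ from the single interpolated inequality $1-e^{-u}\le u^{\beta/2}$ for $u\ge 0$, $\beta/2\in[0,1]$. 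Your version avoids the two-regime bookkeeping and makes the role of $\beta\le 2$ transparent (the inequality $1-e^{-u}\le u^{\beta/2}$ fails near $u=0$ once $\beta>2$), at the cost of needing the minor coordinate-wise bound $\|\omega\|^{2\beta}\le d\sum_j|\omega_j|^{2\beta}$ to connect to Assumption~\ref{assumption2}; the paper reaches the same $\mathrm{TV}(p_0,p_{t_0})\lesssim(\log n)^{O(d)}\,t_0^{\beta/2}$ by the two-case argument. Both are fine, and the differences are cosmetic.
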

\begin{proof}
Firstly,  the total variance (TV) distance between $p_0$ and $p_t$ can be decomposed into two terms and by Jensen's inequality,
\begin{align}
      \mathrm{TV}\left(p_0, p_t\right)
      &=\int_{\mathbb{R}^d}\left|p_0(x)-p_t(x)\right| d x \notag
      \\
      & = \int_{\|x\|_{\infty}<\log n}\left|p_0(x)-p_t(x)\right| d x+\int_{\|x\|_{\infty}>\log n}\left|p_0(x)-p_t(x)\right| d x \notag
      \\
      & \le (2 \log n)^{d / 2} \sqrt{\int_{\mathbb{R}^d}\left|p_0(x)-p_t(x)\right|^2 d x}+\int_{\|x\|_{\infty}>\log n}\left|p_0(x)-p_t(x)\right| d x. \label{e419}
\end{align}
By the Sub-Gaussian tail bound in Lemma~\ref{Sub-Gaussian_tail}, the error of the second term in \eqref{e419} is negligible:
\begin{align}
    \int_{\|x\|_{\infty}>\log n} |p_0(x) - p_t(x)|dx &\leq \int_{\|x\|_{\infty}>\log n} p_0(x) dx+ \int_{\|x\|_{\infty}>\log n} p_t(x)dx \notag
    \\
    &\leq 2d\exp \left(-\frac{(\log n)^2}{2\sigma_0^2}\right) + 2d\exp\left(-\frac{(\log n)^2}{2(\sigma_0+\sqrt t)^2}\right) \notag
    \\
    &\leq 4d n^{-\frac{\log n}{\sigma_0^2 +1}} \notag
    \\
    &\ll n^{-\frac{\beta}{2\beta+d}}.\label{tv_term_3}
\end{align}
Next, we derive an upper bound for the first term $\int_{\mathbb{R}^d}\left|p_0(x)-p_t(x)\right|^2 d x$ in \eqref{e419}. By Plancherel's theorem,
\begin{align} 
\int \left|p_0(x)-p_t(x)\right|^2 d x 
& =\frac{1}{(2 \pi)^d} \int \left|\mathcal{F}\left[p_0\right](\omega)-\mathcal{F}\left[p_0\right](\omega)\right|^2 d \omega \notag
\\ 
& =\frac{1}{(2 \pi)^d} \int_{\|\omega\|_{\infty} \ge t^{-1 / 2}}\left|\mathcal{F}\left[p_0\right](\omega)\right|^2\left|1-\phi_t(\omega)\right|^2 d \omega \label{TV_term1}
\\ 
& +\frac{1}{(2 \pi)^d} \int_{\|\omega\|_{\infty}<t^{-1 / 2}}\left|\mathcal{F}\left[p_0\right](\omega)\right|^2\left|1-\phi_t(\omega)\right|^2 d \omega\label{TV_term2}.
\end{align}

Here $\phi_t (\omega):= \exp\left(-\frac{t^2\|\omega\|^2}{2}\right)$ is the Fourier transformation (or Characteristic function) of Gaussian density.
By the definition of the Sobolev class of density in Assumption~\ref{assumption2}, 
we have
\begin{align}
    d{(2 \pi)^d L^2} 
    & \ge \int_{\mathbb{R}^d} \sum_{i=1}^d\left|\omega_i\right|^{2 \beta}\left|\mathcal{F}\left[p_0\right](\omega)\right|^2 d \omega \notag
    \\
    & \ge \int_{\|\omega\|_{\infty} \ge t^{-1 / 2}}\sum_{i=1}^d\left|\omega_i\right|^{2 \beta}\left|\mathcal{F}\left[p_0\right](\omega)\right|^2 d \omega \notag
    \\
    & \ge t^{-\beta} \int_{\|\omega\|_{\infty} \ge t^{-1 / 2}}\left|\mathcal{F}\left[p_0\right](\omega)\right|^2 d \omega \label{sobolev_time_condition}
\end{align}
For the first term \eqref{TV_term1}, use the fact that $\left|1-\phi_t(\omega)\right| \le 2$ and \eqref{sobolev_time_condition}, substituting $t = n^{-\frac{2}{2 \beta+d}}$, we have
\begin{align}
  \frac{1}{(2 \pi)^d} \int_{\|\omega\|_{\infty} 
   t^{-1/2}}\left|\mathcal{F}\left[p_0\right](\omega)\right|^2\left|1-\phi_t(\omega)\right|^2 \, d\omega 
  & \le d{L^2} t^{\beta}
   = d{L^2} \; n^{-\frac{2\beta}{2\beta + d}}.  \label{tv_term_1}
\end{align}
For the second term in \eqref{TV_term2}, use the fact that $|1-\phi_t(\omega)|^2 = \left|1-\exp\left(-\frac{\|\omega\|^2 t}{2}\right)\right|^2 \le \frac{t^2 \|\omega\|^4}{4}$, we have
\begin{align} 
& \quad \frac{1}{(2 \pi)^d} \int_{\|\omega\|_{\infty}<t^{-1/2}}\left|\mathcal{F}\left[p_0\right](\omega)\right|^2\left|1-\phi_t(\omega)\right|^2 d \omega \notag
\\ 
& \le \frac{1}{(2 \pi)^d} \cdot \frac{t^2}{4} \int_{\|\omega\|_{\infty}<t^{-1/2}}\left|\mathcal{F}\left[p_0\right](\omega)\right|^2\|\omega\|^4 d \omega \notag 
\\ 
& =\frac{1}{(2 \pi)^d} \cdot \frac{t^2}{4} \int_{\|\omega\|_{\infty}<t^{-1 / 2}}\left|\mathcal{F}\left[p_0\right](\omega)\right|^2\|\omega\|^{2 \beta} \cdot \|\omega\|^{4-2 \beta} d \omega \notag
\\
& \le \frac{1}{(2 \pi)^d} \cdot \frac{t^2}{4} \sup _{\|\omega\|_{\infty}<t^{-1/2}}\|\omega\|^{4-2 \beta} \int \left|\mathcal{F}\left[p_0\right](\omega)\right|^2 \|\omega\|^{2\beta} d \omega. \label{TV_term2_mid}
\end{align}
Since $\beta \le 2$, we have 
\begin{align}
\sup _{\|\omega\|_{\infty}<t^{-1/2}} \|\omega\|^{4-2 \beta}
& =\sup _{\|\omega\|_{\infty}<t^{-1/2}} \left(\sum_{i=1}^d\left|\omega_i\right|^{2}\right)^{2 -\beta} 
\le \left(d\left\|\omega\right\|^2_{\infty}\right)^{2-\beta} 
\le d^{2-\beta} t^{\beta-2}. \label{TV_term2_mid_1}
\end{align}
Besides, since $\|\omega\|^{2\beta} = \left(\sum_{i=1}^d |\omega_i|^2\right)^\beta \le \left(d^{\beta-1}\vee 1\right)\sum_{i=1}^d |\omega_i|^{2\beta}$, and by the definition of Sobolev class of densities, we have
\begin{align}
    \int \left|\mathcal{F}\left[p_0\right](\omega)\right|^2 \|\omega\|^{2\beta} d \omega 
    &\le 
    \left(d^{\beta-1}\vee 1\right) \sum_{i=1}^d \int  \left|\mathcal{F}\left[p_0\right](\omega)\right|^2 |\omega_i|^{2\beta} d \omega \notag 
    \\
    &\le d \left(d^{\beta-1}\vee 1\right) (2\pi)^d L^2. \label{TV_term2_mid_2}
\end{align}
Therefore, combining \eqref{TV_term2_mid_1} and \eqref{TV_term2_mid_2},
\begin{align}
    \eqref{TV_term2_mid} \le \frac{L^2 d^{3-\beta}\left(d^{\beta-1}\vee 1\right)}{4} \,t^\beta
    = \frac{L^2 d^{3-\beta}\left(d^{\beta-1}\vee 1\right)}{4} \, n^{-\frac{2 \beta}{2\beta+d}}. \label{tv_term_2}
\end{align}
As a result of combining the above terms \eqref{tv_term_1}, \eqref{tv_term_2} and \eqref{tv_term_3}, we are able to deduce that the total error between $p_0$ and $p_t$ is $\mathrm{TV}\left(p_0, p_t\right) \lesssim \text{polylog}(n) n^{-\frac{\beta}{2 \beta+d}}$.
\end{proof}

\subsection{Girsanov's Theorem}\label{Girsanov}
We can translate the cumulative score error to the KL divergence error by using the following Girsanov Theorem.

\begin{theorem}[Girsanov Theorem, \citet{le2016brownian}]
     Define the stochastic process $(\mathcal {L}_t)_{t\in[0,T]}$ by $\mathcal{L}_t := \int_0^t \,\langle b_s , dB_s\rangle$, where $B$ is a $\mathbb{Q}$-Brownian motion. Assume that $E_{\mathbb{Q}} \int_0^T \|b_s\|^2 \, ds < \infty$. Then, $L$ is a $Q$-martingale in $L^2(\mathbb{Q})$. Moreover, if
\begin{equation*}
E_\mathbb{Q}\, \mathcal{E}(\mathcal{L})_T = 1,
\end{equation*}
where
\begin{align*}
    \mathcal{E}(\mathcal{L})_t &:= \exp\left(\mathcal{L}_t - \frac{1}{2}[\,\mathcal{L}\,]_t\right)
    \\
    &=\exp\left(\int_0^t \,\langle b_s , dB_s\rangle - \frac{1}{2} \int_0^t \|b_s\|^2 \, ds\right).
\end{align*}
Then $\mathcal{E}(\mathcal{L})_T$ is also a $\mathbb Q$-martingale and the process
\begin{equation*}
\Tilde{B}_t := B_t - \int_0^t b_s \, ds
\end{equation*}
is a Brownian motion under the measure $\mathbb P$ defined by the Radon-Nikodym derivative: 
\begin{equation*}
    d\,\mathbb P := \mathcal{E}(\mathcal{L})_T \,d\, \mathbb Q.
\end{equation*}
\end{theorem}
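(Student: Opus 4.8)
The plan is to prove Girsanov's theorem along the classical route: exhibit $\mathcal E(\mathcal L)$ as a nonnegative local martingale solving a linear SDE, promote it to a true martingale using the hypothesis $E_{\mathbb Q}\,\mathcal E(\mathcal L)_T=1$, and then identify $\tilde B$ as a $\mathbb P$-Brownian motion via L\'evy's characterization combined with the abstract Bayes rule. First I would record that $\mathcal L_t=\int_0^t\langle b_s,dB_s\rangle$ is a square-integrable $\mathbb Q$-martingale: under the standing assumption $E_{\mathbb Q}\int_0^T\|b_s\|^2\,ds<\infty$ the stochastic integral is well defined, the It\^o isometry gives $E_{\mathbb Q}\mathcal L_t^2=E_{\mathbb Q}\int_0^t\|b_s\|^2\,ds<\infty$, and the quadratic variation is $[\mathcal L]_t=\int_0^t\|b_s\|^2\,ds$ (if only local square-integrability held, one localizes by $\tau_k=\inf\{t:\int_0^t\|b_s\|^2\,ds\ge k\}$ and passes to the limit).

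Next I would apply It\^o's formula to $f(x)=e^x$ with $X_t=\mathcal L_t-\tfrac12[\mathcal L]_t$. Since $dX_t=\langle b_t,dB_t\rangle-\tfrac12\|b_t\|^2\,dt$ and $d[X]_t=\|b_t\|^2\,dt$, this yields
\begin{equation*}
d\mathcal E(\mathcal L)_t=\mathcal E(\mathcal L)_t\big(dX_t+\tfrac12\,d[X]_t\big)=\mathcal E(\mathcal L)_t\,\langle b_t,dB_t\rangle,
\end{equation*}
so $\mathcal E(\mathcal L)$ is a nonnegative $\mathbb Q$-local martingale, hence a $\mathbb Q$-supermartingale with $\mathcal E(\mathcal L)_0=1$. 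A nonnegative supermartingale whose terminal expectation equals its initial value is a genuine martingale --- this is precisely where the hypothesis $E_{\mathbb Q}\,\mathcal E(\mathcal L)_T=1$ is used. In particular $\mathcal E(\mathcal L)_T\ge 0$ with $E_{\mathbb Q}\,\mathcal E(\mathcal L)_T=1$, so $d\mathbb P:=\mathcal E(\mathcal L)_T\,d\mathbb Q$ is a probability measure and, by the martingale property, $d\mathbb P/d\mathbb Q\big|_{\mathcal F_t}=\mathcal E(\mathcal L)_t$.

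Finally I would identify $\tilde B_t:=B_t-\int_0^t b_s\,ds$ as a $d$-dimensional $\mathbb P$-Brownian motion by L\'evy's characterization. The quadratic covariation is unaffected by subtracting a finite-variation process, so $[\tilde B^{(i)},\tilde B^{(j)}]_t=\delta_{ij}t$ under $\mathbb P$ as well. For the martingale part I would invoke the abstract Bayes rule --- a process $M$ is a $\mathbb P$-(local) martingale iff $M\,\mathcal E(\mathcal L)$ is a $\mathbb Q$-(local) martingale --- and compute, via the It\^o product rule and $d[\tilde B^{(i)},\mathcal E(\mathcal L)]_t=\mathcal E(\mathcal L)_t\,b_t^{(i)}\,dt$,
\begin{equation*}
d\big(\tilde B^{(i)}_t\,\mathcal E(\mathcal L)_t\big)=\tilde B^{(i)}_t\,\mathcal E(\mathcal L)_t\,\langle b_t,dB_t\rangle+\mathcal E(\mathcal L)_t\,dB^{(i)}_t,
\end{equation*}
in which the $dt$ terms cancel exactly; hence $\tilde B^{(i)}\mathcal E(\mathcal L)$ is a $\mathbb Q$-local martingale and $\tilde B^{(i)}$ a $\mathbb P$-local martingale, and L\'evy's theorem gives the claim.

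The hard part will be the integrability bookkeeping that turns these local martingales into true martingales. For $\mathcal E(\mathcal L)$ this is exactly what the hypothesis $E_{\mathbb Q}\,\mathcal E(\mathcal L)_T=1$ buys (which is why the statement assumes it, rather than a sufficient condition such as Novikov's); for $\tilde B^{(i)}\mathcal E(\mathcal L)$ one either localizes and passes to the limit using uniform integrability, or more economically uses the local-martingale form of L\'evy's characterization, which already suffices for the stated conclusion. A secondary point is to justify the abstract Bayes formula on the filtration $(\mathcal F_t)_{t\in[0,T]}$, which follows once the martingale property of $\mathcal E(\mathcal L)$ is established.
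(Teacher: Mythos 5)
The paper does not prove this statement: it is imported verbatim as a classical result, cited to Le Gall's textbook, and used as a black box (via Lemma~\ref{Girsanov_lemma}) in the proof of Theorem~\ref{main_theorem2}. There is therefore no internal proof to compare against; your proposal is the standard textbook argument and matches the route taken in the cited source. Concretely, your It\^o computation $d\mathcal{E}(\mathcal{L})_t=\mathcal{E}(\mathcal{L})_t\langle b_t,dB_t\rangle$ is correct, the promotion of the nonnegative local martingale (hence supermartingale) to a genuine martingale via $E_{\mathbb{Q}}\,\mathcal{E}(\mathcal{L})_T=1$ is exactly the point of that hypothesis, the identification $d\mathbb{P}/d\mathbb{Q}|_{\mathcal{F}_t}=\mathcal{E}(\mathcal{L})_t$ follows, and the abstract Bayes rule together with the cancellation of the $dt$ terms in $d\bigl(\tilde B^{(i)}_t\mathcal{E}(\mathcal{L})_t\bigr)$ and L\'evy's characterization (in its continuous-local-martingale form, which you correctly note suffices and avoids needless integrability bookkeeping) complete the argument. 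No gap.
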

The Girsanov theorem can be used to convert a Brownian motion with drift into a standard Brownian motion by changing the measure. In this paper, we aim to derive the KL divergence between two stochastic processes, $(\bX_t)_{t\in[0,T]}$ and $(\bY_t)_{t\in[0,T]}$. These processes have different drift terms. More specifically, $(\bX_t)_{t\in[0,T]}$ and  $( \bY_t)_{t\in[0,T]}$ are solutions to the following two SDEs:
\begin{equation*}
    d\,\bX_t = b^{\bX}(\bX_t,t) \,dt + d \bB_t, \;\;\; \bX_0 \sim p,
\end{equation*}
\begin{equation*}
    d\,\bY_t = b^{\bY}(\bY_t,t) \,dt + d \bB_t; \;\;\; \bY_0 \sim p.
\end{equation*}
Denote by $\mathbb{P}_{\bX}$ and $\mathbb{P}_{\bY}$ the path measures of $(\bX_t)_{t\in[0,T]}$ and  $( \bY_t)_{t\in[0,T]}$, respectively. Since the initial distributions of these two processes are identical, the only difference for these two processes is their drift terms $b^{\bX}$ and $b^{\bY}$. By Girsanov theorem, we can derive the Radon-Nikodym derivative for their path measure $\frac{d\,\mathbb{P}_{\bX}}{d\,\mathbb{P}_{\bY}}$, which allows us to compute the KL divergence for these two processes:
\begin{equation*}
\mathrm{D}_{\mathrm{KL}}\left(\mathbb{P}_{\bX}\,\|\,\mathbb{P}_{\bY}\right) = \mathbb{E}_{\mathbb{P}_{\bX}}\left[\log \frac{d\,\mathbb{P}_{\bX}}{d\,\mathbb{P}_{\bY}}\right].
\end{equation*}

The above discussion is summarized in the following lemma, as described in \citet{chen2022sampling} and \citet{oko2023diffusion}.
\begin{lemma}[\citet{chen2022sampling},\citet{oko2023diffusion}]\label{Girsanov_lemma}
    Let $p$ be any probability distribution and let $(\bX_t)_{t\in[0,T]}$,  $( \bY_t)_{t\in[0,T]}$ be solutions to the following two SDEs:
\begin{equation*}
    d\,\bX_t = b^{\bX}(\bX_t,t) \,dt + d \bB_t, \;\;\; \bX_0 \sim p,
\end{equation*}
\begin{equation*}
    d\,\bY_t = b^{\bY}(\bY_t,t) \,dt + d \bB_t; \;\;\; \bY_0 \sim p.
\end{equation*}
We denote the distribution of $\bX_t$ and $\bY_t$ as $p_t^{\bX}$ and $p_t^{\bY}$ and the path measure of $(\bX_t)_{t\in[0,T]}$ and $(\bY_t)_{t\in[0,T]}$ as $\mathbb{P}_{\bX}$ and $\mathbb{P}_{\bY}$, respectively.
\begin{enumerate}
    \item Suppose the following Novikov’s condition holds:
    \begin{equation}\label{Novikov}
        \mathbb{E}_{\mathbb{P^{\bX}}}\left[\exp\left(\int_{0}^{T}\|b^{\bX}(\bX_t,t) - b^{\bY}(\bX_t,t)\|^2\,dt\right)\right] < \infty.
    \end{equation}
    Then the Radon-Nikodym derivative of $\mathbb{P^{\bX}}$ with respect to $\mathbb{P^{\bY}}$ is
    \begin{equation*}
        \frac{d\,\mathbb{P_{\bX}}}{d\,\mathbb{P_{\bY}}} (\bZ) = \exp\left(\frac{1}{2}\int_0^T \|b^{\bX}(\bZ_t,t) - b^{\bY}(\bZ_t,t)\|^2\,dt - \int_0^T \left(b^{\bX}(\bZ_t,t) - b^{\bY}(\bZ_t,t)\right)\, d\bB_t\right).
    \end{equation*}
    Therefore we have
    \begin{align*}
        \mathrm{D}_{\mathrm{KL}}\left(\mathbb{P_{\bX}} \,\|\,\mathbb{P_{\bY}}\right) &= \frac{1}{2}\,\mathbb{E}_{\mathbb{P_{\bX}}} \left[\int_0^T \|b^{\bX}(\bX_t,t) - b^{\bY}(\bX_t,t)\|^2\,dt \right]\\
        &= \frac{1}{2}\, \int_x\int_0^T p_t^{\bX}(x)\|b^{\bX}(x,t) - b^{\bY}(x,t)\|^2\,dt\, dx.
    \end{align*}
    \item If $\int_x\int_0^T p_t^{\bX}(x)\|b^{\bX}(x,t) - b^{\bY}(x,t)\|^2\,dt\, dx \leq C$ holds for some constant $C$, then
    \begin{equation*}
        \mathrm{D}_{\mathrm{KL}}\left(\mathbb{P_{\bX}} \,\|\,\mathbb{P_{\bY}}\right) \leq \frac{1}{2}\, \int_x\int_0^T p_t^{\bX}(x)\|b^{\bX}(x,t) - b^{\bY}(x,t)\|^2\,dt\, dx,
    \end{equation*}
    even if Novikov's condition \eqref{Novikov} is not satisfied.
\end{enumerate}
\end{lemma}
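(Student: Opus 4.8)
The plan is to deduce both parts from the classical Girsanov theorem quoted immediately above, treating the general case in part~2 by a localization/truncation argument.

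\textbf{Part 1 (under Novikov's condition).} Set $b_t := b^{\bX}(\bX_t,t)-b^{\bY}(\bX_t,t)$, which is adapted along $(\bX_t)$, and write the driving $\mathbb{P}_{\bX}$-Brownian motion as $\bB_t = \bX_t-\bX_0-\int_0^t b^{\bX}(\bX_s,s)\,ds$. Then $W_t := \bB_t+\int_0^t b_s\,ds = \bX_t-\bX_0-\int_0^t b^{\bY}(\bX_s,s)\,ds$ is precisely the process that would drive the $\bY$-dynamics. Novikov's condition \eqref{Novikov} is stronger than the usual criterion $\mathbb{E}_{\mathbb{P}_{\bX}}\exp\!\big(\tfrac12\int_0^T\|b_s\|^2\,ds\big)<\infty$, so the exponential local martingale $\mathcal{E}\big(-\!\int\langle b_s,d\bB_s\rangle\big)$ is a true martingale of unit expectation; the cited Girsanov theorem then shows that under the measure $\mathbb{P}'$ with $d\mathbb{P}'/d\mathbb{P}_{\bX}=\mathcal{E}\big(-\!\int\langle b_s,d\bB_s\rangle\big)_T$, the process $W$ is a Brownian motion, so $(\bX_t)$ solves $d\bX_t=b^{\bY}(\bX_t,t)\,dt+dW_t$ with $\bX_0\sim p$. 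By weak uniqueness of this SDE, $\mathbb{P}'=\mathbb{P}_{\bY}$, and inverting the Girsanov density yields the displayed expression for $d\mathbb{P}_{\bX}/d\mathbb{P}_{\bY}$. Finally, $\log\frac{d\mathbb{P}_{\bX}}{d\mathbb{P}_{\bY}}=\int_0^T\langle b_s,d\bB_s\rangle+\tfrac12\int_0^T\|b_s\|^2\,ds$; Novikov also gives $\mathbb{E}_{\mathbb{P}_{\bX}}\int_0^T\|b_s\|^2\,ds<\infty$, so the Itô integral is a mean-zero $L^2$-martingale under $\mathbb{P}_{\bX}$, and taking $\mathbb{E}_{\mathbb{P}_{\bX}}$ with $\bX_t\sim p_t^{\bX}$ gives $\mathrm{D}_{\mathrm{KL}}(\mathbb{P}_{\bX}\|\mathbb{P}_{\bY})=\tfrac12\int_x\int_0^T p_t^{\bX}(x)\|b^{\bX}(x,t)-b^{\bY}(x,t)\|^2\,dt\,dx$.

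\textbf{Part 2 (no Novikov, only finiteness).} Here I would localize. Let $\tau_N:=\inf\{t\in[0,T]:\int_0^t\|b^{\bX}(\bX_s,s)-b^{\bY}(\bX_s,s)\|^2\,ds\ge N\}$, with $\tau_N=T$ if the level is never reached. The stopped drift difference $b_s\mathbbm{1}_{\{s\le\tau_N\}}$ satisfies $\int_0^T\|b_s\mathbbm{1}_{\{s\le\tau_N\}}\|^2\,ds\le N$ a.s., so its Novikov condition holds trivially and Part~1 — which applies verbatim to adapted drifts — yields
\begin{equation*}
\mathrm{D}_{\mathrm{KL}}\big(\mathbb{P}_{\bX}|_{\mathcal{F}_{\tau_N}}\,\big\|\,\mathbb{P}_{\bY}|_{\mathcal{F}_{\tau_N}}\big)=\tfrac12\,\mathbb{E}_{\mathbb{P}_{\bX}}\!\Big[\int_0^{\tau_N}\|b^{\bX}(\bX_s,s)-b^{\bY}(\bX_s,s)\|^2\,ds\Big].
\end{equation*}
The hypothesis $\int_x\int_0^T p_t^{\bX}(x)\|b^{\bX}(x,t)-b^{\bY}(x,t)\|^2\,dt\,dx=\mathbb{E}_{\mathbb{P}_{\bX}}\int_0^T\|b^{\bX}-b^{\bY}\|^2\,ds\le C<\infty$ forces $\int_0^T\|b^{\bX}-b^{\bY}\|^2\,ds<\infty$ $\mathbb{P}_{\bX}$-a.s., hence $\tau_N\uparrow T$ and $\mathcal{F}_{\tau_N}\uparrow\mathcal{F}_T$. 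Since relative entropy is monotone under enlargement of the conditioning $\sigma$-algebra and equals the supremum along an increasing filtration generating $\mathcal{F}_T$, letting $N\to\infty$ together with monotone convergence on the right-hand side gives $\mathrm{D}_{\mathrm{KL}}(\mathbb{P}_{\bX}\|\mathbb{P}_{\bY})=\tfrac12\mathbb{E}_{\mathbb{P}_{\bX}}\int_0^T\|b^{\bX}(\bX_s,s)-b^{\bY}(\bX_s,s)\|^2\,ds$, which is the claimed bound (in fact an equality).

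\textbf{Main obstacle.} The essential difficulty is Part~2: Girsanov's theorem genuinely requires the exponential to be a unit-mean martingale, and without Novikov-type control one only obtains a supermartingale, so a naive argument may lose mass. The localization above circumvents this, but needs care (i) that Part~1 remains valid for path-dependent adapted drifts such as $b_s\mathbbm{1}_{\{s\le\tau_N\}}$ — the classical Girsanov statement quoted does permit this; (ii) in the identification $\mathbb{P}'=\mathbb{P}_{\bY}$, which uses weak uniqueness for the SDE defining $\mathbb{P}_{\bY}$; and (iii) in the limit $N\to\infty$, i.e.\ that $\mathrm{D}_{\mathrm{KL}}$ restricted to $\mathcal{F}_{\tau_N}$ increases to $\mathrm{D}_{\mathrm{KL}}$ on $\mathcal{F}_T$ — a standard but not entirely trivial continuity property of the Kullback–Leibler divergence along an increasing filtration.
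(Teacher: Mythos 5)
The paper itself gives no proof of this lemma: it is stated purely as a citation to \citet{chen2022sampling} and \citet{oko2023diffusion}, following an informal heuristic discussion of how the Girsanov theorem is to be used. Your attempt is therefore not a reconstruction of an argument in the paper, and I compare it instead against the arguments in the cited references.

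Your proof is correct and follows essentially the same two-stage strategy used in those works. Part~1 is the direct Girsanov computation; your identification of the log Radon--Nikodym density, namely $\log(d\mathbb{P}_{\bX}/d\mathbb{P}_{\bY}) = \int_0^T \langle b_s, d\bB_s\rangle + \tfrac12\int_0^T\|b_s\|^2\,ds$ with $\bB$ the $\mathbb{P}_{\bX}$-Brownian motion, is the one that makes the Itô integral a zero-mean martingale under $\mathbb{P}_{\bX}$, and the KL formula follows (the sign convention in the paper's displayed exponent is not consistent with $\bB$ being the $\mathbb{P}_{\bX}$-Brownian motion, but this does not affect the KL identity the lemma asserts). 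For Part~2, your stopping-time localization $\tau_N$ with monotone passage $N\to\infty$ is precisely the device used by Chen et al.\ and Oko et al.\ to dispense with Novikov's condition. You correctly flag the three nontrivial ingredients: that Part~1 applies to path-dependent adapted drifts such as $b_s\mathbbm{1}_{\{s\le\tau_N\}}$, that weak uniqueness of the $\bY$-SDE is needed to identify the Girsanov-tilted measure with $\mathbb{P}_{\bY}$, and that $\mathrm{D}_{\mathrm{KL}}(\mu|_{\mathcal{F}_{\tau_N}}\,\|\,\nu|_{\mathcal{F}_{\tau_N}})\uparrow \mathrm{D}_{\mathrm{KL}}(\mu\,\|\,\nu)$ as $\mathcal{F}_{\tau_N}\uparrow\mathcal{F}_T$. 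The last is a standard fact (it follows from martingale convergence applied to the conditional Radon--Nikodym densities, or from the variational characterization of relative entropy) and applies here because the Brownian filtration is continuous, so $\bigvee_N\mathcal{F}_{\tau_N}=\mathcal{F}_{T-}=\mathcal{F}_T$ modulo $\mathbb{P}_{\bX}$-null sets once one has $\tau_N\uparrow T$ $\mathbb{P}_{\bX}$-a.s.\ from the finite second-moment hypothesis, which you establish. That you obtain equality rather than merely the stated inequality in Part~2 is correct and consistent with the literature; the lemma only asserts the inequality because that is all that is used downstream.
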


\subsection{Proof of Theorem \ref{main_theorem2}}\label{appendix_proof_main_theorem_2}
We first recall some notations of the stochastic processes we defined in Section~\ref{sec_background}. The forward process \eqref{BM_process} is denoted by $\left(\bX_t\right)_{t\in [0,T]}$ and $\bX_t \sim p_t$; The backward process \eqref{backward_sde_bm} is denoted by $\left(\bY_t\right)_{t\in [0,T]}$ and by definition, $\bY_t \sim p_{T-t}$; The process of Algorithm~\ref{algorithm1} is denoted by $(\hat \bY_t)_{t\in [0,T]}$.
Let $(\bar \bY_t)_{t\in [0,T]}$ be $(\hat \bY_t)_{t\in [0,T]}$ replacing $\hat \bY_0 \sim \mathcal{N}(0,T\boldsymbol{I}_d)$ by $\bar \bY_0 \sim p_T$, i.e., $(\bar \bY_t)_{t\in [0,T]}$ satisfies:
\begin{equation*}
    \mathrm{d} \bar \bY_t= \hat s_{T-t}(\bar \bY_t) \mathrm{d} t+\mathrm{d} \bB_t, \;\;\;\; \bar \bY_0 \sim p_T.
\end{equation*}
Now we start the proof. By the triangle inequality, 
\begin{align*}
    \mathbb{E}\left[\mathrm{TV}\left(\bX_0, \hat \bY_{T-t_0}\right)\right] &\leq \mathrm{TV}\left(\bX_0, \bX_{t_0}\right) + \mathrm{TV}\left(\bX_{t_0}, \bY_{T-t_0}\right)\\  &+ \mathbb{E}\left[\mathrm{TV}\left(\bY_{T-t_0}, \bar \bY_{T-t_0}\right)\right] 
    + \mathbb{E}\left[\mathrm{TV}\left( \bar \bY_{T-t_0} , \hat \bY_{T-t_0}\right)\right].
\end{align*}
By Theorem~\ref{theorem3}, $\mathrm{TV}\left(\bX_0, \bX_{t_0}\right) \lesssim \mathrm{polylog}(n) \, n^{-\frac{\beta}{2\beta+d}}$. By definition of the backward process \eqref{backward_sde_bm}, $\mathrm{TV}\left(\bX_{t_0}, \bY_{T-t_0}\right) = 0$. For the third term, by Pinsker's inequality and data-processing inequality,
\begin{equation*}
    \mathrm{TV}\left(\bY_{T-t_0}, \bar \bY_{T-t_0}\right) \lesssim \sqrt{\mathrm{D}_{\mathrm{KL}}\left(\bY_{T-t_0}\| \bar \bY_{T-t_0}\right)} \leq \sqrt{\mathrm{D}_{\mathrm{KL}}\left(\mathbb{P}_{\bY}\| \mathbb{P}_{\bar \bY}\right)},
\end{equation*} 
where $\mathbb{P}_{\bY}$ and $\mathbb{P}_{\bar \bY}$ are the path measure for $(\bY_t)_{t\in [0,T-t_0]}$ and $(\bar \bY_t)_{t \in [0,T-t_0]}$. Then using the second part of 
Lemma~\ref{Girsanov_lemma} by taking $b_t^{\bX} = \nabla \log p_{T-t}(\bx)$ and $b_t^{\bY} = \hat s_{T-t}(\bx)$ (see Section~\ref{Girsanov} for details), we have
\begin{align*}
\mathrm{D}_{\mathrm{KL}}\left(\mathbb{P}_{\bY}\| \mathbb{P}_{\bar \bY}\right) &\leq \frac{1}{2} \int_{0}^{T-t_0} \mathbb{E}_{\bx \sim p_t} \left[\|\hat s_{T-t}(\bx) - \nabla \log p_{T-t}(\bx)\|^2 \right]\,dt \\
&= \frac{1}{2} \int_{t_0}^{T} \mathbb{E}_{\bx \sim p_t}\left[ \|\hat s_t(\bx) - \nabla \log p_t(\bx)\|^2\right]\,dt.
\end{align*}
Then $\mathbb{E} \left[ \mathrm{D}_{\mathrm{KL}}\left(\mathbb{P}_{\bY}\| \mathbb{P}_{\bar \bY}\right) \right]$ can be bounded by Corollary~\ref{main_corollary}.
Since the only difference of $\bar \bY_{T-t_0}$ and $\hat \bY_{T-t_0}$ is their initial distribution, we have
\begin{equation*}
     \mathbb{E}\left[\mathrm{TV}\left( \bar \bY_{T-t_0} , \hat \bY_{T-t_0}\right)\right] \leq \mathrm{TV}\left( \bX_{T} , \mathcal{N}(0,T\boldsymbol{I}_d)\right).
\end{equation*}
Again, by Pinsker's inequality,
\begin{equation*}
    \mathrm{TV}\left( \bX_{T} , \mathcal{N}(0,T\boldsymbol{I}_d)\right) \leq \sqrt{\mathrm{D}_{\mathrm{KL}}\left( \bX_{T} \,\|\, \mathcal{N}(0,T\boldsymbol{I}_d)\right)}.
\end{equation*}
Using Jensen's inequality,
\begin{align*}
    \mathrm{D}_{\mathrm{KL}}\left( \bX_{T} \,\|\, \mathcal{N}(0,T\boldsymbol{I}_d)\right) &\leq \mathbb{E}_{\bx\sim p_0}\left[\mathrm{D}_{\mathrm{KL}}\left( \bX_{T} | \bX_{0} = \bx \,\| \,\mathcal{N}(0,T\boldsymbol{I}_d)\right) \right] \\
    &= \frac{1}{2T} \,\mathbb{E}_{\bx\sim p_0}\left[ \bx ^T \bx\right].
\end{align*}
Here in the last equality, we use the fact that $\bX_T | \bX_0 = \bx \sim \mathcal{N}(\bx,T\boldsymbol{I}_d)$ and the KL divergence between two Gaussian distributions. $\mathbb{E}_{\bx\sim p_0}\left[ \bx ^T \bx\right]$ is finite since $p_0$ is Sub-Gaussian. Therefore by taking $T = n^{\frac{2\beta}{2\beta+d}}$
\begin{equation*}
    \mathbb{E}\left[\mathrm{TV}\left( \bar \bY_{T-t_0} , \hat \bY_{T-t_0}\right)\right] \lesssim n^{-\frac{\beta}{2\beta+d}}.
\end{equation*}

\section{Auxiliary Results}\label{sec:Auxiliary Results}
\subsection{Sub-Gaussian Properties of \texorpdfstring{$p_t(x)$}{pt}}
We defined the Sub-Gaussian random vectors in Definition \ref{Sub-Gaussian_def}. Sub-Gaussian random vectors in higher dimensions also retain similar properties to those of 1-dimensional Sub-Gaussian random variables. The following lemma shows that the sum of two Sub-Gaussian random vectors is still Sub-Gaussian.
\begin{lemma}\label{Sub-Gaussian_sum} 
Let $X, Y \in \mathbb{R}^d$ be independent Sub-Gaussian random vectors with Sub-Gaussian norms $\|X\|_{\psi_2}$ and $\|Y\|_{\psi_2}$ respectively. Then $X+Y$ is Sub-Gaussian and 
\begin{equation*}
    \|X+Y\|_{\psi_2} \leq \|X\|_{\psi_2} + \|Y\|_{\psi_2}.
\end{equation*}
\end{lemma}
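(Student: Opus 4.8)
The plan is to reduce the statement to the triangle inequality for the scalar Orlicz (Luxemburg) norm $\|\cdot\|_{\psi_2}$ and then take a supremum over directions $v \in S^{d-1}$. The only structural fact about $\psi_2(x) = e^{x^2}-1$ that is needed is that it is a Young function: nonnegative, convex, and increasing on $[0,\infty)$ with $\psi_2(0)=0$. In particular, independence of $X$ and $Y$ turns out not to be required.

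First I would establish the scalar claim: if $U,V$ are real random variables with $a := \|U\|_{\psi_2} < \infty$ and $b := \|V\|_{\psi_2} < \infty$, then $\|U+V\|_{\psi_2} \le a+b$. The degenerate case $a=0$ (resp. $b=0$) forces $U=0$ (resp. $V=0$) almost surely and is trivial, so assume $a,b>0$. Since $C \mapsto \psi_2(|U|/C)$ decreases pointwise as $C$ increases, monotone convergence gives that the infimum in the definition of $\|U\|_{\psi_2}$ is attained, i.e. $\mathbb{E}[\psi_2(|U|/a)] \le 1$, and likewise $\mathbb{E}[\psi_2(|V|/b)] \le 1$. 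Using $|U+V| \le |U| + |V|$, monotonicity of $\psi_2$, and then convexity with weights $a/(a+b)$ and $b/(a+b)$,
\begin{align*}
\psi_2\!\left(\frac{|U+V|}{a+b}\right) &\le \psi_2\!\left(\frac{a}{a+b}\cdot\frac{|U|}{a} + \frac{b}{a+b}\cdot\frac{|V|}{b}\right) \\
&\le \frac{a}{a+b}\,\psi_2\!\left(\frac{|U|}{a}\right) + \frac{b}{a+b}\,\psi_2\!\left(\frac{|V|}{b}\right).
\end{align*}
Taking expectations yields $\mathbb{E}[\psi_2(|U+V|/(a+b))] \le \frac{a}{a+b} + \frac{b}{a+b} = 1$, so $a+b$ lies in the set defining the infimum, i.e. $\|U+V\|_{\psi_2} \le a+b$.

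To finish, fix $v \in S^{d-1}$ and apply the scalar inequality to $U = \langle X, v\rangle$ and $V = \langle Y, v\rangle$; since $\langle X+Y, v\rangle = U+V$,
\begin{equation*}
\|\langle X+Y, v\rangle\|_{\psi_2} \le \|\langle X, v\rangle\|_{\psi_2} + \|\langle Y, v\rangle\|_{\psi_2} \le \|X\|_{\psi_2} + \|Y\|_{\psi_2}.
\end{equation*}
Taking the supremum over $v \in S^{d-1}$ gives $\|X+Y\|_{\psi_2} \le \|X\|_{\psi_2} + \|Y\|_{\psi_2} < \infty$, so $X+Y$ is Sub-Gaussian. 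There is essentially no real obstacle in this argument; the only point deserving a line of care is the justification, via monotone convergence, that the infimum defining $\|\cdot\|_{\psi_2}$ is attained whenever it is finite, so that $\mathbb{E}[\psi_2(|U|/a)] \le 1$ holds at $C = a$ rather than only for $C > a$.
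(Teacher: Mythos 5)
Your proof is correct and follows essentially the same route as the paper's: establish the scalar triangle inequality for the Luxemburg norm via monotonicity plus convexity of $\psi_2$ and Jensen, then lift to $\mathbb{R}^d$ by applying the scalar case to $\langle X,v\rangle$, $\langle Y,v\rangle$ and taking the supremum over $v\in S^{d-1}$. The two small points you add beyond the paper's argument --- a monotone-convergence justification that the infimum defining $\|\cdot\|_{\psi_2}$ is attained, and the observation that independence of $X$ and $Y$ is never used --- are both correct and worth noting.
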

\begin{proof}
    We first prove that for $X, Y \in \mathbb{R}$, we have 
    \begin{equation*}
        \|X+Y\|_{\psi_2} \leq \|X\|_{\psi_2} + \|Y\|_{\psi_2}.
    \end{equation*}
    In fact, notice that the function $\psi_2(x) = e^{x^2} - 1$ is convex and increasing. Denote $\sigma_X := \|X\|_{\psi_2}$ and $\sigma_Y := \|Y\|_{\psi_2}$, then by triangle inequality and Jensen's inequality,
    \begin{align*}
        \psi_2\left(\frac{|X+Y|}{\sigma_X + \sigma_Y}\right) &\leq \psi_2\left(\frac{|X|}{\sigma_X + \sigma_Y} + \frac{|Y|}{\sigma_X + \sigma_Y}\right) \\
        &= \psi_2\left(\frac{\sigma_X}{\sigma_X + \sigma_Y} \frac{|X|}{\sigma_X} + \frac{\sigma_Y}{\sigma_X + \sigma_Y} \frac{|Y|}{\sigma_Y}\right)\\
        &\leq \frac{\sigma_X}{\sigma_X + \sigma_Y} \psi_2\left(\frac{|X|}{\sigma_X}\right) + \frac{\sigma_Y}{\sigma_X + \sigma_Y}\psi_2\left( \frac{|Y|}{\sigma_Y}\right).
    \end{align*}
    Take expectation of both sizes, and use the definition of Orlicz norm,
    \begin{align*}
        \mathbb E \left[\psi_2\left(\frac{|X+Y|}{\sigma_X + \sigma_Y}\right)\right] &\leq \frac{\sigma_X}{\sigma_X + \sigma_Y} \mathbb{E} \left[\psi_2\left(\frac{|X|}{\sigma_X}\right)\right] + \frac{\sigma_Y}{\sigma_X + \sigma_Y} \mathbb{E} \left[\psi_2\left( \frac{|Y|}{\sigma_Y}\right)\right] \\
        &\leq  \frac{\sigma_X}{\sigma_X + \sigma_Y} +  \frac{\sigma_Y}{\sigma_X + \sigma_Y} \\
        &= 1.
    \end{align*}
    Therefore,
    \begin{equation*}
        \|X+Y\|_{\psi_2} \leq \sigma_X + \sigma_Y.
    \end{equation*}
    In general case where $X, Y \in \mathbb{R}^d$, for any $v \in S^{d-1}$,
    \begin{align*}
        \|\langle X+Y, v \rangle \|_{\psi_2} &=  \|\langle X, v \rangle + \langle Y, v \rangle\|_{\psi_2} \\
        &\leq \|\langle X, v \rangle \|_{\psi_2} + \|\langle Y, v \rangle\|_{\psi_2} \\
        &\leq \|X\|_{\psi_2} + \|Y\|_{\psi_2}.
    \end{align*}
    The result follows from taking the supremum with respect to $v \in S^{d-1}$ of both sizes and the definition of Orlicz norm.
\end{proof}
The next lemma provides a tail bound for a Sub-Gaussian random vector, analogous to the one-dimensional tail bound.
\begin{lemma}\label{Sub-Gaussian_tail}
    Suppose $X \in \mathbb{R}^d$ is $\sigma$-Sub-Gaussian with $\mathbb{E} X = 0$, then for any $t \in \mathbb{R}$,
    \begin{equation*}
        \mathbb{P} \left(\|X\|_{\infty} > t\right) \le 2d\exp\left(-\frac{t^2}{2\sigma^2}\right).
    \end{equation*}
\end{lemma}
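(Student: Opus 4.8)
The plan is to reduce the $\ell_\infty$-tail bound to a union bound over the $d$ coordinates, each of which is controlled by the one-dimensional sub-Gaussian property that is built directly into Definition~\ref{Sub-Gaussian_def}. We may assume $t>0$, which is the only regime in which the bound is used.

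First I would observe that for each $i=1,\dots,d$ the $i$-th coordinate is the one-dimensional marginal $X_i=\langle X,e_i\rangle$ along the unit vector $e_i\in S^{d-1}$, so by the definition of the sub-Gaussian norm, $\|X_i\|_{\psi_2}\le \|X\|_{\psi_2}=\sigma$. Next I would unwind the Orlicz-norm definition with $\psi_2(x)=e^{x^2}-1$: since $\sigma=\|X_i\|_{\psi_2}$, for every $C>\sigma$ we have $\mathbb{E}[\exp(X_i^2/C^2)]\le 2$, and letting $C\downarrow\sigma$ (by monotone convergence) gives $\mathbb{E}[\exp(X_i^2/\sigma^2)]\le 2$. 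Applying Markov's inequality to the nonnegative variable $\exp(X_i^2/\sigma^2)$ then yields, for any $t>0$,
\[
\mathbb{P}(|X_i|>t)=\mathbb{P}\!\left(e^{X_i^2/\sigma^2}>e^{t^2/\sigma^2}\right)\le e^{-t^2/\sigma^2}\,\mathbb{E}\!\left[e^{X_i^2/\sigma^2}\right]\le 2e^{-t^2/\sigma^2}\le 2e^{-t^2/(2\sigma^2)}.
\]

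Finally I would take a union bound: $\{\|X\|_\infty>t\}=\bigcup_{i=1}^d\{|X_i|>t\}$, hence $\mathbb{P}(\|X\|_\infty>t)\le\sum_{i=1}^d\mathbb{P}(|X_i|>t)\le 2d\,e^{-t^2/(2\sigma^2)}$, which is the claim. Note that the hypothesis $\mathbb{E}X=0$ is not actually needed for this estimate; it is harmless and is retained only for consistency with the way the lemma is invoked. There is no genuine obstacle in this argument — the only step requiring a moment's care is extracting the moment bound $\mathbb{E}[\exp(X_i^2/\sigma^2)]\le 2$ from the infimum appearing in the Orlicz-norm definition, which is resolved by the one-sided limit $C\downarrow\sigma$.
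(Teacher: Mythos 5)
Your proof is correct and follows essentially the same route as the paper's: extract the coordinate-wise sub-Gaussian bound $\mathbb{E}[\exp(X_i^2/\sigma^2)]\le 2$ from the definition by testing along the standard basis vectors, apply Markov's inequality to get the one-dimensional tail bound, and finish with a union bound over the $d$ coordinates. The only difference is that you are more explicit about the two small steps the paper glosses over — justifying the moment bound via the limit $C\downarrow\sigma$ in the Orlicz-norm infimum, and spelling out the Markov step (which actually yields the stronger exponent $-t^2/\sigma^2$ before you relax it to $-t^2/(2\sigma^2)$) — and you correctly note that the hypothesis $\mathbb{E}X=0$ is not used.
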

\begin{proof}
    Let $v=(1,0, \ldots,0) \in \mathbb{R}^d$. 
    \begin{align*}
         \mathbb{E} \exp\left(\frac{|X_1|^2}{\sigma^2}\right) = \mathbb{E} \exp\left(\frac{|\langle X, v \rangle|^2}{\sigma^2}\right) \le 2.
    \end{align*}
    Then $X_1$ is $\sigma$-Sub-Gaussian. Similarly, $X_i$ is $\sigma$-Sub-Gaussian for all $i = 1,\ldots,d$.
    Therefore,
    \begin{align*}
        \mathbb{P} \left(\|X\|_{\infty} > t\right) &\le 
        \sum_{i=1}^d \,\mathbb{P} \left(|X_i| > t\right) 
        \\
        &\le \sum_{i=1}^d \, 2\, e^{-\frac{t^2}{2\sigma^2}} 
        \\
        &= 2d\, e^{-\frac{t^2}{2\sigma^2}}.
    \end{align*}
\end{proof}
Let $X_0 \sim p_0$, $Z_t \sim \mathcal{N}(0,t\boldsymbol{I}_d)$ and $X_t = X_0 + Z_t \sim p_t$. The following results of the Sub-Gaussian property of $X_t$ can be directly seen from Lemma~\ref{Sub-Gaussian_sum}.
\begin{lemma}\label{Sub-Gaussianforpt} \textbf{[Sub-Gaussian property for $p_t(x)$]}
     Suppose $p_0$ is $\sigma_0$-Sub-Gaussian, and $p_t = p_0 * \phi_t$, where $\phi_t$ is the density of $\mathcal{N}(0,t\boldsymbol{I}_d)$ and $*$ denotes the convolution. Then $p_t$ is also Sub-Gaussian and for $X_t \sim p_t$,  $\|X_t\|_{\psi_2} \le \sigma_0+\sqrt{t}$. 
\end{lemma}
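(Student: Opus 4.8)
The plan is to read this off as an immediate corollary of the explicit solution of the forward process together with the subadditivity of the sub-Gaussian norm proved in Lemma~\ref{Sub-Gaussian_sum}. By \eqref{BM_process2}, if $X_0 \sim p_0$ and $Z_t \sim \mathcal{N}(0,t\boldsymbol{I}_d)$ are independent, then $X_t := X_0 + Z_t \sim p_t$. Assumption~\ref{assumption1} gives $\|X_0\|_{\psi_2} \le \sigma_0$, so by Lemma~\ref{Sub-Gaussian_sum} it suffices to bound $\|Z_t\|_{\psi_2}$; once that is done, $\|X_t\|_{\psi_2} \le \|X_0\|_{\psi_2} + \|Z_t\|_{\psi_2}$ closes the argument. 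Note that the norm $\|\cdot\|_{\psi_2}$ of Definition~\ref{Sub-Gaussian_def} does not involve centering, so no separate reduction to the mean-zero case is needed.

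For the Gaussian term, fix a unit vector $v \in S^{d-1}$; then $\langle Z_t, v\rangle \sim \mathcal{N}(0,t)$, and for $C^2 > 2t$ the Gaussian Laplace transform gives $\mathbb{E}\exp(\langle Z_t, v\rangle^2 / C^2) = (1 - 2t/C^2)^{-1/2}$. This is $\le 2$ once $C \ge \sqrt{t}$ (up to a universal constant inherent to the Orlicz function $\psi_2(x) = e^{x^2}-1$), so $\|\langle Z_t, v\rangle\|_{\psi_2} \lesssim \sqrt{t}$; taking the supremum over $v$ yields $\|Z_t\|_{\psi_2} \lesssim \sqrt{t}$. Combining with the previous paragraph gives $\|X_t\|_{\psi_2} \le \sigma_0 + \sqrt{t}$, which is exactly the claimed bound. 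As a byproduct, $p_t$ has finite second moment, which is what is actually invoked later (e.g. in the initialization-error computation of Appendix~\ref{appendix_proof_main_theorem_2}).

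There is essentially no hard step here: the lemma is a one-line consequence of Lemma~\ref{Sub-Gaussian_sum}. The only point requiring a little care is the numerical constant in the $\psi_2$-norm of a one-dimensional Gaussian — with $\psi_2(x) = e^{x^2}-1$ one gets $\|\mathcal{N}(0,t)\|_{\psi_2} = \sqrt{8t/3}$ rather than $\sqrt{t}$ — so the stated bound should be read as holding up to a harmless universal constant; this does not affect any downstream use, since all subsequent estimates (including the tail bound in Lemma~\ref{Sub-Gaussian_tail}) only use sub-Gaussianity of $p_t$ with a constant that is polynomial in $\sigma_0$ and $\sqrt{t}$.
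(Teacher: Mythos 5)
Your proof is correct and takes essentially the same route as the paper: write $X_t = X_0 + Z_t$ with $Z_t \sim \mathcal{N}(0,t\boldsymbol{I}_d)$ independent of $X_0$, and invoke the subadditivity of $\|\cdot\|_{\psi_2}$ from Lemma~\ref{Sub-Gaussian_sum} after bounding the Gaussian term. You are also right that under the paper's own normalization $\psi_2(x)=e^{x^2}-1$ one gets $\|\mathcal{N}(0,t)\|_{\psi_2}=\sqrt{8t/3}$, so the paper's assertion that this norm equals $\sqrt{t}$ (and hence the clean bound $\sigma_0+\sqrt{t}$) is off by a universal constant; as you note this is harmless downstream, since every later use only needs $\|X_t\|_{\psi_2}\lesssim \sigma_0+\sqrt{t}$.
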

\begin{proof}
    Let $Z \sim \mathcal{N}(0,t\boldsymbol{I}_d)$. For any $v \in S^{d-1}$, since $\|v\|=1$, $\langle Z,v \rangle \sim \mathcal{N}(0,t)$. Using the fact that the Sub-Gaussian norm for a one-dimensional Gaussian random variable is equal to its square root of the variance, we have $\|\langle Z,v \rangle\|_{\psi_2} =\sqrt{t}$. Therefore,
    \begin{align*}
        \|Z\|_{\psi_2} = \sup_{v \in S^{d-1}} \| \langle Z, v \rangle \|_{\psi_2} = \sqrt{t}.
    \end{align*}
    If $X\sim p_0$, then $X+Z \sim p_t$. By Lemma~\ref{Sub-Gaussian_sum}, 
    \begin{equation*}
        \|X+Z\|_{\psi_2} \le \|X\|_{\psi_2} + \|Z\|_{\psi_2} = \sigma_0 + \sqrt{t}.
    \end{equation*}
\end{proof}
\subsection{Bounds on \texorpdfstring{$\|D^{\alpha}p_t(x)\|_{\infty}$}{Derivative of pt}}
The following lemma provides an upper bound for any derivatives of $p_t$.
\begin{lemma}\label{bound_on_pt}
    Supposed $\|p_0\|_{\infty}:=\sup_{x} p_0(x) < \infty$, and let $\alpha = (\alpha_1,\ldots, \alpha_d)$ with $\ell = \sum_{i=1}^d \alpha_i$, then
    \begin{equation*}
        \sup_x |D^{\alpha}p_t(x)| \leq C_1^d \|p_0\|_{\infty} \ell^{\frac{\ell}{2} + \frac{1}{4}} t^{-\ell/2},
    \end{equation*}
    for some universal constant $C_1$.
\end{lemma}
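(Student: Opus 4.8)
The plan is to write $p_t = p_0 * \phi_t$ with $\phi_t$ the density of $\mathcal N(0,t\boldsymbol I_d)$, transfer all derivatives onto the smooth, rapidly decaying Gaussian factor, and then bound the $L^1$ norm of $D^\alpha\phi_t$ using Hermite polynomials. First I would justify differentiation under the integral sign: since $\phi_t$ and each of its partial derivatives are bounded, integrable, and rapidly decaying, while $p_0$ is a bounded probability density, dominated convergence gives $D^\alpha p_t(x) = (p_0 * D^\alpha\phi_t)(x) = \int_{\mathbb R^d} p_0(y)\,D^\alpha\phi_t(x-y)\,dy$, and hence $\sup_x|D^\alpha p_t(x)| \le \|p_0\|_\infty \int_{\mathbb R^d}|D^\alpha\phi_t(z)|\,dz$.

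Next I would tensorize. Because $\phi_t(z) = \prod_{i=1}^d g_t(z_i)$ where $g_t(z) = t^{-1/2}\varphi(z/\sqrt t)$ is the one-dimensional $\mathcal N(0,t)$ density and $\varphi$ the standard normal density, one has $D^\alpha\phi_t(z) = \prod_{i=1}^d g_t^{(\alpha_i)}(z_i)$, so by Fubini $\int_{\mathbb R^d}|D^\alpha\phi_t(z)|\,dz = \prod_{i=1}^d \int_{\mathbb R}|g_t^{(\alpha_i)}(z)|\,dz$. Using the identity $\varphi^{(k)} = (-1)^k He_k\,\varphi$ for the probabilists' Hermite polynomials together with a change of variables $u = z/\sqrt t$, the $i$-th factor equals $t^{-\alpha_i/2}\,\mathbb E|He_{\alpha_i}(Z)|$ with $Z\sim\mathcal N(0,1)$. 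By Cauchy–Schwarz and the orthogonality relation $\mathbb E[He_k(Z)^2] = k!$, one gets $\mathbb E|He_k(Z)| \le \sqrt{k!}$, so the product is at most $t^{-\ell/2}\prod_{i=1}^d\sqrt{\alpha_i!}$.

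It then remains to collapse the combinatorial factor: since $\prod_{i=1}^d \alpha_i! \le \big(\sum_{i=1}^d \alpha_i\big)! = \ell!$ (the multinomial coefficient $\ell!/\prod_i\alpha_i!$ is a positive integer), we have $\prod_i\sqrt{\alpha_i!}\le\sqrt{\ell!}$, and for $\ell\ge1$ the crude bound $\ell!\le\ell^{\ell}$ (or Stirling, giving the sharper $\sqrt{\ell!}\lesssim \ell^{\ell/2+1/4}e^{-\ell/2}$) yields $\sqrt{\ell!}\le\ell^{\ell/2+1/4}$. Combining, $\sup_x|D^\alpha p_t(x)|\le \|p_0\|_\infty\,\ell^{\ell/2+1/4}\,t^{-\ell/2}$, which is already stronger than the stated bound (taking $C_1\ge1$, so $C_1^d\ge1$, absorbs any universal constant).

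I do not expect a genuine obstacle here: the only points needing a little care are the routine justification of differentiating the convolution under the integral and the bookkeeping of the $d$-fold product so that the combinatorial factor collapses to $\sqrt{\ell!}$ rather than an uncontrolled $\prod_i\sqrt{\alpha_i!}$. A fully equivalent alternative is the Fourier route, bounding $\|D^\alpha p_t\|_\infty \le (2\pi)^{-d}\|\mathcal F[p_0]\|_\infty\int_{\mathbb R^d}|\omega^\alpha|\,e^{-t\|\omega\|^2/2}\,d\omega$ and evaluating the resulting Gaussian moment integral (which produces $\Gamma$-function factors in place of $\sqrt{k!}$ and the same rate in $t$), but the Hermite computation most directly reproduces the $\ell^{\ell/2+1/4}$ shape appearing in the statement.
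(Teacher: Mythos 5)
Your proof is correct and follows essentially the same route as the paper: push the derivatives onto the Gaussian convolution factor, so that $\sup_x|D^\alpha p_t(x)|\le\|p_0\|_\infty\int|D^\alpha\phi_t|$, and then control $\int|D^\alpha\phi_t|$ via Hermite polynomials, collapsing $\prod_i\alpha_i!\le\ell!$ and applying Stirling. The one substantive difference is the Hermite bound you use: the paper invokes Indritz's pointwise inequality $H_n(x)\le C(2^n n!)^{1/2}e^{x^2/2}$ for the physicists' Hermite polynomials and then evaluates the resulting Gaussian integral (which is where its constant $C_1^d$ comes from), whereas you tensorize the $L^1$ norm first and then apply Cauchy--Schwarz together with the $L^2(\gamma)$ orthogonality $\mathbb E[He_k(Z)^2]=k!$ to get $\mathbb E|He_k(Z)|\le\sqrt{k!}$ directly. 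Your route is self-contained (no external Hermite inequality to cite) and in fact yields the slightly tighter bound $\|p_0\|_\infty\,\ell^{\ell/2+1/4}\,t^{-\ell/2}$ without the dimension-dependent factor $C_1^d$; both give the same rate in $t$ and $\ell$ that the lemma asserts.
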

\begin{proof}
Denote $ \phi_t (x) := \frac{1}{(2\pi t)^{d/2}}\exp\left\{-\frac{\|x\|^2}{2t}\right\} $ as the Gaussian density function with variance $t\boldsymbol{I}_d$.
\begin{align*}
    |D^{\alpha} p_t(x)| 
    &
    \le \int_y |p(y) D^{\alpha} \phi_t(x-y)|dy \\
    & \le \|p_0\|_{\infty} \int_x | D^{\alpha} \phi_t(x)|dx.
\end{align*}    
Next, we establish an upper bound for $\int_x | D^{\alpha} \phi_t(x)|dx = \int_x | \prod_{i=1}^d\frac{\partial^{\alpha_i}}{\partial x_i^{\alpha_i}} \phi_t(x)|dx$ using the Hermite polynomials.
The Hermite polynomials of order $n$ are defined as follows:
\begin{equation}\label{Hermite}
    H_n(x):=(-1)^n e^{x^2} \frac{d^n}{dx^n}e^{-x^2}.
\end{equation}
Then we have:
\begin{equation*}
    D^{\alpha} \phi_t(x) = \left(-\frac{1}{\sqrt{2t}}\right)^{|\alpha|} \prod_{i=1}^d H_{\alpha_i}\left(\frac{x_i}{\sqrt{2t}}\right) \phi_t(x).
\end{equation*}
Using Lemma~\ref{Hermiteinequality} for Hermite polynomials, we have
\begin{equation*}
    |D^{\alpha} \phi_t(x)| \le C^d t^{-\ell/2} \left(\prod_{i=1}^d \alpha_i!\right)^{1/2} e^{\|x\|^2/4t} \phi_t(x),
\end{equation*}
Therefore, using $\prod_{i=1}^d \alpha_i! \le (\sum_{i=1}^d \alpha_i)! = \ell !$, we have,
\begin{align*}
    \int_x |D^{\alpha} \phi_t(x)|dx 
    &\le C^d t^{-\ell/2} (\ell!)^{1/2} \int_x e^{\|x\|^2/4t} \phi_t(x)dx 
    \\
    &= C^d t^{-\ell/2} (\ell!)^{1/2} \int_x \frac{1}{(2\pi t)^{d/2}} e^{-\|x\|^2/4t}dx 
    \\
    &= C^d 2^{d/2} (\ell!)^{1/2} t^{-\ell/2}.
\end{align*}
By Stirling's approximation,
\begin{equation*}
    n! \le \sqrt{2\pi n} \left(\frac{n}{e}\right)^n e^{\frac{1}{12n}},
\end{equation*}
we have
\begin{equation*}
    (\ell!)^{1/2} \le (2\pi)^{1/4}\;e^{\frac{1-12{\ell^2}}{24 \ell}} \;\ell^{\frac{\ell}{2} + \frac{1}{4}} \le (2\pi)^{1/4}\,\ell^{\frac{\ell}{2} + \frac{1}{4}}.
\end{equation*}
Therefore,
\begin{align*}
    \int_x |\phi^{(\ell)}_t(x)|dx 
    &\le C^d 2^{d/2} (2\pi)^{1/4} \; \ell^{\frac{\ell}{2} + \frac{1}{4}} t^{-\ell/2}.
\end{align*}
\end{proof}
\begin{lemma}\label{Hermiteinequality}\citet{indritz1961inequality}
    The Hermite polynomials defined in \eqref{Hermite} satisfies
    \begin{equation*}
        H_n(x) \le C \left(2^n n!\right)^{1/2} e^{x^2/2}
    \end{equation*}
    for some universal constant $C$.
\end{lemma}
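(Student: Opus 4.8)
The statement is, up to the orthogonality constant $\int_{\mathbb R}e^{-x^2}H_n(x)^2\,dx=\sqrt\pi\,2^n n!$, equivalent to the claim that the $L^2$-normalized Hermite functions $\psi_n:=(\sqrt\pi\,2^n n!)^{-1/2}e^{-x^2/2}H_n$ satisfy $\sup_n\|\psi_n\|_\infty<\infty$, i.e.\ their sup norm is controlled by their $L^2$ norm uniformly in $n$. My plan is to prove this by a self-contained complex-analytic argument based on Rodrigues' formula, and to indicate at the end a real-variable route through the Hermite ODE.

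First I would use $H_n(x)=(-1)^n e^{x^2}\tfrac{d^n}{dx^n}e^{-x^2}$ together with Cauchy's integral formula for the $n$-th derivative of the entire function $z\mapsto e^{-z^2}$ over the circle of radius $r$ centered at $x$:
\[
H_n(x)=\frac{(-1)^n n!\,e^{x^2}}{2\pi i}\oint_{|z-x|=r}\frac{e^{-z^2}}{(z-x)^{n+1}}\,dz,
\]
so that $|H_n(x)|\le \frac{n!\,e^{x^2}}{2\pi r^n}\int_0^{2\pi}\bigl|e^{-(x+re^{i\theta})^2}\bigr|\,d\theta$. A direct computation gives $\bigl|e^{-(x+re^{i\theta})^2}\bigr|=\exp(-x^2-2xr\cos\theta-r^2\cos 2\theta)$, and completing the square in $x+2r\cos\theta$ shows $-x^2-2xr\cos\theta-r^2\cos2\theta=-\tfrac12 x^2-\tfrac12(x+2r\cos\theta)^2+r^2\le -\tfrac12 x^2+r^2$. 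Hence $e^{-x^2/2}|H_n(x)|\le n!\,e^{r^2}/r^n$ for every $r>0$; optimizing at $r=\sqrt{n/2}$ and invoking Stirling ($n!\le e\,n^{n+1/2}e^{-n}$, $\sqrt{n!}\ge (2\pi n)^{1/4}n^{n/2}e^{-n/2}$) yields $e^{-x^2/2}|H_n(x)|\le C\,n^{1/4}(2^n n!)^{1/2}$. This already proves the stated inequality up to a polynomial-in-$n$ factor, which is in fact all that is needed in Lemma~\ref{bound_on_pt} (such a factor is absorbed into the $\ell^{\ell/2+1/4}$ and polylog terms there).

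The main obstacle is removing the superfluous $n^{1/4}$ to obtain the universal constant in the statement. The loss comes from bounding $\int_0^{2\pi}e^{-\frac12(x+2r\cos\theta)^2+r^2}\,d\theta$ by $2\pi$ times its maximum; replacing this by a Laplace/steepest-descent estimate around the critical angles (where $x+2r\cos\theta=0$) recovers a factor of order $r^{-1/2}=n^{-1/4}$, exactly compensating the excess — except when $|x|$ is near $2r=\sqrt{2n}$, where the two critical angles coalesce at $\theta\in\{0,\pi\}$ and one instead meets an Airy-type degenerate saddle contributing $r^{-1/3}$, which still suffices (or one perturbs the contour radius in this regime). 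This turning-point analysis near $|x|\approx\sqrt{2n+1}$, where $|\psi_n|$ attains its maximum of order $n^{-1/12}$, is the technical heart and is essentially the content of \citet{indritz1961inequality}. A real-variable alternative would start from the ODE $\psi_n''+(2n+1-x^2)\psi_n=0$, exploit the Sonin–Pólya monotonicity of the energy $W_n=\psi_n^2+\psi_n'^2/(2n+1-x^2)$ on the oscillatory interval (so that $\sup|\psi_n|$ over $|x|<\sqrt{2n+1}$ is governed by the value near the turning point) together with the fact that $|\psi_n|$ is monotone and decays on $|x|\ge\sqrt{2n+1}$, thereby reducing again to the same turning-point estimate as the crux.
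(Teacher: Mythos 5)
The paper does not prove this lemma; it cites \citet{indritz1961inequality} and uses the inequality as a black box inside Lemma~\ref{bound_on_pt}. So there is no ``paper proof'' to compare against, and the question is whether your proposal actually establishes the stated bound.

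Your Cauchy-integral argument is correct as far as it goes. From Rodrigues' formula and the contour integral of radius $r$ about $x$, the completing-the-square identity $-x^2-2xr\cos\theta-r^2\cos 2\theta=-\tfrac12 x^2-\tfrac12(x+2r\cos\theta)^2+r^2$ is right, giving $e^{-x^2/2}|H_n(x)|\le n!\,e^{r^2}/r^n$ for every $r>0$; optimizing at $r=\sqrt{n/2}$ and Stirling yield $e^{-x^2/2}|H_n(x)|\lesssim n^{1/4}(2^n n!)^{1/2}$. Your observation that this polynomially weaker form already suffices for the paper is also correct: in Lemma~\ref{bound_on_pt} the constant from this lemma multiplies $\ell^{\ell/2+1/4}t^{-\ell/2}$ with $\ell=\log n$, so the extra factor $\prod_i\alpha_i^{1/4}\le\ell^{d/4}$ is absorbed into the $\mathrm{polylog}(n)$ carried through Propositions~\ref{MSEphat}--\ref{MSEphatprime}.

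That said, the passage from the $n^{1/4}$-weaker bound to the stated universal-constant form is only sketched, and the sketch as written does not close. Near the turning point $|x|\approx\sqrt{2n}$ your Laplace estimate saves only a factor $r^{-1/3}\sim n^{-1/6}$ rather than the needed $n^{-1/4}$, leaving a residual $n^{1/12}$; ``perturb the contour radius'' would have to be carried out quantitatively, and that is precisely the content of Indritz's paper (the full Plancherel--Rotach picture, giving $\|\psi_n\|_\infty\asymp n^{-1/12}$, decays, so the constant is in fact attained at small $n$). So the honest verdict is: you give a correct, self-contained derivation of a slightly weaker but fully sufficient bound plus a heuristic for the sharp-constant form, whereas the paper simply cites. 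Either keep the citation as the paper does, or state and prove only the $n^{1/4}$ version and remark that it suffices.
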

\subsection{Concentration Inequality for 
\boldmath\texorpdfstring{$\hat{p}_t$}{phat}}\label{appendix_concentration}
\begin{lemma}\label{high_probability_bound}
    There exist a constant $C_4$ that only depends on $p_0$ and dimension $d$ such that the kernel density estimator $\hat p_t (x)$ with the kernel of order $\ell$ we defined in Lemma~\ref{Bound_on_kernel} and the choice of $h = \frac{\sqrt{t}}{D_n}$ satisfies
    \begin{equation*}
        \left|\hat p_t(x) - p_t(x)\right|< C_4\left( \sqrt{\frac{D_n^d\, \ell^{3d}\, p_t^*(x) \log(1/\delta)}{nt^{\frac{d}{2}}}} +\frac{D_n^d \ell^{5d/2} \log(1/\delta)}{nt^{\frac{d}{2}}}+\left(\frac{D_n}{de}\right)^{-\ell} \ell^{- \frac{\ell}{2} +\frac{5d}{2} -\frac{1}{4}}\right),
    \end{equation*}
    with probability at least $1-\delta$,
    where $p_t^*(x):=\sup_{\|\lambda\|_{\infty} < h}p_t(x+\lambda)$.
    Furthermore, by choosing $\ell=\log n$, $D_n = C\sqrt{\log n}$ and $\delta = n^{-\alpha}$ for any positive constant $\alpha$, there exists a constant $C_5(\alpha)$ that depends on $p_0$, $d$ and $\alpha$, with probability at least $1-n^{-\alpha}$,
    \begin{equation*}
        \left|\hat p_t(x) - p_t(x)\right|< C_5(\alpha)\; \mathrm{polylog}(n)\left(\sqrt{\frac{p_t^*(x)}{nt^{\frac{d}{2}}}} + \frac{1}{ nt^{\frac{d}{2}}} + (\log n)^{-\log n}\right).
    \end{equation*}
\end{lemma}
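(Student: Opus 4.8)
The plan is to split $\hat p_t(x)-p_t(x)$ into a deterministic bias term $\mathbb{E}[\hat p_t(x)]-p_t(x)$ and a stochastic fluctuation term $\hat p_t(x)-\mathbb{E}[\hat p_t(x)]$. The bias is already controlled inside the proof of Proposition~\ref{MSEphat}: with bandwidth $h=\sqrt t/D_n$ one has $\bigl|\mathbb{E}[\hat p_t(x)]-p_t(x)\bigr|\lesssim C_1^d\|p_0\|_\infty\bigl(D_n/(de)\bigr)^{-\ell}\ell^{-\ell/2+5d/2-1/4}$, which is exactly the third term in the asserted bound. So the real work is a Bernstein-type concentration estimate for the fluctuation term.

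For that I would write $\hat p_t(x)-\mathbb{E}[\hat p_t(x)]=\tfrac1n\sum_{i=1}^n\xi_i$ with $\xi_i:=h^{-d}K_d\!\left(\frac{x-X_i^t}{h}\right)-\mathbb{E}\bigl[h^{-d}K_d\!\left(\frac{x-X_i^t}{h}\right)\bigr]$, which are i.i.d.\ and mean zero. Two ingredients feed Bernstein's inequality. First, an almost-sure bound: since $K_d$ is supported on $[-1,1]^d$ with $\|K_d\|_\infty=O(\ell^{5d/2})$ by Lemma~\ref{Bound_on_kernel}, we get $|\xi_i|\le 2h^{-d}\|K_d\|_\infty\lesssim \ell^{5d/2}h^{-d}=:M$. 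Second, a variance bound identical to the variance computation in the proof of Proposition~\ref{MSEphat}: $\mathrm{Var}(\xi_i)\le h^{-2d}\,\mathbb{E}\bigl[K_d^2\!\left(\frac{x-X_i^t}{h}\right)\bigr]\le h^{-d}p_t^*(x)\int_{[-1,1]^d}K_d^2 \lesssim \ell^{3d}p_t^*(x)h^{-d}=:v$, where the support of $K_d$ restricts the integral to $\|x-y\|_\infty\le h$ so that $p_t(y)\le p_t^*(x)$ there. Then $\mathbb{P}\bigl(|\tfrac1n\sum_i\xi_i|>s\bigr)\le 2\exp\!\bigl(-\tfrac{ns^2/2}{v+Ms/3}\bigr)$, and choosing $s$ so the right-hand side equals $\delta$ yields $\bigl|\hat p_t(x)-\mathbb{E}[\hat p_t(x)]\bigr|\lesssim\sqrt{v\log(1/\delta)/n}+M\log(1/\delta)/n$ with probability at least $1-\delta$. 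Substituting $h=\sqrt t/D_n$ (so $h^{-d}=D_n^d t^{-d/2}$) turns the two terms into $\sqrt{D_n^d\ell^{3d}p_t^*(x)\log(1/\delta)/(n t^{d/2})}$ and $D_n^d\ell^{5d/2}\log(1/\delta)/(n t^{d/2})$, and adding the bias bound and renaming constants to $C_4$ gives the first displayed inequality.

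For the second statement I would plug in $\ell=\log n$, $D_n=C\sqrt{\log n}$, $\delta=n^{-\alpha}$, so $\log(1/\delta)=\alpha\log n$; the factors $D_n^d=C^d(\log n)^{d/2}$, $\ell^{3d}$, $\ell^{5d/2}$ and $\log(1/\delta)$ are all polylogarithmic and get absorbed into a $\mathrm{polylog}(n)$ factor, leaving $\sqrt{p_t^*(x)/(n t^{d/2})}$ and $1/(n t^{d/2})$ respectively. For the bias term, $\bigl(D_n/(de)\bigr)^{-\ell}=(de/C)^{\log n}(\log n)^{-(\log n)/2}$, and since $C>de$ the factor $(de/C)^{\log n}=n^{\log(de/C)}$ has negative exponent, hence is at most $1$, while $\ell^{5d/2-1/4}$ is polylog; thus the bias contribution is $\lesssim\mathrm{polylog}(n)\,(\log n)^{-\log n}$, matching the claim.

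The main obstacle here is bookkeeping rather than conceptual depth: one must carry the dependence on the local maximum $p_t^*(x)$ (instead of the crude global bound $\|p_0\|_\infty$) all the way through the variance estimate, since this location-adaptivity is precisely what later permits the three-case control of the score error in low-density regions; and in the specialization one must check that every residual power of $\log n$ together with the sub-polynomial factor $(de/C)^{\log n}$ collapses into the stated $\mathrm{polylog}(n)$ and $(\log n)^{-\log n}$ forms. A minor point to verify is that Bernstein's inequality applies at all, which it does because $K_d$ is bounded and compactly supported, so both $M$ and $v$ are finite.
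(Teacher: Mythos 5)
Your proposal is essentially the paper's proof: the same decomposition into bias (quoted from Proposition~\ref{MSEphat}) plus fluctuation, the same almost-sure bound $|\xi_i|\lesssim\ell^{5d/2}h^{-d}$ and variance bound $\lesssim\ell^{3d}p_t^*(x)h^{-d}$ from Lemma~\ref{Bound_on_kernel} and the compact support of $K_d$, the same application of Bernstein's inequality, and the same parameter specialization at the end. The bookkeeping in your specialization (including the observation that $(de/C)^{\log n}\le 1$ once $C>de$ so that the bias collapses to $\mathrm{polylog}(n)\,(\log n)^{-\log n}$) is correct and matches the paper.
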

\begin{proof}
Take $Y_i = \frac{1}{h^d}K\left(\frac{x-X_i}{h}\right)$, then
\begin{equation*}
    \left|\frac{1}{n}\sum_{i=1}^n Y_i - \mathbb{E}\, Y\right| = \left|\hat{p}_t (x) - \mathbb{E}\,\hat p_t(x)\right|,
\end{equation*}
Using Lemma~\ref{Bound_on_kernel}, we have
\begin{equation*}
    \left|Y_i\right| \le \frac{1}{h^d} \|K_d\|_{\infty} \lesssim \frac{\ell^{\frac{5d}{2}}}{h},
\end{equation*}
\begin{equation*}
    \mathrm{Var}\left(Y_i\right) = \frac{1}{h^{2d}}\mathrm{Var}\left(K_d\left(\frac{x-X_i}{h}\right)\right) \le \frac{1}{h^d} \int K_d^2(u)du\sup_{\|\lambda\|_{\infty} < h}p_t(x+\lambda) \lesssim \frac{\ell^{3d}}{h^d}\; p_t^*(x),
\end{equation*}
where $p_t^*(x):=\sup_{\|\lambda\|_{\infty} < h}p_t(x+\lambda)$. Next we use the following Bernstein's inequality.
\begin{proposition}[Bernstein's inequality]
Suppose that $Y_i$ are iid with mean $\mu$, $\mathrm{Var}\left(Y_i\right)\leq \sigma^2$ and $\mathbb{P}\left(Y_i \leq M\right) = 1$. Then
\begin{equation*}
    \mathbb{P}\left(\left|\frac{1}{n}\sum_{i=1}^n Y_i - \mu\right|>\varepsilon\right) \leq 2 \exp\left\{-\frac{n\varepsilon^2}{2\sigma^2+2M\varepsilon/3}\right\}.
\end{equation*}
Furthermore, with probability at least $1-\delta$,
\begin{equation*}
    \left|\frac{1}{n}\sum_{i=1}^n Y_i - \mu\right| < \sqrt{\frac{2\sigma^2 \log(1/\delta)}{n}} + \frac{2M\log(1/\delta)}{3n}.
\end{equation*}
\end{proposition}
By Bernstein's inequality,
\begin{equation}\label{eq218}
    \mathbb{P}\left(\left|\hat p_t(x) - \mathbb{E} \,\hat p_t(x)\right| < \sqrt{\frac{2\, \ell^{3d}\, p_t^*(x) \log(1/\delta)}{nh^d}} +\frac{2 \ell^{5d/2} \log(1/\delta)}{3nh^d}\right) > 1-\delta
\end{equation}
By triangle inequality,
\begin{align}\label{eq219}
    |\hat p_t(x) - p_t(x)| &\leq |\hat p_t(x) - \mathbb{E} \,\hat p_t(x)| + |\mathbb{E} \,\hat p_t(x) - p_t(x)|
\end{align}
Using the result of bias term in Proposition~\ref{MSEphat}, and by taking $h = \frac{\sqrt{t}}{ D_n}$, we have
\begin{align*}
    |\mathbb{E} \,\hat p_t(x) - p_t(x)| \leq  
    C_2 \left(\frac{D_n}{de}\right)^{-\ell} \ell^{- \frac{\ell}{2} +\frac{5d}{2} -\frac{1}{4}}.
\end{align*}
Therefore, there is a constant $C_4$ such that with probability at least $1-n^{-\alpha}$,
\begin{equation*}
    \left|\hat p_t(x) - p_t(x)\right|< C_4 \left( \sqrt{\frac{D_n^d\, \ell^{3d}\, p_t^*(x) \log(1/\delta)}{nt^{\frac{d}{2}}}} +\frac{D_n^d \ell^{5d/2} \log(1/\delta)}{nt^{\frac{d}{2}}}+\left(\frac{D_n}{de}\right)^{-\ell} \ell^{- \frac{\ell}{2} +\frac{5d}{2} -\frac{1}{4}}\right).
\end{equation*}
By choosing the order $\ell=\Omega(\log n)$, $D_n = C\sqrt{\log n}$ and $\delta = n^{-\alpha}$ for some positive constant $\alpha$, we have with probability at least $1-n^{-\alpha}$,
\begin{equation*}
    \left|\hat p_t(x) - p_t(x)\right|< C_5(\alpha)\; \mathrm{polylog}(n)\left(\sqrt{\frac{p_t^*(x)}{nt^{\frac{d}{2}}}} + \frac{1}{nt^{\frac{d}{2}}} + (\log n)^{-\log n}\right).
\end{equation*}
\end{proof}
\subsection{Bounds on the Tail Density}
In this section, we provide the proofs for the bound of $|G_1|$ and $\int_{G_1} \|s_t(x)\|^2dx$ in the proofs in Lemma~\ref{main_lemma1}, where $G=\{x\colon p_t(x) > \rho_n \log^c {n} \}$ and $\rho_n = \frac{1}{n \, t^{d/2}}$. 
To begin with, the following lemma shows that for sub-Gaussian density function $p_t$, for sufficiently large $x$ satisfying $\|x\|_{\infty} \gtrsim \sqrt{\log n}$, the tail density can be controlled. 
Specifically, we show that $p_t(x)$ is bounded above by a polynomial decay in $n^{-1}$: $p_t(x) \lesssim \mathrm{poly}(n^{-1})$. This lemma is crucial for the proof of upper bounds on $|G_1|$ and $\int_{G_1} \|s_t(x)\|^2dx$.
\begin{lemma}\label{G_d_dim}
    For any $D > 0$ and $t > n^{-T_1}$, if $x \in \mathbb{R}^d$ satisfies $\|x\|_{\infty} \ge D \sigma_t \sqrt{\log n}$ and $n \ge \exp \left( \frac{\|p_0\|^2_{\infty}}{4D^2}\right)$, where $\sigma_t = \sigma_0 + \sqrt{t}$, then
    \begin{equation*}
        p_t(x) \leq 2 \sqrt{2} n^{-\frac{D}{2} + \frac{dT_1}{2}}
    \end{equation*}
\end{lemma}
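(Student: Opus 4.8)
The plan is to bound $p_t(x)$ directly from the convolution identity $p_t = p_0*\phi_t$, that is $p_t(x)=\int_{\mathbb{R}^d}p_0(y)\phi_t(x-y)\,dy$, by splitting the integral according to whether $y$ lies near the origin (where $\phi_t(x-y)$ is Gaussian-suppressed because $x$ is far out) or far from the origin (where $p_0$ carries little mass by sub-Gaussianity). Concretely, fix a coordinate $i^\ast$ with $|x_{i^\ast}|=\|x\|_\infty=:m$, pick a radius $r\in(0,m)$, and split $\mathbb{R}^d=\{|y_{i^\ast}|\le r\}\cup\{|y_{i^\ast}|>r\}$. On the first region $|x_{i^\ast}-y_{i^\ast}|\ge m-r$, so the $i^\ast$-th one-dimensional Gaussian factor of $\phi_t(x-y)$ is at most $(2\pi t)^{-1/2}\exp(-(m-r)^2/(2t))$; bounding the other factors by $(2\pi t)^{-1/2}$ and using $\int p_0\le1$ gives a contribution at most $(2\pi t)^{-d/2}\exp(-(m-r)^2/(2t))$. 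On the second region bound $\phi_t\le(2\pi t)^{-d/2}$ and apply the one-dimensional sub-Gaussian tail of the $i^\ast$-th marginal of $p_0$ (Lemma~\ref{Sub-Gaussian_tail}), $\int_{|y_{i^\ast}|>r}p_0\le 2\exp(-r^2/(2\sigma_0^2))$, for a contribution at most $2(2\pi t)^{-d/2}\exp(-r^2/(2\sigma_0^2))$. Altogether
\[
 p_t(x)\le (2\pi t)^{-d/2}\Big[\exp\Big(-\tfrac{(m-r)^2}{2t}\Big)+2\exp\Big(-\tfrac{r^2}{2\sigma_0^2}\Big)\Big].
\]

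Next I would choose $r=\sigma_0\sqrt{D\log n}$, so that the second bracketed term is exactly $2n^{-D/2}$. For the first term, using $m=\|x\|_\infty\ge D\sigma_t\sqrt{\log n}=D(\sigma_0+\sqrt t)\sqrt{\log n}$ and $r=\sqrt D\,\sigma_0\sqrt{\log n}$ one checks $m-r\ge D\sqrt t\,\sqrt{\log n}>0$ (here $D\ge1$ is what guarantees $D-\sqrt D\ge0$ and also $r<m$), hence $(m-r)^2/(2t)\ge D^2\log n/2$ and the first term is at most $n^{-D^2/2}\le n^{-D/2}$. Finally invoke $t>n^{-T_1}$ to get $(2\pi t)^{-d/2}<(2\pi)^{-d/2}n^{dT_1/2}$, yielding $p_t(x)<3(2\pi)^{-d/2}n^{dT_1/2-D/2}$; since $(2\pi)^{-d/2}$ is decreasing in $d$, $3(2\pi)^{-d/2}\le 3/\sqrt{2\pi}<2\sqrt2$ for every $d\ge1$, which is the claimed bound. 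For the residual small-$D$ range, where $\|x\|_\infty\ge D\sigma_t\sqrt{\log n}$ need not exceed $r=\sigma_0\sqrt{D\log n}$ and the Gaussian-tail piece can no longer be forced below $n^{-D/2}$, I would fall back on the trivial bound $p_t(x)\le\|p_0\|_\infty$ together with the hypothesis $n\ge\exp(\|p_0\|_\infty^2/(4D^2))$ — equivalently $\|p_0\|_\infty\le 2D\sqrt{\log n}$ — and $t>n^{-T_1}$; this is precisely the role of that hypothesis.

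The main obstacle is obtaining an exponent \emph{linear} in $D$ (the factor $n^{-D/2}$) rather than the \emph{quadratic} $n^{-cD^2}$ that the symmetric split at $r=m/2$ would give: this forces the asymmetric choice $r\asymp\sigma_0\sqrt{D\log n}$, calibrated to the sub-Gaussian scale of $p_0$, after which one must re-verify that the Gaussian-tail piece still decays at least like $n^{-D/2}$ with that choice of $r$. A secondary point of care is collapsing every prefactor into the single constant $2\sqrt2$, which needs the sharper coordinate-wise split (producing the factor $2$, not $2d$) combined with $t>n^{-T_1}$ to cancel $(2\pi t)^{-d/2}$ against $n^{dT_1/2}$.
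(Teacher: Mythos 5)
Your proof takes a genuinely different route from the paper's, and for the range of $D$ that actually matters it is both correct and cleaner. The paper's argument is a variational/local-average one: it starts from the sub-Gaussian moment-generating constraint $\int e^{x^2/\sigma_t^2}p_t\,dx\le 2$, uses the Lipschitz continuity of $p_t$ (via $\sup|\phi_t'|\asymp t^{-1}$) to lower-bound the integral over a ball around $x_0$ whose radius is calibrated to $p_t(x_0)$, and then inverts. In that argument the hypothesis $n\ge\exp(\|p_0\|_\infty^2/(4D^2))$ plays a specific role — it guarantees the small ball $\mathcal{B}$ (radius $\propto p_t(x_0)\sqrt t$ or $p_t(x_0)t$) sits inside the far-out region $\mathcal{A}$ where $e^{x^2/\sigma_t^2}\ge n^{D^2/4}$. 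Your argument instead attacks the convolution $p_t=p_0*\phi_t$ head-on and splits the $y$-integral at $|y_{i^\ast}|=r$, trading off Gaussian suppression of $\phi_t$ against the sub-Gaussian tail of the $i^\ast$-th marginal of $p_0$. This is more elementary, avoids the Lipschitz/local-average machinery entirely, and — for $D\ge1$ — does not even need the $n\ge\exp(\cdot)$ hypothesis: with $r=\sigma_0\sqrt{D\log n}$ you get $m-r\ge D\sqrt{t}\sqrt{\log n}$ and hence $p_t(x)\le 3(2\pi t)^{-d/2}n^{-D/2}$, which after $t>n^{-T_1}$ is comfortably within $2\sqrt2\,n^{-D/2+dT_1/2}$. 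Since the only invocation of this lemma (Lemma~\ref{Bound_for_G}) takes $D=2+dT_2+2dT_1\ge 2$, your main argument covers the entire use case.

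There is one genuine gap, which you yourself flag but do not close. For $D<1$ the fallback via $p_t(x)\le\|p_0\|_\infty\le 2D\sqrt{\log n}$ cannot establish the claim: the target $2\sqrt2\,n^{-D/2+dT_1/2}$ tends to $0$ (or is bounded) whenever $D\ge dT_1$, while $2D\sqrt{\log n}\to\infty$, so the inequality $2D\sqrt{\log n}\le 2\sqrt2\,n^{-D/2+dT_1/2}$ fails for all large $n$ in that sub-range. In fact the statement as written is suspect for small $D$ — a standard Gaussian $p_0$ with $t\to0$ gives $p_t(x)\asymp n^{-cD^2}$ on $\|x\|_\infty\asymp D\sigma_t\sqrt{\log n}$, which exceeds $n^{-D/2}$ once $D$ is below a constant — so no clever split is going to rescue the linear-in-$D$ exponent in that regime. (The paper's own derivation actually produces $p_t(x_0)\le 2\sqrt2\,n^{-D^2/8}t^{-1/4}$ before silently rewriting the exponent as $-D/2$, so it shares exactly the same restriction.) Since the lemma is only applied with $D\ge 2$, this does not affect the rest of the paper, but you should state the restriction $D\ge1$ (or $D$ bounded below by an absolute constant) explicitly rather than gesture at a trivial bound that does not work, and you should correct the assertion that the $n\ge\exp(\|p_0\|_\infty^2/(4D^2))$ hypothesis exists to handle small $D$: it is simply not used by your argument, and in the paper's proof it serves to enforce $\mathcal{B}\subset\mathcal{A}$.
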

\begin{proof}
    \textbf{Step 1}: Suppose $d=1$.
    From Lemma~\ref{Sub-Gaussianforpt}, $X_t\sim p_t(x)$ is Sub-Gaussian with Orlicz norm $\|X_t\|_{\psi_2} \leq \sigma_t = \sigma_0 + \sqrt{t}$. Then by the sub-Gaussian property in Definition \ref{Sub-Gaussian_def},
    \begin{equation*}
    \int e^{\frac{x^2}{\sigma_t^2}} p_t(x) d x \le 2.
    \end{equation*}
    Consider two cases. If $t\ge 1$, fix $x_0$ that satisfies $|x_0| > D\sigma_t \sqrt{\log n}$. Define the following two sets:
    \begin{equation*}
        \mathcal{A}:=\left\{x \colon |x-x_0|<\frac{D\sigma_t}{2} \sqrt{\log n}\right\},
    \end{equation*}
    \begin{equation*}
        \mathcal{B}:=\left\{x \colon |x-x_0|<\frac{p_t(x_0) \sqrt{t}}{4} \right\},
    \end{equation*}
    If $n>\exp \left(\frac{\|p_0\|_\infty^2}{4D^2}\right)$, then $\frac{D\sigma_t}{2}\sqrt{\log n} \ge \frac{D\sigma_t\|p_0\|_\infty}{4D} \geq \frac{p_t(x_0) \sqrt{t}}{4}$, where the last inequality is due to the fact that
    \begin{equation*}
        p_t(x) = \int p(y) \phi_t(x-y)dy \leq \|p_0\|_\infty\;\;\;\;\; \text{for any } x. 
    \end{equation*}
    Therefore $\mathcal{B} \subset \mathcal{A}$. Next, we use the Lipschitz property of $p_t$ to control $p_t(x)$ for all $x \in \mathcal{B}$. Specifically, for any $x < x'$,
    \begin{align}
        |p_t(x) - p_t(x')| &\leq \int p_0(y)|\phi_t(x-y)- \phi_t(x'-y)|dy \notag
        \\
        &\leq |x-x'| \sup_x |\phi_t'(x)| \notag
        \\
        &\leq \frac{1}{t}|x-x'|,\label{eq261}
    \end{align}
    where in the second inequality we use the mean value theorem, and in the third inequality we use the fact that
    \begin{align*}
        \sup_x |\phi_t'(x)| = \frac{1}{t\sqrt{2\pi t} }  \sup_x \left\{ x \exp\left({-\frac{x^2}{2t}}\right)\right\} = \frac{e^{-1/2}}{\sqrt{2\pi}} \,t^{-1}.
    \end{align*}
    Then for any $y \in \mathcal{B}$, using \eqref{eq261}, since $t>1$, we have
    \begin{align}
        p_t(y) &\ge p_t(x_0) - \frac{1}{t} |x_0-y| \notag
        \\
        &\geq p_t(x_0) - \frac{1}{\sqrt t} |x_0-y| \notag
        \\
        &\ge p_t(x_0) -  \frac{1}{\sqrt t} \frac{p_t(x_0 ) \sqrt{t}}{2} \notag \\
        &= \frac{1}{2}\, p_t(x_0). \label{eq264}
    \end{align}
    Then using the definition of Sub-Gaussian and \eqref{eq264},
    \begin{align*}
    2 \geq \int e^{x^2/\sigma_t^2} p_t(x) \,d x 
    & \ge  \inf_{x\in\mathcal{A}}e^{x^2/\sigma_t^2} \int_{\mathcal{A}} p_t(x)\,dx
    \\
    & \ge e^{\frac{D^{2}}{4} \log n} \;
     \int_{\mathcal{B}} p_t(x)\,dx \\
    & \ge n^{D^2/4} \;|\mathcal{B}| \;\frac{1}{2} p_t(x_0) \\
    & = \frac{1}{4}\;n^{D^2/4} p_t^2(x_0) \sqrt{t}.
    \end{align*}
    This implies that
    \begin{align*}
        p_t(x_0) \leq 2\sqrt{2} n^{-D/2} t^{-1/4} \leq 2\sqrt{2} n^{-D/2}. 
    \end{align*}
    
    If $n^{-\frac{2}{2\beta + d}} < t < 1$, define
    \begin{equation*}
        \mathcal{C}:=\left\{x \colon |x-x_0|<\frac{p_t(x_0) t}{4} \right\}.
    \end{equation*}
    Similarly, if $n>\exp \left(\frac{\|p_0\|_\infty^2}{4D^2}\right)$, then $\frac{D\sigma_t}{2}\sqrt{\log n} \ge \frac{D\sigma_t\|p_0\|_\infty}{4D} \geq \frac{p_t(x_0) \sqrt{t}}{4}\geq \frac{p_t(x_0) 
     \,t}{4}$ since $t<1$. Therefore $\mathcal{C} \subset \mathcal{A}$. Then for any $y \in \mathcal{C}$, using \eqref{eq261},
    \begin{align}
        p_t(y) &\geq p_t(x_0) - \frac{1}{t} |x_0-y| \notag
        \\
        &\ge p_t(x_0) -  \frac{1}{t} \frac{p_t(x_0 ) t}{2} \notag
        \\
        &= \frac{1}{2} p_t(x_0). \label{eq285}
    \end{align}
    Then using the definition of Sub-Gaussian and \eqref{eq285},
    \begin{align*}
    2 \geq \int e^{x^2/\sigma_t^2} p_t(x) \,d x 
    & \ge  \inf_{x\in\mathcal{A}}e^{x^2/\sigma_t^2} \int_{\mathcal{A}} p_t(x)\,dx
    \\
    & \ge e^{\frac{D^{2}}{4} \log n} \;
     \int_{\mathcal{C}} p_t(x)\,dx \\
    & \ge n^{D^2/4} \;|\mathcal{C}| \;\frac{1}{2} p_t(x_0) \\
    & = \frac{1}{4}\;n^{D^2/4} p_t^2(x_0) t.
    \end{align*}
    This implies that
    \begin{align*}
        p_t(x_0) \leq 2\sqrt{2} n^{-D/2} t^{-1/2} \leq 2\sqrt{2} n^{-\frac{D}{2} + \frac{T_1}{2}}. 
    \end{align*}
    
    \noindent\textbf{Step 2}: In general case $d \ge 1$,
    since $\|x\|_{\infty} \ge D \sigma_t \sqrt{\log n}$, without loss of generality, we suppose $|x_1| \ge D \sigma_t \sqrt{\log n}$. Then by definition of $p_t(x)$,
    \begin{align*}
        p_t(x) &= \int_{y \in \mathbb {R}^d} p_0(y) \prod_{i=1}^d \phi_t(x_i-y_i) dy \\
        &\le (2\pi t)^{-\frac{d-1}{2}} \int_{y \in \mathbb {R}} p_0(y) \phi_t(x_1-y) dy \\
        &= (2\pi t)^{-\frac{d-1}{2}} p_t(x_1).
    \end{align*}
    Using the 1-dimensional results, $p_t(x_1) \leq 2\sqrt{2} \;n^{-\frac{D}{2} +  \frac{1}{2\beta + d} \mathbbm{1}_{t<1}}$, and use the fact that $t > n^{-T_1}$, we have
    \begin{equation*}
        p_t(x) \leq 2 \sqrt{2} n^{-\frac{D}{2} + \frac{dT_1}{2}}.
    \end{equation*}
\end{proof}
\begin{lemma}\label{Bound_for_G}
    Let $G=\{x\colon p_t(x) > \rho_n \log^c {n} \}$ where $\rho_n = \frac{1}{n \, t^{d/2}}$ and $n^{-T_1}<t < n^{T_2} $, 
    \begin{align*}
        |G| \le D (t^{\frac{d}{2}} + \sigma_0^d) (\log n)^{\frac{d}{2}}
    \end{align*}
    for some constant $D$ only depending on $d$, $T_1$ and $T_2$.
\end{lemma}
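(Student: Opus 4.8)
The plan is to localize $G$ inside a Euclidean box whose side length is of order $\sigma_t\sqrt{\log n}$, where $\sigma_t:=\sigma_0+\sqrt t$, and then bound $|G|$ by the Lebesgue measure of that box. The underlying fact is that $p_t$ has sub-Gaussian tails at scale $\sigma_t$ (Lemma~\ref{Sub-Gaussianforpt}), so far from the origin its density drops below the threshold $\rho_n\log^c n$ that defines $G$.

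For the localization I would apply Lemma~\ref{G_d_dim}: for any fixed $D>0$ and $t>n^{-T_1}$, once $n\ge\exp(\|p_0\|_\infty^2/(4D^2))$, every $x$ with $\|x\|_\infty\ge D\sigma_t\sqrt{\log n}$ satisfies $p_t(x)\le 2\sqrt2\,n^{-D/2+dT_1/2}$. Meanwhile, using $t<n^{T_2}$,
\begin{equation*}
\rho_n\log^c n=\frac{\log^c n}{n\,t^{d/2}}\ \ge\ \log^c n\cdot n^{-(1+dT_2/2)} .
\end{equation*}
Taking the fixed constant $D_0:=dT_1+dT_2+3$ makes the exponent gap $\tfrac{D_0}{2}-\tfrac{dT_1}{2}-\bigl(1+\tfrac{dT_2}{2}\bigr)=\tfrac12>0$, so for all $n$ beyond a threshold $n_1$ depending only on $p_0,d,T_1,T_2$ we get $2\sqrt2\,n^{-D_0/2+dT_1/2}<\rho_n\log^c n$. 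Hence $\|x\|_\infty\ge D_0\sigma_t\sqrt{\log n}$ forces $x\notin G$, i.e.\ $G\subseteq\{x\in\mathbb R^d:\|x\|_\infty<D_0\sigma_t\sqrt{\log n}\}$.

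Taking measures, $|G|\le(2D_0\sigma_t\sqrt{\log n})^d=(2D_0)^d(\sigma_0+\sqrt t)^d(\log n)^{d/2}$, and the elementary inequality $(\sigma_0+\sqrt t)^d\le 2^{d-1}(\sigma_0^d+t^{d/2})$ then gives $|G|\le D\,(t^{d/2}+\sigma_0^d)(\log n)^{d/2}$ with $D:=(2D_0)^d2^{d-1}$, a constant depending only on $d,T_1,T_2$. The finitely many values $n<n_1$ are handled by the crude bound $|G|\le 1/(\rho_n\log^c n)=n\,t^{d/2}/\log^c n$, which follows from $1=\int p_t\ge\int_G p_t\ge|G|\,\rho_n\log^c n$ and is absorbed after enlarging $D$ by a factor depending only on $n_1$. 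The only point needing care is choosing $D_0$ large enough to dominate the slowest decay of $\rho_n\log^c n$, which occurs at the upper extreme $t\asymp n^{T_2}$; nothing here uses the Sobolev assumption, only sub-Gaussianity through Lemma~\ref{G_d_dim}, so there is no genuine obstacle.
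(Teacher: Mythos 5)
Your proof is correct and follows essentially the same route as the paper: apply Lemma~\ref{G_d_dim} with a fixed $D$ chosen so that the sub-Gaussian tail bound on $p_t$ falls below the defining threshold $\rho_n\log^c n$ (using $t<n^{T_2}$), conclude $G$ lies in a cube of side $\asymp\sigma_t\sqrt{\log n}$, and finish with $(\sigma_0+\sqrt t)^d\le 2^{d-1}(\sigma_0^d+t^{d/2})$. The only cosmetic difference is your explicit handling of finitely many small $n$ via the crude bound $|G|\le 1/(\rho_n\log^c n)$, which the paper leaves implicit.
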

\begin{proof}
    For any $t\ge n^{-T_1}$ and constant $C > 0$. Let $D = 2C + 2dT_1$. Then by Lemma~\ref{G_d_dim}, for any $n \ge \exp\left(\frac{\|p_0\|_{\infty}^2}{4D^2}\right)$,
    \begin{align*}
       p_t(x) \ge 2\sqrt{2} \;n^{-C} 
    \end{align*}
    implies
    \begin{align*}
        \|x\|_{\infty} \le D\,\sigma_t \;\sqrt{\log n} \le \,D\, (\sqrt{t} + \sigma_0) \;\sqrt{\log n}.
    \end{align*}
    Since 
    $\rho_n = n^{-1}t^{-d/2} \ge n^{-(1+\frac{dT_2}{2})}$, we can take $C = 1+\frac{dT_2}{2}$ and conclude that
    \begin{align*}
        |G| &\le |\{x: \|x\|_{\infty} \le \,D\, (\sqrt{t} + \sigma_0) \;\sqrt{\log n}\}| \\
        &\lesssim  (\sqrt{t} + \sigma_0)^d (\log n)^{\frac{d}{2}} \\
        &\le 2^{d-1}(t^{\frac{d}{2}} + \sigma_0^d) (\log n)^{\frac{d}{2}}. 
    \end{align*}
\end{proof}
Then using the bound for expectation of moment of score function $\mathbb{E}\left[\|s_t(X)\|^m\right]$ in Lemma~\ref{s_x_bound} and combining \autoref{Bound_for_G}, we can further provide an upper bound for 
\begin{equation}
   \int_{G} \|s_t(x)\|^2 dx.
\end{equation}
\begin{lemma}\label{bound_on_GS}
    For any $m>1$,  define the set
    \begin{equation*}
        G := \{x: p_t(x) > \rho_n\,\log^c {n}\},
    \end{equation*}
    then
    \begin{equation*}
        \int_{G} \|s_t(x)\|^2 dx \le C_6\, m  \; (\log {n})^{-\frac{c}{m} }\rho_n^{-\frac{1}{m}}\,t^{-1}\,(t^{\frac{d}{2}} + \sigma_0^d)\, (\log n)^{\frac{d}{2}},
    \end{equation*}
    for some constant $C_6$ depends on $D_1$ and $p$.
\end{lemma}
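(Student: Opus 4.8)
The plan is to prove Lemma~\ref{bound_on_GS} by an interpolation (H\"older) argument that trades the crude pointwise bound $p_t^{-1}\le(\rho_n\log^c n)^{-1}$ available on $G$ against the measure bound for $G$ supplied by Lemma~\ref{Bound_for_G}. The effect of the interpolation is to reduce the exponent on $(\rho_n\log^c n)^{-1}$ from $1$ to $1/m$ at the cost of one power of $|G|$ and a factor polynomial in $m$.

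First I would record the elementary pointwise fact that on $G=\{x\colon p_t(x)>\rho_n\log^c n\}$ one has $1\le\bigl(p_t(x)/(\rho_n\log^c n)\bigr)^{1/m}$, hence
\begin{equation*}
\int_G \|s_t(x)\|^2\,dx \;\le\; (\rho_n\log^c n)^{-1/m}\int_G \|s_t(x)\|^2\,p_t(x)^{1/m}\,dx .
\end{equation*}
Next I would apply H\"older's inequality with respect to Lebesgue measure restricted to $G$, with conjugate exponents $m$ and $m/(m-1)$, to the product $\bigl(\|s_t\|^2 p_t^{1/m}\bigr)\cdot 1$:
\begin{equation*}
\int_G \|s_t(x)\|^2\,p_t(x)^{1/m}\,dx \;\le\; \Bigl(\int_G \|s_t(x)\|^{2m}\,p_t(x)\,dx\Bigr)^{1/m}\,|G|^{1-1/m}
\;\le\; \bigl(\mathbb{E}_{X\sim p_t}[\|s_t(X)\|^{2m}]\bigr)^{1/m}\,|G|^{1-1/m}.
\end{equation*}
The first factor is controlled by Lemma~\ref{s_x_bound} (the score moment bound of \citet{bobkov2019moments}) applied with moment order $2m$: since that estimate scales like $k^{k/2}t^{-k/2}$ in the order $k$, one obtains $\mathbb{E}_{X\sim p_t}[\|s_t(X)\|^{2m}]\lesssim (Cm)^{m}t^{-m}$, and therefore $\bigl(\mathbb{E}_{X\sim p_t}[\|s_t(X)\|^{2m}]\bigr)^{1/m}\lesssim m\,t^{-1}$, which is exactly the $m\,t^{-1}$ appearing in the target bound. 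Substituting $|G|\le D(t^{d/2}+\sigma_0^d)(\log n)^{d/2}$ from Lemma~\ref{Bound_for_G}, using that for $n$ sufficiently large this upper bound is at least $1$ so that $|G|^{1-1/m}\le D(t^{d/2}+\sigma_0^d)(\log n)^{d/2}$, and rewriting $(\rho_n\log^c n)^{-1/m}=(\log n)^{-c/m}\rho_n^{-1/m}$, yields the claimed estimate with a constant $C_6$ absorbing $C$, $D$ and $\|p_0\|_\infty$.

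The two H\"older steps and the substitution of Lemma~\ref{Bound_for_G} are routine. The point requiring care is the extraction of the clean factor $m$: this hinges on the precise form of the score moment bound in Lemma~\ref{s_x_bound}, namely that the $k$-th moment grows in a Gaussian-type manner ($\asymp k^{k/2}t^{-k/2}$) so that $\bigl(\mathbb{E}[\|s_t\|^{2m}]\bigr)^{1/m}=O(m\,t^{-1})$ uniformly in $t$ and $n$; if only an $m$-dependent implicit constant were available, the factor $m$ in the conclusion would have to be replaced by that constant, which is still harmless for the later application (where $m\asymp 1/\varepsilon$ with $\varepsilon=\tfrac{\log\log n}{\log n}$, so $m\asymp \log n/\log\log n$ is polylogarithmic). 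A secondary bookkeeping point is that $|G|^{1-1/m}\le |G|$ requires $|G|\ge 1$; this holds for all sufficiently large $n$ (indeed $(t^{d/2}+\sigma_0^d)(\log n)^{d/2}\to\infty$), consistent with the large-$n$ regime in which Lemma~\ref{main_lemma1} and the surrounding results are invoked.
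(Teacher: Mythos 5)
Your proposal is correct and takes essentially the same route as the paper: both apply Hölder's inequality with exponents $m$ and $m/(m-1)$ to split off $\bigl(\int_G\|s_t\|^{2m}p_t\,dx\bigr)^{1/m}\lesssim m\,t^{-1}$ (Lemma~\ref{s_x_bound}) from a factor controlled by the lower bound $p_t>\rho_n\log^c n$ on $G$ together with $|G|\lesssim(t^{d/2}+\sigma_0^d)(\log n)^{d/2}$ (Lemma~\ref{Bound_for_G}). Pulling the $(\rho_n\log^c n)^{-1/m}$ factor out pointwise before applying Hölder, as you do, versus applying Hölder to $\|s_t\|^2\cdot p_t^{-1}$ against the measure $p_t\,dx$ and then bounding $\int_G p_t^{1-m'}dx$, as the paper does, is a cosmetic reordering of the same computation; you also correctly flag the same implicit large-$n$ requirement ($|G|^{1-1/m}\le|G|$) that the paper uses without comment.
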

\begin{proof}
    By H\"older's inequality,
    \begin{align*}
        \int_{G} \|s_t(x)\|^2 dx &= \int_{G} \|s_t(x)\|^2 \frac{1}{p_t(x)}p_t(x)dx \\
        &\leq \left(\int_{G}\|s_t(x)\|^{2m}p_t(x)dx\right)^{1/m}\left(\int_{G}p_t(x)^{-m'}p_t(x)dx\right)^{1/m'},
    \end{align*}
    where $m,m'>1$ and satisfies $\frac{1}{m}+\frac{1}{m'}=1$.
    By Lemma~\ref{s_x_bound},
    \begin{equation*}
    \left(\int_{G}\|s_t(x)\|^{2m}p_t(x)dx\right)^{1/m} = \left(\mathbb{E}\left[\|s_t(X)\|^{2m}\right]\right)^{1/m}\leq t^{-1} \left((2m-1)!!\right)^{1/m} \lesssim m\; t^{-1},
    \end{equation*}
    where in the last inequality we use Stirling's approximation.
    By \autoref{Bound_for_G},
    \begin{align*}
        \left(\int_{G}p_t(x)^{-m'}p_t(x)dx\right)^{1/m'} &= \left(\int_{G}p_t(x)^{1-m'}dx\right)^{1/m'} \\
        &\leq \left(|G| (\log^c {n})^{1-m'}\rho_n^{1-m'}\right)^{1/m'}\\
        &\leq \left(D_1 (t^{\frac{d}{2}} + \sigma_0^d) (\log n)^{\frac{d}{2}} \right)^{1-1/m} (\log^c {n})^{-\frac{1}{m} }\rho_n^{-\frac{1}{m}}\\
        &\leq D_1 (t^{\frac{d}{2}} + \sigma_0^d) (\log n)^{\frac{d}{2}} (\log {n})^{-\frac{c}{m} }\rho_n^{-\frac{1}{m}}.
    \end{align*}
\end{proof}

\subsection{Bounds on the Squared Error}
The next proposition provides uniform upper bounds over $x\in \mathbb{R}$ for 
\begin{equation*}
    \|\nabla\hat{p}_t(x) - \nabla p_t(x)\|^2
\end{equation*}
and
\begin{equation*}
    \|\nabla\hat{p}_t(x)\|^2 \left|\hat p_t(x) -p_t(x) \right|^2.
\end{equation*}
\begin{lemma}\label{upper_bound_for_p_minus_phat}
    There exists  constants $C_5$ depending on $d$ and $p_0$ such that with probability 1,
    \begin{equation*}
        \sup_{x} \|\nabla\hat{p}_t(x) - \nabla p_t(x)\|^2 \leq C_5 \left( \|p_0\|_{\infty}^2 t^{-1} + \ell^{5d} h^{-(2d+2)} \right), 
    \end{equation*}
    \begin{equation*}
        \sup_{x} \|\nabla p_t(x)\|^2 \left|\hat p_t(x) -p_t(x) \right|^2 \leq C_5 \left( \|p_0\|_{\infty}^2 t^{-1}(\ell^{5d} h^{-2d} + 2 \|p_0\|_{\infty}^2) \right).
    \end{equation*}
    In particular, if $h = \frac{\sqrt{t}}{D_n}$, $D_n = C\sqrt{\log n}$ and $\ell = \log n$, then the two terms above are $O(t^{-2}\mathrm{poly}(\log n ))$.
    Furthermore, if $\log\frac1{t}=O(\log n)$, then the two terms above are at most polynomial in $n$.
\end{lemma}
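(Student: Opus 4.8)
The plan is to prove both estimates by reducing them to \emph{deterministic} sup-norm bounds on the kernel and on the first-order derivatives of $p_t$; no concentration argument is needed, which is exactly why the bounds hold with probability one.

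First, for the gradient term I would use $\|\nabla\hat{p}_t(x) - \nabla p_t(x)\|^2 \le 2\|\nabla\hat{p}_t(x)\|^2 + 2\|\nabla p_t(x)\|^2$ and bound each piece uniformly in $x$. For the first piece, write the $j$-th coordinate $(\nabla\hat{p}_t(x))_j = \frac{1}{nh^{d+1}}\sum_{i=1}^n (\nabla K_d)_j\!\left(\frac{x-X_i^t}{h}\right)$, bound each summand by $\|(\nabla K_d)_j\|_\infty = O(\ell^{5d/2})$ via Lemma~\ref{Bound_on_kernel}, and sum over the $d$ coordinates to get $\|\nabla\hat{p}_t(x)\|^2 \le d\,\max_j\|(\nabla K_d)_j\|_\infty^2\, h^{-(2d+2)} = O(\ell^{5d} h^{-(2d+2)})$. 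For the second piece, apply Lemma~\ref{bound_on_pt} with multi-index of total order $1$ (so the factor $\ell^{\ell/2+1/4}$ there collapses to a constant), giving $\sup_x|D^{e_j}p_t(x)| \le C_1^d\|p_0\|_\infty t^{-1/2}$ and hence $\sup_x\|\nabla p_t(x)\|^2 \le d\,C_1^{2d}\|p_0\|_\infty^2\, t^{-1}$. Adding the two pieces yields the first claimed bound.

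Second, for $\|\nabla p_t(x)\|^2|\hat{p}_t(x)-p_t(x)|^2$ I would reuse $\sup_x\|\nabla p_t(x)\|^2 \lesssim \|p_0\|_\infty^2 t^{-1}$ and control $|\hat{p}_t(x)-p_t(x)|^2 \le 2|\hat{p}_t(x)|^2 + 2\,p_t(x)^2$; here $|\hat{p}_t(x)| \le h^{-d}\|K_d\|_\infty = O(\ell^{5d/2}h^{-d})$ again from Lemma~\ref{Bound_on_kernel}, while $p_t(x) = (p_0*\phi_t)(x) \le \|p_0\|_\infty$ pointwise. Multiplying gives $\|\nabla p_t(x)\|^2|\hat{p}_t(x)-p_t(x)|^2 \lesssim \|p_0\|_\infty^2 t^{-1}\bigl(\ell^{5d}h^{-2d} + \|p_0\|_\infty^2\bigr)$, which is the second claimed bound after absorbing the factor $2$ into $C_5$.

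Finally, the ``in particular'' clauses follow by substitution: with $h=\sqrt{t}/D_n$, $D_n=C\sqrt{\log n}$, $\ell=\log n$, one has $\ell^{5d}h^{-(2d+2)} = C^{2d+2}(\log n)^{6d+1}t^{-(d+1)}$ and $\ell^{5d}h^{-2d} = C^{2d}(\log n)^{6d}t^{-d}$, so both bounds are $\mathrm{poly}(\log n)$ times a fixed negative power of $t$; and when $\log\frac1t = O(\log n)$ (in particular for $t\ge t_0 = n^{-T_1}$), each such power of $1/t$ is at most polynomial in $n$. There is no genuine obstacle here — the only care needed is in correctly tracking the $d$-dependent powers of $\ell$ and $h$ when passing from the one-coordinate estimates of Lemma~\ref{Bound_on_kernel} to the Euclidean-norm estimates on $\nabla\hat{p}_t$, and in recognizing that the derivative order relevant to Lemma~\ref{bound_on_pt} is $1$ rather than $\ell$, so that no super-polynomial $\ell^{\Theta(\ell)}$ factor appears.
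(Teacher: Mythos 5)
Your proof is correct and follows essentially the same route as the paper: decompose via the triangle inequality, bound $\sup_x\|\nabla p_t(x)\|$ using Lemma~\ref{bound_on_pt} at order one, bound $\sup_x\|\nabla\hat p_t(x)\|$ and $\sup_x|\hat p_t(x)|$ by the deterministic sup-norm estimates for $\nabla K_d$ and $K_d$ from Lemma~\ref{Bound_on_kernel}, and then substitute the parameter choices. One minor observation: your substitution correctly yields $t^{-(d+1)}\mathrm{poly}(\log n)$ and $t^{-d}\mathrm{poly}(\log n)$, whereas the lemma's ``in particular'' clause states $O(t^{-2}\mathrm{poly}(\log n))$; the latter is accurate only for $d=1$ and appears to be a typo in the paper, which in its later application (Lemma~\ref{main_lemma1}, Part 2) actually uses the power $t^{-(d+1)}$ consistent with your computation, and the qualitative conclusion (at most polynomial in $n$ when $\log\frac{1}{t}=O(\log n)$) is unaffected.
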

\begin{proof}
By Lemma~\ref{bound_on_pt},
\begin{align*}
    \| \nabla p_t(x)\|^2 &= \sum_{i=1}^d \left|\frac{\partial}{\partial x_d} p_t(x)\right|^2 \leq d\, C_1^{2d}\|p\|_{\infty}^2 t^{-1}.
\end{align*}
For the derivative of kernel density estimation,
\begin{equation*}
\nabla \hat{p}_t(x) = \frac{1}{nh^{d+1}}\sum_{i=1}^n \nabla K_d\left(\frac{x-X_i^t}{h}\right),
\end{equation*}
By Lemma~\ref{Bound_on_kernel},
\begin{equation*}
   \|\nabla \hat{p}_t(x)\|^2 \le \frac{d\|\nabla K_d(\cdot)_1\|_{\infty}^2}{h^{2d+2}}\lesssim \frac{d\ell^{5d}}{h^{2d+2}}. 
\end{equation*}
Therefore
\begin{equation*}
    \|\nabla\hat{p}_t(x) - \nabla p_t(x)\|^2 \leq 2d\, C_1^{2d}\|p\|_{\infty}^2 t^{-1} + 2 C_2 \,\frac{d\ell^{5d}}{h^{2d+2}}.
\end{equation*}
Besides, since
\begin{equation*}
    \hat p_t(x) = \frac{1}{nh^d} \sum_{i=1}^n K_{d}\left(\frac{x-X_i^t}{h}\right), 
\end{equation*}
we have
\begin{equation*}
        |\hat p_t(x) - p_t(x)|^2 \leq 2 \|K\|_{\infty}^2h^{-2s} + 2 \|p\|_{\infty}^2 \lesssim \ell^{5d}h^{-2d} + 2 \|p\|_{\infty}^2.
\end{equation*}
\end{proof}
Moreover, the fourth moments of the score error $\mathbb{E}_{x \sim p_t, \{x_i\}_{i=1}^n}\|\hat s_t(x) - s_t(x)\|^4$ can also be bounded.
\begin{lemma}\label{bound_on_4_score_error}
    There exists some constants $C_6$ depending on $d$ and $p_0$ such that
    \begin{equation*}
        \mathbb{E}_{X \sim p_t, \{x_i\}_{i=1}^n}\|\hat s_t(X) - s_t(X)\|^4 \le C_6 \left(t^{-2}+\rho_n^{-4} h^{-4(d+1)}\right).
    \end{equation*}
    In particular, if $h = \frac{\sqrt{t}}{D_n}$, $D_n = \sqrt{\log n}$, $\ell = \log n$ and
    $\log\frac1{t}=O(\log n)$,
    then the right side is at most polynomial in $n$.
\end{lemma}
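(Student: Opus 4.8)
The plan is to split the expectation according to whether the truncation in the definition \eqref{score_estimator_eq} of $\hat s_t$ is active at the random point $X$, since on the two events the estimator behaves completely differently. Writing
\[
\mathbb{E}_{X\sim p_t,\{x_i\}}\!\left[\|\hat s_t(X)-s_t(X)\|^4\right]
= \mathbb{E}\!\left[\|\hat s_t(X)-s_t(X)\|^4\mathbbm{1}_{\{\hat p_t(X)<\rho_n\}}\right]
+\mathbb{E}\!\left[\|\hat s_t(X)-s_t(X)\|^4\mathbbm{1}_{\{\hat p_t(X)\ge\rho_n\}}\right],
\]
I would bound the two terms separately.

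On the event $\{\hat p_t(X)<\rho_n\}$ one has $\hat s_t(X)=0$, so the integrand equals $\|s_t(X)\|^4$; dropping the indicator and the (irrelevant) expectation over $\{x_i\}$ gives a bound of $\mathbb{E}_{X\sim p_t}[\|s_t(X)\|^4]$, which is $\lesssim t^{-2}$ by the global fourth-moment bound on the score (Lemma~\ref{s_x_bound}, which yields $\mathbb{E}_{X\sim p_t}\|s_t(X)\|^4\le 3\,t^{-2}$). Note this step uses no density lower bound and holds over all of $\mathbb{R}^d$, not just a low-density region.

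On the event $\{\hat p_t(X)\ge\rho_n\}$ one has $\hat s_t(X)=\nabla\hat p_t(X)/\hat p_t(X)$, hence $\|\hat s_t(X)\|\le\|\nabla\hat p_t(X)\|/\rho_n$, and here I would use the \emph{deterministic} uniform bound
$\sup_x\|\nabla\hat p_t(x)\|\le \tfrac1{nh^{d+1}}\sum_i\|\nabla K_d((x-X_i^t)/h)\|\le h^{-(d+1)}\sup_u\|\nabla K_d(u)\|\lesssim h^{-(d+1)}\ell^{5d/2}$, which follows from $\|(\nabla K_d)_i\|_\infty=O(\ell^{5d/2})$ in Lemma~\ref{Bound_on_kernel}. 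Thus $\|\hat s_t(X)\|\lesssim \rho_n^{-1}h^{-(d+1)}\ell^{5d/2}$ on the whole event, and combining with $\|a-b\|^4\le 8(\|a\|^4+\|b\|^4)$ gives
\[
\|\hat s_t(X)-s_t(X)\|^4\mathbbm{1}_{\{\hat p_t(X)\ge\rho_n\}}
\lesssim \rho_n^{-4}h^{-4(d+1)}\ell^{10d}+\|s_t(X)\|^4,
\]
whose expectation is $\lesssim \rho_n^{-4}h^{-4(d+1)}+t^{-2}$, again invoking Lemma~\ref{s_x_bound} for the last term and absorbing the harmless kernel-order factor $\ell^{10d}$ (a polylog once $\ell=\log n$) into the constant. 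Adding the two pieces yields the claimed $\lesssim t^{-2}+\rho_n^{-4}h^{-4(d+1)}$, and the ``in particular'' claim follows by plugging in $h=\sqrt t/D_n$, $D_n=\sqrt{\log n}$, $\ell=\log n$, $\rho_n=n^{-1}t^{-d/2}$, so that $\rho_n^{-4}h^{-4(d+1)}=n^4 t^{-2}(\log n)^{2(d+1)}$ is polynomial in $n$ when $\log\frac1t=O(\log n)$.

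There is no real obstacle here: the only points to be careful about are (i) that truncation supplies a \emph{deterministic} sup-norm control of $\hat s_t$ on $\{\hat p_t\ge\rho_n\}$, so that no concentration inequality is needed, and (ii) that the $t^{-2}$ term comes from the global score-moment estimate rather than from any low-density restriction. The only bookkeeping nuisance is tracking the $\ell$-dependence from the kernel bounds, which is dominated by the polynomially large factor $\rho_n^{-4}h^{-4(d+1)}$ after the parameters are fixed.
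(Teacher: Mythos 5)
Your proposal is correct and follows essentially the same route as the paper: both rest on the deterministic observation that the truncation forces $\|\hat s_t(X)\|\le\rho_n^{-1}\|\nabla\hat p_t(X)\|$ (and hence $\lesssim \rho_n^{-1}h^{-(d+1)}\ell^{5d/2}$ via Lemma~\ref{Bound_on_kernel}), combined with Lemma~\ref{s_x_bound} for $\mathbb{E}\|s_t(X)\|^4\lesssim t^{-2}$. The paper applies the triangle-type inequality $\|\hat s_t-s_t\|^4\lesssim\|\hat s_t\|^4+\|s_t\|^4$ directly rather than first splitting on the event $\{\hat p_t(X)\ge\rho_n\}$, but this is only a cosmetic reorganization; your explicit event split and your tracking of the $\ell^{10d}$ factor are, if anything, slightly more careful than the paper's write-up.
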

\begin{proof}
    \begin{align*}
        \mathbb{E}_{X \sim p_t, \{x_i\}_{i=1}^n}\|\hat s_t(X) - s_t(X)\|^4 &\lesssim \mathbb{E}_{X \sim p_t, \{x_i\}_{i=1}^n}\|\hat s_t(X)\|^4 + \mathbb{E}_{X \sim p_t}\|s_t(X)\|^4.
    \end{align*}
    Lemma~\ref{s_x_bound} shows that the moment of score is bounded:
    \begin{equation*}
        \mathbb{E}_{X \sim p_t}\|s_t(X)\|^4 \lesssim t^{-2} d^{2}.
    \end{equation*}
    Using the definition of $\hat s_t(x) = \frac{\nabla \hat p_t(x)}{\hat p_t(x)} \mathbbm1_{\hat p_t(x) >  \rho_n}$ and Lemma~\ref{Bound_on_kernel},
    \begin{align*}
        \mathbb{E}_{X \sim p_t, \{x_i\}_{i=1}^n}\|\hat s_t(X)\|^4 &\le \rho_n^{-4} \mathbb{E}_{X \sim p_t, \{x_i\}_{i=1}^n}\left\| \nabla \hat p_t(X) \right\|^4 
        \\
        &\le \rho_n^{-4} \left\| \frac{1}{nh^{d+1}}\sum_{i=1}^d \nabla K_d\left(\frac{x-X_i^t}{h}\right)\right\|^4 
        \\
        &\lesssim \rho_n^{-4} h^{-4(d+1)} \ell^{5d}.
    \end{align*}
\end{proof}
\subsection{Control of the Score moments via the R\'enyi Entropies}\label{sec_renyi}
\subsubsection{R\'enyi entropies for Sub-Gaussian Random vector}
\begin{definition}
    The $\alpha$-R\'enyi entropy for random vector $X$ with distribution $p$ is defined as
    \begin{equation*}
        h_\alpha(X) := \frac{1}{1-\alpha} \log \int (p(x))^\alpha \, dx.
    \end{equation*}
\end{definition}
The next proposition shows that sub-Gaussian random variables have finite R\'enyi entropy and that the maximum R\'enyi entropy is attainable by the Gaussian distribution.
\begin{lemma}\label{Renyi_bound_1d}
Suppose that $X \in \mathbb{R}$ is $\sigma$-Sub-Gaussian random variable.
Let $\alpha\in(0,1)$ and $p$ denotes the density of $X$. Then
\begin{align*}
h_{\alpha}(X)\le 
\frac{1}{1-\alpha}\log \left(\left(2\pi\sigma^2\right)^{\frac{1-\alpha}{2}}  (1-\alpha)^{\frac{1-\alpha}{2}}\alpha^{-\frac{1}{2}}\right).
\end{align*}
\end{lemma}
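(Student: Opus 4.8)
The plan is to reduce the claim to an integral estimate and then apply Hölder's inequality with a Gaussian weight. Since $h_\alpha(X)=\frac{1}{1-\alpha}\log Z_\alpha$ with $Z_\alpha:=\int_{\mathbb R}p(x)^\alpha\,dx$ and the prefactor $\frac{1}{1-\alpha}$ is positive, it suffices to prove $Z_\alpha\le(2\pi\sigma^2)^{(1-\alpha)/2}(1-\alpha)^{(1-\alpha)/2}\alpha^{-1/2}$; this also certifies $Z_\alpha<\infty$, so there are no integrability worries. The hypothesis enters only through the sub-Gaussian moment bound $\mathbb E\!\left[e^{X^2/\sigma^2}\right]\le 2$, which is immediate from Definition~\ref{Sub-Gaussian_def}: $\|X\|_{\psi_2}=\sigma$ means $\mathbb E[\psi_2(|X|/\sigma)]\le 1$ with $\psi_2(y)=e^{y^2}-1$.

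First I would apply Hölder's inequality with the conjugate exponents $1/\alpha$ and $1/(1-\alpha)$ — both exceeding $1$ since $\alpha\in(0,1)$ — to the factorization $p(x)^\alpha=\bigl(p(x)e^{x^2/\sigma^2}\bigr)^{\alpha}\cdot e^{-\alpha x^2/\sigma^2}$, which yields
\begin{equation*}
Z_\alpha\le\left(\int_{\mathbb R}p(x)e^{x^2/\sigma^2}\,dx\right)^{\!\alpha}\left(\int_{\mathbb R}e^{-\frac{\alpha}{(1-\alpha)\sigma^2}x^2}\,dx\right)^{\!1-\alpha}\le 2^{\alpha}\left(\frac{\pi\sigma^2(1-\alpha)}{\alpha}\right)^{\!(1-\alpha)/2},
\end{equation*}
the last factor being the elementary Gaussian integral $\int_{\mathbb R}e^{-ax^2}\,dx=\sqrt{\pi/a}$. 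It then remains to massage the constant on the right into the stated form, using $\alpha^{-(1-\alpha)/2}\le\alpha^{-1/2}$ (valid for $\alpha\in(0,1)$) together with an elementary comparison of the powers of $2$; if one wants the stated constant uniformly in $\alpha$, one inserts a tunable weight $e^{\lambda x^2}$ with $\lambda\le\sigma^{-2}$ into the Hölder split and optimizes over $\lambda$, the optimal $\lambda$ scaling like $(1-\alpha)\sigma^{-2}$, which is precisely the origin of the factor $(1-\alpha)^{(1-\alpha)/2}$. Exponentiating the resulting bound on $Z_\alpha$ recovers the claimed estimate on $h_\alpha(X)$.

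I do not expect a genuine obstacle here; the only delicate point is the constant bookkeeping, in particular reconciling the Orlicz-norm normalization $\mathbb E[e^{X^2/\sigma^2}]\le 2$ with the Gaussian-type quantity $(2\pi\sigma^2(1-\alpha))^{(1-\alpha)/2}\alpha^{-1/2}$ on the right — which is worth recognizing as exactly $\int_{\mathbb R}\bigl(\text{density of }\mathcal N(0,\sigma^2(1-\alpha))\bigr)^{\alpha}\,dx$. This identity explains the shape of the bound, shows the Hölder step is essentially sharp (near-extremal distributions being centered Gaussians), and provides a sanity check on the direction of the inequality. The multivariate version needed later then follows by tensorization, since $h_\alpha$ is additive over independent coordinates and a $\sigma$-sub-Gaussian vector has $\sigma$-sub-Gaussian one-dimensional marginals.
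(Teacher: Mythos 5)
Your approach is genuinely different from the paper's: the paper argues by calculus of variations (concavity of $p\mapsto\int p^\alpha$, a Lagrangian with the sub-Gaussian moment constraint, read off a Gaussian stationary point and evaluate), whereas you factor $p^\alpha=\bigl(pe^{x^2/\sigma^2}\bigr)^\alpha\,e^{-\alpha x^2/\sigma^2}$ and apply H\"older with conjugate exponents $1/\alpha$ and $1/(1-\alpha)$. Your H\"older step is correct and gives the rigorous bound
\begin{equation*}
Z_\alpha\le 2^{\alpha}\left(\frac{\pi\sigma^2(1-\alpha)}{\alpha}\right)^{(1-\alpha)/2}.
\end{equation*}
In fact this route is cleaner than the paper's, whose Lagrangian omits the normalization constraint $\int p=1$ and whose stationary density $\tilde p(x)\propto e^{-x^2/((1-\alpha)\sigma^2)}$ does not even satisfy $\int\tilde p\,e^{x^2/\sigma^2}\le 2$ for $\alpha<1/4$ (that integral equals $1/\sqrt{\alpha}$), so the variational argument there is heuristic at best.

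However, the closing ``massaging'' step does not go through as described. After applying $\alpha^{-(1-\alpha)/2}\le\alpha^{-1/2}$, what remains is the comparison of $2^\alpha$ against $2^{(1-\alpha)/2}$, and $2^\alpha\le 2^{(1-\alpha)/2}$ holds only for $\alpha\le 1/3$; the tuned-weight refinement (replacing $e^{x^2/\sigma^2}$ by $e^{\theta x^2/\sigma^2}$, $\theta\in(0,1]$, and bounding $\int p\,e^{\theta x^2/\sigma^2}\le 2^\theta$) widens the range somewhat but cannot cover all of $(0,1)$. The obstruction is not a defect of H\"older: the stated inequality is in fact false for $\alpha>5/8$. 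Take $X\sim\mathcal N(0,3\sigma^2/8)$; then $\mathbb E[e^{X^2/\sigma^2}]=2$, so $\|X\|_{\psi_2}=\sigma$, yet $\int p^\alpha=(3\pi\sigma^2/4)^{(1-\alpha)/2}\alpha^{-1/2}$, which exceeds $(2\pi\sigma^2(1-\alpha))^{(1-\alpha)/2}\alpha^{-1/2}$ precisely when $3/8>1-\alpha$. This is a defect of the lemma as stated, shared with the paper's own proof; the lemma is invoked downstream (via Lemma~\ref{Renyi_bound}) only in the regime $\alpha\to0$, where both your H\"older bound and the displayed bound are valid and serve the application equally well. So the honest fix is to state the estimate only for small $\alpha$ (say $\alpha\le1/3$), which your argument then proves cleanly and rigorously.
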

\begin{proof}
By the sub-Gaussian property in 1 dimension, $X$ satisfies
\begin{equation}\label{subgaussian_constrain}
    \mathbb{E}[e^{X^2/\sigma^2}]\le 2.
\end{equation}
The map $p(\cdot)\mapsto \int p^{\alpha}dx$ is concave for $\alpha\in(0,1)$, so the maximum of $h_{\alpha}(X)$ under the given Sub-Gaussian constraint \eqref{subgaussian_constrain} is achieved at the stationary point via calculus of variations. Namely, define the Lagrangian with Lagrange multipliers $\eta >0$:
\begin{equation*}
    \mathcal{L}(p(x), \eta) = \int p(x)^\alpha \, dx + \eta \left( 2 - \int p(x) \exp\left(\frac{x^2}{\sigma^2}\right) \, dx \right).
\end{equation*}
The functional derivative of  $\mathcal{L}$ with respect to $p(x)$ is:
\begin{equation*}
    \frac{\delta \mathcal{L}}{\delta p(x)} = \alpha p(x)^{\alpha - 1} - \eta \exp\left(\frac{x^2}{\sigma^2}\right) = 0.
\end{equation*}
Therefore the maximum of $h_\alpha(X)$ is attained by
\begin{align*}
\tilde p(x)=Z e^{-\frac{x^2}{(1-\alpha)\sigma^2}} 
\end{align*} 
for some normalizing constant $Z>0$. 
As $p$ is a probability measure, we can verify that $Z=\frac1{\sqrt{\pi(1-\alpha)}\sigma}$. Therefore, for all $\sigma$-sub-Gaussian random variable $X$,
\begin{equation*}
    h_{\alpha}(X) \le \frac{1}{1-\alpha} \log \int (\tilde p(x))^\alpha \, dx = \frac{1}{1-\alpha}\log \left(\left(2\pi\sigma^2\right)^{\frac{1-\alpha}{2}}  (1-\alpha)^{\frac{1-\alpha}{2}}\alpha^{-\frac{1}{2}}\right).
\end{equation*}
\end{proof}
Next, we generalize this result to $d$-dimension $\sigma$-sub-Gaussian random vector $X$ using the following property of R\'enyi entropy.
\begin{lemma}\label{Renyi_bound}
Suppose that $X \in \mathbb{R}^d$ is $\sigma$-Sub-Gaussian random vector.
Let $\alpha\in(0,1)$ and $p$ denotes the density of $X$. Then for $\alpha \rightarrow 0$, we have
\begin{align*}
h_{\alpha}(X)\le 
\log \left( C_d \sigma^{d} \alpha^{-\frac{d}{2}}\right)
\end{align*}
for some constant $C_d$ that only depends on the dimension $d$.
\end{lemma}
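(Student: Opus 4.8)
The plan is to bypass the calculus-of-variations argument of Lemma~\ref{Renyi_bound_1d} entirely and obtain the $d$-dimensional bound from a single application of H\"older's inequality, after first compressing the sub-Gaussian hypothesis into one exponential-moment bound on $\|X\|$. \textbf{Step 1 (a norm sub-Gaussian bound).} Taking $v=e_i$ in Definition~\ref{Sub-Gaussian_def} gives $\|X_i\|_{\psi_2}\le\|X\|_{\psi_2}=\sigma$, hence $\mathbb{E}\big[e^{X_i^2/\sigma^2}\big]\le 2$ for every coordinate $i$ (exactly as in the proof of Lemma~\ref{Sub-Gaussian_tail}). Since $\exp$ is convex, Jensen's inequality yields $e^{\|x\|^2/(d\sigma^2)}=\exp\!\big(\tfrac1d\sum_{i=1}^d x_i^2/\sigma^2\big)\le\tfrac1d\sum_{i=1}^d e^{x_i^2/\sigma^2}$, and taking expectations gives the key estimate $\mathbb{E}\big[e^{\|X\|^2/(d\sigma^2)}\big]\le 2$; that is, $X$ is sub-Gaussian in the Euclidean-norm sense with parameter $\sqrt d\,\sigma$.

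\textbf{Step 2 (H\"older and a Gaussian integral).} Write $p$ for the density of $X$ and set $\varphi(x):=\|x\|^2/(d\sigma^2)$. Factoring $p(x)^\alpha=\big(p(x)e^{\varphi(x)}\big)^\alpha\big(e^{-\varphi(x)}\big)^\alpha$ and applying H\"older with exponents $1/\alpha$ and $1/(1-\alpha)$,
\[
\int_{\mathbb{R}^d}p(x)^\alpha\,dx\le\Big(\int_{\mathbb{R}^d}p(x)e^{\varphi(x)}\,dx\Big)^{\alpha}\Big(\int_{\mathbb{R}^d}e^{-\frac{\alpha}{1-\alpha}\varphi(x)}\,dx\Big)^{1-\alpha}.
\]
The first factor equals $\mathbb{E}\big[e^{\|X\|^2/(d\sigma^2)}\big]^\alpha\le 2^\alpha$ by Step 1, and the second is the Gaussian integral $\big(\pi(1-\alpha)d\sigma^2/\alpha\big)^{d(1-\alpha)/2}$. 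Taking $\tfrac1{1-\alpha}\log(\cdot)$ gives
\[
h_\alpha(X)=\frac{1}{1-\alpha}\log\!\int_{\mathbb{R}^d}p(x)^\alpha\,dx\le\frac{\alpha\log 2}{1-\alpha}+\frac{d}{2}\log\!\Big(\frac{\pi(1-\alpha)d\sigma^2}{\alpha}\Big),
\]
and for $\alpha\le\tfrac12$ the right-hand side is at most $\log\!\big(2(\pi d)^{d/2}\sigma^d\alpha^{-d/2}\big)$, which is the asserted bound with $C_d=2(\pi d)^{d/2}$.

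There is essentially no obstacle here: the argument is short and in fact delivers the stated inequality for \emph{every} $\alpha\in(0,1)$, not merely in the limit $\alpha\to0$, and it recovers the one-dimensional bound of Lemma~\ref{Renyi_bound_1d} up to the value of the constant. The only point needing a word of care is that $\mathbb{E}[e^{X_i^2/\sigma^2}]\le 2$ should be read as the limit, as $C\downarrow\sigma$, of $\mathbb{E}[e^{X_i^2/C^2}]$, each of which is $\le 2$ for $C>\sigma$ by definition of $\|X_i\|_{\psi_2}$; passing the limit inside the expectation is justified by monotone convergence, so no attainment of the infimum defining the Orlicz norm is required.
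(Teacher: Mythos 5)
Your proof is correct, and it takes a genuinely different and more elementary route than the paper's. The paper proceeds by passing to spherical coordinates $(U,R)=(X/\|X\|,\|X\|)$, splitting $h_\alpha(X)$ via H\"older into a term involving the joint R\'enyi entropy $h_{1-p}(U,R)$ and a term involving a high moment $\mathbb{E}[\|X\|^{q(d-1)}]$; it then invokes a conditional R\'enyi entropy inequality from \citet{Berens2013Conditional} to reduce to $h_{1-p}(\|X\|)$, appeals to the one-dimensional Lemma~\ref{Renyi_bound_1d} (itself proved by a calculus-of-variations argument), and finally tunes the conjugate exponents $p,q$. You instead compress the sub-Gaussian hypothesis into the single moment bound $\mathbb{E}[e^{\|X\|^2/(d\sigma^2)}]\le 2$ and apply H\"older once, directly to $\int p^\alpha$, with the factoring $p^\alpha=(pe^\varphi)^\alpha(e^{-\varphi})^\alpha$ and exponents $1/\alpha$ and $1/(1-\alpha)$; the two resulting factors are an exponential moment and a Gaussian integral, both computed exactly. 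This bypasses spherical coordinates, the conditional R\'enyi entropy machinery, the moment estimate for $\|X\|^{q(d-1)}$, and Lemma~\ref{Renyi_bound_1d} entirely, and it yields an explicit, non-asymptotic constant $C_d=2(\pi d)^{d/2}$ valid for all $\alpha\in(0,\tfrac12]$ rather than only in the limit $\alpha\to 0$. The one place where you use a slightly different ingredient from the paper is Step~1: the paper obtains $\mathbb{E}[e^{\|X\|^2/(d\sigma^2)}]\le 2$ via H\"older across coordinates, while you use Jensen's inequality applied to the convex function $\exp$; both are correct and deliver the same estimate. Your remark on recovering the coordinate-wise bound $\mathbb{E}[e^{X_i^2/\sigma^2}]\le 2$ by monotone convergence as $C\downarrow\sigma$ is the right way to handle the infimum in the Orlicz-norm definition.
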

\begin{proof}
    We define a transformation $T: \mathbb{R}^d \to S^{d-1} \times \mathbb{R}^+$ by $T(x) = (u, r)$, where $u = x / \|x\|$ and $r = \|x\|$. This transformation converts Cartesian coordinates to spherical coordinates, with $u$ representing the direction and $r$ the radius. The determinant of the Jacobian of this transformation is $|J_{T^{-1}}(r,u)| = r^{d-1}$. Therefore, by change of variable, the joint distribution of $(U,R) = (\frac{X}{\|X\|}, \|X\|)$ is given by
    \begin{equation*}
        p_{U,R}(u,r) = p_{X}(T^{-1}(u,r)) |J_{T^{-1}}(r,u)| = p_X(ur)r^{d-1}.
    \end{equation*}
    Next, we formulate the R\'enyi entropy of $X$ in terms of the joint distribution of $(X/\|X\|,\|X\|)$:
    \begin{align*}
        h_\alpha(X) 
        &= \log \left(\mathbb{E}\left[\left(\frac{1}{p_X(X)}\right)^{1-\alpha}\right]\right)^{\frac{1}{1-\alpha}}
        \\
        &= \log \left(\mathbb{E}\left[\left(\frac{\|X\|^{d-1}}{p_{U,R}(X/\|X\|,\|X\|)}\right)^{1-\alpha}\right]\right)^{\frac{1}{1-\alpha}}
        \\
        &\le \log \left(\mathbb{E}\left[\left(\frac{1}{p_{U,R}(X/\|X\|,\|X\|)}\right)^{p}\right]\right)^{\frac{1}{p}} + \log \left(\mathbb{E}\left[\left(\|X\|^{(d-1)}\right)^{q}\right]\right)^{\frac{1}{q}} 
        \\
        &= h_{1-p}\left(\frac{X}{\|X\|},\|X\|\right) + \frac{1}{q} \log \mathbb{E}\left[\|X\|^{q(d-1)}\right],
    \end{align*}
    where in the inequality term we apply the H\"older's inequality with $\frac{1}{1-\alpha} = \frac{1}{p} + \frac{1}{q}$ for $0<p<1$ and $q > 1$. Combining Proposition 4.9 and Proposition 4.12 in \citet{Berens2013Conditional}, we obtain an R\'enyi Entropy upper bound for the join distribution $\left(\frac{X}{\|X\|},\|X\|\right)$
    \begin{equation*}
        h_{1-p}\left(\frac{X}{\|X\|},\|X\|\right) \le h_{1-p}\left(\|X\|\right) + h_{0}\left(\frac{X}{\|X\|}\right).
    \end{equation*}
    The last term is bounded by a constant $\log C_d$ that only depends on the dimension because the direction vector $\frac{X}{\|X\|}$ has bounded supported on a unit sphere $S^{d-1}$ \citep{van2014renyi}. 
    Notice that $\|X\|\in \mathbb{R}$ is $\sqrt{d}\sigma$-sub-Gaussian since by H\"older's inequality,
    \begin{align*}
        \mathbb{E}\left[\exp\left(\frac{\|X\|^2}{d\sigma^2}\right)\right]
        \le \prod_{i=1}^d\left(\mathbb{E}\left[\exp\left(\frac{X_i^2}{\sigma^2}\right)\right]\right)^{\frac{1}{d}}
        \le 2.
    \end{align*}
    Therefore, using the result of one dimension in Lemma~\ref{Renyi_bound_1d},
    \begin{equation*}
        h_{1-p}(\|X\|) \le \frac{1}{p}\log \left(\left(2\pi d \sigma^2\right)^{\frac{p}{2}}  p^{\frac{p}{2}}(1-p)^{-\frac{1}{2}}\right).
    \end{equation*}
    The moments of $\|X\|$ can also be bounded since it is sub-Gaussian:
    \begin{align*}
        \mathbb{E} \left[\|X\|^{q(d-1)}\right] 
        &= \int_{0}^\infty \mathbb{P} \left( \|X\|^{q(d-1)} > u\right) du
        \\
        &= q(d-1) \int_{0}^\infty \mathbb{P} \left( \|X\| > t\right) t^{q(d-1)-1}dt
        \\
        &\le q(d-1) \int_{0}^\infty 2\exp\left(-\frac{t^2}{2d\sigma^2}\right) t^{q(d-1)-1}dt
        \\
        &= q(d-1) (2d\sigma^2)^{\frac{q(d-1)}{2}} \Gamma\left(\frac{q(d-1)}{2}\right)
        \\
        &\le 3 q(d-1) (2d\sigma^2)^{\frac{q(d-1)}{2}} \left(\frac{q(d-1)}{2}\right)^{\frac{q(d-1)}{2}}
    \end{align*}
    where in the last inequality we use $\Gamma(x)\le 3x^x$ for all $x\ge 1/2$.

    Combining these results and take $p=1-\frac{\alpha}{2}$ and $q=\frac{2}{\alpha}+\alpha-3$, when $\alpha$ is small we have
    \begin{equation*}
        h_{\alpha}(X) \le \log \left(\tilde C_d \sigma^{d} \alpha^{-\frac{d}{2}}\right)
    \end{equation*}
    for some constant $\tilde C_d$ that only depends on the dimension $d$.
\end{proof}
\subsubsection{Controlling the Expectation of Score function within Lower Density Areas}
For any random vector, we denote $I_m(X):=\mathbb{E}[\|s_X(X)\|^m]$, where $s_X(x):= \nabla \log p_X(x)$. For any random vector $X$, we can use the following proposition to control any order of moments of the score function $I_m(X)$.
\begin{lemma}\label{s_x_bound}
Let $X$ and $G$ be independent random vectors such that $X \sim p_0$, $G \sim \mathcal{N}(0,\boldsymbol{I}_d)$ and $Y = X + \sqrt{t} G$, then we have
\begin{align*}
I_m(Y)&\le \min\{I_m(X),I_m(\sqrt{t}G)\}
\\
&\le I_m(\sqrt{t}G)
\\
&\le t^{-\frac{m}{2}} d^{\frac{m}{2}} (m-1)!!
\end{align*}
\end{lemma}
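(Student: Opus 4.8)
The plan is to use the classical identity that the score of a sum of independent random vectors is the conditional expectation of the score of either summand, then apply Jensen's inequality and evaluate the Gaussian score moments directly.

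First I would establish, with $W := \sqrt{t}\,G \sim \mathcal{N}(0,t\boldsymbol{I}_d)$ and $q_t$ its density, the identity $s_Y(y) = \mathbb{E}[\,s_W(W)\mid Y=y\,]$. Since $p_Y = p_0 * q_t$ and $q_t$ is smooth with rapidly decaying derivatives, one can differentiate under the integral sign: $\nabla p_Y(y) = \int p_0(x)\,\nabla q_t(y-x)\,dx = \int p_0(x)\,q_t(y-x)\,s_W(y-x)\,dx$; dividing by $p_Y(y)$ and recognizing $x\mapsto p_0(x)q_t(y-x)/p_Y(y)$ as the conditional density of $X$ given $Y=y$ (the pair $(X,Y)$ has joint density $p_0(x)q_t(y-x)$ with unit Jacobian) gives the claim. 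The same computation with the roles of $X$ and $W$ interchanged gives $s_Y(y) = \mathbb{E}[\,s_X(X)\mid Y=y\,]$ whenever $p_0$ is differentiable (otherwise one reads $I_m(X) = +\infty$, so that branch of the minimum is vacuous).

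Next I would invoke Jensen's inequality. Because $v\mapsto\|v\|_2^m$ is convex on $\mathbb{R}^d$ for $m\ge 1$ (a nondecreasing convex function composed with the Euclidean norm), conditional Jensen gives $\|s_Y(y)\|^m \le \mathbb{E}[\,\|s_W(W)\|^m\mid Y=y\,]$ pointwise, and integrating against $p_Y$ together with the tower property yields $I_m(Y)\le \mathbb{E}\|s_W(W)\|^m = I_m(W)$; the interchanged identity gives $I_m(Y)\le I_m(X)$, hence $I_m(Y)\le\min\{I_m(X),I_m(W)\}\le I_m(W)$. To bound $I_m(W)$, note $\log q_t(w) = -\|w\|^2/(2t)+\mathrm{const}$, so $s_W(w) = -w/t$ and $\|s_W(W)\|^m = \|W\|^m/t^m = t^{-m/2}\|G\|^m$, i.e. $I_m(W) = t^{-m/2}\,\mathbb{E}\|G\|^m$. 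It then remains to show $\mathbb{E}\|G\|^m\le d^{m/2}(m-1)!!$ for integer $m\ge1$: for $m=1$, $\mathbb{E}\|G\|\le(\mathbb{E}\|G\|^2)^{1/2}=\sqrt d = d^{1/2}\cdot 0!!$; for $m\ge 2$, the power-mean inequality gives $\bigl(\tfrac1d\sum_i G_i^2\bigr)^{1/2}\le\bigl(\tfrac1d\sum_i|G_i|^m\bigr)^{1/m}$, hence $\|G\|^m\le d^{m/2-1}\sum_i|G_i|^m$ and $\mathbb{E}\|G\|^m\le d^{m/2}\,\mathbb{E}|G_1|^m$, where for a standard normal $\mathbb{E}|G_1|^m=(m-1)!!$ for even $m$ and $\mathbb{E}|G_1|^m=\sqrt{2/\pi}\,(m-1)!!\le(m-1)!!$ for odd $m$. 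Chaining these estimates gives $I_m(Y)\le I_m(W)\le t^{-m/2}d^{m/2}(m-1)!!$.

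The crux is the first step: making the ``score of a convolution equals conditional expectation of the score'' identity (a Tweedie/De Bruijn-type formula) rigorous, in particular justifying the interchange of differentiation and integration, which is exactly where the smoothness of the Gaussian kernel is used. After that, everything downstream is a routine application of Jensen's inequality together with elementary moment bounds for $\chi$-type distributions, so I do not anticipate further difficulty there.
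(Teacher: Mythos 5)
Your proof is correct. The key inequality $I_m(Y)\le\min\{I_m(X),I_m(W)\}$ is established by a genuinely different route from the paper's: you use the De Bruijn/Tweedie-type identity $s_Y(y)=\mathbb{E}[s_W(W)\mid Y=y]$ followed by conditional Jensen for the convex map $v\mapsto\|v\|_2^m$, whereas the paper follows \citet{bobkov2019moments} and argues via the convexity (and $1$-homogeneity) of the functional $(u,v)\mapsto\|u\|^m/v^{m-1}$ applied to $(\nabla p_Y,p_Y)$, which it verifies by a Hessian computation in the multivariate case. If you unwind both arguments they produce the same pointwise inequality $p_Y(y)\|s_Y(y)\|^m\le\int p_0(x)q_t(y-x)\|s_W(y-x)\|^m\,dx$, but your normalization trick (dividing by $p_Y(y)$ first so that only convexity of $\|\cdot\|^m$ is needed) is more elementary and sidesteps having to verify that the homogeneous functional is jointly convex in $d+1$ variables; it also makes the probabilistic content transparent. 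Your derivation of $\mathbb{E}\|G\|^m\le d^{m/2}(m-1)!!$ via the power-mean inequality is a legitimate (and slightly more carefully spelled out) version of the coordinate-wise bound the paper states without proof, and your caveat that the $I_m(X)$ branch of the minimum is vacuous when $p_0$ is not differentiable is appropriate.
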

\begin{proof}
The proof follows from a simple convexity argument of Corollary~3.2 in
\citet{bobkov2019moments}.\\
According to \citet{bobkov2019moments}[Corollary~3.2], for all independent random variables $X, Y$, $I_k(X+Y) \leq \min \left\{I_k(X), I_k(Y)\right\}$.
We can generalize this result to higher dimensions. Here we have 
\begin{align*} I_m(X) 
& =\int\|s_X(x)\|^m p_X(x) d x 
\\ 
& =\int \frac{\|\nabla p(x)\|^m}{p^{m-1}(x)} d x
\end{align*}
where the items are vectors instead of scalars. 
    In \citet{bobkov2019moments}, follows the fact that the homogeneous function $R(u, v)=u^k / v^{k-1}$ is convex on the plane $u \in \mathbb{R}, v\in \mathbb{R}$. We need to extend this to the convexity of $(u, v) \longmapsto \frac{\|u\|^m}{v^{m-1}}$ where $u$ is vector from $\mathbb{R}^d$ vector space. In fact, as in the constraints in classical convex optimization problems, we can check that the Hessian matrix for $u$ and $v$ are positive semi-definite:
\begin{align*}
\operatorname{det}\left(\nabla^2 \frac{\|u\|^m}{v^{m-1}}\right) 
&= \operatorname{det}\left(\begin{array}{cc}(m-2)(u \otimes u) + I & -(m-1) u^{\top} \\
-(m-1) u & m-1\end{array}\right) \cdot \frac{\|u\|^{2 m - 3}}{v^{2 m}} \cdot m^2\\
& \ge 0.
\end{align*}
Therefore, the function $(u, v) \longmapsto \frac{\|u\|^m}{v^{m-1}}$ is still convex within the $u \in \mathbb{R}^d, v \in \mathbb{R}$.

Next, we calculate the moment of scores for Gaussian distribution $I_m(\sqrt{t}G)$. Let $\sqrt{t}G \sim \phi_t(x):= (2\pi t)^{-\frac{d}{2}}\exp\left(-\frac{\|x\|^2}{2t}\right)$ and $Z \sim \mathcal{N}(0,1)$,
\begin{align*}
    I_m(\sqrt{t}G) &= \mathbb{E}_{X\sim \phi_t}\|\nabla \log \phi_t(X)\|^m 
    \\
    &= t^{-\frac{m}{2}}\,\mathbb{E}_{G\sim \phi_1}\|G\|^m 
    \\
    &\le t^{-\frac{m}{2}} d^{\frac{m}{2}}\, \mathbb{E}|Z|^m
    \\
    &\le t^{-\frac{m}{2}} d^{\frac{m}{2}} (m-1)!!.
\end{align*}
\end{proof}
\begin{lemma}\label{renyi_bound}
    Suppose that $X \sim p_t$ is $\sigma_t^2$-Sub-Gaussian, and $s_t(x):=\frac{\nabla p_t(x)}{p_t(x)}$ denoted the score function for $p_t(x)$. Then for any $0< \varepsilon < 1$,
    \begin{equation*}
        \int_{p_t(x) \leq \rho_n\log^{-c}{n}}\| s_t(x)\|^2\,p_t(x)dx \lesssim 
        \varepsilon^{-(1+\frac{d}{2})} t^{-1} \rho_n^{1-\varepsilon} \sigma^{d(1-\varepsilon)},
    \end{equation*}
    where $\rho_n = n^{-1}t^{-d/2}$.
\end{lemma}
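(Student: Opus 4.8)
The plan is to rewrite the left-hand side as $\mathbb{E}\big[\|s_t(X)\|^2\,\mathbbm{1}\{p_t(X)\le\tau\}\big]$ with $X\sim p_t$ and $\tau:=\rho_n\log^{-c}n$, and to split this expectation by H\"older's inequality into a ``score moment'' factor and a ``low-density mass'' factor. For an integer $m\ge 3$, H\"older on the measure $p_t(x)\,dx$ with conjugate exponents $m/2$ and $m/(m-2)$ gives
\begin{equation*}
\int_{p_t\le\tau}\|s_t(x)\|^2 p_t(x)\,dx \;\le\; \big(\mathbb{E}\,\|s_t(X)\|^m\big)^{2/m}\,\big(\mathbb{P}(p_t(X)\le\tau)\big)^{1-2/m}.
\end{equation*}
Since $\tau=\rho_n\log^{-c}n\le\rho_n$, every positive power of $\tau$ is bounded by the same power of $\rho_n$, which is where the factor $\rho_n^{1-\varepsilon}$ will ultimately come from.

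For the moment factor I would use Lemma~\ref{s_x_bound}, which gives $\mathbb{E}\,\|s_t(X)\|^m\le t^{-m/2}d^{m/2}(m-1)!!$; together with $(m-1)!!\le m^{m/2}$ this yields $\big(\mathbb{E}\,\|s_t(X)\|^m\big)^{2/m}\lesssim t^{-1}m$, with the implied constant depending only on $d$. The low-density mass factor is the only step that is not mechanical: a sub-Gaussian tail bound controls $\mathbb{P}(\|X\|\text{ large})$, not $\mathbb{P}(p_t(X)\text{ small})$, so instead the idea is to trade density for entropy. For $\alpha\in(0,1)$, on $\{p_t(x)\le\tau\}$ we have $p_t(x)^{1-\alpha}\le\tau^{1-\alpha}$, hence $p_t(x)\le\tau^{1-\alpha}p_t(x)^{\alpha}$ there, so
\begin{equation*}
\mathbb{P}(p_t(X)\le\tau)=\int_{p_t\le\tau}p_t(x)\,dx\;\le\;\tau^{1-\alpha}\int_{\mathbb{R}^d}p_t(x)^{\alpha}\,dx\;=\;\tau^{1-\alpha}\,e^{(1-\alpha)h_\alpha(X)},
\end{equation*}
and Lemma~\ref{Renyi_bound} (using that $X\sim p_t$ is $\sigma$-sub-Gaussian with $\sigma=\sigma_0+\sqrt t$, by Lemma~\ref{Sub-Gaussianforpt}) gives $h_\alpha(X)\le\log\!\big(C_d\sigma^d\alpha^{-d/2}\big)$ in the small-$\alpha$ regime, whence $\mathbb{P}(p_t(X)\le\tau)\le\big(\tau\,C_d\sigma^d\alpha^{-d/2}\big)^{1-\alpha}$.

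Substituting both estimates into the H\"older split leaves
\begin{equation*}
\int_{p_t\le\tau}\|s_t(x)\|^2 p_t(x)\,dx\;\lesssim\;t^{-1}\,m\,\big(\tau\,C_d\sigma^d\alpha^{-d/2}\big)^{(1-\alpha)(1-2/m)},
\end{equation*}
and the last step is to choose $m\asymp\varepsilon^{-1}$ and $\alpha\asymp\varepsilon$ (for instance $\alpha=\varepsilon/4$, $m=\lceil 8/\varepsilon\rceil$) so that $(1-\alpha)(1-2/m)\ge 1-\varepsilon$: then $m\lesssim\varepsilon^{-1}$ and $\alpha^{-d/2}\asymp\varepsilon^{-d/2}$ account for the prefactor $\varepsilon^{-(1+d/2)}$, while $\tau\le\rho_n\log^{-c}n$ and $\sigma^d\lesssim t^{d/2}+\sigma_0^d$ account for $\rho_n^{1-\varepsilon}\sigma^{d(1-\varepsilon)}$ up to a $\log$-power and an $n^{O(\varepsilon)}$ factor. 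These last factors are harmless, since they disappear once the downstream choice $\varepsilon=\log\log n/\log n$ is made (this is also why $c$ is taken large). I expect the main obstacle to be exactly this entropy step — confirming that Lemma~\ref{Renyi_bound} applies for the order $\alpha$ we need, and that the exponent $(1-\alpha)(1-2/m)$ can be kept at least $1-\varepsilon$ — the rest being routine H\"older/Stirling bookkeeping.
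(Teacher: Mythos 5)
Your proof is correct and follows essentially the same route as the paper's: H\"older's inequality with exponents $m/2$ and $m/(m-2)$, the score-moment bound from Lemma~\ref{s_x_bound}, the low-density mass bounded via $\int_{p_t\le\tau}p_t\,dx\le\tau^{1-\alpha}\int p_t^{\alpha}\,dx = \tau^{1-\alpha}e^{(1-\alpha)h_\alpha(X)}$ (the paper phrases this identical estimate as Markov's inequality applied to $p_t^{-(1-\alpha)}(X)$), the R\'enyi entropy bound of Lemma~\ref{Renyi_bound}, and finally $\alpha,m^{-1}\asymp\varepsilon$. The minor bookkeeping you flagged (the exact exponent $(1-\alpha)(1-2/m)$ versus $1-\varepsilon$, and dropping the favorable $\log^{-c}n$ factor) is handled in the paper in the same informal way and is indeed harmless once $\varepsilon=\log\log n/\log n$ is set downstream.
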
 
\begin{proof}
By H\"older's inequality,
\begin{align*}
    \int_{p_t(x) \leq \rho_n\log^{-c}{n}}\| s_t(x)\|^2\,p_t(x)dx &= 
    \mathbb{E}[\|s_t(X)\|^2\mathbbm{1}_{\{p_t(x) \le \rho_n\log^{-c}{n}\}}(X)]
    \\
    &\le \left(\mathbb{E}\left[\|s_t(X)\|^m\right]\right)^{2/m} \left(\mathbb{P}[p(X)\le \rho_n\log^{-c}{n}]\right)^{1-\frac{2}{m}}.
\end{align*}
The first term is the moments of score and can be upper bounded by Lemma~\ref{s_x_bound}:
\begin{align*}
    \left(\mathbb{E}\left[\|s_t(X)\|^m\right]\right)^{2/m} &\le \left(\mathbb{E}\left[\|s_t(X)\|^m\right]\right)^{2/m} 
    \\
    &\le t^{-1} d ((m-1)!!)^{2/m} \\
    &\lesssim m\, t^{-1},
\end{align*}
where in the last inequality we use Stirling's approximation. By Markov's inequality and the Upper bound for R\'enyi entropies in Lemma~\ref{Renyi_bound},
\begin{align*}
\mathbb{P}[p(X)\le \rho_n\log^{-c}{n}]
&= \mathbb{P}\left[\frac1{p^{1-\alpha}(X)}\ge \frac1{\rho_n^{1-\alpha}(\log n)^{-(1-\alpha)c}}\right]
\\
&\le \rho_n^{1-\alpha}(\log n)^{-(1-\alpha)c}\,\exp((1-\alpha)h_{\alpha}(Y))
\\
&\le C_d \rho_n^{1-\alpha}  \sigma^{d(1-\alpha)} \alpha^{-\frac{d}{2}}.
\end{align*}
So combining the results we have
\begin{align*}
    \int_{p_t(x) \leq \rho_n\log^{-c}{n}}\| s_t(x)\|^2\,p_t(x)dx 
    &\lesssim mt^{-1} \rho_n^{(1-\alpha)(1-2/m)} \sigma^{d(1-\alpha)(1-2/m)} \alpha^{-\frac{d(1-2/m)}{2}}.
\end{align*}
Finally, we take $\alpha = \varepsilon \downarrow 0$, $m = \varepsilon^{-1} \to\infty$ such that $(1-\alpha)(1-\frac{2}{m}) \asymp 1-\varepsilon$, then
\begin{align*}
    \int_{p_t(x) \leq \rho_n\log^{-c}{n}}\| s_t(x)\|^2\,p_t(x)dx 
    &\lesssim \varepsilon^{-(1+\frac{d}{2})} t^{-1} \rho_n^{1-\varepsilon} \sigma^{d(1-\varepsilon)}.
\end{align*}
\end{proof}
\subsection{Bounds on the Maximal Function}\label{appendix_maximal}
In the MSE analysis for $\hat p_t$ and $\nabla \hat p_t$ in Proposition~\ref{MSEphat} and Proposition~\ref{MSEphatprime}, we introduce the local maximal function $p_t^*(x):=\sup_{\|\lambda\|_{\infty} < h}p_t(x+\lambda)$.
The following lemma shows that $p^*_t(x)$ and $p_t(x)$ are uniformly close to each other for all $x$. Specifically, we demonstrate that if we take $h = \frac{\sqrt{t}}{D_n}$, then for any small $\varepsilon$, the ratio $\frac{p_t^*(x)}{p_t^{1-\varepsilon}(x)}$ is bounded by a constant that only depends on $\varepsilon$ for all $x$ and $t$.
\begin{lemma}\label{pstar_property}
    Define
    \begin{equation*}
        p_t^*(x) = \sup_{\|\lambda\|_{\infty} \le h} p_t(x + \lambda).
    \end{equation*}
    If we take $h = \frac{\sqrt{t}}{D_n}$, for some $D_n$ that depends on sample size $n$, then for any $t>0$, $x\in\mathbb{R}^d$, 
    and $\varepsilon\in(0,1)$,
    \begin{equation*}
        p_t(x) \leq p^*_t(x)\le (1-\varepsilon)^{-d/2} \exp\left\{\frac{d(1-\varepsilon)}{2D_n^{2}\varepsilon}\right\} \,p_t^{1-\varepsilon}(x).
    \end{equation*}
    In particular, by taking $D_n = C \sqrt{\log n}$ for some constant $C$ we have
    \begin{equation*}
        p_t(x) \leq p^*_t(x)\le (1-\varepsilon)^{-d/2} \exp\left\{\frac{d(1-\varepsilon)}{2C^2 (\log n)\varepsilon}\right\} \,p_t^{1-\varepsilon}(x).
    \end{equation*}  
\end{lemma}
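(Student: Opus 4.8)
The first inequality $p_t(x)\le p_t^*(x)$ is trivial (take $\lambda=0$ in the supremum). For the second inequality the plan is to prove a \emph{pointwise} comparison between the Gaussian kernel at the shifted point and a fractional power of the kernel at $x$, and then convolve with $p_0$. Fix $\lambda$ with $\|\lambda\|_\infty\le h$, fix $y$, and write $a:=x-y$. Completing the square gives, for every $\varepsilon\in(0,1)$,
\[
\|a+\lambda\|^2=\varepsilon\Big\|a+\tfrac{\lambda}{\varepsilon}\Big\|^2+(1-\varepsilon)\|a\|^2-\tfrac{1-\varepsilon}{\varepsilon}\|\lambda\|^2\ \ge\ (1-\varepsilon)\|a\|^2-\tfrac{1-\varepsilon}{\varepsilon}\|\lambda\|^2 .
\]
Substituting this into $\phi_t(a+\lambda)=(2\pi t)^{-d/2}\exp(-\|a+\lambda\|^2/(2t))$ and writing $(2\pi t)^{-d/2}=(2\pi t)^{-d\varepsilon/2}\big((2\pi t)^{-d/2}\big)^{1-\varepsilon}$ yields the key pointwise bound
\[
\phi_t(x+\lambda-y)\ \le\ (2\pi t)^{-\frac{d\varepsilon}{2}}\,\exp\!\Big(\tfrac{(1-\varepsilon)\|\lambda\|^2}{2\varepsilon t}\Big)\,\phi_t(x-y)^{1-\varepsilon}.
\]

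Next I would integrate this against $p_0(y)\,dy$. Since $p_0$ is a probability density, Hölder's inequality with exponents $(1/\varepsilon,\,1/(1-\varepsilon))$ gives $\int p_0(y)\phi_t(x-y)^{1-\varepsilon}\,dy\le\big(\int p_0\big)^{\varepsilon}\big(\int p_0(y)\phi_t(x-y)\,dy\big)^{1-\varepsilon}=p_t(x)^{1-\varepsilon}$, so that
\[
p_t(x+\lambda)\ \le\ (2\pi t)^{-\frac{d\varepsilon}{2}}\,\exp\!\Big(\tfrac{(1-\varepsilon)\|\lambda\|^2}{2\varepsilon t}\Big)\,p_t(x)^{1-\varepsilon}.
\]
Taking the supremum over $\|\lambda\|_\infty\le h$ and using $\|\lambda\|^2\le d h^2=d t/D_n^2$ bounds the exponent by $\tfrac{d(1-\varepsilon)}{2D_n^2\varepsilon}$, which is exactly the exponential factor in the statement; plugging in $D_n=C\sqrt{\log n}$ gives the second display. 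The surviving normalization factor $(2\pi t)^{-d\varepsilon/2}$ is what must be reconciled with the stated prefactor $(1-\varepsilon)^{-d/2}$: it is $\le 1$ whenever $t\gtrsim 1$, and in the range in which the lemma is invoked ($t\ge n^{-T_1}$ and $\varepsilon=O(\log\log n/\log n)$) it is $\mathrm{polylog}(n)$; alternatively it can be absorbed by replacing the Hölder step with the cruder bound $\int p_0(y)\phi_t(x-y)^{1-\varepsilon}dy\le(\sup_z p_t(z))^{\varepsilon}p_t(x)^{1-\varepsilon}\le\|p_0\|_\infty^{\varepsilon}p_t(x)^{1-\varepsilon}$, at the cost of a $p_0$-dependent constant.

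I expect the main obstacle to be exactly this bookkeeping of the prefactor rather than the Gaussian algebra (which is routine). The essential subtlety is that one cannot compare $\phi_t(x+\lambda-y)$ to $\phi_t(x-y)$ with power $1$: the ratio $\phi_t(x+\lambda-y)/\phi_t(x-y)=\exp(-(2\langle x-y,\lambda\rangle+\|\lambda\|^2)/(2t))$ depends on $y$ and is unbounded, so the shift cannot be controlled uniformly. Passing to the power $1-\varepsilon$ is what makes the quadratic in $a$ strictly concave after the shift and hence produces a bound uniform in $y$; but this same device introduces the mismatch between the Gaussian normalizations $(2\pi t)^{-d/2}$ and $(2\pi t)^{-d(1-\varepsilon)/2}$, which is the $(2\pi t)^{-d\varepsilon/2}$ (equivalently, after rewriting $(2\pi t)^{-d/2}e^{-(1-\varepsilon)\|a\|^2/(2t)}=(1-\varepsilon)^{-d/2}\phi_{t/(1-\varepsilon)}(a)$, a factor $(1-\varepsilon)^{-d/2}$ together with the density $p_{t/(1-\varepsilon)}$ in place of $p_t^{1-\varepsilon}$). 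Making this step quantitatively harmless in all the regimes where \Cref{pstar_property} is applied is the one place that needs care.
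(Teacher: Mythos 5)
Your argument is essentially the same as the paper's: both exploit concavity of $u\mapsto u^{1-\varepsilon}$ (you phrase it as H\"older with exponents $1/\varepsilon$, $1/(1-\varepsilon)$; the paper invokes Jensen on $(p_0*\phi_t)^{1-\varepsilon}\ge p_0*\phi_t^{1-\varepsilon}$, plus Lemma~\ref{lemma_supfg} to pass to the pointwise ratio of kernels) and both complete the square in the exponent, $(1-\varepsilon)\|a\|^2-\|a+\lambda\|^2 = -\varepsilon\|a+\lambda/\varepsilon\|^2 + (1/\varepsilon - 1)\|\lambda\|^2$, to get the factor $\exp\{d(1-\varepsilon)/(2D_n^2\varepsilon)\}$.

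The normalization-constant discrepancy you flag is a genuine issue in the lemma as stated. Tracking the normalizations carefully, writing $p_t(x)=(2\pi t)^{-d/2}I(x)$ with $I(x)=\int p_0(y)e^{-\|x-y\|^2/(2t)}dy$, Jensen gives $I(x)^{1-\varepsilon}\ge \int p_0(y)e^{-(1-\varepsilon)\|x-y\|^2/(2t)}dy$, and the pointwise kernel comparison gives the exponential factor, so that one obtains
\begin{equation*}
\frac{p_t^*(x)}{p_t^{1-\varepsilon}(x)} \;=\; (2\pi t)^{-d\varepsilon/2}\,\frac{\sup_\lambda I(x+\lambda)}{I(x)^{1-\varepsilon}} \;\le\; (2\pi t)^{-d\varepsilon/2}\exp\!\left\{\frac{d(1-\varepsilon)}{2D_n^2\varepsilon}\right\},
\end{equation*}
exactly as you derive. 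The paper's displayed proof first treats $\phi_t$ as a normalized density (to get $p_t^{1-\varepsilon}=(p_0*\phi_t)^{1-\varepsilon}$) but then uses the unnormalized identity $\phi_t^{1-\varepsilon}=\phi_{t/(1-\varepsilon)}$ and finally re-inserts normalizations to produce $(1-\varepsilon)^{-d/2}$; these conventions are inconsistent, and the $(2\pi t)^{-d\varepsilon/2}$ does not actually cancel. For $t<1/(2\pi)$ the stated bound with prefactor $(1-\varepsilon)^{-d/2}$ is false as written. As you correctly observe, this is benign in context: the lemma is invoked with $t\ge n^{-T_1}$ and $\varepsilon=O(\log\log n/\log n)$, so $(2\pi t)^{-d\varepsilon/2}=O(\mathrm{polylog}(n))$ and all downstream estimates (which already carry a $\mathrm{polylog}(n)$) are unaffected. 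One small caveat: your proposed fallback bound $\int p_0(y)\phi_t(x-y)^{1-\varepsilon}\,dy\le (\sup_z p_t(z))^\varepsilon p_t(x)^{1-\varepsilon}$ is not justified as stated and does not obviously remove the $t$-dependence, so the honest conclusion is simply that the lemma's prefactor should read $(2\pi t)^{-d\varepsilon/2}$ (or at least $((1-\varepsilon)\wedge 2\pi t)^{-d\varepsilon/2}$), which is what your main line of argument proves.
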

\begin{remark}
    If we take $\varepsilon \rightarrow 0$ such that $\epsilon \log n \rightarrow +\infty$, we have $p_t(x) \le p^*_t(x)\lesssim p_t^{1-\varepsilon}(x)$.
\end{remark}
\begin{proof}[Proof of Lemma~\ref{pstar_property}]
The first inequality follows from the definition of $p^*_t(x)$. Next we prove the second inequality.
By Jensen's inequality,
\begin{equation*}
   p_t^{1-\varepsilon}(x) = (p_0 * \phi_t)^{1-\varepsilon}(x) \ge p_0 * \phi_t^{1-\varepsilon}(x) \ge p_0 * \phi_\frac{t}{1-\varepsilon}(x),
\end{equation*}
where $\phi_t(x)= \exp \left(-\frac{\|x\|_2^2}{2 t}\right)$. 
Then
\begin{equation*}
    \frac{p_t^*(x)}{p_t^{1-\varepsilon}(x)} \le \sup _{\substack{x \in \mathbb{R}^d \\ \|\lambda\|_{\infty} \le \frac{\sqrt{t}}{D_n}}} \frac{ p_0 * \phi_t(x+\lambda)}{p_0 * \phi_\frac{t}{1-\varepsilon}(x)}
\end{equation*}
By changing of variable and Lemma~\ref{lemma_supfg},
\begin{align*}
\frac{p_t^*(x)}{p_t^{1-\varepsilon}(x)}
&\le \sup _{\substack{x \in \mathbb{R}^d 
\\ \|\lambda\|_{\infty} \le D_n^{-1}}} \frac{\int p_0(x+\sqrt{t}\lambda - \sqrt{t}y)\phi_1(y)dy}{\int p_0(x - \sqrt{t} y)\phi_{\frac{1}{1-\varepsilon}}(y)dy}
\\ 
&= \sup _{\substack{x \in \mathbb{R}^d 
\\ \|\lambda\|_{\infty} \le D_n^{-1}}} \frac{\int p_0(y)\phi_1\left(\lambda + \frac{x-y}{\sqrt{t}}\right)dy}{\int p_0(y)\phi_{\frac{1}{1-\varepsilon}}\left(\frac{x-y}{\sqrt{t}}\right)dy} 
\\
&
\le \sup _{\substack{x \in \mathbb{R}^d 
\\ \|\lambda\|_{\infty}\le D_n^{-1}}}\frac{\phi_1\left(\lambda + x\right)}{\phi_{\frac{1}{1-\varepsilon}}\left(x\right)} \\
&= (1-\varepsilon)^{-d/2} \sup _{\substack{x \in \mathbb{R}^d \\ \|\lambda\|_{\infty} \le D_n^{-1}}} \exp\left\{\frac{1}{2}\left[(1-\varepsilon) \|x\|^2 -\|x+\lambda\|^2\right]\right\}.
\end{align*}
The exponent above is
\begin{align*}
\sup_{\substack{x \in \mathbb{R}^d \\ \|\lambda\|_{\infty} \le D_n^{-1}}} \,\left((1-\varepsilon) \|x\|^2 -\|x+\lambda\|^2\right) 
&= \sup_{\substack{x \in \mathbb{R}^d \\ \|\lambda\|_{\infty} \le D_n^{-1}}} \, \left(-\varepsilon\|x+\lambda/\varepsilon\|^2 + (1/\varepsilon - 1)\|\lambda\|^2\right) \\
&= \sup_{\|\lambda\|_{\infty} \le D_n^{-1}} \, (1/\varepsilon - 1) \|\lambda\|^2 \\
&= d(1/\varepsilon - 1)D_n^{-2}.
\end{align*}
Therefore,
denote $D_\varepsilon := \frac{p_t^*(x)}{p_t^{1-\varepsilon}(x)}$, then for all $x\in \mathbb{R}^d$
\begin{equation*}
    D_{\varepsilon} \le (1-\varepsilon)^{-d/2} \exp\left\{\frac{d(1-\varepsilon)}{2D_n^{2}\varepsilon}\right\}.
\end{equation*}
\end{proof}
\begin{lemma}\label{lemma_supfg}
    Suppose that $f(x)$, $g(x)$ and $p(x)$ are probability density functions. 
    Suppose that the density ratio of $f$ and $g$ is bounded, i.e., 
    \begin{equation*}
        \sup_x \frac{f(x)}{g(x)} < \infty.
    \end{equation*}
    Then
    \begin{equation*}
        \frac{\int_x f(x) p(x)\,dx}{\int_x g(x) p(x) \,dx} \leq \sup_x \frac{f(x)}{g(x)}.
    \end{equation*}
\end{lemma}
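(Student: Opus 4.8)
The plan is to convert the finiteness of the supremum into a pointwise domination and then integrate. Set $M := \sup_x f(x)/g(x)$, which is finite by hypothesis. The only structural input needed is the resulting inequality $f(x) \le M\,g(x)$, valid for every $x$ (or, if one wishes to be careful about null sets, for $p$-almost every $x$, replacing the ordinary supremum by the essential supremum with respect to $p$; this changes neither the right-hand side nor any application of the lemma in this paper).

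Next I would multiply the inequality $f(x) \le M\,g(x)$ by the nonnegative weight $p(x)$ and integrate over $\mathbb{R}^d$. Since $p \ge 0$, monotonicity of the Lebesgue integral is preserved, giving $\int f(x)\,p(x)\,dx \le M \int g(x)\,p(x)\,dx$. Finally, dividing both sides by $\int g(x)\,p(x)\,dx$ — which is strictly positive whenever the ratio on the left is well defined, and is tacitly assumed nonzero in the statement — yields $\frac{\int f\,p}{\int g\,p} \le M = \sup_x \frac{f(x)}{g(x)}$, as claimed.

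There is essentially no obstacle here: the argument is an immediate consequence of the fact that an upper bound on a ratio of nonnegative functions is inherited by any nonnegatively weighted average. The only minor point is the treatment of the supremum on a $p$-null set, which is handled either by interpreting it as an essential supremum or by noting that the densities of interest in the places where this lemma is invoked (Gaussian density ratios in the proof of Lemma~\ref{pstar_property}) are continuous and strictly positive, so the issue does not arise.
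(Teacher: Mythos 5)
Your proof is correct and takes essentially the same route as the paper's: define $M=\sup_x f(x)/g(x)$, use the pointwise bound $f\le Mg$, multiply by $p\ge 0$, and integrate. The extra remarks about essential suprema and positivity of the denominator are fine but not needed for the application.
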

\begin{proof}
    Denote 
    \begin{equation*}
        \lambda = \sup_x \frac{f(x)}{g(x)} < \infty.
    \end{equation*}
    Then
    \begin{equation*}
        f(x) \le \lambda g(x).
    \end{equation*}
    Integrate both sizes we have
    \begin{align*}
        \int f(x) p(x)\,dx \le \lambda \int g(x) p(x)\,dx.
    \end{align*}
\end{proof}


\end{document}